\documentclass[11pt,leqno]{amsart}
\usepackage{verbatim,amssymb,amsfonts,latexsym,amsmath,amsthm,bbm,nicefrac,xspace,enumerate,tikz,marvosym,mathtools}
\usetikzlibrary{decorations.pathmorphing,shapes}

%%%%%%%%%%%%
\newtheorem{theorem}{Theorem}[section]
\newtheorem*{theorem*}{Theorem}
\newtheorem*{corollary*}{Corollary}
\newtheorem*{maintheorem}{Main Theorem}

\newtheorem{lemma}[theorem]{Lemma}

\newtheorem{corollary}[theorem]{Corollary}

%%%%%%%%%%%%

%%%%%%%%%%%%
\theoremstyle{definition}

\newtheorem{definition}[theorem]{Definition}

\theoremstyle{remark}

%%%%%%%%%%%%
\overfullrule=5 pt
%%%%Macros%%%%

%%%Basics%%%
%%%%%%%%%%%%

\newcommand{\N}{\mathbb{N}}
\newcommand{\Z}{\mathbb{Z}}
\newcommand{\CC}{\mathbb{C}}
%%%%%%%%%%%%

\newcommand{\A}{\mathcal{A}}
\newcommand{\B}{\mathcal{B}}
\newcommand{\C}{\mathcal{C}}

\newcommand{\F}{\mathcal{F}}

\newcommand{\LL}{\mathcal{L}}
\renewcommand{\P}{\mathcal{P}}

\newcommand{\U}{\mathcal{U}}
\newcommand{\V}{\mathcal{V}}
\newcommand{\W}{\mathcal{W}}

%%%%%%%%%%%%
\newcommand{\explicitSet}[1]{\left\lbrace #1 \right\rbrace}
\newcommand{\brackets}[1]{\left\langle #1 \right\rangle}
\newcommand{\set}[2]{\explicitSet{#1 \colon #2}}
\newcommand{\seq}[2]{\brackets{#1 \colon #2}}
\newcommand{\<}{\langle}
\renewcommand{\>}{\rangle}
%%%%%%%%%%%%
\renewcommand{\a}{\alpha}
\renewcommand{\b}{\beta}
\newcommand{\g}{\gamma}
\newcommand{\dlt}{\delta}

\newcommand{\z}{\zeta}

\newcommand{\s}{\sigma}
\renewcommand{\t}{\tau}
\newcommand{\w}{\omega}
%%%%%%%%%%%%
\newcommand{\0}{\emptyset}

\newcommand{\sub}{\subseteq}
\newcommand{\rest}{\!\restriction\!}

%%%%%%%%%%%%

%%%Topology&Dynamics%%%
%%%%%%%%%%%%

\newcommand{\iso}{\cong}
%%%%%%%%%%%%

%%%%%%%%%%%%

%%%%%%%%%%%%

%%%%%%%%%%%%

%%%SetTheory%%%
%%%%%%%%%%%%

\newcommand{\card}[1]{\left\lvert #1 \right\rvert}

%%%%%%%%%%%%
\newcommand{\PP}{\mathbb{P}}

\newcommand{\zero}{0\hspace{-2.1mm}0}
\newcommand{\1}{1\hspace{-1.75mm}1}
%%%%%%%%%%%%
\newcommand{\pwmf}{\mathcal{P}(\w)/\mathrm{Fin}}
\newcommand{\pwmff}{\nicefrac{\mathcal{P}(\w)}{\mathrm{Fin}}}
\newcommand{\pzmff}{\nicefrac{\mathcal{P}(\Z)}{\mathrm{Fin}}}

\renewcommand{\AA}{\mathbb{A}}
\newcommand{\BB}{\mathbb{B}}
\newcommand{\DD}{\mathbb{D}}

%%%%%%%%%%%%
\newcommand{\continuum}{\mathfrak{c}}

%%%%%%%%%%%%
\newcommand{\ch}{\ensuremath{\mathsf{CH}}\xspace}

\newcommand{\zfc}{\ensuremath{\mathsf{ZFC}}\xspace}
\newcommand{\pfa}{\ensuremath{\mathsf{PFA}}\xspace}
\newcommand{\ocama}{\ensuremath{\mathsf{OCA + MA}}\xspace}

\newcommand{\oca}{\ensuremath{\mathsf{OCA}}\xspace}
%%%%%%%%%%%%

%%%MacrosUsedJustHere%%%
%%%%%%%%%%%%

\newcommand{\tosi}{\xrightarrow{\text{\tiny $\,\s\,$}}}
\newcommand{\toa}{\xrightarrow{\scalebox{.9}{$\,{}_{\a}\,$}}}
\newcommand{\tob}{\xrightarrow{\scalebox{.8}{$\,{}_{\b}\,$}}}

\newcommand{\toA}{\xrightarrow{\scalebox{.75}{$\,{}_{\A}\,$}}}
\newcommand{\toB}{\xrightarrow{\scalebox{.75}{$\,{}_{\B}\,$}}}
\newcommand{\toC}{\xrightarrow{\scalebox{.75}{$\,{}_{\C}\,$}}}
\newcommand{\toU}{\xrightarrow{\scalebox{.75}{$\,{}_{\U}\,$}}}
\newcommand{\toV}{\xrightarrow{\scalebox{.75}{$\,{}_{\V}\,$}}}
\newcommand{\toW}{\xrightarrow{\scalebox{.75}{$\,{}_{\W}\,$}}}
\newcommand{\toSp}{\xrightarrow{\scalebox{.75}{$\,{}_{\mathcal S^\phi}\,$}}}
\newcommand{\St}{\mathcal{S}}
\newcommand{\E}{\mathcal{E}}
\newcommand{\fS}{\mathrm{S}}

\newcommand{\fB}{\mathrm{T}}

%%%%%%%%%%%%

\begin{document}

%%%%%%%%%%%%
\title[Does $\mathcal P(\omega) / \mathrm{Fin}$ know its right hand from its left?]{Does $\mathcal P(\omega) / \mathrm{Fin}$ know its right hand \linebreak from its left? 
%\linebreak {\footnotesize (notes on a question I cannot yet answer)}
}
%%%%%%%%%%%%
\author{Will Brian}
\address {
W. R. Brian\\
Department of Mathematics and Statistics\\
University of North Carolina at Charlotte\\
%9201 University City Blvd.\\
Charlotte, NC %28223, 
(USA)}
\email{wbrian.math@gmail.com}
\urladdr{wrbrian.wordpress.com}
%%%%%%%%%%%%

%%%%%%%%%%%%
\subjclass[2020]{03E35, 05C90, 06E25, 08A35, 37B99, 54D40}
\keywords{Stone-\v{C}ech remainder, shift map, Continuum Hypothesis, Boolean algebras, automorphisms and embeddings, directed graphs}
%%%%%%%%%%%%

%%%%%%%%%%%%
\thanks{The author is supported by NSF grant DMS-2154229}
%%%%%%%%%%%%

%%%%%%%%%%%%
\begin{abstract}
Let $\s$ denote the shift automorphism on $\pwmff$, defined by setting
$\s([A]) = [A+1]$ for all $A \sub \w$. 
We show that the Continuum Hypothesis implies the shift automorphism $\sigma$ and its inverse $\sigma^{-1}$ are conjugate in the automorphism group of $\pwmff$.

Due to work of van Douwen and Shelah, it has been known since the 1980's that it is consistent with $\mathsf{ZFC}$ that $\sigma$ and $\sigma^{-1}$ are not conjugate. Our result shows that the question of whether $\sigma$ and $\sigma^{-1}$ are conjugate is independent of $\mathsf{ZFC}$.

As a corollary to the main theorem, we deduce that the structures $\langle \pwmff,\sigma \rangle$ and $\langle \pwmff,\sigma^{-1} \rangle$ are elementarily equivalent in the language of algebraic dynamical systems (an algebraic dynamical system being a Boolean algebra together with an automorphism). This corollary does not depend on the Continuum Hypothesis. 
\end{abstract}
%%%%%%%%%%%%

%%%%%%%%%%%%
\maketitle
%%%%%%%%%%%%

%%%%%%%%%%%%
\section{Introduction}
%%%%%%%%%%%%

A \emph{topological dynamical system} is a pair $\< X,f \>$, where $X$ is a compact Hausdorff space and $f: X \to X$ is a homeomorphism. In the large and expanding literature on topological dynamical systems, sometimes more is required of $X$ (such as metrizability), or less of $f$ (perhaps it may be just a continuous surjection rather than a homeomorphism). 
However the objects of this category are defined, the isomorphisms are defined as follows. 
Two topological dynamical systems $\< X,f \>$ and $\< Y,g \>$ are \emph{conjugate} %(or, by some authors, \emph{isomorphic}) 
if there is a homeomorphism $h: X \to Y$, called a \emph{conjugacy}, such that $h \circ f = g \circ h$.

\begin{center}
\begin{tikzpicture}

\node at (0,2) {$X$};
\node at (2,2) {$X$};
\node at (0,0) {$Y$};
\node at (2,0) {$Y$};

\draw[->] (.35,2) -- (1.65,2);
\draw[->] (.35,0) -- (1.65,0);
\draw[->] (0,1.65) -- (0,.35);
\draw[->] (2,1.65) -- (2,.35);

\node at (1,2.25) {\footnotesize $f$};
\node at (1,-.25) {\footnotesize $g$};
\node at (-.22,1) {\footnotesize $h$};
\node at (2.22,1) {\footnotesize $h$};

\end{tikzpicture}
\end{center}

\noindent This is the natural notion of isomorphism in the category of topological dynamical systems. 
The term ``conjugate'' (rather than \emph{isomorphic}) arises from the case $X=Y$, where the above definition says $f$ and $g$ are conjugate elements of the autohomeomorphism group of $X$. 
The main theorem of this paper can be phrased as a theorem of topological dynamics:

\begin{maintheorem}
The Continuum Hypothesis implies that the topological dynamical systems $\< \w^*,\s \>$ and $\< \w^*,\s^{-1} \>$ are conjugate.
\end{maintheorem}

\noindent In the statement of this theorem, $\w^*$ denotes the \v{C}ech-Stone remainder of the countable discrete space $\w$ of all finite ordinals, and $\s$ denotes the shift homeomorphism of $\w^*$, which maps an ultrafilter $u \in \w^*$ to the ultrafilter generated by $\set{A+1}{A \in u}$. 
Roughly speaking, this theorem states that, assuming the Continuum Hypothesis (henceforth abbreviated \ch), the maps $\s$ and $\s^{-1}$ are indistinguishable from the topological point of view.

This theorem answers question 11 from Hart and van Mill's ``Problems on $\b \N$" \cite{HvM}. 
(The question is also raised directly in \cite{Geschke}, \cite{Fetal}, \cite{Brian2}, and \cite{YCor}.)
The roots of the question reach back to work of van Douwen and Shelah from the 1980's. Near the end of his life, van Douwen considered the question of whether $\< \w^*,\s \>$ and $\< \w^*,\s^{-1} \>$ are conjugate. He proved that if in fact they are conjugate, then any map $h$ witnessing their conjugacy cannot be a \emph{trivial} homeomorphism (meaning a homeomorphism induced on $\w^*$ by a map $\w \to \w$, in the way that $\s$ is induced by the successor map $n \mapsto n+1$). This work was published posthumously in \cite{vanDouwen}. Not long before van Douwen's work on this question, Shelah proved in \cite{Shelah} that it is consistent with \zfc that every homeomorphism $\w^* \to \w^*$ is trivial. Combining these results of Shelah and van Douwen yields the following:

\begin{theorem}[van Douwen and Shelah]
It is consistent with \zfc that $\<\w^*,\s\>$ and $\<\w^*,\s^{-1}\>$ are not conjugate.
\end{theorem}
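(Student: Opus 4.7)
The plan is to combine the two cited results in a direct consistency argument. I would begin by working inside a model of \zfc witnessing Shelah's theorem from \cite{Shelah}: a model in which every autohomeomorphism of $\w^*$ is trivial, that is, induced on $\w^*$ by a function $\w \to \w$ (modulo finite modifications). Such models exist by Shelah's forcing construction (a proper forcing iteration over a model of \ch), so the statement ``every homeomorphism $\w^* \to \w^*$ is trivial'' is relatively consistent with \zfc.

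Inside this model, I would suppose toward contradiction that $\<\w^*,\s\>$ and $\<\w^*,\s^{-1}\>$ are conjugate, and let $h: \w^* \to \w^*$ be a witnessing conjugacy, so that $h \circ \s = \s^{-1} \circ h$. By the choice of model, $h$ must be trivial. But van Douwen proved in \cite{vanDouwen} that any conjugacy between $\<\w^*,\s\>$ and $\<\w^*,\s^{-1}\>$ is necessarily non-trivial. This is the contradiction, so in this model the two systems are not conjugate.

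Since the two ingredients are already in the literature, there is really no genuine obstacle to overcome here; the content of the theorem lies entirely in recognizing that Shelah's and van Douwen's results, proved independently and for different purposes, combine cleanly. The only thing I would double-check before writing up is that van Douwen's argument actually rules out \emph{all} trivial homeomorphisms implementing the conjugacy (not merely shift-like ones), and that the model produced by Shelah really does kill every non-trivial autohomeomorphism and not just some restricted class. Both are indeed what is proved in \cite{vanDouwen} and \cite{Shelah} respectively, so the combined statement follows immediately.
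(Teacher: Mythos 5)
Your argument is exactly the one the paper intends: work in Shelah's model where every autohomeomorphism of $\w^*$ is trivial, and note that van Douwen's theorem forces any conjugacy between $\<\w^*,\s\>$ and $\<\w^*,\s^{-1}\>$ to be non-trivial, so no conjugacy can exist there. This matches the paper's (sketched) proof, so there is nothing to add.
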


Later work of Shelah and Stepr\={a}ns in \cite{ShelahSteprans} shows that ``every autohomeomorphism of $\w^*$ is trivial'' is not merely consistent with \zfc, but is in fact a consequence of the $\mathsf{P}$roper $\mathsf{F}$orcing $\mathsf{A}$xiom, or $\pfa$. 
This was improved by Veli\v{c}kovi\'c, who showed in \cite{Velickovic} that \ocama, a significant weakening of \pfa, implies all autohomeomorphisms of $\w^*$ are trivial. 
(Here \oca denotes the $\mathsf{O}$pen $\mathsf{C}$oloring $\mathsf{A}$xiom defined by Todor\v{c}evi\'c in \cite{Tod}.) 
Building on work of Moore in \cite{Moore}, De Bondt, Farah, and Vignati showed recently in \cite{dBFV} that \oca alone suffices. 
See \cite{BFV}, \cite{FS}, and \cite{Vignati} for further results in this vein. 
Thus \oca implies $\<\w^*,\s\>$ and $\<\w^*,\s^{-1}\>$ are not conjugate. 

Combined with the work of van Douwen and Shelah, our result shows that the question of whether $\sigma$ and $\sigma^{-1}$ are conjugate is independent of $\mathsf{ZFC}$. 

\vspace{2mm}

An \emph{algebraic dynamical system} is a pair $\< \AA,\a \>$, where $\AA$ is a Boolean algebra and $\a$ is an automorphism of $\AA$. Algebraic dynamical systems have received less attention in the literature than their topological counterparts. However, the machinery of Stone duality reveals that the category of algebraic dynamical systems is naturally equivalent to the category of zero-dimensional topological dynamical systems.

It turns out that several aspects of the proof of our main theorem are stated more simply and proved more cleanly in the algebraic category rather than the topological one. For this reason, we do not say much about topological dynamical systems beyond this introduction, and do all of our work (or nearly all) with algebraic dynamical systems instead. Henceforth, a \emph{dynamical system} (with no adjective) means an algebraic dynamical system. 
It is likely helpful, but not strictly necessary, for the reader to be comfortable with the basics of Stone duality. 

\vspace{2mm}

\begin{center}
\begin{tikzpicture}[decoration=snake,yscale=.8]

\node at (0,2.5) {\small Boolean};
\node at (0,2) {\small algebras};
\node at (8,2.5) {\small zero-dimensional};
\node at (8,2) {\small compact $T_2$ spaces};

\node at (0,1) {\small isomorphisms};
\node at (0,.5) {\small of algebras};
\node at (8,1) {\small homeomorphisms};
\node at (8,.5) {\small of spaces};

\node at (0,-.5) {\small embeddings};
\node at (0,-1) {\small of algebras};
\node at (8,-.5) {\small continuous surjections};
\node at (8,-1) {\small between spaces};

\node at (0,-2) {\small $\P(\w) \text{ and } \pwmff$};
\node at (8,-2) {\small $\b\w \text{ and } \w^*$};

%\node at (3.8,1.45) {\small Stone};
%\node at (3.8,-.05) {\small duality};

\draw[decorate] (2.2,2.25) -- (5.4,2.25);
\draw[->] (5.4,2.25) -- (5.5,2.25);
\draw[<-] (2,2.25) -- (2.2,2.25);
\draw[decorate] (2.2,.75) -- (5.4,.75);
\draw[->] (5.4,.75) -- (5.5,.75);
\draw[<-] (2,.75) -- (2.2,.75);
\draw[decorate] (2.2,-.75) -- (5.4,-.75);
\draw[->] (5.4,-.75) -- (5.5,-.75);
\draw[<-] (2,-.75) -- (2.2,-.75);
\draw[decorate] (2.2,-2) -- (5.4,-2);
\draw[->] (5.4,-2) -- (5.5,-2);
\draw[<-] (2,-2) -- (2.2,-2);

\end{tikzpicture}
\end{center}

\vspace{2mm}

The power set of $\w$, $\P(\w)$, when ordered by inclusion, is a complete Boolean algebra. The roles of ``join'' and ``meet'' in this algebra are played by the familiar set operations $\cup$ and $\cap$, respectively. 
The Boolean algebra $\pwmff$ 
is the quotient of the algebra $\P(\w)$ by the ideal of finite sets. For $A,B \sub \w$, we write $A =^* B$ to mean that $A$ and $B$ differ by finitely many elements, i.e. $\card{(A \setminus B) \cup (B \setminus A)} < \aleph_0$.
The members of $\nicefrac{\P(\w)}{\mathrm{Fin}}$ are equivalence classes of the $=^*$ relation, and we denote the equivalence class of $A \sub \w$ by $[A]$. The Boolean-algebraic relation $[A] \leq [B]$ is denoted on the level of representatives by writing $A \sub^* B$: that is, $A \sub^* B$ means that $[A] \leq [B]$, or (equivalently) that $B$ contains all but finitely many members of $A$.
The \emph{shift map} is the automorphism $\s$ of $\pwmff$ defined by setting
$$\s([A]) = [A+1] = [\set{a+1}{a \in A}]$$
for every $A \in \P(\w)$. The inverse mapping is
$$\s^{-1}([A]) = [A-1] = [\set{a-1}{a \in A \setminus \{0\}}].$$
In other words, $\s$ is the automorphism of $\pwmff$ that shifts every set one unit to the right, and $\s^{-1}$ shifts every set one unit to the left.

Two dynamical systems $\< \AA,\a \>$ and $\< \BB,\b \>$ are \emph{conjugate} if there is an isomorphism $\phi: \AA \to \BB$ such that $\phi \circ \a = \b \circ \phi$.

\begin{center}
\begin{tikzpicture}

\node at (0,2) {$\BB$};
\node at (2,2) {$\BB$};
\node at (0,0) {$\AA$};
\node at (2,0) {$\AA$};

\draw[<-] (.35,2) -- (1.65,2);
\draw[<-] (.35,0) -- (1.65,0);
\draw[<-] (0,1.65) -- (0,.35);
\draw[<-] (2,1.65) -- (2,.35);

\node at (1,2.25) {\footnotesize $\b$};
\node at (1,-.2) {\footnotesize $\a$};
\node at (-.22,1) {\footnotesize $\phi$};
\node at (2.22,1) {\footnotesize $\phi$};

\end{tikzpicture}
\end{center}

\noindent In other words, two (algebraic) dynamical systems are conjugate if and only if their Stone duals are conjugate as topological dynamical systems.
Weakening this notion, we say that $\eta$ is an \emph{embedding} of $\< \AA,\a \>$ into $\< \BB,\b \>$ if $\eta$ is a Boolean-algebraic embedding $\AA \to \BB$ with the additional property that $\eta \circ \a = \b \circ \eta$ (i.e., $\eta$ is a conjugacy from $\<\AA,\a\>$ onto a subsystem of $\< \BB,\b \>$). In other words, an embedding of dynamical systems is like a conjugacy, but without requiring the connecting map to be surjective. 
The Stone dual of this notion is called a \emph{factor map}, or more rarely a \emph{semi-conjugacy} or a \emph{quotient map}, in the topological dynamics literature.

Our main theorem, stated in the algebraic category, says:

\begin{maintheorem}
%The Continuum Hypothesis implies that the dynamical systems 
\ch implies $\< \pwmff,\s \>$ and $\< \pwmff,\s^{-1} \>$ are conjugate.
\end{maintheorem}

Several results related to van Douwen's question and this theorem were known before the present paper: 
\begin{itemize}
\item[$\circ$] As mentioned already, if there is a conjugacy $\phi$ from $\<\pwmff,\s\>$ to $\<\pwmff,\s^{-1}\>$, then $\phi$ is a non-trivial automorphism of $\pwmff$. Consequently, it is consistent that no such automorphism exists. 
But also, it has been known since work of Rudin \cite{Rudin} that the Continuum Hypothesis implies the existence of many non-trivial automorphisms of $\pwmff$. In fact, \ch seems to be an optimal hypothesis for constructing non-trivial automorphisms (see \cite[Section 1.6]{vanMill}).

\vspace{2mm}

\item[$\circ$] If it is consistent with \zfc that $\<\pwmff,\s\>$ and $\<\pwmff,\s^{-1}\>$ are conjugate, then it is also consistent with $\zfc+\ch$. This result is folklore. (A sketch of the proof: If $\<\pwmff,\s\>$ and $\<\pwmff,\s^{-1}\>$ are conjugate, then force with countable conditions to collapse the continuum to $\aleph_1$. In the forcing extension, \ch is true, and because this notion of forcing is countably closed, the fact that $\<\pwmff,\s\>$ and $\<\pwmff,\s^{-1}\>$ are conjugate is preserved in the extension.)

\vspace{2mm}

\item[$\circ$] The existence of certain large cardinals implies that if ``$\<\pwmff,\s\>$ and $\<\pwmff,\s^{-1}\>$ are conjugate" is true in some forcing extension, then it is true in every forcing extension satisfying \ch. This follows from Woodin's celebrated $\Sigma^2_1$ absoluteness theorem. %, because ``$\<\pwmff,\s\>$ and $\<\pwmff,\s^{-1}\>$ are conjugate'' is such a statement. 
Farah's account in \cite{Farah} contains a precise statement and proof of Woodin's result.

\vspace{2mm}

\item[$\circ$] The Continuum Hypothesis implies that $\<\pwmff,\s\>$ {embeds} into $\<\pwmff,\s^{-1}\>$, and vice-versa. Like with conjugacies, the axiom \ocama implies there are no embeddings from $\<\pwmff,\s\>$ into $\<\pwmff,\s^{-1}\>$ or vice-versa; thus the existence of such embeddings is independent of \zfc. These results are found in \cite{Brian1}.

\vspace{2mm}

\item[$\circ$] The conjugacy class of $\s$ and the conjugacy class of $\s^{-1}$ cannot be separated by a Borel set in $\mathcal Aut(\pwmff)$, the automorphism group of $\pwmff$ equipped with the topology of pointwise convergence. This is another sense in which $\s$ and $\s^{-1}$ are topologically indistinguishable. This result is proved in \cite{Brian2}.

\end{itemize}

The first four of these five points suggest, and have suggested for some time, that if ``$\<\pwmff,\s\>$ and $\<\pwmff,\s^{-1}\>$ are conjugate" is consistent with \zfc, then it may be outright provable from $\zfc+\ch$. 
Indeed, this has turned out to be the case. 
Further partial results concerning this problem, and a summary of its status as of 15 years ago, can be found in the survey of Geschke \cite{Geschke}. 

The behavior of $\pwmff$ under \ch is usually considered to be well understood: see, e.g., \cite[Section 1]{vanMill} and \cite{Farah2}. Our main result here further reinforces this feeling, filling what was one of the few remaining gaps (as far as we are able to see) in our knowledge of how \ch affects $\pwmff$ and its automorphisms.

\vspace{2mm}

Ultimately, our proof of the main theorem relies on a transfinite back-and-forth argument. This recursive argument needs to deal with $\continuum$ tasks in succession, but the recursive hypotheses cannot survive more than $\w_1$ stages. Thus the argument can only succeed if $\continuum = \w_1$, i.e., if \ch holds. This is in fact the only point in the proof where \ch is needed. 

This back-and-forth argument is laid out in Section~\ref{sec:back&forth}. 
The limit steps of the recursion are easy (just take unions); all the difficulty lies in the successor steps. 
At successor steps, we wish to take a conjugacy between countable substructures of $\< \pwmff,\s \>$ and $\< \pwmff,\s^{-1} \>$ and extend it to a conjugacy between strictly larger substructures. 
Furthermore, we must have at least some control over the growth of these substructures as the recursion progresses, in order to ensure that they cover $\pwmff$ in $\w_1$ steps. 

This problem at successor steps is isolated and distilled into a problem of infinite combinatorics that we call the \emph{lifting problem for $\< \pwmff,\s \>$}. What precisely constitutes an instance of the lifting problem for $\< \pwmff,\s \>$, and a solution to it, are defined in Section~\ref{sec:Incompressibility}. 

We show in Section 3 that if every instance of the lifting problem for $\< \pwmff,\s \>$ were solvable, then there would be a fairly straightforward path to proving our main theorem. However, we show in Section~\ref{sec:NonSaturation} that this is too much to hope for: the most general form of the lifting problem does not always have a solution. 
To put it another way, the kind of task we face at the successor steps of our recursive argument is not always doable. 

In light of this, our recursive argument must be done very carefully, taking care to avoid running into unsolvable instances of the lifting problem at successor steps. 
The key to this is to use elementary substructures, rather than arbitrary countable substructures, wherever it is possible to do so.% throughout the recursion.

Instead of dealing with the lifting problem first, and then proving the main theorem, we present things in the reverse order. 
Section~\ref{sec:back&forth} gives a contingent proof of the main theorem, relying on a key lemma that offers, under certain circumstances, a solution to the lifting problem (see Lemma~\ref{lem:main}). 
The remainder of the paper, except the last section, is devoted to proving this lemma. 
Our proof of the lemma (and even its statement) uses some model-theoretic ideas, but the main part of the proof, the heart of the whole argument, is more combinatorial: a careful analysis of the finite directed graphs that, in a sense to be made precise in Section~\ref{sec:Digraphs}, each capture some finite amount of information about $\< \pwmff,\s \>$ and $\< \pwmff,\s^{-1} \>$. 
This part of the argument is found in Sections \ref{sec:Diagonalization}$\,$-\ref{sec:Exhale}. 

In the final section of the paper, Section~\ref{sec:Corollaries}, we obtain several further applications of main theorem and the main lemma. Each of these answers a question that does not seem to appear in the literature, but has been part of the folklore surrounding dynamics in $\w^*$ for some time. 
For example, we show, as a corollary to the main theorem, that the structures $\< \pwmff,\sigma \>$ and $\< \pwmff,\sigma^{-1} \>$ are elementarily equivalent. Unlike the main theorem, this corollary does not assume \ch. 
We also show, as a corollary to the main lemma, that \ch implies any conjugacy between two countable elementary substructures of $\< \pwmff,\s \>$ can be extended to a conjugacy from $\< \pwmff,\s \>$ to itself. 
This implies for example that, assuming \ch, the shift map is conjugate to a nontrivial automorphism of $\pwmff$, and that there are nontrivial automorphisms commuting with the shift map, i.e., automorphisms $\phi$ such that $\phi \circ \s = \s \circ \phi$, but $\phi \neq \s^n$ for any $n \in \Z$.

\vspace{3mm}

\noindent \textbf{Acknowledgments.} 
Since the time I started working on van Douwen's problem, a decade ago as a postdoc at Tulane University, I have had many valuable and enlightening conversations about the shift map with Hector Barriga-Acosta, Dana Barto\v{s}ov\'a, Alan Dow, Ilijas Farah, K. P. Hart, Karl Hofmann, Paul Larson, Paul McKenny, Mike Mislove, Brian Raines, Paul Szeptycki, and Andy Zucker. This paper would not have been possible without their insights. 
I am especially grateful to Alan Dow, with whom I discussed several wrong ideas for proving this theorem over the years, and who patiently listened to one (finally) right idea over the course of several months in our seminar; and to Ilijas Farah, who provided invaluable feedback on an earlier draft of this paper.

%%%%%%%%%%%%
\section{A blueprint}\label{sec:Prelims}
%%%%%%%%%%%%

This section examines the analogue of the lifting problem for $\< \pwmff,\s \>$, a problem of algebraic dynamics, in the simpler category of Boolean algebras. 
This simplified version of the lifting problem was completely solved about 60 years ago in the work of Parovi\v{c}enko \cite{Parovicenko}, J\'onsson and Olin \cite{JO}, and Keisler \cite{Keisler}. 
We review the main points of their solution here. 
Our reason for doing this is that the proof of our main theorem, in its broad strokes, is built from the blueprint laid out in this section. 
However, obstacles are present in the dynamical category that do not appear in the simpler algebraic setting, and these obstacles force us to deviate from this pattern. 
Before understanding our approach to the lifting problem for $\< \pwmff,\s \>$, it is useful to understand the pattern we are building from, and to see both where and why it is necessary to abandon it.

\begin{definition}\label{def:AlgebraicLP}
Let $\CC$ be a Boolean algebra. An \emph{instance of the lifting problem} for $\CC$ is a $4$-tuple
$(\AA,\BB,\iota,\eta),$
where $\AA$ and $\BB$ are countable Boolean algebras, $\iota$ is an embedding $\AA \to \BB$, and $\eta$ is an embedding $\AA \to \CC$. 
A \emph{solution} to this instance of the lifting problem is an embedding $\bar \eta: \BB \to \CC$ such that $\bar \eta \circ \iota = \eta$.

\vspace{-2mm}

\begin{center}
\begin{tikzpicture}[scale=.97]

\node at (0,0) {$\AA$};
\node at (0,2) {$\BB$};
\node at (2.85,2) {$\CC$};

\draw[dashed,->] (.4,2) -- (2.4,2);
\draw[->] (.3,.2) -- (2.5,1.65);
\draw[<-] (0,1.65) -- (0,.35);

\node at (-.15,1) {\footnotesize $\iota$};
\node at (1.62,.75) {\footnotesize $\eta$};
\node at (1.4,2.24) {\footnotesize $\bar \eta$};

\end{tikzpicture}
\end{center}

\noindent In this situation we say that $\bar \eta$ is a \emph{lifting} of $\eta$ from $\AA$ to $\BB$. 
\hfill
\Coffeecup
\end{definition}

Equivalently, one may think of $\eta$ as an isomorphism from $\AA$ onto some subalgebra $\CC_0 \sub \CC$, and a lifting $\bar \eta$ as an isomorphism from $\BB$ onto a larger subalgebra $\CC_1 \supseteq \CC_0$ such that $\bar \eta \circ \iota = \eta$.

\begin{center}
\begin{tikzpicture}[scale=.97]

\node at (0,0) {$\AA$};
\node at (0,2) {$\BB$};
\node at (3,2) {$\CC_1$};
\node at (3,0) {$\CC_0$};

\draw[dashed,->] (.4,2) -- (2.6,2);
\draw[->] (.4,0) -- (2.6,0);
\draw[<-] (0,1.62) -- (0,.38);

\node at (-.15,1) {\footnotesize $\iota$};
\node at (1.5,.23) {\footnotesize $\eta$};
\node at (1.5,2.23) {\footnotesize $\bar \eta$};

\node at (2.95,1) {\footnotesize \rotatebox{90}{$\sub$}};

\end{tikzpicture}
\end{center}

\begin{theorem}[Parovi\v{c}enko, 1963 \cite{Parovicenko}]\label{thm:Parovicenko}
Every instance of the lifting problem for $\pwmff$ has a solution. 
\end{theorem}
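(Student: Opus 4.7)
The plan is to prove the theorem by a standard one-step extension argument along $\w$. Identifying $\AA$ with its image $\iota[\AA] \sub \BB$ so that $\iota$ becomes an inclusion, I enumerate $\BB = \set{b_n}{n \in \w}$ and build an increasing chain of countable subalgebras $\AA = \AA_0 \sub \AA_1 \sub \cdots \sub \BB$ with $b_n \in \AA_{n+1}$, together with coherent embeddings $\eta_n \colon \AA_n \to \pwmff$ extending $\eta$. Setting $\bar \eta = \bigcup_n \eta_n$ gives the desired lifting, since a union of a coherent chain of embeddings into $\pwmff$ is itself an embedding. All the content lies in the one-step extension at each successor stage.

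The one-step extension I need is the following: given a countable subalgebra $\AA' \sub \BB$, an embedding $\phi \colon \AA' \to \pwmff$, and an element $b \in \BB \setminus \AA'$, extend $\phi$ to an embedding of the subalgebra $\AA'[b]$ of $\BB$ into $\pwmff$. Every element of $\AA'[b]$ has the form $(a_1 \wedge b) \vee (a_2 \wedge \neg b)$ for some $a_1, a_2 \in \AA'$, so the only freedom lies in picking the image $c \in \pwmff$ of $b$. The ``type'' of $b$ over $\AA'$ is captured by the two ideals
\[
L = \set{a \in \AA'}{a \leq_\BB b}, \qquad U = \set{a \in \AA'}{a \wedge_\BB b = 0}
\]
of $\AA'$. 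A routine verification shows that sending $b \mapsto c$ extends $\phi$ to an embedding of $\AA'[b]$ precisely when $c$ satisfies $\phi(a) \leq c$ for $a \in L$, $\phi(a) \wedge c = 0$ for $a \in U$, together with the non-triviality conditions $\phi(a) \wedge c \neq 0$ for $a \notin U$ and $\phi(a) \wedge \neg c \neq 0$ for $a \notin L$.

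To produce such a $c$, I fix a set representative $A^a \sub \w$ of $\phi(a)$ for each $a \in \AA'$ and work at the level of $\P(\w)$. The constraints now read: $A^a \sub^* c$ for $a \in L$; $A^a \cap c$ finite for $a \in U$; and both $A^a \cap c$ and $A^a \setminus c$ infinite for $a \in F := \AA' \setminus (L \cup U)$. These constraints are pairwise consistent: for $a \in L$ and $a' \in U$ one has $a \wedge a' \leq_\BB b \wedge \neg b = 0$, so $A^a \cap A^{a'}$ is finite; and for $a \in F$, both $a \wedge b$ and $a \wedge \neg b$ are nonzero in $\BB$, which prevents $A^a$ from being absorbed by any finite Boolean combination of the $A^{a'}$ ($a' \in L \cup U$) that the $L$- and $U$-constraints can pin down. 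Enumerating $L$, $U$, $F$ as $\{a^L_n\}$, $\{a^U_n\}$, $\{a^F_n\}$, I build $c$ stage by stage: at stage $n$ I commit finitely many elements of $A^{a^L_n}$ to $c$, exclude finitely many elements of $A^{a^U_n}$ from $c$, and pick two fresh integers in $A^{a^F_n}$, placing one in $c$ and leaving one outside.

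The main obstacle is verifying that this diagonal construction always succeeds---that at each stage the still-uncommitted portions of $A^{a^L_n}$, $A^{a^U_n}$, and $A^{a^F_n}$ remain infinite. This ultimately reduces to the two classical ZFC properties that make $\pwmff$ a Parovi\v{c}enko algebra: atomlessness (which ensures each $A^a$ with $a \in F$ has ``room'' left after the $L$- and $U$-constraints are imposed) and countable interpolation (which ensures the upper and lower bounds on $c$ are simultaneously realizable). Both are elementary combinatorial facts about $\pwmff$. Iterating the one-step extension $\w$-many times and taking the union of the $\eta_n$ then yields $\bar \eta$, completing the proof.
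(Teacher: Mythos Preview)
The paper does not give its own proof of this theorem; it simply cites Parovi\v{c}enko and moves on. Your overall strategy---reduce to a one-element extension and realize the type of $b$ over $\AA'$ by a diagonal construction in $\P(\w)$---is the standard one, and your analysis of what the image $c$ of $b$ must satisfy is correct.

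There is, however, a gap in your construction of $c$. For each $a \in L$ you need $A^a \sub^* c$, i.e., all but finitely many elements of $A^a$ must lie in $c$. Your stage-$n$ step ``commits finitely many elements of $A^{a^L_n}$ to $c$'' and then never revisits $a^L_n$; this leaves infinitely many elements of $A^{a^L_n}$ uncommitted (as you yourself observe in the last paragraph), and nothing forces them into $c$. The same problem arises symmetrically for $U$. The fix is to make an \emph{infinite} commitment at stage $n$: choose a threshold $m_n$ large enough that $\bigcup_{k \leq n} A^{a^L_k}$ and $\bigcup_{k \leq n} A^{a^U_k}$ are disjoint above $m_n$ (possible since each $A^{a^L_i} \cap A^{a^U_j}$ is finite), and declare that every element of $\bigcup_{k \leq n} A^{a^L_k}$ above $m_n$ is in $c$ and every element of $\bigcup_{k \leq n} A^{a^U_k}$ above $m_n$ is out. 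The $F$-conditions are then handled with finite witnesses exactly as you describe. Your argument that fresh $F$-witnesses can always be found is essentially right, since $a \notin L$ and $a \notin U$ give $a \not\leq \bigvee_{k \leq n} a^L_k$ and $a \not\leq \bigvee_{k \leq n} a^U_k$; note that this uses only that $\phi$ is an embedding and that $L,U$ are ideals, not atomlessness of $\pwmff$.
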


Let us note that $\pwmff$ also has the (closely related) property of being $\aleph_1$-saturated. (We will say more on saturation in Section~\ref{sec:Corollaries} below.) 
This was proved by Keisler in \cite{Keisler} and independently, two years later, by J\'{o}nsson and Olin in \cite{JO}. 
Let us consider some consequences of Theorem~\ref{thm:Parovicenko}. %(both well known consequences of $\aleph_1$-saturation). 

\begin{theorem}\label{thm:AlgebraicUniv}
Suppose $\CC$ is a Boolean algebra such that every instance of the lifting problem for $\CC$ has a solution. 
Then every Boolean algebra of size $\leq\!\aleph_1$ embeds in $\CC$. 
\end{theorem}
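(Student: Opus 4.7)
The plan is to prove Theorem~\ref{thm:AlgebraicUniv} by a straightforward transfinite recursion of length $\w_1$, building an embedding $\BB \hookrightarrow \CC$ as the union of a continuous chain of embeddings of countable subalgebras, using the lifting hypothesis at every successor stage.

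Fix a Boolean algebra $\BB$ with $\card{\BB} \leq \aleph_1$, and enumerate its elements as $\set{b_\a}{\a < \w_1}$ (with repetitions if necessary). For each $\a < \w_1$, let $\BB_\a$ be the subalgebra of $\BB$ generated by $\set{b_\b}{\b < \a}$. Then $\BB_0 = \{0,1\}$ is the trivial subalgebra, each $\BB_\a$ is countable (as $\a$ is a countable ordinal), the sequence is increasing, it is continuous at limit stages (since the union of a chain of subalgebras is a subalgebra containing the same generators), and $\BB = \bigcup_{\a < \w_1} \BB_\a$.

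The plan is to recursively construct, for every $\a < \w_1$, an embedding $\eta_\a : \BB_\a \to \CC$ such that $\eta_\b \rest \BB_\a = \eta_\a$ whenever $\a \leq \b$. Take $\eta_0$ to be the unique embedding of $\{0,1\}$ into $\CC$. At a limit stage $\lambda$, set $\eta_\lambda = \bigcup_{\a < \lambda} \eta_\a$; since the $\eta_\a$ agree on overlaps, this is a well-defined function $\BB_\lambda \to \CC$, and it is immediate from the directedness of the chain that it preserves all finite Boolean operations and is injective (any two elements of $\BB_\lambda$ lie in some common $\BB_\a$, where injectivity and homomorphy are already known), so $\eta_\lambda$ is an embedding. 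At a successor stage $\a+1$, apply the hypothesis to the instance of the lifting problem given by $(\BB_\a, \BB_{\a+1}, \iota, \eta_\a)$, where $\iota : \BB_\a \to \BB_{\a+1}$ is the inclusion map; this yields an embedding $\eta_{\a+1}: \BB_{\a+1} \to \CC$ with $\eta_{\a+1} \circ \iota = \eta_\a$, that is, $\eta_{\a+1}$ extends $\eta_\a$.

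Once the recursion is complete, define $\eta : \BB \to \CC$ by $\eta = \bigcup_{\a < \w_1} \eta_\a$. As in the limit case, this is an embedding because any finitely many elements of $\BB$ all lie in some common $\BB_\a$, on which $\eta$ restricts to the embedding $\eta_\a$. Hence $\eta$ embeds $\BB$ into $\CC$, as required. There is no real obstacle in this argument: all the work is concentrated in the lifting hypothesis applied at each successor step, and the limit and bookkeeping steps are entirely routine. The only thing worth flagging is that the chain $(\BB_\a)_{\a < \w_1}$ must be countable and continuous, which is why the argument is restricted to algebras of size $\leq \aleph_1$; this is precisely the cardinality threshold beyond which an $\w_1$-long recursion can no longer be expected to exhaust the target.
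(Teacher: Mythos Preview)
Your proof is correct and follows essentially the same approach as the paper: build a continuous $\omega_1$-chain of countable subalgebras exhausting the algebra, and extend the embedding one step at a time using the lifting hypothesis at successor stages, taking unions at limits. The only difference is cosmetic---you explicitly construct the chain from an enumeration, whereas the paper simply asserts such a chain exists---and you supply a bit more detail at the limit stages.
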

\begin{proof}
Let $\AA$ be a Boolean algebra of size $\leq\!\aleph_1$, and write $\AA$ as the union of a length-$\w_1$ sequence $\seq{\AA_\a}{\a < \w_1}$ of subalgebras, such that $\AA_0 = \{\mathbf{0},\mathbf{1}\}$, and $\AA_\a \sub \AA_\b$ whenever $\a \leq \b$, and $\AA_\a = \bigcup_{\xi < \a}\AA_\xi$ for limit $\a < \w_1$. 

Using transfinite recursion, we now build a sequence $\seq{\eta_\a}{\a < \w_1}$ of maps such that each $\eta_\a$ is an embedding $\AA_\a \to \CC$, and the sequence is ``coherent'' in the sense that $\eta_\a = \eta_\b \rest \AA_\a$ whenever $\a \leq \b$. The base step is trivial: there is an embedding $\eta_0: \AA_0 \to \CC$ because $\AA_0 = \{\mathbf{0},\mathbf{1}\}$. At successor steps, observe that $(\AA_\a,\AA_{\a+1},\iota,\eta_\a)$ is an instance of the lifting problem for $\CC$, taking $\iota$ to be the inclusion map $\AA_\a \xhookrightarrow{} \AA_{\a+1}$. 
By assumption, there is an embedding $\eta_{\a+1}: \AA_{\a+1} \to \CC$ that extends $\eta_\a$. 
At limit stages there is nothing to do: because $\AA_\a = \bigcup_{\xi < \a}\AA_\xi$, we can (and must, to preserve coherence) define $\eta_\a = \bigcup_{\xi < \a} \eta_\xi$, which is well-defined because the $\eta_\xi$'s are coherent. 
In the end, $\eta = \bigcup_{\a < \w_1}\eta_\a$ is an embedding $\AA \to \CC$. 
\end{proof}

In particular, combining this with Parovi\v{c}enko's Theorem~\ref{thm:Parovicenko}, we see that every Boolean algebra of size $\leq\!\aleph_1$ embeds in $\pwmff$.

\begin{theorem}\label{thm:AlgebraicB&F}
Suppose $\CC$ and $\DD$ are Boolean algebras of size $\aleph_1$, and every instance of the lifting problem for $\CC$ or for $\DD$ has a solution. Then $\CC \iso \DD$.
\end{theorem}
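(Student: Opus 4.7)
The plan is to mimic the one-directional argument of Theorem~\ref{thm:AlgebraicUniv}, but now running it simultaneously on both sides, which is the classical back-and-forth pattern. Enumerate $\CC = \set{c_\a}{\a<\w_1}$ and $\DD = \set{d_\a}{\a<\w_1}$. I will recursively build a coherent chain $\seq{\phi_\a}{\a<\w_1}$ of isomorphisms between countable subalgebras $\phi_\a : \CC_\a \to \DD_\a$, where $\CC_\a \sub \CC_\b$ and $\DD_\a \sub \DD_\b$ for $\a \le \b$, arranged so that $c_\a \in \CC_{\a+1}$ and $d_\a \in \DD_{\a+1}$. Taking $\phi = \bigcup_{\a<\w_1}\phi_\a$ will then give the desired isomorphism $\CC \to \DD$, because every $c_\a$ lies in the domain and every $d_\a$ lies in the range.

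The base step $\CC_0 = \{\mathbf{0},\mathbf{1}\} = \DD_0$ and limit steps (take unions of everything, which preserves being an isomorphism because Boolean operations only involve finitely many elements at a time) are routine. All the content lies at the successor step, and this is where I use the lifting hypothesis on $\CC$ and $\DD$. The successor step $\a \mapsto \a+1$ is done in two half-steps:

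\emph{Forth half-step.} Form the countable subalgebra $\CC_\a^* = \langle \CC_\a \cup \{c_\a\}\rangle \sub \CC$. The tuple $(\CC_\a, \CC_\a^*, \iota, \phi_\a)$, with $\iota$ the inclusion $\CC_\a \hookrightarrow \CC_\a^*$ and $\phi_\a$ viewed as an embedding $\CC_\a \to \DD$, is an instance of the lifting problem for $\DD$. By hypothesis it has a solution $\bar\eta : \CC_\a^* \to \DD$. Let $\DD_\a^* = \bar\eta(\CC_\a^*)$, a countable subalgebra of $\DD$ containing $\DD_\a$. \emph{Back half-step.} Form $\DD_{\a+1} = \langle \DD_\a^* \cup \{d_\a\}\rangle \sub \DD$. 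Then $(\DD_\a^*, \DD_{\a+1}, \iota', \bar\eta^{-1})$, with $\iota'$ the inclusion and $\bar\eta^{-1}$ viewed as an embedding $\DD_\a^* \to \CC$, is an instance of the lifting problem for $\CC$. A solution $\psi : \DD_{\a+1} \to \CC$ exists by hypothesis. Set $\CC_{\a+1} = \psi(\DD_{\a+1})$ and $\phi_{\a+1} = \psi^{-1}$.

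I need to check coherence, namely $\phi_{\a+1} \rest \CC_\a = \phi_\a$ and that $c_\a \in \CC_{\a+1}$. For the first, on $x \in \CC_\a$ we have $\psi(\phi_\a(x)) = \bar\eta^{-1}(\phi_\a(x)) = \bar\eta^{-1}(\bar\eta(x)) = x$ (using $\bar\eta \rest \CC_\a = \phi_\a$ and $\psi \rest \DD_\a^* = \bar\eta^{-1}$), so $\phi_{\a+1}(x) = \psi^{-1}(x) = \phi_\a(x)$. For the second, $\CC_{\a+1} = \psi(\DD_{\a+1}) \supseteq \psi(\DD_\a^*) = \bar\eta^{-1}(\DD_\a^*) = \CC_\a^* \ni c_\a$.

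The only potential obstacle is the purely bookkeeping one of arranging that both enumerations get exhausted without breaking coherence; splitting each successor into a forth half-step (for $\CC$) and a back half-step (for $\DD$) takes care of this. Because no instance of the lifting problem on either side can fail, no additional restraint on the enumeration or the subalgebras is needed --- this is in sharp contrast to the situation we will face for $\<\pwmff,\s\>$, where the analogous lifting problem can fail and the whole recursion must therefore be engineered to avoid its failing instances.
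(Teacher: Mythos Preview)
Your proof is correct and follows essentially the same back-and-forth argument as the paper's: enumerate both algebras, start from the trivial subalgebras, take unions at limits, and at successor stages solve the lifting problem twice (once in each direction) to absorb the next element from each enumeration. The only cosmetic difference is that the paper does the $d_\a$ half-step before the $c_\a$ half-step, while you do them in the opposite order; your explicit verification of coherence and of $c_\a \in \CC_{\a+1}$ is a nice addition.
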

\begin{proof}
Let $\seq{c_\xi}{\xi < \w_1}$ be an enumeration of $\CC$, and let $\seq{d_\xi}{ \xi < \w_1 }$ be an enumeration of $\DD$. 
Using transfinite recursion, we now build a sequence $\seq{\phi_\a}{\a < \w_1}$ of maps such that each $\phi_\a$ is an isomorphism of its domain (which we denote $\CC_\a$) to its codomain (which we denote $\DD_\a$); furthermore, we shall have $\CC_\a \sub \CC_\b$ and $\DD_\a \sub \DD_\b$ whenever $\a \leq \b$, $\CC_\a = \bigcup_{\xi < \a}\CC_\xi$ and $\DD_\a = \bigcup_{\xi < \a}\DD_\xi$ for limit $\a$, and the $\phi_\a$'s will be coherent, in the sense that $\phi_\a = \phi_\b \rest \CC_\a$ whenever $\a \leq \b$. 
Given $X \sub \CC$, let $\<\hspace{-1mm}\< X \>\hspace{-1mm}\>$ 
denote the subalgebra of $\CC$ generated by $X$, and similarly for $\DD$.

For the base case, let $\CC_0 = \{ \mathbf{0}_\CC,\mathbf{1}_\CC \}$ and $\DD_0 = \{ \mathbf{0}_\DD,\mathbf{1}_\DD \}$, and let $\phi_0$ denote the unique isomorphism $\CC_0 \to \DD_0$. 
At limit stages there is nothing to do: take $\CC_\a = \bigcup_{\xi < \a}\CC_\xi$, $\DD_\a = \bigcup_{\xi < \a}\DD_\xi$, and $\phi_\a = \bigcup_{\xi < \a}\phi_\xi$, which is well-defined because the $\phi_\xi$'s are coherent. 

At successor steps we obtain $\phi_{\a+1}$ from $\phi_\a$ by solving the lifting problem twice, once in the forwards direction and once in the backwards direction. 
First, let $\DD_{\a+1}^0 = \<\hspace{-1mm}\< \DD_\a \cup \{d_\a\} \>\hspace{-1mm}\>$ and observe that $(\DD_\a,\DD^0_{\a+1},\iota,\phi_\a^{-1})$ is an instance of the lifting problem for $\CC$, where $\iota$ denotes the inclusion $\DD_\a \xhookrightarrow{} \DD_{\a+1}^0$. 
Consequently, there is an isomorphism $\phi^0_{\a+1}$ from $\DD_{\a+1}^0$ to some subalgebra $\CC^0_{\a+1}$ of $\CC$ with $\CC^0_{\a+1} \supseteq \CC_\a$. 
Second, let $\CC_{\a+1} = \<\hspace{-1mm}\< \CC_\a^0 \cup \{c_\a\} \>\hspace{-1mm}\>$ and observe that $(\CC_{\a+1}^0,\CC_{\a+1},\iota,(\phi_\a^0)^{-1})$ is an instance of the lifting problem for $\DD$, where $\iota$ denotes the inclusion $\CC_{\a+1}^0 \xhookrightarrow{} \CC_{\a+1}$. 
Thus there is an isomorphism $\phi_{\a+1}$ from $\CC_{\a+1}$ to some subalgebra $\DD_{\a+1}$ of $\DD$ with $\DD_{\a+1} \supseteq \DD_{\a+1}^0$. 

\begin{center}
\begin{tikzpicture}[scale=.8]

\node at (-.01,0) {\small $\CC_\a$};
\node at (3.99,0) {\small $\DD_\a$};
\node at (-.21,2) {\small $\CC_{\a+1}^0$};
\node at (5.7,2) {\small $\DD_{\a+1}^0 = \<\hspace{-1mm}\< \DD_\a \cup \{d_\a\} \>\hspace{-1mm}\>$};
\node at (-.21,3.5) {\small $\CC_{\a+1}^0$};
\node at (4.2,3.5) {\small $\DD_{\a+1}^0$};
\node at (-.21,5.5) {\small $\CC_{\a+1}$};
\node at (4.21,5.5) {\small $\DD_{\a+1}$};
\node at (6.28,5.5) {\small $= \mathrm{Image}(\phi_{\a+1})$};
\node at (-2.35,2) {\small $\mathrm{Image}(\phi_{\a+1}^0) =$};
\node at (-2.53,5.5) {\small $\<\hspace{-1mm}\< \CC_{\a+1}^0 \cup \{c_\a\} \>\hspace{-1mm}\> =$};

\draw[<-] (.6,0) -- (3.4,0);
\draw[<-] (.6,2) -- (3.4,2);
\draw[->] (.6,3.5) -- (3.4,3.5);
\draw[->] (.6,5.5) -- (3.4,5.5);

\node at (3.95,1) {\footnotesize \rotatebox{90}{$\sub$}};
\node at (-.3,2.75) {\footnotesize \rotatebox{90}{$=$}};
\node at (4,2.75) {\footnotesize \rotatebox{90}{$=$}};
\node at (-.3,4.5) {\footnotesize \rotatebox{90}{$\sub$}};

\node at (2,.3) {\footnotesize $\phi_\a^{-1}$};
\node at (2,2.3) {\footnotesize $\phi_{\a+1}^0$};
\node at (2,3.8) {\footnotesize $(\phi_{\a+1}^0)^{-1}$};
\node at (2,5.8) {\footnotesize $\phi_{\a+1}$};

\end{tikzpicture}
\end{center}

This completes the recursion. Because the $\phi_\a$ are coherent partial isomorphisms, $\phi = \bigcup_{\a < \w_1}\phi_\a$ is an isomorphism from its domain $\bigcup_{\a < \w_1} \CC_\a$ to its image $\bigcup_{\a < \w_1} \DD_\a$. 
Furthermore, $\bigcup_{\a < \w_1} \CC_\a = \CC$, because we ensured at stage $\a+1$ of the recursion that $c_\a \in \CC_{\a+1}$, and $\bigcup_{\a < \w_1} \DD_\a = \DD$, because we ensured at stage $\a+1$ of the recursion that $d_\a \in \DD_{\a+1}$. 
\end{proof}

\begin{corollary}\label{cor:Par}
Assuming \ch, $\pwmff$ is (up to isomorphism) the unique Boolean algebra of size $\aleph_1$ for which every instance of the lifting problem has a solution.
\end{corollary}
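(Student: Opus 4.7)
The plan is to assemble the corollary from the three results just presented—Parovičenko's theorem, the embedding theorem \ref{thm:AlgebraicUniv}, and the back-and-forth theorem \ref{thm:AlgebraicB&F}—plus the cardinal arithmetic that \ch supplies. Essentially nothing new is required; the whole statement is just a matter of noting that $\pwmff$ is one witness and that the back-and-forth argument forces any other witness of the same size to be isomorphic to it.

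First I would verify that $\pwmff$ itself is a Boolean algebra of size $\aleph_1$ for which every instance of the lifting problem has a solution. The size claim is where \ch enters: under \ch one has $|\pwmff| = 2^{\aleph_0} = \aleph_1$. (Note that the upper bound $|\pwmff| \leq 2^{\aleph_0}$ is trivial, and the lower bound follows, for instance, from the existence of $2^{\aleph_0}$ almost-disjoint subsets of $\w$, which yield $2^{\aleph_0}$ distinct elements of $\pwmff$.) The lifting property for $\pwmff$ is exactly Theorem \ref{thm:Parovicenko}. So $\pwmff$ is a witness to the property in the statement of the corollary.

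For uniqueness, suppose $\CC$ is any Boolean algebra of size $\aleph_1$ with the property that every instance of the lifting problem for $\CC$ has a solution. Then both $\CC$ and $\pwmff$ are Boolean algebras of size $\aleph_1$, and both satisfy the hypotheses of Theorem \ref{thm:AlgebraicB&F}. Applying that theorem with $\DD = \pwmff$ gives $\CC \iso \pwmff$, as required.

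I do not anticipate any real obstacle: the only subtle point is the cardinality of $\pwmff$ under \ch, and even that is standard. Theorem \ref{thm:AlgebraicUniv} is not strictly needed for this corollary, though it could be invoked as a remark to explain why the restriction to size $\aleph_1$ in the statement is natural—any Boolean algebra $\CC$ with the lifting property must have cardinality at least $2^{\aleph_0}$ (because every Boolean algebra of size $\leq \aleph_1$ embeds in it, and in particular $\P(\w)$ would need to embed if one works a bit harder), so under \ch the size $\aleph_1$ is both necessary and sufficient.
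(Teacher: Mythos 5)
Your proof is correct and takes essentially the same route as the paper, which simply cites Theorems~\ref{thm:Parovicenko} and \ref{thm:AlgebraicB&F}; your version just spells out the cardinality check $|\pwmff| = \aleph_1$ under \ch and the application of the back-and-forth theorem explicitly.
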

\begin{proof}
This follows immediately from Theorems \ref{thm:Parovicenko} and \ref{thm:AlgebraicB&F}.
\end{proof}

Sometimes the name ``Parovi\v{c}enko's theorem'' refers to Corollary~\ref{cor:Par}, or sometimes to the following statement in the topological category: 

\begin{itemize}
\item[$\ $] Assuming \ch, every crowded zero-dimensional compact Hausdorff $F$-space of weight $\mathfrak{c}$ in which nonempty $G_\dlt$'s have nonempty interior is homeomorphic to $\w^*$.
\end{itemize} 
\noindent This is, more or less, the Stone dual of Corollary~\ref{cor:Par}. 
Despite its topological flavor, this latter form of Parovi\v{c}enko's theorem is most naturally proved in the algebraic category (see, e.g., \cite[Section 1.1]{vanMill}). 
A direct topological proof, including a topological analogue of Theorem~\ref{thm:Parovicenko}, can be found in \cite{BS}.

%%%%%%%%%%%%
\section{Incompressibility}\label{sec:Incompressibility}
%%%%%%%%%%%%

The results in the previous section suggest a natural approach to proving our main theorem, namely, by proceeding in the dynamical category via analogy with the Boolean-algebraic category: 
\begin{enumerate}
\item First, show that every instance of the lifting problem for $\< \pwmff,\s \>$ and for $\< \pwmff,\s^{-1} \>$ has a solution (analogous to Theorem~\ref{thm:Parovicenko}), once this lifting problem has been suitably defined.
\item Consequently, by an argument more or less identical to our proofs of Theorem~\ref{thm:AlgebraicB&F} and Corollary~\ref{cor:Par}, \ch implies these two structures are isomorphic.
\end{enumerate}

\noindent In this section, we turn our attention to the phrase ``suitably defined'' in $(1)$. To answer the question of how the lifting problem for $\< \pwmff,\s \>$ should be defined, we need to look at the dynamical notion of incompressibility. 

\begin{definition}
A dynamical system $\< \AA,\a \>$ is \emph{compressible} if there is some $x \in \AA \setminus \{ \mathbf{0},\mathbf{1} \}$ such that $\a(x) \leq x$.
Otherwise $\< \AA,\a \>$ is \emph{incompressible}.
\hfill{\Coffeecup}
\end{definition}

\begin{theorem}\label{thm:It'sIncompressible}
$\< \pwmff,\s \>$ and $\< \pwmff,\s^{-1} \>$ are incompressible.
\end{theorem}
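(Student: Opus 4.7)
The plan is a direct, elementary computation: show that if $[A] \in \pwmff$ satisfies $\s([A]) \leq [A]$, then $[A] \in \{\mathbf{0},\mathbf{1}\}$, and symmetrically for $\s^{-1}$. Both implications come down to the same finitary observation about a set $A \sub \w$ whose shift is almost contained in itself.

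For the first direction, suppose $\s([A]) \leq [A]$; unpacking the definitions, this means $A+1 \sub^* A$, so the set $F = (A+1) \setminus A$ is finite. Let $N = \max(F)+1$. For any $m \geq N$, if $m \in A$ then $m+1 \in A+1$, and $m+1 > N > \max F$ forces $m+1 \notin F$, hence $m+1 \in A$. A trivial induction now yields the dichotomy: either some $m \geq N$ lies in $A$, in which case $[m,\w) \sub A$ and $[A] = \mathbf{1}$, or else $A \sub [0,N)$ is finite and $[A] = \mathbf{0}$. Either way, $[A] \notin \pwmff \setminus \{\mathbf{0},\mathbf{1}\}$, proving that $\<\pwmff,\s\>$ is incompressible.

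The case of $\s^{-1}$ is the mirror image. If $\s^{-1}([A]) \leq [A]$ then $A - 1 \sub^* A$, so $F' = (A-1) \setminus A$ is finite. Set $N = \max(F')+1$. For $m \geq N$, the implication $m+1 \in A \Rightarrow m \in A$ holds (since $m \notin F'$), and taking contrapositives gives: if $m \geq N$ and $m \notin A$, then $m+1 \notin A$ either (otherwise $m$ would be forced into $A$). By induction, either $A \cap [N,\w) = \0$, so $[A] = \mathbf{0}$, or $[N,\w) \sub A$, so $[A] = \mathbf{1}$.

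There is no real obstacle here: the entire argument is an exercise in choosing $N$ beyond the finite error set and iterating the single-step implication. The reason the dichotomy is so clean is that the shift map moves by exactly one unit, so almost-invariance of $A$ under translation forces $A$ to be eventually periodic of period $1$, i.e.\ eventually all of $\w$ or eventually empty.
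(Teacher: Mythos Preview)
Your proof is correct and is essentially the contrapositive of the paper's argument: the paper shows directly that if $A$ is neither finite nor cofinite then $\{n \in A : n+1 \notin A\}$ is infinite (so $\s([A]) \not\leq [A]$), while you show that if $(A+1)\setminus A$ is finite then $A$ must be finite or cofinite. For $\s^{-1}$ the paper also notes the one-line shortcut that $\langle \AA,\a\rangle$ is compressible iff $\langle \AA,\a^{-1}\rangle$ is (via $x \mapsto \mathbf{1}-x$), which saves repeating the computation.
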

\begin{proof}
This can be found in \cite[Theorem 5.3]{Brian0}; but the proof is short and not difficult, so we reproduce it here.

Let $a \in \pwmff$ with $a \neq \mathbf{0},\mathbf{1}$. In other words, let $a$ be the equivalence class $[A]$ of some $A \sub \w$ that is neither finite (since then $[A] = [\0] = \mathbf{0}$) nor co-finite (since then $[A] = [\w] = \mathbf{1}$). Because $A$ is neither finite nor co-finite, the set $B = \set{n \in A}{n+1 \notin A}$ is infinite. But then
$$\s([A]) - [A] \,=\, [A+1] - [A] \,=\, [(A+1) \setminus A] \,=\, [B+1] \,\neq\, \mathbf{0},$$
which means that $\s(a) = \s([A]) \not\leq [A] = a$. 

A similar proof works for $\< \pwmff,\s^{-1} \>$. Alternatively, one may observe that, in general, $\< \AA,\a \>$ is compressible if and only if $\< \AA,\a^{-1} \>$ is, because if $\a(x) \leq x$ then $\a^{-1}(\mathbf{1}-x) \leq \mathbf{1}-x$. 
\end{proof}

\begin{theorem}\label{thm:Incompressibility}
If $\<\AA,\a\>$ embeds in an incompressible dynamical system, it is incompressible. 
\end{theorem}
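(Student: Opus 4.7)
The proof will proceed by contrapositive: I will show that if $\langle \AA, \alpha \rangle$ is compressible, then any dynamical system into which it embeds is also compressible.

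The plan is to start with an embedding $\eta: \langle \AA, \alpha \rangle \to \langle \BB, \beta \rangle$ and a compression witness $x \in \AA \setminus \{\mathbf{0}, \mathbf{1}\}$ with $\alpha(x) \leq x$, and verify that $\eta(x)$ is a compression witness in $\BB$. There are three things to check, each corresponding to one of the properties of an embedding:
\begin{enumerate}
\item $\eta(x) \neq \mathbf{0}_\BB$ and $\eta(x) \neq \mathbf{1}_\BB$. This follows because $\eta$ is injective and preserves $\mathbf{0}$ and $\mathbf{1}$, so $\eta(x) \in \{\mathbf{0}_\BB, \mathbf{1}_\BB\}$ would force $x \in \{\mathbf{0}, \mathbf{1}\}$.
\item $\eta$ is order-preserving (being a Boolean-algebraic embedding), so from $\alpha(x) \leq x$ we obtain $\eta(\alpha(x)) \leq \eta(x)$.
\item Using the commutation condition $\eta \circ \alpha = \beta \circ \eta$ from the definition of embedding of dynamical systems, we rewrite $\eta(\alpha(x)) = \beta(\eta(x))$, giving $\beta(\eta(x)) \leq \eta(x)$.
\end{enumerate}
Combining these, $\eta(x)$ witnesses that $\langle \BB, \beta \rangle$ is compressible, contradicting the assumption that $\langle \BB, \beta \rangle$ is incompressible. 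Hence $\langle \AA, \alpha \rangle$ must itself be incompressible.

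There is no real obstacle here: the statement is essentially a formal consequence of the definition of embedding of dynamical systems, which by design preserves every ingredient in the definition of compressibility (the Boolean order, the top and bottom elements, and the automorphism). The argument is short enough that I would simply present the three-line verification directly, without bothering with a contrapositive reformulation.
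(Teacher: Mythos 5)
Your proposal is correct and is essentially the paper's argument run in the contrapositive direction: the paper fixes $x \in \AA \setminus \{\mathbf{0},\mathbf{1}\}$ and deduces $\a(x) \not\leq x$ from $\b(\eta(x)) \not\leq \eta(x)$ using the same three facts (injectivity with preservation of $\mathbf{0},\mathbf{1}$, order-preservation, and the commutation $\eta \circ \a = \b \circ \eta$). The two write-ups are logically identical, and your explicit check that $\eta(x) \notin \{\mathbf{0}_\BB,\mathbf{1}_\BB\}$ is a point the paper leaves implicit.
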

\begin{proof}
Like the previous theorem, this is a known fact and can be found (in dual form) in \cite[Chapter 4]{Akin} or \cite[Section 5]{Brian0}. But like the previous theorem, the proof is short and not difficult, so we reproduce it here.

Suppose $\eta$ is an embedding of $\< \AA,\a \>$ into an incompressible dynamical system $\< \BB,\b \>$, and let $x \in \AA \setminus \{ \mathbf{0},\mathbf{1} \}$. Then $\b(\eta(x)) \not\leq \eta(x)$ (by the incompressibility of $\< \BB,\b \>$), which implies $\eta(\a(x)) \not\leq \eta(x)$ (because $\eta \circ \a = \b \circ \eta$), which implies $\a(x) \not\leq x$ (because $\eta^{-1}$ preserves inequalities). Thus $\< \AA,\a \>$ is incompressible.
\end{proof}

\begin{corollary}\label{cor:Substructures}
Every dynamical system that embeds in $\< \pwmff,\s \>$ or $\< \pwmff,\s^{-1} \>$ is incompressible.
\end{corollary}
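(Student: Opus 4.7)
The plan is essentially a one-line invocation of the two preceding results. Theorem~\ref{thm:It'sIncompressible} establishes that both $\< \pwmff,\s \>$ and $\< \pwmff,\s^{-1} \>$ are incompressible, and Theorem~\ref{thm:Incompressibility} says that incompressibility is inherited by subsystems (anything that embeds into an incompressible dynamical system is itself incompressible). The corollary is the direct conjunction of these two facts.

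Concretely, I would proceed as follows. Let $\<\AA,\a\>$ be any dynamical system that embeds in either $\< \pwmff,\s \>$ or $\< \pwmff,\s^{-1} \>$, and let $\eta$ denote such an embedding. By Theorem~\ref{thm:It'sIncompressible}, the target system is incompressible. By Theorem~\ref{thm:Incompressibility} applied to $\eta$, the source system $\<\AA,\a\>$ is also incompressible. That is the entire argument.

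There is no real obstacle here — the corollary is posed precisely so that the reader sees the immediate consequence of the two theorems just proved. The only thing worth noting is that the statement covers both $\s$ and $\s^{-1}$, but this is already handled in Theorem~\ref{thm:It'sIncompressible} (via either the direct computation or the observation that $\<\AA,\a\>$ is compressible iff $\<\AA,\a^{-1}\>$ is). So the proof can simply cite both theorems and stop.
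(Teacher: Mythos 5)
Your proof is correct and matches the paper's intent exactly: the corollary is stated without proof precisely because it is the immediate conjunction of Theorem~\ref{thm:It'sIncompressible} and Theorem~\ref{thm:Incompressibility}, which is what you wrote.
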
 

For example, of the two dynamical systems on the $4$-element Boolean algebra, one of them embeds in $\< \pwmff,\s \>$ and the other one does not. 

\vspace{2mm}

\begin{center}
\begin{tikzpicture}[scale=.95]

\node at (1,-1) {\footnotesize $\mathbf{0}$};
\node at (1,1) {\footnotesize $\mathbf{1}$};
\draw (0,0) ellipse (1mm and 1mm);
\draw (2,0) ellipse (1mm and 1mm);

\draw[->] (1.15,1.15) arc (-45:225:2.1mm);
\draw [->] (.2,.12) to [out=25,in=155] (1.8,.12);
\draw [<-] (.2,-.12) to [out=-25,in=-155] (1.8,-.12);
\draw[->] (1.15,-1.15) arc (45:-225:2.1mm);

\node at (7,-1) {\footnotesize $\mathbf{0}$};
\node at (7,1) {\footnotesize $\mathbf{1}$};
\draw (6,0) ellipse (1mm and 1mm);
\draw (8,0) ellipse (1mm and 1mm);

\draw[->] (7.15,1.15) arc (-45:225:2.1mm);
\draw[->] (5.85,.15) arc (45:315:2.1mm);
\draw[->] (8.15,-.15) arc (-135:135:2.1mm);
\draw[->] (7.15,-1.15) arc (45:-225:2.1mm);

\end{tikzpicture}
\end{center}

\vspace{1mm}

\noindent In fact, the dynamical system illustrated on the left admits exactly two (essentially identical) embeddings into $\< \pwmff,\s \>$: $\mathbf 0$ must map to $[\0]$ and $\mathbf 1$ to $[\w]$, and the two other elements of the algebra must map to $[\set{2n}{n \in \w}]$ and $[\set{2n+1}{n \in \w}]$.

The most direct analogue of Definition~\ref{def:AlgebraicLP} for dynamical systems is: 

\begin{definition}\label{def:GeneralLP}
An \emph{instance of the general lifting problem} for a dynamical system $\<\CC,\g\>$ is a $4$-tuple 
$(\<\AA,\a\>,\<\BB,\b\>,\iota,\eta),$
where $\<\AA,\a\>$ and $\<\BB,\b\>$ are countable dynamical systems, $\iota$ is an embedding of $\<\AA,\a\>$ into $\<\BB,\b\>$, and $\eta$ is an embedding of $\<\AA,\a\>$ into $\< \CC,\g \>$. 

\begin{center}
\begin{tikzpicture}[scale=1.1]

\node at (0,0) {$\<\AA,\a\>$};
\node at (0,2) {$\<\BB,\b\>$};
\node at (3.1,2) {$\<\CC,\g\>$};

\draw[dashed,->] (.65,2) -- (2.4,2);
\draw[->] (.55,.35) -- (2.5,1.65);
\draw[<-] (0,1.65) -- (0,.35);

\node at (-.15,1) {\footnotesize $\iota$};
\node at (1.6,.8) {\footnotesize $\eta$};
\node at (1.5,2.22) {\footnotesize $\bar \eta$};

\end{tikzpicture}
\end{center}

\noindent A \emph{solution} to this instance of the general lifting problem is an embedding $\bar \eta$ from $\<\BB,\b\>$ into $\<\CC,\g\>$ such that $\bar \eta \circ \iota = \eta$.
\hfill
\Coffeecup
\end{definition}

However, allowing $\< \AA,\a \>$ and $\< \BB,\b \>$ to be any countable dynamical systems is too broad, because an analogue of Theorem~\ref{thm:AlgebraicUniv} holds in this context:

\begin{theorem}
Suppose $\< \CC,\g \>$ is a dynamical system such that every instance of the general lifting problem for $\< \CC,\g \>$ has a solution. Then every dynamical system of size $\leq\!\aleph_1$ embeds in $\<\CC,\g\>$.
\end{theorem}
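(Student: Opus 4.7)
My plan is to mimic the proof of Theorem~\ref{thm:AlgebraicUniv} essentially verbatim, just with dynamical systems replacing Boolean algebras at every step. The only real adaptation needed is to make sure the chain of countable substructures used in the recursion consists of actual dynamical subsystems (i.e., subalgebras closed under both $\a$ and $\a^{-1}$), so that the solutions provided by the general lifting property for $\<\CC,\g\>$ are applicable at each successor step.

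Concretely, let $\<\AA,\a\>$ be a dynamical system with $|\AA| \leq \aleph_1$. First I would enumerate $\AA = \set{a_\xi}{\xi < \w_1}$ (allowing repetitions if $|\AA| < \aleph_1$) and construct a continuous increasing chain $\seq{\AA_\xi}{\xi < \w_1}$ of countable subalgebras with $\AA_0 = \{\mathbf{0},\mathbf{1}\}$, each $\AA_\xi$ closed under $\a$ and $\a^{-1}$, with $a_\xi \in \AA_{\xi+1}$ and $\AA_\xi = \bigcup_{\eta < \xi}\AA_\eta$ at limits $\xi$. Such a chain exists because, starting from any countable set $X \sub \AA$, the subalgebra generated by $\bigcup_{n \in \Z} \a^n(X)$ is still countable and is a dynamical subsystem of $\<\AA,\a\>$. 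Writing $\a_\xi = \a \rest \AA_\xi$, each $\<\AA_\xi, \a_\xi\>$ is then a countable dynamical system and the inclusion $\iota_\xi \colon \AA_\xi \hookrightarrow \AA_{\xi+1}$ is an embedding of dynamical systems.

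Next I would build a coherent sequence $\seq{\eta_\xi}{\xi < \w_1}$ of embeddings $\eta_\xi \colon \<\AA_\xi,\a_\xi\> \to \<\CC,\g\>$ by transfinite recursion, in exactly the style of Theorem~\ref{thm:AlgebraicUniv}. The base case is trivial since $\AA_0 = \{\mathbf{0},\mathbf{1}\}$. At a successor $\xi+1$, the $4$-tuple
\[
\bigl(\<\AA_\xi,\a_\xi\>,\,\<\AA_{\xi+1},\a_{\xi+1}\>,\,\iota_\xi,\,\eta_\xi\bigr)
\]
is an instance of the general lifting problem for $\<\CC,\g\>$, so by hypothesis there is a solution $\eta_{\xi+1} \colon \<\AA_{\xi+1},\a_{\xi+1}\> \to \<\CC,\g\>$ with $\eta_{\xi+1} \circ \iota_\xi = \eta_\xi$, i.e., $\eta_{\xi+1}$ extends $\eta_\xi$. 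At limit stages $\xi$, coherence forces $\eta_\xi := \bigcup_{\eta < \xi}\eta_\eta$, which is an embedding of $\<\AA_\xi,\a_\xi\>$ into $\<\CC,\g\>$ because each $\eta_\eta$ is. Finally, $\eta := \bigcup_{\xi < \w_1}\eta_\xi$ is an embedding of $\<\AA,\a\>$ into $\<\CC,\g\>$.

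There is no real obstacle here; the only point requiring even a moment's thought is verifying that countable subsystems (closed under $\a^{\pm 1}$) are cofinal in the countable subsets of $\AA$, which is immediate because $\a$ is an automorphism and countable unions of countable sets are countable. The content of the theorem lies in recognizing that this slavish imitation of Theorem~\ref{thm:AlgebraicUniv} works, and therefore that the \emph{general} lifting problem is too strong a property to ask of $\<\pwmff,\s\>$: any $\<\CC,\g\>$ with this property absorbs all dynamical systems of size $\leq \aleph_1$, which (as will presumably be shown in Section~\ref{sec:NonSaturation}) cannot hold for $\<\pwmff,\s\>$ because some small dynamical systems (e.g.\ compressible ones, by Corollary~\ref{cor:Substructures}) fail to embed into it. This motivates the need for a more restrictive formulation of the lifting problem in the dynamical setting.
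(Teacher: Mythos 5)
Your proof is correct and matches the paper's intent exactly: the paper's own "proof" consists of the single sentence that this is a straightforward adaptation of Theorem~\ref{thm:AlgebraicUniv}, and your write-up supplies precisely that adaptation, including the one genuinely new point (that countable subalgebras closed under $\a^{\pm1}$ are cofinal, so the chain can be taken to consist of dynamical subsystems). Nothing further is needed.
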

\begin{proof}
This is a straightforward adaptation of the proof of Theorem~\ref{thm:AlgebraicUniv}.
\end{proof} 

As we have already seen, not every countable dynamical system, or even every finite dynamical system, embeds in $\< \pwmff,\s \>$ and $\< \pwmff,\s^{-1} \>$; only incompressible systems do. This observation motivates our definition of the lifting problem for $\< \pwmff,\s \>$. 

\begin{definition}\label{def:LP}
An \emph{instance of the lifting problem} for $\<\pwmff,\s\>$ is a $4$-tuple 
$(\<\AA,\a\>,\<\BB,\b\>,\iota,\eta),$
where $\<\AA,\a\>$ and $\<\BB,\b\>$ are countable incompressible dynamical systems, $\iota$ is an embedding of $\<\AA,\a\>$ into $\<\BB,\b\>$, and $\eta$ is an embedding of $\<\AA,\a\>$ into $\< \pwmff,\s \>$. 

\begin{center}
\begin{tikzpicture}[scale=1.2]

\node at (0,0) {$\<\AA,\a\>$};
\node at (0,2) {$\<\BB,\b\>$};
\node at (3.4,2) {$\<\pwmff,\s\>$};

\draw[dashed,->] (.65,2) -- (2.4,2);
\draw[->] (.55,.35) -- (2.5,1.65);
\draw[<-] (0,1.65) -- (0,.35);

\node at (-.15,1) {\footnotesize $\iota$};
\node at (1.6,.8) {\footnotesize $\eta$};
\node at (1.5,2.22) {\footnotesize $\bar \eta$};

\end{tikzpicture}
\end{center}

\noindent A \emph{solution} to this instance of the lifting problem for $\< \pwmff,\s \>$ is an embedding $\bar \eta$ from $\<\BB,\b\>$ into $\<\pwmff,\s\>$ such that $\bar \eta \circ \iota = \eta$. 

An \emph{instance of the lifting problem for $\< \pwmff,\s^{-1} \>$} is defined similarly, as are the notions of \emph{solution} and \emph{lifting} in that context.
\hfill
\Coffeecup
\end{definition}

Once again, an analogue of Theorem~\ref{thm:AlgebraicUniv} holds in this context: 

\begin{theorem}\label{thm:AU2}
{If} every instance of the lifting problem for $\< \pwmff,\s \>$ has a solution, {then} every incompressible dynamical system of size $\leq\!\aleph_1$ embeds in $\<\pwmff,\s\>$.
\end{theorem}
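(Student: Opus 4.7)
The plan is to mimic the proof of Theorem~\ref{thm:AlgebraicUniv} almost verbatim, taking care that at every stage of the transfinite construction the relevant countable dynamical subsystems are \emph{incompressible}, so that each appeal to the lifting hypothesis is a legitimate instance of the lifting problem for $\<\pwmff,\s\>$ in the sense of Definition~\ref{def:LP}.

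Let $\<\AA,\a\>$ be an incompressible dynamical system with $\card{\AA}\leq\aleph_1$. The first step is to write $\AA$ as the union of a continuous increasing chain $\seq{\AA_\xi}{\xi<\w_1}$ of countable subalgebras that are \emph{$\a$-invariant} (closed under both $\a$ and $\a^{-1}$), with $\AA_0=\{\mathbf 0,\mathbf 1\}$, $\AA_\xi\sub\AA_\eta$ for $\xi\le\eta$, and $\AA_\xi=\bigcup_{\zeta<\xi}\AA_\zeta$ at limits. This is a routine closure argument: enumerate $\AA$ in type $\w_1$, and at each stage close under the Boolean operations and under $\a^{\pm 1}$. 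Each $\<\AA_\xi,\a\!\restriction\!\AA_\xi\>$ is then a countable dynamical system, and because it embeds (via inclusion) into the incompressible system $\<\AA,\a\>$, Theorem~\ref{thm:Incompressibility} guarantees that it is itself incompressible.

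Next, by transfinite recursion on $\xi<\w_1$, I would construct a coherent sequence $\seq{\eta_\xi}{\xi<\w_1}$ of embeddings $\eta_\xi\colon\<\AA_\xi,\a\>\to\<\pwmff,\s\>$, meaning that $\eta_\xi=\eta_\eta\rest\AA_\xi$ whenever $\xi\le\eta$. The base case $\xi=0$ is trivial, since $\AA_0=\{\mathbf 0,\mathbf 1\}$ admits the unique embedding into $\<\pwmff,\s\>$. At a limit stage $\xi$, coherence forces (and permits) us to set $\eta_\xi=\bigcup_{\zeta<\xi}\eta_\zeta$, which is an embedding of $\AA_\xi=\bigcup_{\zeta<\xi}\AA_\zeta$ because each $\eta_\zeta$ is an embedding and they agree on overlaps. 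At a successor stage, the tuple
\[
\bigl(\<\AA_\xi,\a\>,\,\<\AA_{\xi+1},\a\>,\,\iota,\,\eta_\xi\bigr),
\]
where $\iota\colon\AA_\xi\hookrightarrow\AA_{\xi+1}$ is the inclusion, is a bona fide instance of the lifting problem for $\<\pwmff,\s\>$: both systems are countable (by construction) and incompressible (by the previous paragraph), $\iota$ is an embedding of dynamical systems, and $\eta_\xi$ is an embedding into $\<\pwmff,\s\>$. By hypothesis there is a solution $\eta_{\xi+1}\colon\<\AA_{\xi+1},\a\>\to\<\pwmff,\s\>$ with $\eta_{\xi+1}\circ\iota=\eta_\xi$, which is precisely the required coherent extension.

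Finally, $\eta=\bigcup_{\xi<\w_1}\eta_\xi$ is an embedding of $\<\AA,\a\>$ into $\<\pwmff,\s\>$. The only nonroutine point in the whole argument is ensuring that the countable subsystems appearing at each stage are incompressible, so that the lifting hypothesis actually applies; this is handled uniformly by Theorem~\ref{thm:Incompressibility} together with the $\a$-invariance built into the chain $\seq{\AA_\xi}{\xi<\w_1}$. Everything else is a direct transcription of the proof of Theorem~\ref{thm:AlgebraicUniv}.
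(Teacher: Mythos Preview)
Your proof is correct and is exactly the argument the paper has in mind: the paper does not write out a proof of this theorem at all, but simply introduces it as ``an analogue of Theorem~\ref{thm:AlgebraicUniv}'' and leaves the straightforward adaptation to the reader. You have carried out that adaptation faithfully, correctly identifying the one point that requires care beyond the purely Boolean-algebraic case---namely, building the chain $\seq{\AA_\xi}{\xi<\w_1}$ so that each $\AA_\xi$ is closed under $\a^{\pm 1}$, and then invoking Theorem~\ref{thm:Incompressibility} to see that the resulting countable subsystems are incompressible, so that each successor step is a legitimate instance of the lifting problem in the sense of Definition~\ref{def:LP}.
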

%\begin{proof}
%This is a straightforward adaptation of the proof of Theorem~\ref{thm:AlgebraicUniv}.
%\end{proof} 

Similarly, an analogue of Theorem~\ref{thm:AlgebraicB&F} and Corollary~\ref{cor:Par} is also provable in this context. The proof is a straightforward adaptation of the proofs of Theorem~\ref{thm:AlgebraicB&F} and Corollary~\ref{cor:Par} in the previous section.

\begin{theorem}\label{thm:BF2}
If every instance of the lifting problem for $\< \pwmff,\s \>$ and for $\< \pwmff,\s^{-1}\>$ has a solution, then \ch implies that $\< \pwmff,\s \>$ and $\< \pwmff,\s^{-1}\>$ are conjugate.
\end{theorem}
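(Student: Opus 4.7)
The plan is to replay the back-and-forth construction from the proof of Theorem~\ref{thm:AlgebraicB&F}, but working in the dynamical category and using the assumed solvability of the lifting problem in both directions at every successor step. Assuming \ch, fix enumerations $\seq{c_\xi}{\xi < \w_1}$ and $\seq{d_\xi}{\xi < \w_1}$ of $\pwmff$. By transfinite recursion on $\a < \w_1$, I would build a coherent sequence of conjugacies $\phi_\a: \< \CC_\a, \s \> \to \< \DD_\a, \s^{-1} \>$ in which each $\CC_\a$ and $\DD_\a$ is a countable subalgebra of $\pwmff$ closed under both $\s$ and $\s^{-1}$, the chains $\seq{\CC_\a}{\a < \w_1}$ and $\seq{\DD_\a}{\a < \w_1}$ are $\sub$-increasing and continuous, and $c_\a \in \CC_{\a+1}$ and $d_\a \in \DD_{\a+1}$. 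The base step takes $\CC_0 = \DD_0 = \{\mathbf{0},\mathbf{1}\}$ with $\phi_0$ the unique isomorphism, and limit steps are handled by taking unions, which are well-defined and yield conjugacies by coherence.

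The successor step mirrors the two-swap move from the proof of Theorem~\ref{thm:AlgebraicB&F}. Given $\phi_\a$, first let $\DD_{\a+1}^0$ be the smallest subalgebra of $\pwmff$ containing $\DD_\a \cup \{d_\a\}$ and closed under $\s^{\pm 1}$; this is countable because $\DD_\a \cup \set{\s^n(d_\a)}{n \in \Z}$ is countable and a countably generated subalgebra of a Boolean algebra is countable. The subsystem $\< \DD_{\a+1}^0, \s^{-1} \>$ is incompressible by Corollary~\ref{cor:Substructures}, as is $\< \DD_\a, \s^{-1} \>$, so $(\< \DD_\a, \s^{-1}\>, \< \DD_{\a+1}^0, \s^{-1}\>, \iota, \phi_\a^{-1})$ is a legitimate instance of the lifting problem for $\< \pwmff, \s \>$. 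The assumed solvability produces an embedding $\phi_{\a+1}^0: \< \DD_{\a+1}^0, \s^{-1} \> \to \< \pwmff, \s \>$ extending $\phi_\a^{-1}$, whose image $\CC_{\a+1}^0 \supseteq \CC_\a$ is a countable $\s^{\pm 1}$-closed subalgebra of $\pwmff$. Running the symmetric step---letting $\CC_{\a+1}$ be the smallest $\s^{\pm 1}$-closed subalgebra containing $\CC_{\a+1}^0 \cup \{c_\a\}$, and applying the assumed solvability of the lifting problem for $\< \pwmff, \s^{-1} \>$ to $(\< \CC_{\a+1}^0, \s \>, \< \CC_{\a+1}, \s \>, \iota, (\phi_{\a+1}^0)^{-1})$---yields the desired conjugacy $\phi_{\a+1}: \< \CC_{\a+1}, \s \> \to \< \DD_{\a+1}, \s^{-1} \>$ extending $\phi_\a$, with image $\DD_{\a+1} \supseteq \DD_{\a+1}^0$ containing $d_\a$.

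After the recursion, $\phi = \bigcup_{\a < \w_1} \phi_\a$ is by coherence a conjugacy from $\bigcup_{\a < \w_1} \CC_\a$ onto $\bigcup_{\a < \w_1} \DD_\a$ that intertwines $\s$ and $\s^{-1}$. Because $c_\a \in \CC_{\a+1}$ and $d_\a \in \DD_{\a+1}$ for every $\a < \w_1$, these unions exhaust $\pwmff$, so $\phi$ is a conjugacy from $\< \pwmff, \s \>$ to $\< \pwmff, \s^{-1} \>$. There is no substantive obstacle beyond what already appears in the proof of Theorem~\ref{thm:AlgebraicB&F}: the only new wrinkle is that at each successor step the enlarged subalgebras $\DD_{\a+1}^0$ and $\CC_{\a+1}$ must be closed under $\s^{\pm 1}$ so that they are genuine dynamical subsystems eligible as inputs to the lifting problem. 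Closure under a single Boolean automorphism preserves countability, and incompressibility comes for free from Corollary~\ref{cor:Substructures}, so the hypothesis of the theorem is exactly what is needed to run the back-and-forth to completion.
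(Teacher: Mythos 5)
Your proposal is correct and is precisely the ``straightforward adaptation'' of the proof of Theorem~\ref{thm:AlgebraicB&F} that the paper invokes for Theorem~\ref{thm:BF2} (and that it later elaborates, with elementarity added, in the proof of Theorem~\ref{thm:main}). You have correctly identified and handled the only genuinely new points: the enlarged subalgebras must be closed under $\s^{\pm 1}$ so that they are countable dynamical subsystems, and their incompressibility comes for free from Corollary~\ref{cor:Substructures}, so each successor step really is a legitimate instance of the lifting problem in the appropriate direction.
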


%Both Theorems~\ref{thm:AU2} and \ref{thm:BF2} assert conditional statements: \emph{if $A$ then $B$}. 
%Statements of this form are considered formally true if 
%$(i)$ $A$ is true and $B$ is true, or 
%$(ii)$ $A$ is false and $B$ is true, or 
%$(iii)$ $A$ is false and $B$ is false. 
%In the latter two cases, though, the assertion is only \emph{vacuously} true; the implication holds, but is not particularly meaningful.

To complete step $(1)$ in the strategy outlined at the beginning of this section, we should now prove that every instance of the lifting problem for $\< \pwmff,\s \>$ and for $\< \pwmff,\s^{-1}\>$ has a solution. The paper's main theorem then follows via Theorem~\ref{thm:BF2}. 

But alas, it turns out this is too much to hope for. 
We prove in Section 5 below that not every instance of the lifting problem for $\< \pwmff,\s \>$ has a solution. 
This forces us to travel a more difficult road in order to reach a proof of the main theorem. 

Interestingly, Theorems~\ref{thm:AU2} and \ref{thm:BF2} are both vacuously true. 
That is, both theorems assert conditional statements -- \emph{if $A$ then $B$} -- 
and in both cases, $A$ is false while $B$ is true nonetheless. 
So while not every instance of the lifting problem for $\< \pwmff,\s \>$ has a solution, the most important consequences of that statement hold anyway. There is a reason for this, as we shall see: all ``sufficiently nice'' instances of the lifting problem for $\< \pwmff,\s \>$ do have a solution, and this suffices to derive the conclusions of Theorems~\ref{thm:AU2} and \ref{thm:BF2}, albeit by more nuanced arguments. 

A rough outline of the next few sections is:
\begin{itemize}
\item[$\S$\ref{sec:Embeddings}:] We develop some techniques for constructing embeddings of a countable dynamical system into $\< \pwmff,\s \>$ and $\< \pwmff,\s^{-1} \>$.
\item[$\S$\ref{sec:NonSaturation}:] We identify an instance of the lifting problem for $\< \pwmff,\s \>$ that does not have a solution. In other words, we show the hypotheses of Theorems \ref{thm:AU2} and \ref{thm:BF2} do not hold. 
\item[$\S$\ref{sec:back&forth}:] We show that, nevertheless, if every ``sufficiently nice'' instance of the lifting problem has a solution, then \ch implies $\< \pwmff,\s \>$ and $\< \pwmff,\s^{-1}\>$ are conjugate, via a modification of the back-and-forth argument used for Theorem~\ref{thm:AlgebraicB&F}.
\end{itemize}

In other words, the strategy outlined at the beginning of this section fails in step $(1)$, because the natural analogue of Theorem~\ref{thm:Parovicenko} for $\< \pwmff,\s \>$ and $\< \pwmff,\s^{-1} \>$ is not true. But we can rescue the strategy by considering only ``sufficiently nice'' instances of the lifting problem. 

The fact that ``sufficiently nice'' instances of the lifting problem are solvable is not proved in Section 6, but in Sections 7-11. The main content of Section 6 is to define precisely what ``sufficiently nice'' means, and to prove that being able to solve only these instances of the lifting problem still suffices to imply the main theorem. 

The conclusion of Theorem~\ref{thm:AU2}, that every weakly incompressible dynamical system of size $\leq\!\aleph_1$ embeds in $\< \pwmff,\s \>$, is the main result of \cite{Brian1}. 
But this result is also a relatively straightforward consequence of some of the techniques used in this paper, and we include a proof of it at the end of Section~\ref{sec:Diagonalization} below.

%%%%%%%%%%%%
\section{Embeddings of Boolean algebras and dynamical systems}\label{sec:Embeddings}
%%%%%%%%%%%%

Recall that a dynamical system $\< \AA,\a \>$ {embeds} in another dynamical system $\< \BB,\b \>$ if there is a Boolean-algebraic embedding $\eta: \AA \to \BB$ such that $\eta \circ \a = \b \circ \eta$. 
In other words, an embedding of $\< \AA,\a \>$ into $\< \BB,\b \>$ is a conjugacy from $\< \AA,\a \>$ onto a subsystem of $\< \BB,\b \>$.

In this section, we develop some techniques for constructing embeddings of dynamical systems into $\< \pwmff,\s \>$ and $\< \pwmff,\s^{-1} \>$. Ultimately, the goal is to show that every embedding of a countable dynamical system $\< \AA,\a \>$ into $\< \pwmff,\s \>$ or $\< \pwmff,\s^{-1} \>$ is completely described by a particular kind of sequence in $\AA$ (see Theorem~\ref{thm:CharacterizingGoodSequences}).

In what follows, we have no need to consider infinite partitions of unity in a Boolean algebra, but will work quite a lot with finite partitions. To avoid repeating the word ``finite'' many times, we will use the word ``partition'' to always mean what is usually meant by ``finite partition."

\begin{definition}
A \emph{partition of unity}, or simply a \emph{partition}, in a Boolean algebra $\AA$ is a finite set $\U$ of nonzero members of $\AA$ such that $\bigvee \U = \1$ and $u \wedge v = \zero$ for all distinct $u,v \in \U$. 

Given a partition $\U$ of $\AA$ and some $a \in \AA$, we say that $a$ is \emph{below} $\U$ if $a \leq u$ for some $u \in \U$. 
If $\U$ and $\V$ are two partitions of $\AA$, $\V$ \emph{refines} $\U$ if every member of $\V$ is below $\U$. 
In this case, the \emph{natural map} from $\V$ to $\U$ sends each $v \in \V$ to the unique $u \in \U$ with $v \leq u$.

A sequence $\seq{a_n}{n \in \w}$ in $\AA$ is \emph{eventually small} if for every partition $\U$ of $\AA$, there is some $N \in \w$ such that $a_n$ is below $\U$ for all $n \geq N$. 

A sequence $\seq{a_n}{n \in \w}$ in $\AA$ is \emph{eventually dense} if for every $x \in \AA$, there are infinitely many $n \in \w$ such that $a_n \leq x$. 
\hfill{\Coffeecup}
\end{definition}

\begin{theorem}\label{thm:InducedMap}
Suppose $\AA$ is a Boolean algebra, and $\seq{a_n}{n \in \w}$ is an eventually small, eventually dense sequence in $\AA \setminus \{\zero\}$.
Then the mapping
$a \,\mapsto\, [\set{n \in \w}{a_n \leq a}]$ 
is an embedding $\AA \to \pwmff$.
\end{theorem}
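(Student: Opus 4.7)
The plan is to define $\eta: \AA \to \pwmff$ by $\eta(a) = [\set{n \in \w}{a_n \leq a}]$ and verify in turn that $\eta$ preserves $\zero$, $\1$, and $\wedge$; preserves complements; and is injective. This breaks into three steps, of which only the last two require a nontrivial hypothesis on the sequence $\seq{a_n}{n\in\w}$.

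The preservation of $\zero$, $\1$, and $\wedge$ is immediate from the definition and requires no hypothesis beyond $a_n \neq \zero$: since $a_n \neq \zero$, the inequality $a_n \leq \zero$ never holds, so $\eta(\zero) = [\0] = \zero$; trivially $a_n \leq \1$ always, so $\eta(\1) = [\w] = \1$; and $a_n \leq a \wedge b$ iff $a_n \leq a$ and $a_n \leq b$, so $\eta(a \wedge b) = \eta(a) \wedge \eta(b)$, in fact on the nose rather than merely modulo finite. For preservation of complement I would use eventual smallness applied to the partition $\U = \{a, \mathbf{1} - a\}$ (with the obvious trivial modification when $a \in \{\zero,\1\}$): this yields $N \in \w$ such that for every $n \geq N$, the element $a_n$ lies below some member of $\U$, and since $a_n \neq \zero$ it lies below exactly one. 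Hence for $n \geq N$, exactly one of $a_n \leq a$ or $a_n \leq \mathbf{1} - a$ holds, which gives
\[\set{n \geq N}{a_n \leq \mathbf{1} - a} \,=\, \set{n \geq N}{a_n \not\leq a},\]
so $\eta(\mathbf{1} - a) = \mathbf{1} - \eta(a)$ in $\pwmff$.

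For injectivity I would use eventual density. If $a \neq b$, then without loss of generality $a \not\leq b$, so $c = a \wedge (\mathbf{1} - b) \neq \zero$. By eventual density there are infinitely many $n$ with $a_n \leq c$, and for each such $n$ we have $a_n \leq a$ while $a_n \not\leq b$ (because $a_n \leq \mathbf{1} - b$ and $a_n \neq \zero$ force $a_n \wedge b = \zero \neq a_n$). Thus $\set{n}{a_n \leq a}$ and $\set{n}{a_n \leq b}$ have infinite symmetric difference, so $\eta(a) \neq \eta(b)$ in $\pwmff$. The step I expect to be most delicate is the complement clause: on its face $\set{n}{a_n \leq \mathbf{1} - a}$ could be much smaller than $\w \setminus \set{n}{a_n \leq a}$, and it is precisely eventual smallness that prevents this by forcing all but finitely many $a_n$ to commit to one side of the partition $\{a,\mathbf{1}-a\}$. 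Once that is in hand, the remaining verifications are essentially bookkeeping.
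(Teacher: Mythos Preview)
Your proof is correct and follows the same overall strategy as the paper: eventual smallness yields the homomorphism property, and eventual density yields injectivity. The decomposition differs slightly. You observe that $\eta$ preserves $\wedge$ \emph{exactly} (no hypothesis needed), then use only the two-element partition $\{a,\mathbf{1}-a\}$ to handle complements, obtaining $\vee$ by De~Morgan; the paper instead works with the four-element partition generated by $a$ and $b$ to verify $\vee$ directly, then says $\wedge$ and complement follow by ``a similar computation.'' For injectivity, the paper checks that the kernel is $\{\zero\}$ (using eventual density to show $\eta(a)\neq[\0]$ whenever $a\neq\zero$), while you argue directly that $\eta(a)\neq\eta(b)$ when $a\neq b$; these are of course equivalent once the homomorphism property is in hand. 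Your route is marginally more economical, but the substance is the same.
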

\begin{proof}
Suppose $\seq{a_n}{n \in \w}$ is an eventually small, eventually dense sequence in $\AA \setminus \{\zero\}$, and define $\eta: \AA \to \pwmff$ by setting $\eta(a) = [\set{n \in \w}{a_n \leq a}]$.

The definition of $\eta$ readily implies $\eta(\mathbf{0}) = [\0]$ and $\eta(\mathbf{1}) = [\w]$. 
Now suppose $a,b \in \AA$. Let $\U = \{ a-b,b-a,a \wedge b, \mathbf{1} - (a \vee b)\} \setminus \{\mathbf{0}\}$ (the partition of $\AA$ generated by $a$ and $b$). 
Because $\seq{a_n}{n \in \w}$ is eventually small, there is some $N \in \w$ such that if $n \geq N$ then $a_n$ is below $\U$. It follows that
\begin{align*}
\set{n \in \w}{a_n \leq a \vee b} =^* & \\
\set{n \in \w}{a_n \leq a-b} &\cup \set{n \in \w}{a_n \leq b-a} \cup \set{n \in \w}{a_n \leq a \wedge b} \\
=\, \set{n \in \w}{a_n \leq a} &\cup \set{n \in \w}{a_n \leq b}
\end{align*}
which means that
\begin{align*}
\eta(a \vee b) &\,=\, [\set{n \in \w}{a_n \leq a \vee b}] 
 \,=\, [\set{n \in \w}{a_n \leq a} \cup \set{n \in \w}{a_n \leq b}] \\
&\,=\, [\set{n \in \w}{a_n \leq a}] \vee [\set{n \in \w}{a_n \leq b}] \,=\, \eta(a) \vee \eta(b).
\end{align*}
A similar computation shows $\eta(a \wedge b) = \eta(a) \wedge \eta(b)$ and $\eta(\mathbf 1 - a) = \mathbf 1 - \eta(a)$.

Thus $\eta$ is a homomorphism $\AA \to \pwmff$. To check that it is injective, suppose $a \in \AA$ and $a \neq \zero$. 
Because $\seq{a_n}{n \in \w}$ is eventually dense, 
$\set{n}{a_n \leq a}$ is infinite, which means $\eta(a) = [\set{n}{a_n \leq a}] \neq [\0]$.
In other words, the kernel of the homomorphism $\eta$ is $\{\zero\}$, and this implies $\eta$ is injective.
\end{proof}

\begin{definition}\label{def:Induced}
Suppose $\AA$ is a Boolean algebra, and $\vec s = \seq{a_n}{n \in \w}$ is an eventually small, eventually dense sequence in $\AA \setminus \{\zero\}$.
The mapping
$$a \,\mapsto\, [\set{n \in \w}{a_n \leq a}]$$
is called the embedding \emph{induced} by $\vec s$.
If $\eta: \AA \to \pwmff$ is an embedding, we say that $\eta$ is \emph{induced} if there is a sequence $\vec s$ in $\AA$ that induces it.
\hfill{\Coffeecup}
\end{definition}

Observe that if $\seq{a_n}{n \in \w}$ is an eventually dense sequence in $\AA$, then taking $\F_n = \set{x \in \AA}{a_n \leq x}$ gives a countable family of filters whose union is $\AA \setminus \{\mathbf 0\}$; hence $\AA$ is $\s$-centered. 
Contrapositively, if $\AA$ is not $\s$-centered, then there is no eventually dense sequence in $\AA$, and no embedding $\AA \to \pwmff$ can be induced. 
For example, there are no induced mappings $\pwmff \to \pwmff$ in the sense of Definition~\ref{def:Induced}. 
In contrast to this, the following theorem (Theorem~\ref{thm:CBA}) shows that if $\AA$ is a countable Boolean algebra, then every embedding $\AA \to \pwmff$ is induced. 

Theorem~\ref{thm:CBA} is well known. Its Stone dual states that $\w^*$ continuously surjects onto every compact zero-dimensional metric space $X$, and every such surjection can be continuously extended to a map $\b\w \to X$. We include a proof (in the algebraic category) for the sake of completeness.

\begin{lemma}\label{lem:NicePartitionSequence}
If $\AA$ is a countable Boolean algebra, then there is a sequence $\seq{\U_n}{n \in \w}$ of partitions of $\AA$ such that $\U_n$ refines $\U_m$ whenever $m \leq n$, and for every partition $\V$ of $\AA$, $\U_n$ refines $\V$ for all sufficiently large $n$.
\end{lemma}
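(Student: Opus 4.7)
The plan is to enumerate all partitions of $\AA$ and then recursively build $\U_n$ as the common refinement of the first $n$ partitions in that enumeration. This works precisely because $\AA$ is countable.

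First I would observe that, since a partition is by definition a finite subset of $\AA$ and $\AA$ is countable, the collection of all partitions of $\AA$ is itself countable. So we may fix an enumeration $\seq{\V_n}{n \in \w}$ of all partitions of $\AA$.

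Next, I would recall the standard fact that any two partitions $\U,\V$ of a Boolean algebra admit a \emph{common refinement}, namely $\set{u \wedge v}{u \in \U,\, v \in \V,\, u \wedge v \neq \zero}$, which is a partition refining both $\U$ and $\V$. Using this operation, I would define the sequence by recursion: let $\U_0 = \{\mathbf{1}\}$, and given $\U_n$, let $\U_{n+1}$ be the common refinement of $\U_n$ and $\V_n$. By construction, $\U_{n+1}$ refines $\U_n$ for every $n$, so by transitivity of the refinement relation $\U_n$ refines $\U_m$ whenever $m \leq n$. Moreover $\U_{n+1}$ refines $\V_n$, and hence $\U_m$ refines $\V_n$ for every $m \geq n+1$.

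Finally, to verify the ``eventually refines every partition'' clause: given any partition $\V$ of $\AA$, by the choice of enumeration there is some $n$ with $\V = \V_n$, and then $\U_m$ refines $\V$ for every $m > n$. That completes the proof. There is essentially no obstacle here — the only point that requires care is the passage from ``countable algebra'' to ``countably many partitions,'' which is immediate because each partition is a finite set. I would note in passing that this sequence $\seq{\U_n}{n\in\w}$ is exactly the kind of object that, in Stone-dual language, describes a basis of clopen refinements for the associated zero-dimensional metric space, which is why the lemma will be the natural bridge toward Theorem~\ref{thm:CBA} in the next step.
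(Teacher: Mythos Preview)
Your proof is correct and follows essentially the same idea as the paper's. The only cosmetic difference is that the paper enumerates the nonzero \emph{elements} $\seq{a_k}{k\in\w}$ of $\AA$ and takes $\U_n$ to be the partition generated by $\{a_0,\dots,a_n\}$, whereas you enumerate the \emph{partitions} themselves and take successive common refinements; since the partition generated by a finite set $\{a_0,\dots,a_n\}$ is exactly the common refinement of the two-element partitions $\{a_i,\mathbf{1}-a_i\}$, the two constructions are interchangeable.
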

\begin{proof}
Let $\seq{a_k}{k \in \w}$ be an enumeration of $\AA \setminus \{\mathbf{0}\}$, and for each $n \in \w$ let $\U_n$ be the partition of $\AA$ generated by $\set{a_k}{k \leq n}$. 
\end{proof}

\begin{theorem}\label{thm:CBA}
If $\AA$ is a countable Boolean algebra, then
\begin{enumerate}
\item there is an embedding of $\AA$ into $\pwmff$, and
\item every embedding of $\AA$ into $\pwmff$ is induced.
\end{enumerate}
\end{theorem}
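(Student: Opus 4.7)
The plan is to prove both clauses by producing an appropriate eventually small, eventually dense sequence in $\AA \setminus \{\mathbf{0}\}$ and invoking Theorem~\ref{thm:InducedMap} together with Definition~\ref{def:Induced}. Fix throughout an enumeration $\AA = \{a_k : k \in \w\}$ and let $\seq{\U_n}{n \in \w}$ be the refining partition sequence supplied by Lemma~\ref{lem:NicePartitionSequence}.

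For clause (1) I would just concatenate the partitions: let $\seq{b_k}{k \in \w}$ list the elements of $\U_0$, then those of $\U_1$, and so on. Given any partition $\V$ of $\AA$, the lemma gives an $N$ with $\U_n$ refining $\V$ for all $n \geq N$, so every $b_k$ past the block for $\U_{N-1}$ lies below $\V$; this is eventual smallness. For eventual density, given $x \in \AA \setminus \{\mathbf{0}\}$, apply the lemma to the partition $\{x, \mathbf{1} - x\}$ (or $\{\mathbf{1}\}$ if $x = \mathbf{1}$): for all large $n$ every $u \in \U_n$ lies below $x$ or below $\mathbf{1} - x$, and since $\bigvee \U_n = \mathbf{1}$, at least one $u \in \U_n$ must lie below $x$. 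Each subsequent block therefore contributes an entry below $x$, giving infinitely many such $b_k$, and Theorem~\ref{thm:InducedMap} turns this sequence into an embedding $\AA \to \pwmff$.

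For clause (2), fix an embedding $\eta : \AA \to \pwmff$ and choose a representative $E_a \sub \w$ of $\eta(a)$ for each $a \in \AA$. For each $k$ the family $\{E_u : u \in \U_k\}$ is an almost-partition of $\w$: pairwise intersections are finite and the union is cofinite. Moreover, once $\U_k$ refines $\{a_j, \mathbf{1} - a_j\}$ (which happens eventually in $k$ for each fixed $j$), every $u \in \U_k$ is comparable with $a_j$, so either $u \leq a_j$ and $E_u \setminus E_{a_j}$ is finite, or $u \wedge a_j = \mathbf{0}$ and $E_u \cap E_{a_j}$ is finite. By absorbing these finitely many finite exceptional sets at each stage, one can pick thresholds $N_0 \leq N_1 \leq \cdots \to \infty$ such that for every $n \geq N_k$: (i) exactly one $u \in \U_k$ satisfies $n \in E_u$, and (ii) for this $u$ and every $j \leq k$, $u \leq a_j$ if and only if $n \in E_{a_j}$. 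Let $k(n)$ be the largest $k$ with $N_k \leq n$ (when $n \geq N_0$; for the finitely many $n < N_0$, pick $b_n$ arbitrarily in $\AA \setminus \{\mathbf{0}\}$), and define $b_n$ to be the unique $u \in \U_{k(n)}$ with $n \in E_u$.

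The verification is then routine: $b_n \in \AA \setminus \{\mathbf{0}\}$ because partition elements are nonzero; eventual smallness follows from $\U_{k(n)}$ refining any fixed partition once $n$ is large; property (ii) yields $\{n : b_n \leq a_j\} =^* E_{a_j}$ for every $j$, so the induced embedding coincides with $\eta$; and eventual density is automatic since $\eta$ being injective forces $E_{a_j}$ to be infinite whenever $a_j \neq \mathbf{0}$. The main (and essentially only) obstacle is the simultaneous construction of the thresholds $N_k$: at stage $k$ one must absorb finitely many ``almost-everywhere'' conditions, one for each pair $(j, u)$ with $j \leq k$ and $u \in \U_k$. Each is a cofinite restriction on $n$, so the required $N_k$ exists, but one has to be deliberate about the order of quantifiers.
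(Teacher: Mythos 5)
Your proof is correct and follows essentially the same route as the paper's: part (1) by concatenating the members of the refining partition sequence, and part (2) by fixing representatives, choosing an increasing sequence of thresholds that absorb finitely many almost-everywhere conditions at each stage, and reading off the sequence from membership in the representatives. The only point to tidy is that your condition (ii) at stage $k$ must be imposed only for those $j \leq k$ with $\U_k$ refining $\{a_j, \mathbf{1}-a_j\}$ (automatic if one takes $\U_k$ to be the partition generated by $\{a_j : j \leq k\}$, as in the proof of Lemma~\ref{lem:NicePartitionSequence}); read literally, the condition is unachievable for a $j$ whose $a_j$ splits a cell of $\U_k$, though your parenthetical shows you intend the correct reading and the final verification only uses (ii) for large $k$, where it holds.
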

\begin{proof}
Let $\AA$ be a countable Boolean algebra. Fix a sequence $\U_0,\U_1,\U_2,\dots$ of partitions of $\AA$ with the properties stated in Lemma~\ref{lem:NicePartitionSequence}.

For $(1)$, let $\seq{a_n}{n \in \w}$ be a sequence of members of $\AA$ obtained by simply listing the (finitely many) members of $\U_0$, then listing the members of $\U_1$, then $\U_2$, etc. This sequence is both eventually small and eventually dense, so it induces an embedding $\AA \to \pwmff$ by Theorem~\ref{thm:InducedMap}.

For $(2)$, suppose $\eta: \AA \to \pwmff$ is an embedding. 
For each $a \in \AA$ fix a representative $X_a$ of the equivalence class $\eta(a)$: that is, $X_a \sub \w$ and $\eta(a) = [X_a]$. 
For convenience, define $n_{-1} = 0$, and then, for each $k$, define $n_k$ to be some natural number large enough that: 
\begin{enumerate}
\item $n_k > n_{k-1}$.
\item For each $a \in \U_k$, $X_a \cap [n_{k-1},n_k) \neq \0$.
\item If $a \in \AA$ and $a = \bigvee \set{u \in \U_k}{u \leq a}$, then 
$$\set{X_u \setminus n_k}{u \in \U_{k+1} \text{ and } u \leq a}$$ 
is a partition of $X_a \setminus n_k$.
\end{enumerate}
It is clear that all sufficiently large $n_k$ satisfy $(1)$ and $(2)$. For condition $(3)$, let $A = \set{u \in \U_{k+1}}{u \leq a}$, suppose $a = \bigvee A$, 
and recall that, in $\pwmff$, 
$$[X_u] \wedge [X_{u'}] = \eta(u) \wedge \eta(u') = \eta(u \wedge u') = \eta(\zero) = [\0]$$
for all $u,u' \in A$ with $u \neq u'$. In other words, the sets in $\set{X_u}{u \in A}$ are ``pairwise almost disjoint" in the sense that $\set{X_u \setminus n}{u \in A}$ are pairwise disjoint above $N$ for large enough $N$. Similarly, $[X_a] = \bigvee_{u \in A}[X_u]$ in $\pwmff$, and this implies $\bigcup \set{X_u \setminus N}{u \in A} = X_a \setminus N$ for large enough $N$. 

This describes the $n_k$. Next, for each $n \in \w$, define
$$a_n = a \qquad \text{ if and only if } \qquad n \in [n_{k-1},n_k), \, a \in \U_k, \text{ and }n \in X_a.$$
We claim that this sequence is eventually small and eventually dense, and that it induces the embedding $\eta: \AA \to \pwmff$.

The sequence is eventually small because for every partition $\V$ of $\AA$, $\U_k$ refines $\V$ for all sufficiently large $k$, which implies (by the definition of the $a_n$) that $a_n$ is below $\V$ for all sufficiently large $n$.

To see that $\seq{a_n}{n \in \w}$ is eventually dense, fix some $a \in \AA \setminus \{\zero\}$. There is some $K \in \w$ such that $\U_k$ refines $\{a,\mathbf 1 - a\}$ whenever $k \geq K$. But then condition $(2)$ of our choice of the $n_k$ implies that in every interval $[n_{k-1},n_k)$, there is some $n$ such that $a_n \leq a$. Hence $\set{n \in \w}{a_n \leq a}$ is infinite.

Finally, we must show that $\eta$ is induced by $\seq{a_n}{n \in \w}$.
Let $\eta'$ be the embedding $\AA \to \pwmff$ induced by $\seq{a_n}{n \in \w}$ and fix $a \in \AA$. By our choice of the $\U_k$, there is some $K \in \w$ such that $\U_k$ refines $\{a,\mathbf 1 - a\}$ whenever $k \geq K$, which implies 
$a = \bigvee \set{u \in \U_k}{u \leq a}$ for all $k \geq K$.
By part $(3)$ of our choice of the $n_k$, and by the definition of the $a_n$, this implies that if $n \geq n_K$, then $a_n \leq a$ if and only if $n \in X_a$. 
Thus
$$\eta'(a) \,=\, [\set{n \in \w}{a_n \leq a}] \,=\, [X_a \setminus n_K] \,=\, [X_a] \,=\, \eta(a).$$
As $a$ was arbitrary, this shows $\eta = \eta'$.
\end{proof}

\begin{definition}
Let $\AA$ be a Boolean algebra. If $\U$ is a partition of $\AA$ and $a,b \in \AA$, we write $a \approx_\U b$ to mean that $a \wedge u \neq \zero$ if and only if $b \wedge u \neq \zero$ for every $u \in \U$. 
Define $\seq{a_n}{n \in \w} \approx_\U^* \seq{b_n}{n \in \w}$ to mean that $a_n \approx_\U b_n$ for all sufficiently large $n$.
Two sequences $\seq{a_n}{n \in \w}$ and $\seq{b_n}{n \in \w}$ in $\AA$ are \emph{eventually close} if $\seq{a_n}{n \in \w} \approx_\U^* \seq{b_n}{n \in \w}$
for every partition $\U$ of $\AA$.
This is denoted $\seq{a_n}{n \in \w} \approx \seq{b_n}{n \in \w}$.
\hfill{\Coffeecup}
\end{definition}

\begin{theorem}\label{thm:EC}
Suppose $\AA$ is a Boolean algebra, and that $\seq{a_n}{n \in \w}$ and $\seq{b_n}{n \in \w}$ are eventually small, eventually dense sequences in $\AA$. These sequences induce the same embedding $\AA \to \pwmff$ if and only if they are eventually close.
\end{theorem}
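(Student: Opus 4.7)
The plan is to prove both directions by exploiting the interplay between eventual smallness (which forces each $a_n$ to ``choose'' at most one block of any given partition) and the definition of the induced embedding, which records which $a_n$ lie below which element.

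For the forward direction, suppose $\vec a = \seq{a_n}{n \in \w}$ and $\vec b = \seq{b_n}{n \in \w}$ induce the same embedding $\eta$. Fix a partition $\U = \{u_1,\dots,u_k\}$ of $\AA$. By eventual smallness, there is some $N$ such that for all $n \geq N$ each of $a_n$ and $b_n$ is below $\U$; since the $a_n, b_n$ are nonzero, this means each lies below a \emph{unique} element of $\U$. For every $u_i \in \U$ we have $\eta(u_i)$ represented on one hand by $\set{n}{a_n \leq u_i}$ and on the other by $\set{n}{b_n \leq u_i}$, so these two sets differ only finitely. As $\U$ is finite, we may enlarge $N$ so that for all $n \geq N$ and all $i \leq k$, $a_n \leq u_i$ iff $b_n \leq u_i$. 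Thus for $n \geq N$, the unique element of $\U$ above $a_n$ equals the unique element above $b_n$; equivalently, for each $u \in \U$, $a_n \wedge u \neq \zero$ iff $b_n \wedge u \neq \zero$. That is, $\vec a \approx_\U^* \vec b$, and as $\U$ was arbitrary, $\vec a \approx \vec b$.

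For the backward direction, suppose $\vec a \approx \vec b$. Fix $a \in \AA$ and consider the partition $\U = \{a, \mathbf{1} - a\} \setminus \{\mathbf 0\}$ generated by $a$. If $a = \mathbf 0$ or $a = \mathbf 1$, the conclusion is trivial. Otherwise, $\U = \{a, \mathbf 1 - a\}$, and by hypothesis $\vec a \approx_\U^* \vec b$; by eventual smallness for both sequences, there is some $N$ such that for $n \geq N$, each of $a_n$ and $b_n$ is below $\U$, and $a_n \wedge u \neq \zero$ iff $b_n \wedge u \neq \zero$ for each $u \in \U$. Since the $a_n, b_n$ are nonzero and below $\U$, this means $a_n \leq a$ iff $a_n \wedge (\mathbf 1 - a) = \zero$ iff $b_n \wedge (\mathbf 1 - a) = \zero$ iff $b_n \leq a$. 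Therefore $\set{n}{a_n \leq a} =^* \set{n}{b_n \leq a}$, so the two induced embeddings agree on $a$; as $a$ was arbitrary, they coincide.

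There is no real obstacle here; the main step in both directions is to bootstrap pointwise almost-equality on finitely many elements of a partition into uniform almost-equality on that partition, which is immediate because finite unions of finite sets are finite. The mildly delicate point is remembering that ``below $\U$'' together with nonzeroness pins down a unique $u \in \U$, which converts the containment condition $a_n \leq a$ into the disjointness condition $a_n \wedge (\mathbf 1 - a) = \zero$ that $\approx_\U$ controls directly.
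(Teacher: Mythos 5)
Your proof is correct and follows essentially the same route as the paper: both directions reduce to the observation that an eventually small nonzero sequence eventually sits below a unique block of any given partition, the ``only if'' direction bootstraps the finitely many almost-equalities $\set{n}{a_n \leq u} =^* \set{n}{b_n \leq u}$ over $u \in \U$ into a single tail, and the ``if'' direction tests against the two-block partition $\{a, \mathbf 1 - a\}$. The only (harmless) difference is that you explicitly dispose of the degenerate cases $a = \mathbf 0, \mathbf 1$, which the paper passes over silently.
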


\begin{proof}
For the ``if'' direction, fix two eventually small, eventually dense sequences $\seq{a_n}{n \in \w}$, $\seq{b_n}{n \in \w}$ in $\AA$ with $\seq{a_n}{n \in \w} \approx \seq{b_n}{n \in \w}$.
Fix some $a \in \AA$, and let $\U = \{a,\1-a\}$. 
Because $\seq{a_n}{n \in \w}$ and $\seq{b_n}{n \in \w}$ are eventually small, $a_n$ and $b_n$ are below $\U$ for all sufficiently large $n$. But also $a_n \approx_\U b_n$ for sufficiently large $n$. 
Combining these two things, if $n$ is sufficiently large then 
either $a_n,b_n \leq a$ or else $a_n,b_n \leq \mathbf 1 - a$. 
Therefore $\set{n}{a_n \leq a} =^* \set{n}{b_n \leq a}$, which means the embedding induced by $\seq{a_n}{n \in \w}$ and the embedding induced by $\seq{b_n}{n \in \w}$ both map $a$ to the same member of $\pwmff$, namely $[\set{n}{a_n \leq a}] = [\set{n}{b_n \leq a}]$. 
As $a$ was arbitrary, these two sequences induce the same embedding.

For the ``only if'' direction, fix two eventually small, eventually dense sequences $\seq{a_n}{n \in \w}$, $\seq{b_n}{n \in \w}$ in $\AA$ that induce the same embedding $\AA \to \pwmff$. 
Let $\U$ be a partition of $\AA$. 
Because $\seq{a_n}{n \in \w}$ and $\seq{b_n}{n \in \w}$ are eventually small, $a_n$ and $b_n$ are below $\U$ for sufficiently large $n$. 
In particular, $a_n$ and $b_n$ are each below exactly one member of $\U$ for all sufficiently large $n$. 
For any particular $u \in \U$, 
$$\set{n}{a_n \leq u} \,=^*\, \set{n}{b_n \leq u}$$
(because the sequences induce the same embedding $\AA \to \pwmff$). 
As this is true for every $u \in \U$, $a_n$ and $b_n$ are each below the same member of $\U$ for sufficiently large $n$. Consequently, $\seq{a_n}{n \in \w} \approx_\U \seq{b_n}{n \in \w}$.
\end{proof}

\begin{theorem}\label{thm:ESED}
Suppose $\<\AA,\a\>$ is a dynamical system, and $\seq{a_n}{n \in \w}$ is an eventually small, eventually dense sequence in $\AA$. 
Let $\eta$ denote the embedding of $\AA$ into $\pwmff$ that is induced by this sequence. 
Then $\eta \circ \a = \s \circ \eta$ if and only if $\seq{a_{n+1}}{n \in \w} \approx \seq{\a(a_n)}{n \in \w}$.
\end{theorem}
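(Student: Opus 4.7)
The plan is to apply Theorem~\ref{thm:EC} to the two sequences $\seq{a_{n+1}}{n \in \w}$ and $\seq{\a(a_n)}{n \in \w}$. To do so I must verify that each is an eventually small, eventually dense sequence in $\AA \setminus \{\zero\}$, and then compute the embedding each induces. The first sequence inherits all three properties directly from $\seq{a_n}{n \in \w}$. For the second, the fact that $\a$ is an automorphism does all the work: given a partition $\U$ of $\AA$, $\set{\a^{-1}(u)}{u \in \U}$ is again a partition, so $a_n$ is eventually below $\a^{-1}(\U)$, and then $\a(a_n)$ is eventually below $\U$; the analogous trick handles eventual density, and nonzeroness is automatic.

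Next I would compute the two induced embeddings. The sequence $\seq{a_{n+1}}{n \in \w}$ induces the map $a \mapsto [\set{n}{a_{n+1} \leq a}]$, and since $\set{n \in \w}{a_{n+1} \leq a} = \set{n \in \w}{a_n \leq a} - 1$, this class equals $\s^{-1}([\set{n}{a_n \leq a}]) = \s^{-1}(\eta(a))$, so the induced embedding is $\s^{-1} \circ \eta$. Likewise, $\seq{\a(a_n)}{n \in \w}$ induces the map $a \mapsto [\set{n}{\a(a_n) \leq a}] = [\set{n}{a_n \leq \a^{-1}(a)}] = \eta(\a^{-1}(a))$, which is $\eta \circ \a^{-1}$.

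By Theorem~\ref{thm:EC}, the two sequences are eventually close if and only if their induced embeddings coincide, i.e., $\s^{-1} \circ \eta = \eta \circ \a^{-1}$. Composing with $\s$ on the left and with $\a$ on the right converts this equation into $\eta \circ \a = \s \circ \eta$, yielding the desired equivalence. The entire proof is this reduction to Theorem~\ref{thm:EC}; the only delicate point is the index-shift bookkeeping identifying $[\set{n}{a_{n+1} \leq a}]$ with $\s^{-1}(\eta(a))$, and I do not foresee any substantive obstacle beyond that.
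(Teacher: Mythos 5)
Your proposal is correct and follows essentially the same route as the paper's proof: compute that $\seq{a_{n+1}}{n\in\w}$ induces $\s^{-1}\circ\eta$ and $\seq{\a(a_n)}{n\in\w}$ induces $\eta\circ\a^{-1}$, then invoke Theorem~\ref{thm:EC} and rearrange $\s^{-1}\circ\eta=\eta\circ\a^{-1}$ into $\eta\circ\a=\s\circ\eta$. Your explicit verification that both sequences are eventually small and eventually dense (needed to legitimately apply Theorem~\ref{thm:EC}) is a detail the paper leaves implicit, but it is not a different argument.
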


In other words, this theorem gives a criterion for determining when an embedding of Boolean algebras $\AA \to \pwmff$ induced by a sequence $\seq{a_n}{n \in \w}$ is also an embedding of dynamical systems $\< \AA,\a \> \to \< \pwmff,\s \>$: namely, this holds if and only if $\seq{a_{n+1}}{n \in \w} \approx \seq{\a(a_n)}{n \in \w}$.

\begin{proof}
Let $\eta$ denote the embedding $\AA \to \pwmff$ induced by $\seq{a_n}{n \in \w}$. Observe that
$$\s^{-1} \circ \eta(a) = \s^{-1}([\set{n}{a_n \leq a}]) = [\set{n-1}{a_n \leq a}] = [\set{n}{a_{n+1} \leq a}]$$
for every $a$, which means that $\s^{-1} \circ \eta$ is the embedding $\AA \to \pwmff$ induced by the sequence $\seq{a_{n+1}}{n \in \w}$.
Likewise,
$$\eta \circ \a^{-1}(a) = [\set{n}{a_n \leq \a^{-1}(a)}] = [\set{n}{\a(a_n) \leq a}]$$
for every $a$, which means that $\eta \circ \a^{-1}$ is the embedding $\AA \to \pwmff$ induced by the sequence $\seq{\a(a_n)}{n \in \w}$.

By Theorem~\ref{thm:EC}, this implies that $\eta$ is an embedding from $\<\AA,\a^{-1}\>$ to $\<\pwmff,\s^{-1}\>$ if and only if $\seq{a_{n+1}}{n \in \w} \approx \seq{\a(a_n)}{n \in \w}$.
This proves the theorem, because $\eta$ is an embedding from $\<\AA,\a^{-1}\>$ to $\<\pwmff,\s^{-1}\>$ if and only if it is also an embedding from $\<\AA,\a\>$ to $\<\pwmff,\s\>$. (This is because $\eta \circ \a^{-1} = \s^{-1} \circ \eta$ implies $\s \circ \eta = \s \circ (\eta \circ \a^{-1}) \circ \a = \s \circ (\s^{-1} \circ \eta) \circ \a = \eta \circ \a$, and similarly $\s \circ \eta = \eta \circ \a$ implies $\eta \circ \a^{-1} = \s^{-1} \circ \eta$.)
\end{proof}

%\begin{corollary}\label{cor:ESED}
%Suppose $\<\AA,\a\>$ is a dynamical system, and $\seq{a_n}{n \in \w}$ is an eventually small, eventually dense sequence in $\AA$. 
%Let $\eta$ denote the embedding of $\AA$ into $\pwmff$ that is induced by this sequence. 
%Then $\eta \circ \a = \s \circ \eta$ if and only if $\seq{a_{n+1}}{n \in \w} \approx \seq{\a(a_n)}{n \in \w}$.
%\end{corollary}

Of course, by suitably modifying the proof of this theorem, one can obtain a similar result for $\s^{-1}$:

\begin{theorem}\label{thm:ESED2}
Suppose $\<\AA,\a\>$ is a dynamical system, and $\seq{a_n}{n \in \w}$ is an eventually small, eventually dense sequence in $\AA$. 
Let $\eta$ denote the embedding of $\AA$ into $\pwmff$ that is induced by this sequence. 
Then $\eta \circ \a = \s^{-1} \circ \eta$ if and only if $\seq{a_{n-1}}{n \in \w} \approx \seq{\a(a_n)}{n \in \w}$ (where we set $a_{-1} = \mathbf{1}$ in order to make the sequence on the left well-defined).
\end{theorem}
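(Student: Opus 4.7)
The plan is to imitate the proof of Theorem~\ref{thm:ESED} with the roles of $\s$ and $\s^{-1}$ (and dually of $\a$ and $\a^{-1}$) interchanged. As in that proof, the idea is to represent both $\s \circ \eta$ and $\eta \circ \a^{-1}$ as induced embeddings, observe that the desired equation is equivalent to $\s \circ \eta = \eta \circ \a^{-1}$, and finish with Theorem~\ref{thm:EC}.

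The first computation starts from $\eta(a) = [\set{n \in \w}{a_n \leq a}]$ and gives
$$\s \circ \eta(a) \,=\, [\set{n+1}{a_n \leq a}] \,=\, [\set{n \in \w}{a_{n-1} \leq a}],$$
where the second equality holds in $\pwmff$ irrespective of the value assigned to $a_{-1}$, because the two sets differ in at most the single index $n=0$. Setting $a_{-1} = \mathbf{1}$ turns $\seq{a_{n-1}}{n \in \w}$ into a well-defined sequence in $\AA$, and it is eventually small and eventually dense because $\seq{a_n}{n \in \w}$ is and both properties depend only on a tail. Hence $\s \circ \eta$ is the embedding induced by $\seq{a_{n-1}}{n \in \w}$. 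The second computation is the same one used in Theorem~\ref{thm:ESED}:
$$\eta \circ \a^{-1}(a) \,=\, [\set{n}{a_n \leq \a^{-1}(a)}] \,=\, [\set{n}{\a(a_n) \leq a}],$$
so $\eta \circ \a^{-1}$ is induced by $\seq{\a(a_n)}{n \in \w}$. That sequence is again eventually small and eventually dense, since for any partition $\V$ of $\AA$ the image $\a^{-1}(\V)$ is also a partition, and $\a(a_n) \leq v$ iff $a_n \leq \a^{-1}(v)$.

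To finish, note that $\eta \circ \a = \s^{-1} \circ \eta$ is equivalent (compose with $\s$ on the left and $\a^{-1}$ on the right, and conversely) to $\s \circ \eta = \eta \circ \a^{-1}$. By Theorem~\ref{thm:EC}, this latter identity between two embeddings induced by eventually small, eventually dense sequences holds if and only if those sequences are eventually close, i.e., $\seq{a_{n-1}}{n \in \w} \approx \seq{\a(a_n)}{n \in \w}$. There is no substantive obstacle; the only piece of bookkeeping is the undefined term at $n=0$, which is handled cleanly by the convention $a_{-1} = \mathbf{1}$ since altering a single coordinate is invisible modulo finite.
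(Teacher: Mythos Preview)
Your proof is correct and follows precisely the first of the two approaches the paper suggests: modifying the proof of Theorem~\ref{thm:ESED} by interchanging the roles of $\s$ and $\s^{-1}$. Your computations that $\s \circ \eta$ is induced by $\seq{a_{n-1}}{n \in \w}$ and $\eta \circ \a^{-1}$ by $\seq{\a(a_n)}{n \in \w}$, followed by the appeal to Theorem~\ref{thm:EC} and the equivalence $\eta \circ \a = \s^{-1} \circ \eta \Leftrightarrow \s \circ \eta = \eta \circ \a^{-1}$, mirror the paper's argument for Theorem~\ref{thm:ESED} exactly; the paper also notes a second route, deducing the result directly from Theorem~\ref{thm:ESED} via the general fact that $\eta$ embeds $\<\AA,\a\>$ into $\<\BB,\b\>$ iff it embeds $\<\AA,\a^{-1}\>$ into $\<\BB,\b^{-1}\>$.
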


Alternatively, one could derive Theorem~\ref{thm:ESED2} directly from Theorem~\ref{thm:ESED} using the general fact that $\eta$ is an embedding of $\< \AA,\a \>$ into $\< \BB,\b \>$ if and only if $\eta$ is also an embedding from $\< \AA,\a^{-1} \>$ into $\< \BB,\b^{-1} \>$. %(a fact already used in the proof of Theorem~\ref{thm:ESED}).

\begin{theorem}\label{thm:CharacterizingGoodSequences}
Suppose $\< \AA,\a \>$ is a countable dynamical system and $\eta$ is a Boolean-algebraic embedding $\AA \to \pwmff$. The following are equivalent:

\begin{enumerate}
\item The map $\eta$ is an embedding of $\< \AA,\a \>$ into $\< \pwmff,\s \>$.

\vspace{1mm}

\item The map $\eta$ is induced by an eventually small, eventually dense sequence $\seq{a_n}{n \in \w}$ in $\AA$ such that $\seq{\a(a_n)}{n \in \w} \approx \seq{a_{n+1}}{n \in \w}$.

\vspace{1mm}

\item The map $\eta$ is induced by an eventually small, eventually dense sequence $\seq{a_n}{n \in \w}$ in $\AA$ such that $\a(a_n) \wedge a_{n+1} \neq \zero$ for all $n \in \w$.
\end{enumerate}

\end{theorem}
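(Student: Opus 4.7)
The three statements package three equivalent expressions of the fact that the Boolean-algebraic embedding $\eta$ respects the dynamics. My plan is to close the loop $(2) \Rightarrow (1) \Rightarrow (3) \Rightarrow (2)$, with $(1) \Rightarrow (3)$ carrying the main weight.

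The implication $(2) \Rightarrow (1)$ is immediate from Theorem~\ref{thm:ESED}. For $(3) \Rightarrow (2)$, I would fix an arbitrary partition $\U$ of $\AA$. Since $\a$ is an automorphism, $\a^{-1}(\U)$ is again a partition, so applying eventual smallness of $\seq{a_n}{n \in \w}$ once directly to $\U$ and once to $\a^{-1}(\U)$ yields, for all sufficiently large $n$, unique $u, u' \in \U$ with $\a(a_n) \leq u$ and $a_{n+1} \leq u'$. If $u \neq u'$ then $\a(a_n) \wedge a_{n+1} \leq u \wedge u' = \zero$, contradicting the pointwise meet hypothesis; hence $u = u'$, which is exactly $\a(a_n) \approx_\U a_{n+1}$.

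The main work is $(1) \Rightarrow (3)$. My approach is to refine the construction used in the proof of Theorem~\ref{thm:CBA}(2). Assuming $(1)$, that construction (applied to the Boolean-algebraic embedding underlying $\eta$) fixes a nested sequence of partitions $\U_0, \U_1, \dots$ (from Lemma~\ref{lem:NicePartitionSequence}), representatives $X_a \sub \w$ of $\eta(a)$, and carefully chosen thresholds $n_0 < n_1 < \dots$, and then sets $a_n$ to be the unique member of $\U_k$ whose representative contains $n$, for $n \in [n_{k-1}, n_k)$. The key combinatorial observation is that for any pair $(a,b) \in \AA \times \AA$ with $\a(a) \wedge b = \zero$, the set $X_a \cap (X_b - 1)$ has equivalence class $\eta(a) \wedge \s^{-1}(\eta(b)) = \eta(a \wedge \a^{-1}(b)) = \zero$ in $\pwmff$ (using here that $\eta$ is a dynamical embedding, so $\s^{-1} \circ \eta = \eta \circ \a^{-1}$), and hence $X_a \cap (X_b - 1)$ is finite. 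I would therefore add to the inductive choice of $n_k$ the requirement that $n_k$ be large enough to have passed every such bad transition for pairs in $\U_k \times \U_k$ (governing the meet condition within the interval $[n_{k-1}, n_k)$) and in $\U_k \times \U_{k+1}$ (governing the transition at $n_k - 1 \mapsto n_k$ between consecutive levels). Since only finitely many pairs are involved at each level and each contributes finitely many exceptions, this strengthened recursion still succeeds. A finite adjustment of the first few $a_n$'s (which does not alter the induced embedding, since modification on a finite set of indices preserves each $\set{n \in \w}{a_n \leq x}$ modulo $=^*$) then secures $\a(a_n) \wedge a_{n+1} \neq \zero$ for every $n$, delivering $(3)$. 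The main obstacle I expect is the careful bookkeeping of the transitions between consecutive partition levels.
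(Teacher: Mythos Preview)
Your argument is correct, and your treatment of $(2)\Rightarrow(1)$ and $(3)\Rightarrow(2)$ coincides with the paper's. The difference lies in $(1)\Rightarrow(3)$. The paper does not reopen the construction of Theorem~\ref{thm:CBA}(2); instead it first obtains $(1)\Rightarrow(2)$ as a black-box consequence of Theorems~\ref{thm:CBA}(2) and~\ref{thm:ESED}, and then proves $(2)\Rightarrow(3)$ by \emph{coarsening}: given any inducing sequence $\seq{a_n}{n\in\w}$ satisfying $(2)$, it replaces each $a_n$ by the unique member of a suitable $\U_k$ lying above it, and checks that the resulting sequence still induces $\eta$ and now satisfies the pointwise meet condition. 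Your route is more direct but less modular: you must redo the recursion inside Theorem~\ref{thm:CBA}(2), whereas the paper treats that theorem as finished and builds a second sequence on top. Your use of $\s^{-1}\circ\eta = \eta\circ\a^{-1}$ to show $X_a\cap(X_b-1)$ is finite whenever $\a(a)\wedge b=\zero$ is exactly the right engine for the direct approach.

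One indexing slip to repair: requiring ``$n_k$ large enough to have passed every bad transition for pairs in $\U_k\times\U_k$'' does not protect the interval $[n_{k-1},n_k)$, since $n_{k-1}$ is already fixed and the finitely many bad points for $\U_k$ may lie above it. What you need is a one-step look-ahead: when choosing $n_k$, require it to exceed all bad transitions for $\U_{k+1}\times\U_{k+1}$ (protecting the interior of the \emph{next} interval $[n_k,n_{k+1})$) and for $\U_k\times\U_{k+1}$ (protecting the boundary step at $n_k-1$). Since the partition sequence is fixed in advance, this look-ahead is available, and your finite initial adjustment then handles the first interval.
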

\begin{proof}
We've proved already that $(1) \Leftrightarrow (2)$: that $(2) \Rightarrow (1)$ follows from Theorem~\ref{thm:ESED}, and that $(1) \Rightarrow (2)$ follows from Theorem~\ref{thm:ESED} together with Theorem~\ref{thm:CBA}(2).
It remains to prove $(2) \Leftrightarrow (3)$. 

To see that $(3) \Rightarrow (2)$, suppose $\seq{a_n}{n \in \w}$ satisfies condition $(3)$. Let $\U$ be a partition of $\AA$. 
Because $\seq{a_n}{n \in \w}$ is eventually small, $a_{n+1}$ and $\a(a_n)$ are both below $\U$ for sufficiently large $n$. 
Because $\a(a_n) \wedge a_{n+1} \neq \mathbf 0$, this implies $a_{n+1}$ and $\a(a_n)$ are below the same member of $\U$ for sufficiently large $n$. 
Hence $\a(a_n) \approx_\U^* a_{n+1}$. 
As $\U$ was arbitrary, this shows that $\seq{\a(a_n)}{n \in \w} \approx \seq{a_{n+1}}{n \in \w}$.

To see that $(2) \Rightarrow (3)$, suppose $\seq{a_n}{n \in \w}$ satisfies condition $(2)$. 
Let $\U_0,\U_1,\U_2,\dots$ be a sequence of partitions of $\AA$ with the properties stated in Lemma~\ref{lem:NicePartitionSequence}. 
For every $k \in \w$, let $\V_k$ be the partition of $\AA$ generated by the (finite) set $\U_k \cup \set{\a^{-1}(u)}{u \in \U_k}$. 
Because $\seq{a_n}{n \in \w}$ is eventually small, for every $k \in \w$ there is some $N_k \in \w$ such that $a_n$ is below $\V_k$ for all $n \geq N_k$. 
By our choice of $\V_k$, this implies that both $\a(a_n)$ and $a_{n+1}$ are below $\U_k$ whenever $n \geq N_k$. 
Furthermore, because $\seq{\a(a_n)}{n \in \w} \approx \seq{a_{n+1}}{n \in \w}$, $\a(a_n)$ and $a_{n+1}$ are below the same member of $\U_k$ whenever $n \geq N_k$. 

Define a new sequence $\seq{b_n}{n \in \w}$ in $\AA$ by setting 
$b_n$ equal to the unique $u \in \U_k$ with $a_n \leq u$ whenever $n \in [N_k,N_{k+1})$, and $b_n = \mathbf{1}$ whenever $n < N_0$. 
By the conclusion of the previous paragraph, this new sequence satisfies the condition in $(3)$.
\end{proof}

Just as with Theorems \ref{thm:ESED} and \ref{thm:ESED2}, one can obtain a version of  Theorem~\ref{thm:CharacterizingGoodSequences} for $\s^{-1}$. This can be done by suitably modifying the proof above, or by using Theorem~\ref{thm:CharacterizingGoodSequences} directly, together with the fact that $\eta$ is an embedding $\< \AA,\a \> \to \< \BB,\b \>$ if and only if $\eta$ is an embedding $\< \AA,\a^{-1} \> \to \< \BB,\b^{-1} \>$.

\begin{theorem}\label{thm:CharacterizingGoodSequences2}
Suppose $\< \AA,\a \>$ is a countable dynamical system, and let $\eta$ be an embedding $\AA \to \pwmff$. The following are equivalent:

\begin{enumerate}
\item The map $\eta$ is an embedding of $\< \AA,\a \>$ into $\< \pwmff,\s^{-1} \>$.

\vspace{1mm}

\item The map $\eta$ is induced by an eventually small, eventually dense sequence $\seq{a_n}{n \in \w}$ in $\AA$ such that $\seq{a_{n-1}}{n \in \w} \approx \seq{\a(a_n)}{n \in \w}$.

\vspace{1mm}

\item The map $\eta$ is induced by an eventually small, eventually dense sequence $\seq{a_n}{n \in \w}$ in $\AA$ such that $a_{n-1} \wedge \a(a_n) \neq \zero$ for all $n \in \w$.
\end{enumerate}

\end{theorem}

%%%%%%%%%%%%
\section{A false hope cruelly dashed}\label{sec:NonSaturation}
%%%%%%%%%%%%

The goal of this section is to show, as promised at the end of Section~\ref{sec:Incompressibility}, that not every instance of the lifting problem for $\< \pwmff,\s \>$ or for $\< \pwmff,\s^{-1} \>$ has a solution. 

This counterexample motivates a good deal of what follows, such as the definition of ``incompatibility'' and ``polarization'' in Section 9. Understanding the example thoroughly may help to guide one's intuition through the proof of the main theorem. That being said, this counterexample is not strictly necessary for understanding the proof of the main theorem, and this section can be skipped by the reader who is feeling impatient about getting to the proof of the main theorem. 

\begin{theorem}\label{thm:NoGo}
There is an instance of the lifting problem for $\< \pwmff,\s \>$ that has no solution.
\end{theorem}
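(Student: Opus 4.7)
Plan: construct a specific instance of the lifting problem whose non-liftability is detected by the characterization of $\sigma$-respecting embeddings (Theorem~\ref{thm:CharacterizingGoodSequences}). The idea is to bake a combinatorial directional obstruction into $\langle \B,\b\rangle$ that any candidate lifting would have to defeat but cannot.

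First, I would pick $\langle \A,\a\rangle$ together with a specific embedding $\eta:\A\to\pwmff$ so that, by Theorem~\ref{thm:CharacterizingGoodSequences}, $\eta$ is induced by an eventually small, eventually dense sequence $\seq{a_n}{n\in\w}$ in $\A$ with $\a(a_n)\wedge a_{n+1}\neq\zero$ for every $n$. This sequence captures the ``polarization'' of $\eta$ — the directional information about how $\A$ sits inside $\< \pwmff,\s \>$. I would then build $\langle \B,\b\rangle\supseteq\langle\A,\a\rangle$ by adjoining a small, carefully chosen set of new elements, whose $\b$-action is compatible with $\a$ on $\iota(\A)$, and then check incompressibility of $\B$ by hand. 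The check of incompressibility must rule out cheap attempts (e.g.\ attaching a $2$-periodic atom under a $2$-periodic element of $\A$, which immediately collapses incompressibility), so the construction of $\B$ needs an extra degree of freedom beyond $\A$ — for example, a third generator $\beta$-mixed with the structure coming from $\A$.

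Assuming the construction, I would prove non-liftability as follows. Suppose for contradiction $\bar\eta:\B\to\pwmff$ is a lifting. By Theorem~\ref{thm:CharacterizingGoodSequences}, $\bar\eta$ is induced by some eventually small, eventually dense sequence $\seq{b_n}{n\in\w}$ with $\b(b_n)\wedge b_{n+1}\neq\zero$. Because $\bar\eta\circ\iota=\eta$, the ``$\A$-trace'' of $\seq{b_n}{n\in\w}$ (i.e., for each partition $\U$ of $\A$, the record of which member of $\iota(\U)$ each $b_n$ lies below) must eventually agree with the $\A$-trace of $\seq{a_n}{n\in\w}$; this is the eventual-closeness criterion of Theorem~\ref{thm:EC}. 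Combining the two constraints — that $\b(b_n)\wedge b_{n+1}\neq\zero$ in $\B$ \emph{and} that the $\A$-trace of $\seq{b_n}{n\in\w}$ is forced by $\seq{a_n}{n\in\w}$ — should produce a contradiction by exhibiting a new element $b\in\B$ for which (i) the density requirement forces $b_n\leq b$ infinitely often, but (ii) no such $n$ admits a successor $b_{n+1}$ consistent with the $\sigma$-compatibility condition while also respecting $\seq{a_n}{n\in\w}$. In other words, the flow pattern required by $\b$ in $\B$ cannot be realized along any sequence whose $\A$-trace is the prescribed $\seq{a_n}{n\in\w}$.

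The main obstacle is constructing $\B$ with the right dynamical shape. The obstruction must be directional (so that the analogous problem with $\s^{-1}$ would be solvable by reversing the sequence), it must survive the incompressibility check, and it must live in a finite enough fragment of $\B$ that it can be verified by tracking only a small number of elements. I expect the right $\B$ to be describable as a finite directed graph of distinguished elements whose edges record the relations $\b(x)\wedge y\neq\zero$; the obstruction will then amount to saying that this little graph can be oriented as a subsystem of $\<\pwmff,\s^{-1}\>$ but not of $\<\pwmff,\s\>$ when the projection to $\A$ is fixed as $\eta$. This is exactly the kind of phenomenon the paper announces will be formalized under the names \emph{incompatibility} and \emph{polarization} in Section~9, and I would expect the concrete example to be a minimal witness to that phenomenon.
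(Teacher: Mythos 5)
Your strategy is exactly the paper's: exhibit $\langle\AA,\a\rangle$, an induced embedding $\eta$ given by an eventually small, eventually dense sequence $\seq{a_n}{n\in\w}$ with $\a(a_n)\wedge a_{n+1}\neq\zero$, and a superstructure $\langle\BB,\b\rangle$ carrying a directional obstruction; then argue that any lifting would be induced by a sequence $\seq{b_n}{n\in\w}$ whose $\AA$-trace is forced to agree with that of $\seq{a_n}{n\in\w}$, and derive a contradiction from the flow pattern that $\b$ imposes. You have also correctly diagnosed that the obstruction must be a finite-digraph phenomenon and that it is the minimal witness to the incompatibility/polarization machinery of Section~9. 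But the theorem is an existence claim, and your proposal explicitly defers its entire content: ``Assuming the construction, I would prove non-liftability as follows.'' No candidate $\AA$, $\BB$, $\iota$, or sequence $\seq{a_n}{n\in\w}$ is produced, and no argument is given that a system with the required shape exists at all. That is the gap, and it is the hard part: a careless choice of $\BB$ either fails incompressibility or admits a lifting after all.

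For the record, the paper fills the gap as follows. $\AA$ is the clopen algebra of a three-point compactification $X$ of four copies of $\Z$, arranged as a chain of compactification points $a$--$b$--$c$ with $\a$ induced by shifting each copy of $\Z$; $\BB$ is the clopen algebra of a four-point compactification $Y$ adding a further point $d$ beyond $c$, and $\iota$ is dual to the projection $Y\to X$ that folds the region near $d$ back onto the region near $b$. The sequence $\seq{a_n}{n\in\w}$ consists of atoms and is engineered so that every excursion from near $c$ into near $b$ continues on toward $a$ and \emph{never} returns directly to $c$. The contradiction is then not the one-step failure your item (ii) suggests (``no such $n$ admits a successor $b_{n+1}$''): locally every step of a candidate $\seq{b_n}{n\in\w}$ is fine. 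Rather, eventual density forces $\seq{b_n}{n\in\w}$ to visit the region near $d$ infinitely often, and each such visit must both enter from and exit into the region near $c$ (since those are the only edges available in the relevant partition digraph of $\BB$); pushing a whole such excursion through $\iota$ forces a $c\to b\to c$ pattern in $\seq{a_n}{n\in\w}$, which by construction never occurs. So the argument has to track an entire excursion between two partition classes, not a single transition; your closing remark about the unrealizable ``flow pattern'' points in the right direction, but the proposal as written does not identify which pattern, in which concrete system, is unrealizable.
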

\begin{proof}
We begin by describing a particular instance of the lifting problem for $\< \pwmff,\s \>$. Recall that an instance of the lifting problem consists of two countable incompressible dynamical systems $\<\AA,\a\>$ and $\<\BB,\b\>$, an embedding $\iota$ from $\<\AA,\a\>$ into $\<\BB,\b\>$, and an embedding $\eta$ from $\< \AA,\a \>$ into $\< \pwmff,\s \>$. Let us start by describing the Stone duals of $\AA$ and $\BB$.

The topological space $X$ (whose Stone dual gives us $\AA$) is a $3$-point compactification of $4$ copies of $\Z$:
$$X \,=\, \big( \{0,1,2,3\} \times \Z \big) \cup \{ a,b,c \}$$
where points of the form $(i,n)$ are discrete, 
the basic neighborhoods of $a$ have the form $\{a\} \cup \big( \{0,1\} \times (-\infty,n] \big)$, 
the basic neighborhoods of $c$ have the form $\{c\} \cup \big( \{2,3\} \times [n,\infty) \big)$, 
and the basic neighborhoods of $b$ have the form $\{b\} \cup \big(\{0,1\} \times [n,\infty) \big) \cup \big( \{2,3\} \times (-\infty,n] \big)$. 

\begin{center}
\begin{tikzpicture}[yscale=.75]

\node at (0,0) {\scriptsize $\bullet$};
\node at (4,0) {\scriptsize $\bullet$};
\node at (8,0) {\scriptsize $\bullet$};

\node at (0,.4) {\scriptsize $a$};
\node at (4,.4) {\scriptsize $b$};
\node at (8,.4) {\scriptsize $c$};

\node at (2,1) {\scriptsize $\{0\} \times \Z$};
\node at (2,-1) {\scriptsize $\{1\} \times \Z$};
\node at (6,1) {\scriptsize $\{2\} \times \Z$};
\node at (6,-1) {\scriptsize $\{3\} \times \Z$};

\node at (2,.6) {\Large .};
\node at (1.6,.58) {\Large .};
\node at (2.4,.58) {\Large .};
\node at (1.25,.55) {\Large .};
\node at (2.75,.55) {\Large .};
\node at (.95,.51) {\Large .};
\node at (3.05,.51) {\Large .};
\node at (.7,.46) {\Large .};
\node at (3.3,.46) {\Large .};
\node at (.5,.4) {\Large .};
\node at (3.5,.4) {\Large .};
\node at (.35,.32) {\Large .};
\node at (3.65,.32) {\Large .};
\node at (.25,.25) {\Large .};
\node at (3.75,.25) {\Large .};
\node at (.19,.2) {\Large .};
\node at (3.81,.2) {\Large .};
\node at (.13,.14) {\Large .};
\node at (3.87,.14) {\Large .};
\node at (.09,.1) {\Large .};
\node at (3.91,.1) {\Large .};
\node at (.06,.06) {\Large .};
\node at (3.94,.06) {\Large .};
\node at (.03,.03) {\Large .};
\node at (3.97,.03) {\Large .};
%\draw [->] (3.5,.8) to [out=164,in=16] (.5,.8);

\begin{scope}[yscale={-1}]
\node at (2,.6) {\Large .};
\node at (1.6,.58) {\Large .};
\node at (2.4,.58) {\Large .};
\node at (1.25,.55) {\Large .};
\node at (2.75,.55) {\Large .};
\node at (.95,.51) {\Large .};
\node at (3.05,.51) {\Large .};
\node at (.7,.46) {\Large .};
\node at (3.3,.46) {\Large .};
\node at (.5,.4) {\Large .};
\node at (3.5,.4) {\Large .};
\node at (.35,.32) {\Large .};
\node at (3.65,.32) {\Large .};
\node at (.25,.25) {\Large .};
\node at (3.75,.25) {\Large .};
\node at (.19,.2) {\Large .};
\node at (3.81,.2) {\Large .};
\node at (.13,.14) {\Large .};
\node at (3.87,.14) {\Large .};
\node at (.09,.1) {\Large .};
\node at (3.91,.1) {\Large .};
\node at (.06,.06) {\Large .};
\node at (3.94,.06) {\Large .};
\node at (.03,.03) {\Large .};
\node at (3.97,.03) {\Large .};
%\draw [<-] (3.5,.8) to [out=164,in=16] (.5,.8);
\end{scope}

\begin{scope}[shift={(4,0)}]
\node at (2,.6) {\Large .};
\node at (1.6,.58) {\Large .};
\node at (2.4,.58) {\Large .};
\node at (1.25,.55) {\Large .};
\node at (2.75,.55) {\Large .};
\node at (.95,.51) {\Large .};
\node at (3.05,.51) {\Large .};
\node at (.7,.46) {\Large .};
\node at (3.3,.46) {\Large .};
\node at (.5,.4) {\Large .};
\node at (3.5,.4) {\Large .};
\node at (.35,.32) {\Large .};
\node at (3.65,.32) {\Large .};
\node at (.25,.25) {\Large .};
\node at (3.75,.25) {\Large .};
\node at (.19,.2) {\Large .};
\node at (3.81,.2) {\Large .};
\node at (.13,.14) {\Large .};
\node at (3.87,.14) {\Large .};
\node at (.09,.1) {\Large .};
\node at (3.91,.1) {\Large .};
\node at (.06,.06) {\Large .};
\node at (3.94,.06) {\Large .};
\node at (.03,.03) {\Large .};
\node at (3.97,.03) {\Large .};
%\draw [->] (3.5,.8) to [out=164,in=16] (.5,.8);

\begin{scope}[yscale={-1}]
\node at (2,.6) {\Large .};
\node at (1.6,.58) {\Large .};
\node at (2.4,.58) {\Large .};
\node at (1.25,.55) {\Large .};
\node at (2.75,.55) {\Large .};
\node at (.95,.51) {\Large .};
\node at (3.05,.51) {\Large .};
\node at (.7,.46) {\Large .};
\node at (3.3,.46) {\Large .};
\node at (.5,.4) {\Large .};
\node at (3.5,.4) {\Large .};
\node at (.35,.32) {\Large .};
\node at (3.65,.32) {\Large .};
\node at (.25,.25) {\Large .};
\node at (3.75,.25) {\Large .};
\node at (.19,.2) {\Large .};
\node at (3.81,.2) {\Large .};
\node at (.13,.14) {\Large .};
\node at (3.87,.14) {\Large .};
\node at (.09,.1) {\Large .};
\node at (3.91,.1) {\Large .};
\node at (.06,.06) {\Large .};
\node at (3.94,.06) {\Large .};
\node at (.03,.03) {\Large .};
\node at (3.97,.03) {\Large .};
%\draw [<-] (3.5,.8) to [out=164,in=16] (.5,.8);
\end{scope}
\end{scope}

\end{tikzpicture}
\end{center}

\noindent Let $\AA$ denote the Boolean algebra of clopen subsets of $X$. Define a homeomorphism $f: X \to X$ by setting
$$f(a) = a, \quad f(b) = b, \quad f(c) = c,$$
$$f(i,n) = (i,n+1) \text{ for $i = 0,2$},$$
$$f(i,n) = (i,n-1) \text{ for $i = 1,3$}.$$

\begin{center}
\begin{tikzpicture}[yscale=.75]

\node at (0,0) {\scriptsize $\bullet$};
\node at (4,0) {\scriptsize $\bullet$};
\node at (8,0) {\scriptsize $\bullet$};

\node at (2,.6) {\Large .};
\node at (1.6,.58) {\Large .};
\node at (2.4,.58) {\Large .};
\node at (1.25,.55) {\Large .};
\node at (2.75,.55) {\Large .};
\node at (.95,.51) {\Large .};
\node at (3.05,.51) {\Large .};
\node at (.7,.46) {\Large .};
\node at (3.3,.46) {\Large .};
\node at (.5,.4) {\Large .};
\node at (3.5,.4) {\Large .};
\node at (.35,.32) {\Large .};
\node at (3.65,.32) {\Large .};
\node at (.25,.25) {\Large .};
\node at (3.75,.25) {\Large .};
\node at (.19,.2) {\Large .};
\node at (3.81,.2) {\Large .};
\node at (.13,.14) {\Large .};
\node at (3.87,.14) {\Large .};
\node at (.09,.1) {\Large .};
\node at (3.91,.1) {\Large .};
\node at (.06,.06) {\Large .};
\node at (3.94,.06) {\Large .};
\node at (.03,.03) {\Large .};
\node at (3.97,.03) {\Large .};
\draw [<-] (3.5,.8) to [out=164,in=16] (.5,.8);

\begin{scope}[yscale={-1}]
\node at (2,.6) {\Large .};
\node at (1.6,.58) {\Large .};
\node at (2.4,.58) {\Large .};
\node at (1.25,.55) {\Large .};
\node at (2.75,.55) {\Large .};
\node at (.95,.51) {\Large .};
\node at (3.05,.51) {\Large .};
\node at (.7,.46) {\Large .};
\node at (3.3,.46) {\Large .};
\node at (.5,.4) {\Large .};
\node at (3.5,.4) {\Large .};
\node at (.35,.32) {\Large .};
\node at (3.65,.32) {\Large .};
\node at (.25,.25) {\Large .};
\node at (3.75,.25) {\Large .};
\node at (.19,.2) {\Large .};
\node at (3.81,.2) {\Large .};
\node at (.13,.14) {\Large .};
\node at (3.87,.14) {\Large .};
\node at (.09,.1) {\Large .};
\node at (3.91,.1) {\Large .};
\node at (.06,.06) {\Large .};
\node at (3.94,.06) {\Large .};
\node at (.03,.03) {\Large .};
\node at (3.97,.03) {\Large .};
\draw [->] (3.5,.8) to [out=164,in=16] (.5,.8);
\end{scope}

\begin{scope}[shift={(4,0)}]
\node at (2,.6) {\Large .};
\node at (1.6,.58) {\Large .};
\node at (2.4,.58) {\Large .};
\node at (1.25,.55) {\Large .};
\node at (2.75,.55) {\Large .};
\node at (.95,.51) {\Large .};
\node at (3.05,.51) {\Large .};
\node at (.7,.46) {\Large .};
\node at (3.3,.46) {\Large .};
\node at (.5,.4) {\Large .};
\node at (3.5,.4) {\Large .};
\node at (.35,.32) {\Large .};
\node at (3.65,.32) {\Large .};
\node at (.25,.25) {\Large .};
\node at (3.75,.25) {\Large .};
\node at (.19,.2) {\Large .};
\node at (3.81,.2) {\Large .};
\node at (.13,.14) {\Large .};
\node at (3.87,.14) {\Large .};
\node at (.09,.1) {\Large .};
\node at (3.91,.1) {\Large .};
\node at (.06,.06) {\Large .};
\node at (3.94,.06) {\Large .};
\node at (.03,.03) {\Large .};
\node at (3.97,.03) {\Large .};
\draw [<-] (3.5,.8) to [out=164,in=16] (.5,.8);

\begin{scope}[yscale={-1}]
\node at (2,.6) {\Large .};
\node at (1.6,.58) {\Large .};
\node at (2.4,.58) {\Large .};
\node at (1.25,.55) {\Large .};
\node at (2.75,.55) {\Large .};
\node at (.95,.51) {\Large .};
\node at (3.05,.51) {\Large .};
\node at (.7,.46) {\Large .};
\node at (3.3,.46) {\Large .};
\node at (.5,.4) {\Large .};
\node at (3.5,.4) {\Large .};
\node at (.35,.32) {\Large .};
\node at (3.65,.32) {\Large .};
\node at (.25,.25) {\Large .};
\node at (3.75,.25) {\Large .};
\node at (.19,.2) {\Large .};
\node at (3.81,.2) {\Large .};
\node at (.13,.14) {\Large .};
\node at (3.87,.14) {\Large .};
\node at (.09,.1) {\Large .};
\node at (3.91,.1) {\Large .};
\node at (.06,.06) {\Large .};
\node at (3.94,.06) {\Large .};
\node at (.03,.03) {\Large .};
\node at (3.97,.03) {\Large .};
\draw [->] (3.5,.8) to [out=164,in=16] (.5,.8);
\end{scope}
\end{scope}

\end{tikzpicture}
\end{center}

\noindent Define $\a: \AA \to \AA$ by setting $\a(A) = f[A]$ (where $f[A]$ denotes the image of $A$ under the map $f$). This is well-defined, because homeomorphisms map clopen sets to clopen sets, and it is an automorphism of $\AA$. So $\< \AA,\a \>$ is an algebraic dynamical system.

The topological space $Y$ (whose Stone dual gives us $\BB$) is a $4$-point compactification of $6$ copies of $\Z$:
$$Y \,=\, \big( \{0,1,2,3,4,5\} \times \Z \big) \cup \{ a,b,c,d \}$$
where points of the form $(i,n)$ are discrete, 
the basic neighborhoods of $a$ have the form $\{a\} \cup \big( \{0,1\} \times (-\infty,n] \big)$, 
the basic neighborhoods of $d$ have the form $\{d\} \cup \big( \{4,5\} \times [n,\infty) \big)$, 
the basic neighborhoods of $b$ have the form $\{b\} \cup \big( \{0,1\} \times [n,\infty) \big) \cup \big( \{2,3\} \times (-\infty,n] \big)$, 
and the basic neighborhoods of $c$ have the form $\{c\} \cup \big( \{2,3\} \times [n,\infty) \big) \cup \big( \{4,5\} \times (-\infty,n] \big)$. 

\begin{center}
\begin{tikzpicture}[yscale=.75]

\node at (0,0) {\scriptsize $\bullet$};
\node at (4,0) {\scriptsize $\bullet$};
\node at (8,0) {\scriptsize $\bullet$};
\node at (12,0) {\scriptsize $\bullet$};

\node at (0,.4) {\scriptsize $a$};
\node at (4,.4) {\scriptsize $b$};
\node at (8,.4) {\scriptsize $c$};
\node at (12,.4) {\scriptsize $d$};

\node at (2,1) {\scriptsize $\{0\} \times \Z$};
\node at (2,-1) {\scriptsize $\{1\} \times \Z$};
\node at (6,1) {\scriptsize $\{2\} \times \Z$};
\node at (6,-1) {\scriptsize $\{3\} \times \Z$};
\node at (10,1) {\scriptsize $\{4\} \times \Z$};
\node at (10,-1) {\scriptsize $\{5\} \times \Z$};

\node at (2,.6) {\Large .};
\node at (1.6,.58) {\Large .};
\node at (2.4,.58) {\Large .};
\node at (1.25,.55) {\Large .};
\node at (2.75,.55) {\Large .};
\node at (.95,.51) {\Large .};
\node at (3.05,.51) {\Large .};
\node at (.7,.46) {\Large .};
\node at (3.3,.46) {\Large .};
\node at (.5,.4) {\Large .};
\node at (3.5,.4) {\Large .};
\node at (.35,.32) {\Large .};
\node at (3.65,.32) {\Large .};
\node at (.25,.25) {\Large .};
\node at (3.75,.25) {\Large .};
\node at (.19,.2) {\Large .};
\node at (3.81,.2) {\Large .};
\node at (.13,.14) {\Large .};
\node at (3.87,.14) {\Large .};
\node at (.09,.1) {\Large .};
\node at (3.91,.1) {\Large .};
\node at (.06,.06) {\Large .};
\node at (3.94,.06) {\Large .};
\node at (.03,.03) {\Large .};
\node at (3.97,.03) {\Large .};
%\draw [->] (3.5,.8) to [out=164,in=16] (.5,.8);

\begin{scope}[yscale={-1}]
\node at (2,.6) {\Large .};
\node at (1.6,.58) {\Large .};
\node at (2.4,.58) {\Large .};
\node at (1.25,.55) {\Large .};
\node at (2.75,.55) {\Large .};
\node at (.95,.51) {\Large .};
\node at (3.05,.51) {\Large .};
\node at (.7,.46) {\Large .};
\node at (3.3,.46) {\Large .};
\node at (.5,.4) {\Large .};
\node at (3.5,.4) {\Large .};
\node at (.35,.32) {\Large .};
\node at (3.65,.32) {\Large .};
\node at (.25,.25) {\Large .};
\node at (3.75,.25) {\Large .};
\node at (.19,.2) {\Large .};
\node at (3.81,.2) {\Large .};
\node at (.13,.14) {\Large .};
\node at (3.87,.14) {\Large .};
\node at (.09,.1) {\Large .};
\node at (3.91,.1) {\Large .};
\node at (.06,.06) {\Large .};
\node at (3.94,.06) {\Large .};
\node at (.03,.03) {\Large .};
\node at (3.97,.03) {\Large .};
%\draw [<-] (3.5,.8) to [out=164,in=16] (.5,.8);
\end{scope}

\begin{scope}[shift={(4,0)}]
\node at (2,.6) {\Large .};
\node at (1.6,.58) {\Large .};
\node at (2.4,.58) {\Large .};
\node at (1.25,.55) {\Large .};
\node at (2.75,.55) {\Large .};
\node at (.95,.51) {\Large .};
\node at (3.05,.51) {\Large .};
\node at (.7,.46) {\Large .};
\node at (3.3,.46) {\Large .};
\node at (.5,.4) {\Large .};
\node at (3.5,.4) {\Large .};
\node at (.35,.32) {\Large .};
\node at (3.65,.32) {\Large .};
\node at (.25,.25) {\Large .};
\node at (3.75,.25) {\Large .};
\node at (.19,.2) {\Large .};
\node at (3.81,.2) {\Large .};
\node at (.13,.14) {\Large .};
\node at (3.87,.14) {\Large .};
\node at (.09,.1) {\Large .};
\node at (3.91,.1) {\Large .};
\node at (.06,.06) {\Large .};
\node at (3.94,.06) {\Large .};
\node at (.03,.03) {\Large .};
\node at (3.97,.03) {\Large .};
%\draw [->] (3.5,.8) to [out=164,in=16] (.5,.8);

\begin{scope}[yscale={-1}]
\node at (2,.6) {\Large .};
\node at (1.6,.58) {\Large .};
\node at (2.4,.58) {\Large .};
\node at (1.25,.55) {\Large .};
\node at (2.75,.55) {\Large .};
\node at (.95,.51) {\Large .};
\node at (3.05,.51) {\Large .};
\node at (.7,.46) {\Large .};
\node at (3.3,.46) {\Large .};
\node at (.5,.4) {\Large .};
\node at (3.5,.4) {\Large .};
\node at (.35,.32) {\Large .};
\node at (3.65,.32) {\Large .};
\node at (.25,.25) {\Large .};
\node at (3.75,.25) {\Large .};
\node at (.19,.2) {\Large .};
\node at (3.81,.2) {\Large .};
\node at (.13,.14) {\Large .};
\node at (3.87,.14) {\Large .};
\node at (.09,.1) {\Large .};
\node at (3.91,.1) {\Large .};
\node at (.06,.06) {\Large .};
\node at (3.94,.06) {\Large .};
\node at (.03,.03) {\Large .};
\node at (3.97,.03) {\Large .};
%\draw [<-] (3.5,.8) to [out=164,in=16] (.5,.8);
\end{scope}
\end{scope}

\begin{scope}[shift={(8,0)}]
\node at (2,.6) {\Large .};
\node at (1.6,.58) {\Large .};
\node at (2.4,.58) {\Large .};
\node at (1.25,.55) {\Large .};
\node at (2.75,.55) {\Large .};
\node at (.95,.51) {\Large .};
\node at (3.05,.51) {\Large .};
\node at (.7,.46) {\Large .};
\node at (3.3,.46) {\Large .};
\node at (.5,.4) {\Large .};
\node at (3.5,.4) {\Large .};
\node at (.35,.32) {\Large .};
\node at (3.65,.32) {\Large .};
\node at (.25,.25) {\Large .};
\node at (3.75,.25) {\Large .};
\node at (.19,.2) {\Large .};
\node at (3.81,.2) {\Large .};
\node at (.13,.14) {\Large .};
\node at (3.87,.14) {\Large .};
\node at (.09,.1) {\Large .};
\node at (3.91,.1) {\Large .};
\node at (.06,.06) {\Large .};
\node at (3.94,.06) {\Large .};
\node at (.03,.03) {\Large .};
\node at (3.97,.03) {\Large .};
%\draw [->] (3.5,.8) to [out=164,in=16] (.5,.8);

\begin{scope}[yscale={-1}]
\node at (2,.6) {\Large .};
\node at (1.6,.58) {\Large .};
\node at (2.4,.58) {\Large .};
\node at (1.25,.55) {\Large .};
\node at (2.75,.55) {\Large .};
\node at (.95,.51) {\Large .};
\node at (3.05,.51) {\Large .};
\node at (.7,.46) {\Large .};
\node at (3.3,.46) {\Large .};
\node at (.5,.4) {\Large .};
\node at (3.5,.4) {\Large .};
\node at (.35,.32) {\Large .};
\node at (3.65,.32) {\Large .};
\node at (.25,.25) {\Large .};
\node at (3.75,.25) {\Large .};
\node at (.19,.2) {\Large .};
\node at (3.81,.2) {\Large .};
\node at (.13,.14) {\Large .};
\node at (3.87,.14) {\Large .};
\node at (.09,.1) {\Large .};
\node at (3.91,.1) {\Large .};
\node at (.06,.06) {\Large .};
\node at (3.94,.06) {\Large .};
\node at (.03,.03) {\Large .};
\node at (3.97,.03) {\Large .};
%\draw [<-] (3.5,.8) to [out=164,in=16] (.5,.8);
\end{scope}
\end{scope}

\end{tikzpicture}
\end{center}

\noindent Let $\BB$ denote the Boolean algebra of clopen subsets of $Y$. Define a homeomorphism $g: Y \to Y$ by setting
$$g(a) = a, \quad g(b) = b, \quad g(c) = c, \quad g(d) = d,$$
$$g(i,n) = (i,n+1) \text{ for $i = 0,2,4$},$$
$$g(i,n) = (i,n-1) \text{ for $i = 1,3,5$}.$$

\begin{center}
\begin{tikzpicture}[yscale=.75]

\node at (0,0) {\scriptsize $\bullet$};
\node at (4,0) {\scriptsize $\bullet$};
\node at (8,0) {\scriptsize $\bullet$};
\node at (12,0) {\scriptsize $\bullet$};

\node at (2,.6) {\Large .};
\node at (1.6,.58) {\Large .};
\node at (2.4,.58) {\Large .};
\node at (1.25,.55) {\Large .};
\node at (2.75,.55) {\Large .};
\node at (.95,.51) {\Large .};
\node at (3.05,.51) {\Large .};
\node at (.7,.46) {\Large .};
\node at (3.3,.46) {\Large .};
\node at (.5,.4) {\Large .};
\node at (3.5,.4) {\Large .};
\node at (.35,.32) {\Large .};
\node at (3.65,.32) {\Large .};
\node at (.25,.25) {\Large .};
\node at (3.75,.25) {\Large .};
\node at (.19,.2) {\Large .};
\node at (3.81,.2) {\Large .};
\node at (.13,.14) {\Large .};
\node at (3.87,.14) {\Large .};
\node at (.09,.1) {\Large .};
\node at (3.91,.1) {\Large .};
\node at (.06,.06) {\Large .};
\node at (3.94,.06) {\Large .};
\node at (.03,.03) {\Large .};
\node at (3.97,.03) {\Large .};
\draw [<-] (3.5,.8) to [out=164,in=16] (.5,.8);

\begin{scope}[yscale={-1}]
\node at (2,.6) {\Large .};
\node at (1.6,.58) {\Large .};
\node at (2.4,.58) {\Large .};
\node at (1.25,.55) {\Large .};
\node at (2.75,.55) {\Large .};
\node at (.95,.51) {\Large .};
\node at (3.05,.51) {\Large .};
\node at (.7,.46) {\Large .};
\node at (3.3,.46) {\Large .};
\node at (.5,.4) {\Large .};
\node at (3.5,.4) {\Large .};
\node at (.35,.32) {\Large .};
\node at (3.65,.32) {\Large .};
\node at (.25,.25) {\Large .};
\node at (3.75,.25) {\Large .};
\node at (.19,.2) {\Large .};
\node at (3.81,.2) {\Large .};
\node at (.13,.14) {\Large .};
\node at (3.87,.14) {\Large .};
\node at (.09,.1) {\Large .};
\node at (3.91,.1) {\Large .};
\node at (.06,.06) {\Large .};
\node at (3.94,.06) {\Large .};
\node at (.03,.03) {\Large .};
\node at (3.97,.03) {\Large .};
\draw [->] (3.5,.8) to [out=164,in=16] (.5,.8);
\end{scope}

\begin{scope}[shift={(4,0)}]
\node at (2,.6) {\Large .};
\node at (1.6,.58) {\Large .};
\node at (2.4,.58) {\Large .};
\node at (1.25,.55) {\Large .};
\node at (2.75,.55) {\Large .};
\node at (.95,.51) {\Large .};
\node at (3.05,.51) {\Large .};
\node at (.7,.46) {\Large .};
\node at (3.3,.46) {\Large .};
\node at (.5,.4) {\Large .};
\node at (3.5,.4) {\Large .};
\node at (.35,.32) {\Large .};
\node at (3.65,.32) {\Large .};
\node at (.25,.25) {\Large .};
\node at (3.75,.25) {\Large .};
\node at (.19,.2) {\Large .};
\node at (3.81,.2) {\Large .};
\node at (.13,.14) {\Large .};
\node at (3.87,.14) {\Large .};
\node at (.09,.1) {\Large .};
\node at (3.91,.1) {\Large .};
\node at (.06,.06) {\Large .};
\node at (3.94,.06) {\Large .};
\node at (.03,.03) {\Large .};
\node at (3.97,.03) {\Large .};
\draw [<-] (3.5,.8) to [out=164,in=16] (.5,.8);

\begin{scope}[yscale={-1}]
\node at (2,.6) {\Large .};
\node at (1.6,.58) {\Large .};
\node at (2.4,.58) {\Large .};
\node at (1.25,.55) {\Large .};
\node at (2.75,.55) {\Large .};
\node at (.95,.51) {\Large .};
\node at (3.05,.51) {\Large .};
\node at (.7,.46) {\Large .};
\node at (3.3,.46) {\Large .};
\node at (.5,.4) {\Large .};
\node at (3.5,.4) {\Large .};
\node at (.35,.32) {\Large .};
\node at (3.65,.32) {\Large .};
\node at (.25,.25) {\Large .};
\node at (3.75,.25) {\Large .};
\node at (.19,.2) {\Large .};
\node at (3.81,.2) {\Large .};
\node at (.13,.14) {\Large .};
\node at (3.87,.14) {\Large .};
\node at (.09,.1) {\Large .};
\node at (3.91,.1) {\Large .};
\node at (.06,.06) {\Large .};
\node at (3.94,.06) {\Large .};
\node at (.03,.03) {\Large .};
\node at (3.97,.03) {\Large .};
\draw [->] (3.5,.8) to [out=164,in=16] (.5,.8);
\end{scope}
\end{scope}

\begin{scope}[shift={(8,0)}]
\node at (2,.6) {\Large .};
\node at (1.6,.58) {\Large .};
\node at (2.4,.58) {\Large .};
\node at (1.25,.55) {\Large .};
\node at (2.75,.55) {\Large .};
\node at (.95,.51) {\Large .};
\node at (3.05,.51) {\Large .};
\node at (.7,.46) {\Large .};
\node at (3.3,.46) {\Large .};
\node at (.5,.4) {\Large .};
\node at (3.5,.4) {\Large .};
\node at (.35,.32) {\Large .};
\node at (3.65,.32) {\Large .};
\node at (.25,.25) {\Large .};
\node at (3.75,.25) {\Large .};
\node at (.19,.2) {\Large .};
\node at (3.81,.2) {\Large .};
\node at (.13,.14) {\Large .};
\node at (3.87,.14) {\Large .};
\node at (.09,.1) {\Large .};
\node at (3.91,.1) {\Large .};
\node at (.06,.06) {\Large .};
\node at (3.94,.06) {\Large .};
\node at (.03,.03) {\Large .};
\node at (3.97,.03) {\Large .};
\draw [<-] (3.5,.8) to [out=164,in=16] (.5,.8);

\begin{scope}[yscale={-1}]
\node at (2,.6) {\Large .};
\node at (1.6,.58) {\Large .};
\node at (2.4,.58) {\Large .};
\node at (1.25,.55) {\Large .};
\node at (2.75,.55) {\Large .};
\node at (.95,.51) {\Large .};
\node at (3.05,.51) {\Large .};
\node at (.7,.46) {\Large .};
\node at (3.3,.46) {\Large .};
\node at (.5,.4) {\Large .};
\node at (3.5,.4) {\Large .};
\node at (.35,.32) {\Large .};
\node at (3.65,.32) {\Large .};
\node at (.25,.25) {\Large .};
\node at (3.75,.25) {\Large .};
\node at (.19,.2) {\Large .};
\node at (3.81,.2) {\Large .};
\node at (.13,.14) {\Large .};
\node at (3.87,.14) {\Large .};
\node at (.09,.1) {\Large .};
\node at (3.91,.1) {\Large .};
\node at (.06,.06) {\Large .};
\node at (3.94,.06) {\Large .};
\node at (.03,.03) {\Large .};
\node at (3.97,.03) {\Large .};
\draw [->] (3.5,.8) to [out=164,in=16] (.5,.8);
\end{scope}
\end{scope}

\end{tikzpicture}
\end{center}

\noindent Define $\b: \BB \to \BB$ by setting $\b(B) = g[B]$. This is an automorphism of $\BB$, so $\< \BB,\b \>$ is an algebraic dynamical system.

Note that $X$ and $Y$ have been defined so that $X \sub Y$, and $f = g \rest X$. Define a projection map $\pi: Y \to X$ by setting $\pi(x) = x$ for all $x \in X$, and otherwise set
$$\pi(d) = b, \qquad \pi(4,n) = (3,-n), \qquad \pi(5,n) = (2,-n).$$
Roughly speaking, $\pi$ takes the rightmost third of $Y$ from our picture, and rotates it around the point $c$ precisely one half turn, until it sits on top of the middle third. We then define $\iota(A) = \pi^{-1}(A)$ for all $A \in \AA$. This is a Boolean-algebraic embedding (the Stone dual of $\pi$). On the topological side, we have $\pi \circ g = f \circ \pi$, and it follows that $\iota \circ \a = \b \circ \iota$ on the algebraic side. Hence $\iota$ is an embedding of $\< \AA,\a \>$ into $\< \BB,\b \>$.

To finish our description of the counterexample, we must define an embedding $\eta$ from $\< \AA,\a \>$ into $\< \pwmff,\s \>$. 
We do this by describing an eventually small, eventually dense sequence $\seq{a_n}{n \in \w}$ of members of $\AA$ such that $\seq{\a(a_n)}{n \in \w} \approx \seq{a_{n+1}}{n \in \w}$. 

The sequence is chosen so that each $a_n$ is a clopen singleton in $X$. The pattern of the sequence is depicted below. Roughly, the idea is to follow the action of $\a$ for increasingly long stretches along each of the $4$ copies of $\Z$ in $X$, jumping to the next stretch at some prescribed time. 

\begin{center}
\begin{tikzpicture}[yscale=.75]

\node at (0,0) {\scriptsize $\bullet$};
\node at (4,0) {\scriptsize $\bullet$};
\node at (8,0) {\scriptsize $\bullet$};

\node at (2,.8) {\scalebox{.6}{$0$}};
\node at (6,.8) {\scalebox{.6}{$1$}};
\node at (6,-.8) {\scalebox{.6}{$2$}};
\node at (2,-.8) {\scalebox{.6}{$3$}};

\node at (2,.6) {\Large .};
\node at (1.6,.58) {\Large .};
\node at (2.4,.58) {\Large .};
\node at (1.25,.55) {\Large .};
\node at (2.75,.55) {\Large .};
\node at (.95,.51) {\Large .};
\node at (3.05,.51) {\Large .};
\node at (.7,.46) {\Large .};
\node at (3.3,.46) {\Large .};
\node at (.5,.4) {\Large .};
\node at (3.5,.4) {\Large .};
\node at (.35,.32) {\Large .};
\node at (3.65,.32) {\Large .};
\node at (.25,.25) {\Large .};
\node at (3.75,.25) {\Large .};
\node at (.19,.2) {\Large .};
\node at (3.81,.2) {\Large .};
\node at (.13,.14) {\Large .};
\node at (3.87,.14) {\Large .};
\node at (.09,.1) {\Large .};
\node at (3.91,.1) {\Large .};
\node at (.06,.06) {\Large .};
\node at (3.94,.06) {\Large .};
\node at (.03,.03) {\Large .};
\node at (3.97,.03) {\Large .};
%\draw [->] (3.5,.8) to [out=164,in=16] (.5,.8);

\begin{scope}[yscale={-1}]
\node at (2,.6) {\Large .};
\node at (1.6,.58) {\Large .};
\node at (2.4,.58) {\Large .};
\node at (1.25,.55) {\Large .};
\node at (2.75,.55) {\Large .};
\node at (.95,.51) {\Large .};
\node at (3.05,.51) {\Large .};
\node at (.7,.46) {\Large .};
\node at (3.3,.46) {\Large .};
\node at (.5,.4) {\Large .};
\node at (3.5,.4) {\Large .};
\node at (.35,.32) {\Large .};
\node at (3.65,.32) {\Large .};
\node at (.25,.25) {\Large .};
\node at (3.75,.25) {\Large .};
\node at (.19,.2) {\Large .};
\node at (3.81,.2) {\Large .};
\node at (.13,.14) {\Large .};
\node at (3.87,.14) {\Large .};
\node at (.09,.1) {\Large .};
\node at (3.91,.1) {\Large .};
\node at (.06,.06) {\Large .};
\node at (3.94,.06) {\Large .};
\node at (.03,.03) {\Large .};
\node at (3.97,.03) {\Large .};
%\draw [<-] (3.5,.8) to [out=164,in=16] (.5,.8);
\end{scope}

\begin{scope}[shift={(4,0)}]
\node at (2,.6) {\Large .};
\node at (1.6,.58) {\Large .};
\node at (2.4,.58) {\Large .};
\node at (1.25,.55) {\Large .};
\node at (2.75,.55) {\Large .};
\node at (.95,.51) {\Large .};
\node at (3.05,.51) {\Large .};
\node at (.7,.46) {\Large .};
\node at (3.3,.46) {\Large .};
\node at (.5,.4) {\Large .};
\node at (3.5,.4) {\Large .};
\node at (.35,.32) {\Large .};
\node at (3.65,.32) {\Large .};
\node at (.25,.25) {\Large .};
\node at (3.75,.25) {\Large .};
\node at (.19,.2) {\Large .};
\node at (3.81,.2) {\Large .};
\node at (.13,.14) {\Large .};
\node at (3.87,.14) {\Large .};
\node at (.09,.1) {\Large .};
\node at (3.91,.1) {\Large .};
\node at (.06,.06) {\Large .};
\node at (3.94,.06) {\Large .};
\node at (.03,.03) {\Large .};
\node at (3.97,.03) {\Large .};
%\draw [->] (3.5,.8) to [out=164,in=16] (.5,.8);

\begin{scope}[yscale={-1}]
\node at (2,.6) {\Large .};
\node at (1.6,.58) {\Large .};
\node at (2.4,.58) {\Large .};
\node at (1.25,.55) {\Large .};
\node at (2.75,.55) {\Large .};
\node at (.95,.51) {\Large .};
\node at (3.05,.51) {\Large .};
\node at (.7,.46) {\Large .};
\node at (3.3,.46) {\Large .};
\node at (.5,.4) {\Large .};
\node at (3.5,.4) {\Large .};
\node at (.35,.32) {\Large .};
\node at (3.65,.32) {\Large .};
\node at (.25,.25) {\Large .};
\node at (3.75,.25) {\Large .};
\node at (.19,.2) {\Large .};
\node at (3.81,.2) {\Large .};
\node at (.13,.14) {\Large .};
\node at (3.87,.14) {\Large .};
\node at (.09,.1) {\Large .};
\node at (3.91,.1) {\Large .};
\node at (.06,.06) {\Large .};
\node at (3.94,.06) {\Large .};
\node at (.03,.03) {\Large .};
\node at (3.97,.03) {\Large .};
%\draw [<-] (3.5,.8) to [out=164,in=16] (.5,.8);
\end{scope}
\end{scope}

\end{tikzpicture}
\end{center}

\begin{center}
\begin{tikzpicture}[yscale=.75]

\node at (0,0) {\scriptsize $\bullet$};
\node at (4,0) {\scriptsize $\bullet$};
\node at (8,0) {\scriptsize $\bullet$};

\node at (1.6,.8) {\scalebox{.6}{$4$}};
\node at (2,.8) {\scalebox{.6}{$5$}};
\node at (2.4,.8) {\scalebox{.6}{$6$}};
\node at (5.6,.8) {\scalebox{.6}{$7$}};
\node at (6,.8) {\scalebox{.6}{$8$}};
\node at (6.4,.8) {\scalebox{.6}{$9$}};
\node at (6.4,-.8) {\scalebox{.6}{$10$}};
\node at (6,-.8) {\scalebox{.6}{$11$}};
\node at (5.6,-.8) {\scalebox{.6}{$12$}};
\node at (2.4,-.8) {\scalebox{.6}{$13$}};
\node at (2,-.8) {\scalebox{.6}{$14$}};
\node at (1.6,-.8) {\scalebox{.6}{$15$}};

\node at (2,.6) {\Large .};
\node at (1.6,.58) {\Large .};
\node at (2.4,.58) {\Large .};
\node at (1.25,.55) {\Large .};
\node at (2.75,.55) {\Large .};
\node at (.95,.51) {\Large .};
\node at (3.05,.51) {\Large .};
\node at (.7,.46) {\Large .};
\node at (3.3,.46) {\Large .};
\node at (.5,.4) {\Large .};
\node at (3.5,.4) {\Large .};
\node at (.35,.32) {\Large .};
\node at (3.65,.32) {\Large .};
\node at (.25,.25) {\Large .};
\node at (3.75,.25) {\Large .};
\node at (.19,.2) {\Large .};
\node at (3.81,.2) {\Large .};
\node at (.13,.14) {\Large .};
\node at (3.87,.14) {\Large .};
\node at (.09,.1) {\Large .};
\node at (3.91,.1) {\Large .};
\node at (.06,.06) {\Large .};
\node at (3.94,.06) {\Large .};
\node at (.03,.03) {\Large .};
\node at (3.97,.03) {\Large .};
%\draw [->] (3.5,.8) to [out=164,in=16] (.5,.8);

\begin{scope}[yscale={-1}]
\node at (2,.6) {\Large .};
\node at (1.6,.58) {\Large .};
\node at (2.4,.58) {\Large .};
\node at (1.25,.55) {\Large .};
\node at (2.75,.55) {\Large .};
\node at (.95,.51) {\Large .};
\node at (3.05,.51) {\Large .};
\node at (.7,.46) {\Large .};
\node at (3.3,.46) {\Large .};
\node at (.5,.4) {\Large .};
\node at (3.5,.4) {\Large .};
\node at (.35,.32) {\Large .};
\node at (3.65,.32) {\Large .};
\node at (.25,.25) {\Large .};
\node at (3.75,.25) {\Large .};
\node at (.19,.2) {\Large .};
\node at (3.81,.2) {\Large .};
\node at (.13,.14) {\Large .};
\node at (3.87,.14) {\Large .};
\node at (.09,.1) {\Large .};
\node at (3.91,.1) {\Large .};
\node at (.06,.06) {\Large .};
\node at (3.94,.06) {\Large .};
\node at (.03,.03) {\Large .};
\node at (3.97,.03) {\Large .};
%\draw [<-] (3.5,.8) to [out=164,in=16] (.5,.8);
\end{scope}

\begin{scope}[shift={(4,0)}]
\node at (2,.6) {\Large .};
\node at (1.6,.58) {\Large .};
\node at (2.4,.58) {\Large .};
\node at (1.25,.55) {\Large .};
\node at (2.75,.55) {\Large .};
\node at (.95,.51) {\Large .};
\node at (3.05,.51) {\Large .};
\node at (.7,.46) {\Large .};
\node at (3.3,.46) {\Large .};
\node at (.5,.4) {\Large .};
\node at (3.5,.4) {\Large .};
\node at (.35,.32) {\Large .};
\node at (3.65,.32) {\Large .};
\node at (.25,.25) {\Large .};
\node at (3.75,.25) {\Large .};
\node at (.19,.2) {\Large .};
\node at (3.81,.2) {\Large .};
\node at (.13,.14) {\Large .};
\node at (3.87,.14) {\Large .};
\node at (.09,.1) {\Large .};
\node at (3.91,.1) {\Large .};
\node at (.06,.06) {\Large .};
\node at (3.94,.06) {\Large .};
\node at (.03,.03) {\Large .};
\node at (3.97,.03) {\Large .};
%\draw [->] (3.5,.8) to [out=164,in=16] (.5,.8);

\begin{scope}[yscale={-1}]
\node at (2,.6) {\Large .};
\node at (1.6,.58) {\Large .};
\node at (2.4,.58) {\Large .};
\node at (1.25,.55) {\Large .};
\node at (2.75,.55) {\Large .};
\node at (.95,.51) {\Large .};
\node at (3.05,.51) {\Large .};
\node at (.7,.46) {\Large .};
\node at (3.3,.46) {\Large .};
\node at (.5,.4) {\Large .};
\node at (3.5,.4) {\Large .};
\node at (.35,.32) {\Large .};
\node at (3.65,.32) {\Large .};
\node at (.25,.25) {\Large .};
\node at (3.75,.25) {\Large .};
\node at (.19,.2) {\Large .};
\node at (3.81,.2) {\Large .};
\node at (.13,.14) {\Large .};
\node at (3.87,.14) {\Large .};
\node at (.09,.1) {\Large .};
\node at (3.91,.1) {\Large .};
\node at (.06,.06) {\Large .};
\node at (3.94,.06) {\Large .};
\node at (.03,.03) {\Large .};
\node at (3.97,.03) {\Large .};
%\draw [<-] (3.5,.8) to [out=164,in=16] (.5,.8);
\end{scope}
\end{scope}

\end{tikzpicture}
\end{center}

\begin{center}
\begin{tikzpicture}[yscale=.75]

\node at (0,0) {\scriptsize $\bullet$};
\node at (4,0) {\scriptsize $\bullet$};
\node at (8,0) {\scriptsize $\bullet$};

\node at (1.25,.8) {\scalebox{.6}{$16$}};
\node at (1.6,.8) {\scalebox{.6}{$17$}};
\node at (2,.8) {\scalebox{.6}{$18$}};
\node at (2.4,.8) {\scalebox{.6}{$19$}};
\node at (2.75,.8) {\scalebox{.6}{$20$}};
\node at (5.25,.8) {\scalebox{.6}{$21$}};
\node at (5.6,.8) {\scalebox{.6}{$22$}};
\node at (6,.8) {\scalebox{.6}{$23$}};
\node at (6.4,.8) {\scalebox{.6}{$24$}};
\node at (6.75,.8) {\scalebox{.6}{$25$}};
\node at (6.75,-.8) {\scalebox{.6}{$26$}};
\node at (6.4,-.8) {\scalebox{.6}{$27$}};
\node at (6,-.8) {\scalebox{.6}{$28$}};
\node at (5.6,-.8) {\scalebox{.6}{$29$}};
\node at (5.25,-.8) {\scalebox{.6}{$30$}};
\node at (2.75,-.8) {\scalebox{.6}{$31$}};
\node at (2.4,-.8) {\scalebox{.6}{$32$}};
\node at (2,-.8) {\scalebox{.6}{$33$}};
\node at (1.6,-.8) {\scalebox{.6}{$34$}};
\node at (1.25,-.8) {\scalebox{.6}{$35$}};

\node at (2,.6) {\Large .};
\node at (1.6,.58) {\Large .};
\node at (2.4,.58) {\Large .};
\node at (1.25,.55) {\Large .};
\node at (2.75,.55) {\Large .};
\node at (.95,.51) {\Large .};
\node at (3.05,.51) {\Large .};
\node at (.7,.46) {\Large .};
\node at (3.3,.46) {\Large .};
\node at (.5,.4) {\Large .};
\node at (3.5,.4) {\Large .};
\node at (.35,.32) {\Large .};
\node at (3.65,.32) {\Large .};
\node at (.25,.25) {\Large .};
\node at (3.75,.25) {\Large .};
\node at (.19,.2) {\Large .};
\node at (3.81,.2) {\Large .};
\node at (.13,.14) {\Large .};
\node at (3.87,.14) {\Large .};
\node at (.09,.1) {\Large .};
\node at (3.91,.1) {\Large .};
\node at (.06,.06) {\Large .};
\node at (3.94,.06) {\Large .};
\node at (.03,.03) {\Large .};
\node at (3.97,.03) {\Large .};
%\draw [->] (3.5,.8) to [out=164,in=16] (.5,.8);

\begin{scope}[yscale={-1}]
\node at (2,.6) {\Large .};
\node at (1.6,.58) {\Large .};
\node at (2.4,.58) {\Large .};
\node at (1.25,.55) {\Large .};
\node at (2.75,.55) {\Large .};
\node at (.95,.51) {\Large .};
\node at (3.05,.51) {\Large .};
\node at (.7,.46) {\Large .};
\node at (3.3,.46) {\Large .};
\node at (.5,.4) {\Large .};
\node at (3.5,.4) {\Large .};
\node at (.35,.32) {\Large .};
\node at (3.65,.32) {\Large .};
\node at (.25,.25) {\Large .};
\node at (3.75,.25) {\Large .};
\node at (.19,.2) {\Large .};
\node at (3.81,.2) {\Large .};
\node at (.13,.14) {\Large .};
\node at (3.87,.14) {\Large .};
\node at (.09,.1) {\Large .};
\node at (3.91,.1) {\Large .};
\node at (.06,.06) {\Large .};
\node at (3.94,.06) {\Large .};
\node at (.03,.03) {\Large .};
\node at (3.97,.03) {\Large .};
%\draw [<-] (3.5,.8) to [out=164,in=16] (.5,.8);
\end{scope}

\begin{scope}[shift={(4,0)}]
\node at (2,.6) {\Large .};
\node at (1.6,.58) {\Large .};
\node at (2.4,.58) {\Large .};
\node at (1.25,.55) {\Large .};
\node at (2.75,.55) {\Large .};
\node at (.95,.51) {\Large .};
\node at (3.05,.51) {\Large .};
\node at (.7,.46) {\Large .};
\node at (3.3,.46) {\Large .};
\node at (.5,.4) {\Large .};
\node at (3.5,.4) {\Large .};
\node at (.35,.32) {\Large .};
\node at (3.65,.32) {\Large .};
\node at (.25,.25) {\Large .};
\node at (3.75,.25) {\Large .};
\node at (.19,.2) {\Large .};
\node at (3.81,.2) {\Large .};
\node at (.13,.14) {\Large .};
\node at (3.87,.14) {\Large .};
\node at (.09,.1) {\Large .};
\node at (3.91,.1) {\Large .};
\node at (.06,.06) {\Large .};
\node at (3.94,.06) {\Large .};
\node at (.03,.03) {\Large .};
\node at (3.97,.03) {\Large .};
%\draw [->] (3.5,.8) to [out=164,in=16] (.5,.8);

\begin{scope}[yscale={-1}]
\node at (2,.6) {\Large .};
\node at (1.6,.58) {\Large .};
\node at (2.4,.58) {\Large .};
\node at (1.25,.55) {\Large .};
\node at (2.75,.55) {\Large .};
\node at (.95,.51) {\Large .};
\node at (3.05,.51) {\Large .};
\node at (.7,.46) {\Large .};
\node at (3.3,.46) {\Large .};
\node at (.5,.4) {\Large .};
\node at (3.5,.4) {\Large .};
\node at (.35,.32) {\Large .};
\node at (3.65,.32) {\Large .};
\node at (.25,.25) {\Large .};
\node at (3.75,.25) {\Large .};
\node at (.19,.2) {\Large .};
\node at (3.81,.2) {\Large .};
\node at (.13,.14) {\Large .};
\node at (3.87,.14) {\Large .};
\node at (.09,.1) {\Large .};
\node at (3.91,.1) {\Large .};
\node at (.06,.06) {\Large .};
\node at (3.94,.06) {\Large .};
\node at (.03,.03) {\Large .};
\node at (3.97,.03) {\Large .};
%\draw [<-] (3.5,.8) to [out=164,in=16] (.5,.8);
\end{scope}
\end{scope}

\end{tikzpicture}
\end{center}

More precisely, define the first $4$ elements of the sequence as:
$$a_0 = \{(0,0)\}, \quad a_1 = \{(2,0)\}, \quad a_2 = \{(3,0)\}, \quad a_3 = \{(1,0)\}.$$
Then define the next $12$ elements of the sequence by taking
$$a_5 = a_0, \quad a_8 = a_1, \quad a_{11} = a_2, \quad a_{14} = a_3,$$
and then defining $a_{i-1} = \a^{-1}(a_i)$ and $a_{i+1} = \a(a_i)$ for $i = 5,8,11,14$. 
Then define the next $20$ members of the sequence similarly: 
$$a_{18} = a_0, \quad a_{23} = a_1, \quad a_{28} = a_2, \quad a_{33} = a_3,$$
and $a_{i + j} = \a^{j}(a_i)$ for $i = 18,23,28,33$ and $j = \pm1,\pm2$. 
The remainder of the sequence is defined similarly. 

This sequence is eventually small. In fact, every member of this sequence is an atom of $\AA$, and so every member of this sequence is below every partition of $\AA$.

This sequence is eventually dense. Indeed, note that for any isolated $p \in X$, there are infinitely many $n$ such that $a_n = \{p\}$. If $A \in \AA$, then there is some isolated $p \in X$ with $\{p\} \sub A$, and therefore we have infinitely many $n$ with $a_n = \{p\} \sub A$.

Finally, $\seq{\a(a_n)}{n \in \w} \approx \seq{a_{n+1}}{n \in \w}$. To see this, let $\U$ be a partition of $\AA$. 
For many values of $n$, we defined $a_{n+1} = \a(a_n)$, and for these values of $n$ we have $a_{n+1} \approx_\U \a(a_n)$. 
For the other values of $n$, either 
\begin{itemize}
\item[$\circ$] $a_n = \{(0,k)\}$ and $a_{n+1} = \{(2,-k)\}$, or
\item[$\circ$] $a_n = \{(2,k)\}$ and $a_{n+1} = \{(3,k)\}$, or
\item[$\circ$] $a_n = \{(3,-k)\}$ and $a_{n+1} = \{(1,k)\}$, or
\item[$\circ$] $a_n = \{(1,-k)\}$ and $a_{n+1} = \{(0,-k-1)\}$.
\end{itemize}
The partition $\U$ consists of finitely many clopen subsets of $X$, including some $A \ni a$, $B \ni b$, and $C \ni c$. For large enough values of $n$, the corresponding $k$-values in the bulleted list above will also be large, large enough that $(0,k)$ and $(2,-k)$ are both in $B$, and $(2,k)$ and $(3,k)$ are both in $C$, etc. Thus for large enough $n$, we still have $a_{n+1} \approx_\U \a(a_n)$ even when we do not have $a_{n+1} = \a(a_n)$. Because this is true for any partition $\U$, this shows that $\seq{\a(a_n)}{n \in \w} \approx \seq{a_{n+1}}{n \in \w}$ as claimed.

By Theorem~\ref{thm:ESED}, the sequence $\seq{a_n}{n \in \w}$ induces an embedding of $\< \AA,\a \>$ into $\< \pwmff,\s \>$. Let $\eta$ denote this embedding.

This completes our description of the counterexample: $(\<\AA,\a\>,\<\BB,\b\>,\iota,\eta)$ is an instance of the lifting problem for $\< \pwmff,\s \>$. It remains to prove that this instance of the lifting problem has no solution.

Suppose, aiming for a contradiction, that $\bar \eta$ is a lifting of $\eta$ to $\<\BB,\b\>$, so that $\eta = \bar \eta \circ \iota$. 
By Theorem~\ref{thm:ESED}, there is some eventually small, eventually dense sequence $\seq{b_n}{n \in \w}$ in $\BB$ that induces the embedding $\bar \eta$, with the property that $\b(b_n) \wedge b_{n+1} \neq \mathbf{0}$ for all $n$. 
%%%%%%%%%%

Roughly, the problem here is that to be eventually dense, the sequence $\seq{b_n}{n \in \w}$ must get close to $d$ infinitely often. The only way to do this is to enter the region near $d$ from the region near $c$, and then to exit again into the region near $c$. Using the fact that $\eta = \bar \eta \circ \iota$, this means that the sequence $\seq{a_n}{n \in \w}$ must infinitely often be going near $c$, and then near $b = \pi(d)$, and then near $c$ again. But the sequence of $a_n$'s does not behave this way: it was chosen so that whenever it goes from near $c$ to near $b$, it will, every time, continue on in the direction of $a$, not back towards $c$. 

Let us make this idea more precise.
Let $\U = \{ A,B,C \}$ be a partition of $\AA$ into three elements, with $a \in A$ and $b \in B$, and $c \in C$. 
Note that $\iota(B) = \pi^{-1}(B) \in \BB$, and $b,d \in \iota(B)$. Fix some $\bar B,\bar D \in \BB$ such that $\bar B \cup \bar D = \iota(B)$, with $b \in \bar B$ and $d \in \bar D$. Let $\bar A = \iota(A)$ and $\bar C = \iota(C)$, and let $\V = \{ \bar A,\bar B,\bar C,\bar D \}$. Thus $\V$ is a partition of $\BB$ into four elements, with $a \in \bar A$, $b \in \bar B$, $c \in \bar C$, and $d \in \bar D$.

Because $\seq{a_n}{n \in \w}$ and $\seq{b_n}{n \in \w}$ are eventually small, $a_n$ is below $\U$ and $b_n$ is below $\V$ for sufficiently large $n$. 
Furthermore, because $\eta = \bar \eta \circ \iota$, 
\begin{align*}
[\set{n \in \w}{a_n \leq A}] &\,=\, \eta(A) \,=\, \bar \eta \circ \iota(A) \,=\, \bar \eta(\bar A)
\,=\, [\set{n \in \w}{b_n \leq \bar A}], 
\end{align*}
which means that $\set{n \in \w}{a_n \in A} =^* \set{n \in \w}{b_n \in \bar A}$, and so
$$a_n \leq A \quad \text{if and only if} \quad b_n \leq \bar A$$
for sufficiently large $n$. Because $\iota$ is an embedding $\AA \to \BB$, $a_n \leq A$ if and only if $\iota(a_n) \leq \iota(A) = \bar A$. Hence 
$$\iota(a_n) \leq \bar A \quad \text{if and only if} \quad b_n \leq \bar A$$
for sufficiently large $n$. 
Similarly,
$$\iota(a_n) \leq \bar B \cup \bar D \quad \text{if and only if} \quad b_n \leq \bar B \cup \bar D, \text{ and}$$
$$\iota(a_n) \leq \bar C \quad \text{if and only if} \quad b_n \leq \bar C$$
for sufficiently large $n$. 
Because $a_n$ is below $\U$ and $b_n$ below $\V$ for sufficiently large $n$, this means each of $\iota(a_n)$ and $b_n$ is $\leq$ exactly one of $\bar A$, $\bar B \cup \bar D$, and $\bar C$ for large enough $n$. Combined with the three displayed assertions above, this implies $\iota(a_n) \approx_\V b_n$ for sufficiently large $n$. Fix $N \in \w$ such that $n \geq N$ implies $\iota(a_n) \approx_\V b_n$. 

Using the fact that $\seq{b_n}{n \in \w}$ is eventually dense, fix some $i \geq N$ such that $b_i \not\leq \bar D$ and some $k > i$ such that $b_k \leq \bar D$. 

Let $j$ denote the maximum integer $<\! k$ such that $b_j \not\leq \bar D$. (Note that $N \leq i \leq j < k$.) Let $\ell$ denote the minimum integer $>\! k$ such that $b_\ell \not\leq \bar D$. (Some such $k$ must exist because $\seq{b_n}{n \in \w}$ is eventually dense, and so infinitely often $b_n \leq \mathbf{1} - \bar D$.)
Thus $b_j \not\leq \bar D$ and $b_\ell \not\leq \bar D$, but $b_n \leq \bar D$ for every $n \in (j,\ell)$. 
In fact, because $b_j$ and $b_\ell$ are below $\V$, we have $b_j \wedge \bar D = b_\ell \wedge \bar D = \mathbf{0}$.

Recall that $\b(b_n) \wedge b_{n+1} \neq \mathbf{0}$ for all $n$. Because $b_{\ell-1} \leq \bar D$, we must have $b_\ell \wedge \b(\bar D) \neq \mathbf{0}$. But $\b(\bar D) \leq \bar D \vee \bar C$ and $b_\ell \wedge \bar D = \mathbf{0}$, so this implies $b_\ell \wedge \bar C \neq \mathbf{0}$. Because $b_\ell$ is below $\V$, we see that $b_\ell \leq \bar C$. 
Similarly, because $b_{j+1} \leq \bar D$ we must have $b_j \wedge \b^{-1}(\bar D) \neq \mathbf{0}$. But $\b^{-1}(\bar D) \leq \bar D \vee \bar C$ and $b_j \wedge \bar D = \mathbf{0}$, so this implies $b_j \wedge \bar C \neq \mathbf{0}$. Because $b_j$ is below $\V$, we get $b_j \leq \bar C$.

Hence $b_j,b_\ell \leq \bar C$, and $b_n \leq \bar D$ for all $n \in (j,\ell)$. Because $\iota(a_n) \approx_\V b_n$ for all $n \geq N$, this means that $\iota(a_j) \wedge \bar C \neq \mathbf{0}$, 
$\iota(a_\ell) \wedge \bar C \neq \mathbf{0}$, and
$\iota(a_n) \wedge \bar D \neq \mathbf{0}$ for all $n \in (j,\ell)$.
But $a_n$ is below $\U$ for all $n \in [j,\ell]$, so the only way for this to happen is if 
$a_j \leq C$, then $a_n \leq B$ for all $n \in (j,\ell)$, and then $a_\ell \leq C$.

We have arrived at our sought-after contradiction, because the sequence $\seq{a_n}{n \in \w}$ never exhibits this behavior. If $a_j \leq C$ and $a_n \leq B$ for all $n \in (j,\ell)$, then (by our definition of the $a_n$'s), we must have either $a_\ell \leq B$ or $a_\ell \leq A$.
\end{proof}

As one would probably guess, a ``reverse'' version of Theorem~\ref{thm:NoGo} is also true: not every instance of the lifting problem for $\< \pwmff,\s^{-1} \>$ has a solution. This can of course be proved by suitably modifying the proof of Theorem~\ref{thm:NoGo}. 
Alternatively, one can deduce this directly from Theorem~\ref{thm:NoGo} by showing that if $(\<\AA,\a\>,\<\BB,\b\>,\iota,\eta)$ is an instance of the lifting problem for $\< \pwmff,\s \>$, then $(\<\AA,\a^{-1}\>,\<\BB,\b^{-1}\>,\iota,\eta)$ is an instance of the lifting problem for $\< \pwmff,\s^{-1} \>$, and 
$\bar \eta$ is a solution to one of these problems if and only if it is also a solution to the other. 
This idea (or one nearly identical to it) is laid out in more detail in the proof of Lemma~\ref{lem:niam} below.

%%%%%%%%%%%%
\section{The Lifting Lemma, part 1: a back-and-forth argument}\label{sec:back&forth}
%%%%%%%%%%%%

Despite the counterexample described in the previous section, we still aim to prove $\<\pwmff,\s\>$ and $\<\pwmff,\s^{-1}\>$ are conjugate under \ch via a back-and-forth argument like the proof of Theorem~\ref{thm:AlgebraicB&F}. 
The idea, as mentioned at the end of Section~\ref{sec:Incompressibility}, is to show that all ``sufficiently nice'' instances of the lifting problem have a solution. 
We can then do our back-and-forth argument carefully, so that we never encounter any not-nice instances of the lifting problem, and the argument goes through. 
What precisely ``sufficiently nice'' means in this context involves the model-theoretic concept of elementarity. 

By the \emph{language of dynamical systems} we mean the formal logical language that, in addition to the usual things like the logical connectives ``and'', ``or'', and $\Rightarrow$, the negation symbol $\neg$, the quantifiers $\forall$ and $\exists$, etc., includes two special constant terms $\mathbf 0$ and $\mathbf 1$, three binary operations $+$, $-$, and $\cdot$, the binary relation $\leq$, and unary function symbols representing an automorphism of a Boolean algebra and its inverse. 
This language can be used to write down properties of a dynamical system $\< \AA,\a \>$. 
For example, $\< \AA,\a \>$ is incompressible if and only if it satisfies the sentence 
$$\forall x \in \AA \, (\neg(x = \mathbf 0) \text{ and } \neg(x=\mathbf 1)) \Rightarrow \neg(\a(x) \leq x).$$
Following the usual practice, all the standard abbreviations are allowed; for example, the sentence above can be written ``$\forall x \in \AA \!\setminus\! \{\mathbf 0,\mathbf 1\} \  \a(x) \not\leq x$" with no ambiguity. 
A \emph{first-order} sentence in the language of dynamical systems is one where the quantifiers $\forall$ and $\exists$ range only over members of a Boolean algebra (not, for example, subsets of that algebra). For example, the sentence displayed above is first-order. 

\begin{definition}
An \emph{elementary embedding} of $\< \AA,\a \>$ into $\< \BB,\b \>$ is a function $\eta: \AA \to \BB$ that preserves the truth value of all first-order sentences in the language of dynamical systems, in the sense that 
$$\< \AA,\a \> \models \varphi(a_1,\dots,a_n,\a) \  \text{ if and only if } \, \< \BB,\b \> \models \varphi(\eta(a_1),\dots,\eta(a_n),\b)$$ 
whenever $a_1,\dots,a_n \in \AA$. 
If $\AA \sub \BB$, then $\< \AA,\b \rest \AA \>$ is called an \emph{elementary substructure} of $\< \BB,\b \>$ if the inclusion map $\AA \xhookrightarrow{} \BB$ is an elementary embedding. 
\hfill
\Coffeecup
\end{definition}

Roughly, $\< \AA,\b \rest \AA\>$ is an elementary substructure of $\< \BB,\b \>$ if there is no difference between them detectable by a first-order sentence in the language of dynamical systems, even when that sentence is allowed to contain parameters from $\AA$. 
Nevertheless, a structure can be quite different from one of its elementary substructures in their second-order properties. For example, the (downward) L\"{o}wenheim-Skolem Theorem tells us that every uncountable dynamical system has countable elementary substructures. Among other things, this means uncountability is not expressible in a first-order sentence. 

We are now in a position to state the paper's primary lemma, mentioned in the introduction, which states that every ``sufficiently nice'' instance of the lifting problem for $\< \pwmff,\s \>$ has a solution. 

\begin{lemma}[Lifting Lemma]\label{lem:main}
Let $(\<\AA,\a\>,\<\BB,\b\>,\iota,\eta)$ be an instance of the lifting problem for $\< \pwmff,\s \>$.
If the image of $\eta$ is an elementary substructure of $\< \pwmff,\s \>$, then this instance of the lifting problem has a solution.
\end{lemma}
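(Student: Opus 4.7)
By Theorem~\ref{thm:CharacterizingGoodSequences}, fix an eventually small, eventually dense sequence $\vec a = \seq{a_n}{n \in \w}$ in $\AA$ with $\a(a_n) \wedge a_{n+1} \neq \mathbf 0$ for all $n$, inducing $\eta$. The task is to produce an eventually small, eventually dense sequence $\vec b = \seq{b_n}{n \in \w}$ in $\BB$ satisfying $\b(b_n) \wedge b_{n+1} \neq \mathbf 0$ for all $n$, whose induced embedding $\bar \eta$ is a lifting of $\eta$. Unpacking $\bar\eta \circ \iota = \eta$ via Theorem~\ref{thm:EC}, this is equivalent to a \emph{compatibility} condition: for every partition $\U$ of $\AA$ and all sufficiently large $n$, $b_n \leq \iota(u)$ for the unique $u \in \U$ with $a_n \leq u$.

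To organize matters I would associate to each partition $\V$ of $\BB$ the finite directed graph whose vertices are members of $\V$ and whose edges record the pairs with $\b(v) \wedge v' \neq \mathbf 0$, and analogously for $\AA$. The tail of $\vec a$ traces, in each $\U$-digraph of $\AA$, an infinite walk, and what we need to build is a coherent family of lifted walks in refining digraphs of $\BB$. I would construct $\vec b$ by a single recursion over $n$, simultaneously handling an enumerated cofinal sequence of partitions $\V_0, \V_1, \dots$ of $\BB$, each refining $\iota[\U_k]$ for corresponding cofinal partitions of $\AA$, plus countably many density commitments that promise infinitely many indices where $b_n \leq b$ for each enumerated $b \in \BB$. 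The dynamical condition $\b(b_n) \wedge b_{n+1} \neq \mathbf 0$ is preserved step by step using the incompressibility of $\<\BB,\b\>$, which prevents the walks in $\BB$-digraphs from collapsing onto proper invariant subsets.

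The genuine obstacle, and the point at which the elementarity hypothesis is essential, is the interaction of density with compatibility for $b \in \BB \setminus \iota[\AA]$. Any such $b$ lies strictly below some $\iota(u)$, and the only indices at which we can set $b_n \leq b$ are those with $a_n \leq u$. The counterexample in Section~\ref{sec:NonSaturation} shows that without further hypotheses, the walk traced by $\vec a$ may systematically fail to enter the local configurations in which such a $b_n$ can be inserted while respecting the dynamics. The assumption that $\eta[\AA]$ is an elementary substructure of $\<\pwmff,\s\>$ blocks precisely this: any first-order sentence asserting ``arbitrarily far along, the walk enters a $\U$-configuration that admits a compatible $\V$-refinement of a prescribed shape'' holds in $\<\pwmff,\s\>$ (because $\pwmff$ is rich enough to realize the relevant refinements itself), and elementarity reflects it down into $\<\AA,\a\>$, forcing $\vec a$ to visit those configurations infinitely often.

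Accordingly, the hard part is two-fold. First, one must isolate the precise first-order statements in the language of dynamical systems that encode ``a prescribed finite $\V$-digraph configuration can be fitted above the current $\U$-configuration,'' strong enough both to hold in $\<\pwmff,\s\>$ and to drive the recursion after reflection. Second, one must prove the underlying purely combinatorial fact about lifting walks between finite digraphs that these reflected sentences deliver; this is the content to be developed in Sections~\ref{sec:Digraphs} through~\ref{sec:Exhale}. Once that combinatorial core is in hand, weaving the compatibility, density, and dynamical demands into a single recursion producing $\vec b$, and hence $\bar\eta$, is routine bookkeeping.
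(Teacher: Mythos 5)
Your high-level architecture --- encode the problem in finite digraphs of partitions, build the lifting sequence by a recursion over cofinal families of partitions, and use elementarity of $\eta[\AA]$ to transfer combinatorial information from $\< \pwmff,\s \>$ down to $\< \AA,\a \>$ --- matches the paper's. But the mechanism you assign to elementarity is wrong, and this is the crux of the lemma, not a detail. You propose to reflect the sentence ``arbitrarily far along, the walk enters a $\U$-configuration that admits a compatible $\V$-refinement of a prescribed shape,'' claiming it holds in $\< \pwmff,\s \>$ ``because $\pwmff$ is rich enough to realize the relevant refinements itself.'' Theorem~\ref{thm:NoGo} shows exactly the opposite: there are partitions of $\pwmff$ and (virtual) refinements of their digraphs that $\pwmff$ cannot realize. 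So the statement you want to reflect is simply not a theorem about $\< \pwmff,\s \>$, and elementarity delivers nothing. What \emph{is} true of $\< \pwmff,\s \>$ is a dichotomy (``polarization,'' Definition~\ref{def:Polarized}, proved via Theorem~\ref{thm:Heart&Soul}): every virtual refinement of every partition digraph either is realized by an actual refinement, or is \emph{incompatible} --- in a precise sense involving a common strongly connected cover --- with some actual refinement. Both horns are witnessed by finitely many elements, so the dichotomy is first-order with parameters and descends to the elementary substructure $\eta[\AA]$, hence to $\< \AA,\a \>$. One then needs a second, separate argument to exclude the negative horn for the particular virtual refinement $\< \B,\tob \>$ arising in the lifting problem: because $\B$ comes from a genuine embedding $\iota$, its natural epimorphism onto $\< \A,\toa \>$ is compatible with \emph{every} actual refinement of $\A$ (Lemma~\ref{lem:Compatability}; compositions of natural maps are natural maps), so realization must occur (Theorem~\ref{thm:FinalStroke}). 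Your sketch contains neither the dichotomy nor the exclusion argument, and without them the recursion has no way to choose $b_n$ at the critical steps.

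Beyond that, the two items you explicitly defer are not symmetric in weight. The ``purely combinatorial fact about lifting walks between finite digraphs'' is Theorem~\ref{thm:Heart&Soul}, which is again a dichotomy (lift the diligent walk, or construct an incompatible strongly connected digraph that does admit a projecting diligent walk); its statement is not guessable from your setup and its proof, via state spaces and Ramsey-theoretic extractions, is most of the paper. Finally, the assembly step is not ``routine bookkeeping'': passing from solvability of all finitary fragments to an actual solution (Theorem~\ref{thm:Dagger}) requires diagonalizing across countably many walks while controlling the error $\b(b_n)\wedge b_{n+1}$ at the jump times between levels, which the paper does with a coloring argument (Lemma~\ref{lem:Rainbow}); a naive interleaving of density and compatibility requirements does not obviously preserve the dynamical condition across jumps.
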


The proof of this lemma occupies most of the rest of the paper. In the remainder of this section, we show how to deduce the main theorem of the paper from the lemma. 
The first step is to deduce a ``backwards'' version of the Lifting Lemma from the version stated above. 

\begin{lemma}\label{lem:niam}
Let $(\<\AA,\a\>,\<\BB,\b\>,\iota,\eta)$ be an instance of the lifting problem for $\< \pwmff,\s^{-1} \>$.
If the image of $\eta$ is an elementary substructure of $\< \pwmff,\s^{-1} \>$, then this instance of the lifting problem has a solution.
\end{lemma}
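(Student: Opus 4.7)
The plan is to deduce Lemma~\ref{lem:niam} from Lemma~\ref{lem:main} (the forward Lifting Lemma) via the standard trick of inverting everything in sight. Given an instance $(\<\AA,\a\>,\<\BB,\b\>,\iota,\eta)$ of the lifting problem for $\<\pwmff,\s^{-1}\>$, I would first verify that $(\<\AA,\a^{-1}\>,\<\BB,\b^{-1}\>,\iota,\eta)$ is an instance of the lifting problem for $\<\pwmff,\s\>$. The countability is clear, and incompressibility transfers because, as noted in the proof of Theorem~\ref{thm:It'sIncompressible}, $\<\CC,\g\>$ is incompressible iff $\<\CC,\g^{-1}\>$ is. For the two maps: $\iota \circ \a = \b \circ \iota$ is equivalent to $\iota \circ \a^{-1} = \b^{-1} \circ \iota$ (compose both sides with $\a^{-1}$ on the right and $\b^{-1}$ on the left), so $\iota$ remains an embedding after inverting; the same computation applied to $\eta, \a, \s^{-1}$ shows that $\eta$ becomes an embedding $\<\AA,\a^{-1}\> \to \<\pwmff,\s\>$.

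The next step is to check that the elementarity hypothesis survives this inversion. The point is that in the language of dynamical systems, the symbol for the inverse automorphism is definable from the symbol for the automorphism via the formula ``$\a^{-1}(x)=y \Leftrightarrow \a(y)=x$,'' and vice versa. Consequently, for any first-order formula $\varphi$ in the language of dynamical systems there is a formula $\varphi^*$, obtained by interchanging the two unary function symbols throughout, such that $\< \CC,\g \> \models \varphi(\vec c)$ iff $\< \CC,\g^{-1} \> \models \varphi^*(\vec c)$ for every tuple $\vec c$ from $\CC$. Applying this to both $\<\AA,\a\>$ and $\<\pwmff,\s^{-1}\>$ shows that the image of $\eta$ is an elementary substructure of $\<\pwmff,\s^{-1}\>$ if and only if it is an elementary substructure of $\<\pwmff,\s\>$.

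With these observations, Lemma~\ref{lem:main} applies to the inverted instance and produces an embedding $\bar\eta: \<\BB,\b^{-1}\> \to \<\pwmff,\s\>$ with $\bar\eta \circ \iota = \eta$. The same calculation as in the first paragraph, run in reverse, shows that $\bar\eta \circ \b^{-1} = \s \circ \bar\eta$ is equivalent to $\bar\eta \circ \b = \s^{-1} \circ \bar\eta$, so $\bar\eta$ is simultaneously an embedding of $\<\BB,\b\>$ into $\<\pwmff,\s^{-1}\>$. The commutation $\bar\eta \circ \iota = \eta$ is unchanged, so $\bar\eta$ is the required solution. There is essentially no ``hard step'' here: the whole argument is a bookkeeping exercise once one notices that the category of algebraic dynamical systems is symmetric under the involution $\< \AA,\a \> \mapsto \< \AA,\a^{-1} \>$, and that this involution preserves all the relevant structure (incompressibility, embeddings, first-order theory).
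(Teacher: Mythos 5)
Your proposal is correct and follows essentially the same route as the paper: invert the automorphisms to turn the instance into one for $\< \pwmff,\s \>$, observe that the elementarity hypothesis is unaffected (the paper phrases this by noting the relevant classes of sentences are literally the same, since the language already contains symbols for both the automorphism and its inverse), apply Lemma~\ref{lem:main}, and invert back. No gaps.
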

\begin{proof}
If $(\<\AA,\a\>,\<\BB,\b\>,\iota,\eta)$ is an instance of the lifting problem for \linebreak $\< \pwmff,\s^{-1} \>$, then 
$(\< \AA,\a^{-1} \>,\< \BB,\b^{-1} \>,\iota,\eta)$ is an instance of the lifting problem for $\< \pwmff,\s \>$. 
Furthermore, if the image of $\< \AA,\a \>$ under $\eta$ is an elementary substructure of $\< \pwmff,\s \>$, then 
the image of $\< \AA,\a^{-1} \>$ under $\eta$ is an elementary substructure of $\< \pwmff,\s^{-1} \>$. (In fact the images coincide, and this assertion really just amounts to the fact that if $\eta$ transforms true sentences about $\< \AA,\a \>$ into true sentences about $\< \pwmff,\s \>$, then it must also transform true sentences about $\< \AA,\a^{-1} \>$ into true sentences about $\< \pwmff,\s^{-1} \>$. This is obvious, because these two classes of sentences are the same.) 
So assuming Lemma~\ref{lem:main}, there is an embedding $\bar \eta$ of $\< \AA,\a^{-1} \>$ into $\< \pwmff,\s \>$ that solves the instance $(\<\AA,\a^{-1}\>,\<\BB,\b^{-1}\>,\iota,\eta)$ of the lifting problem for $\< \pwmff,\s \>$. 
But then $\bar \eta$ is also a solution to the instance $(\<\AA,\a\>,\<\BB,\b\>,\iota,\eta)$ of the lifting problem for $\< \pwmff,\s^{-1} \>$.
\end{proof}

\begin{theorem}[The Main Theorem]\label{thm:main}
The Continuum Hypothesis implies $\< \pwmff,\s \>$ and $\< \pwmff,\s^{-1} \>$ are conjugate.
\end{theorem}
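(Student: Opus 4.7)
The plan is to mimic the back-and-forth proof of Theorem~\ref{thm:AlgebraicB&F}, restricted to the regime where Lemma~\ref{lem:main} and its mirror Lemma~\ref{lem:niam} apply. Under \ch, enumerate $\pwmff = \{x_\xi : \xi < \omega_1\}$ and build, by transfinite recursion, a coherent continuous chain of conjugacies $\phi_\alpha : \mathcal{C}_\alpha \to \mathcal{D}_\alpha$, where $\mathcal{C}_\alpha \prec \<\pwmff,\sigma\>$ and $\mathcal{D}_\alpha \prec \<\pwmff,\sigma^{-1}\>$ are countable elementary substructures, arranged so that $x_\alpha \in \mathcal{C}_{\alpha+1} \cap \mathcal{D}_{\alpha+1}$ at each successor stage. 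Then $\bigcup_{\alpha < \omega_1} \phi_\alpha$ will be the desired conjugacy.

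Limit stages are routine: a chain union of countable elementary substructures is elementary, and a chain union of coherent conjugacies is a conjugacy. The base case reduces to the successor procedure: pick any countable $\mathcal{C}_0 \prec \<\pwmff,\sigma\>$, fix some embedding of $\mathcal{C}_0$ into $\<\pwmff,\sigma^{-1}\>$ (available because $\mathcal{C}_0$ is a countable incompressible dynamical system, per Section~\ref{sec:Embeddings}), and run the inner zigzag described below starting with an enlargement of the $\<\pwmff,\sigma^{-1}\>$-side, which is legal because $\mathcal{C}_0$ is already elementary.

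The real work happens at successor stages. Given $\phi_\alpha : \mathcal{C}_\alpha \to \mathcal{D}_\alpha$, I would set $(\mathcal{C}^0,\mathcal{D}^0,\phi^0) = (\mathcal{C}_\alpha,\mathcal{D}_\alpha,\phi_\alpha)$ and build an inner $\omega$-chain of coherent conjugacies $\phi^n : \mathcal{C}^n \to \mathcal{D}^n$ maintaining the \emph{alternating invariant} that $\mathcal{C}^n \prec \<\pwmff,\sigma\>$ whenever $n$ is odd and $\mathcal{D}^n \prec \<\pwmff,\sigma^{-1}\>$ whenever $n$ is even. At odd step $n$, I enlarge $\mathcal{C}^{n-1}$ to a countable elementary $\mathcal{C}^n \prec \<\pwmff,\sigma\>$ containing $x_\alpha$ via downward L\"owenheim--Skolem, then invoke Lemma~\ref{lem:niam} --- whose hypothesis is met because the image $\mathcal{D}^{n-1}$ of $\phi^{n-1}$ is elementary in $\<\pwmff,\sigma^{-1}\>$ by the invariant --- to lift $\phi^{n-1}$ to an embedding $\phi^n : \mathcal{C}^n \to \pwmff$, and take $\mathcal{D}^n = \phi^n(\mathcal{C}^n)$. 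Even steps do the symmetric operation with $\mathcal{C}$ and $\mathcal{D}$ swapped and Lemma~\ref{lem:main} in place of Lemma~\ref{lem:niam}, also making sure to throw $x_\alpha$ into $\mathcal{D}^n$. Setting $\mathcal{C}_{\alpha+1} = \bigcup_n \mathcal{C}^n$ and $\mathcal{D}_{\alpha+1} = \bigcup_n \mathcal{D}^n$ yields elementary substructures, because each contains a cofinal chain of countable elementary substructures (the odd-indexed $\mathcal{C}^n$'s on one side, the even-indexed $\mathcal{D}^n$'s on the other), and $\phi_{\alpha+1} = \bigcup_n \phi^n$ is a conjugacy between them extending $\phi_\alpha$.

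The main obstacle, flagged already in Section~\ref{sec:Incompressibility}, is that each application of a Lifting Lemma produces an image that need not be elementary in its target. The alternating zigzag sidesteps this: at every step, exactly one side is ``freshly elementary'' (the side whose L\"owenheim--Skolem closure was just performed), and it is precisely that side which appears as the image of $\eta$ at the next step, so the hypothesis of the next Lifting Lemma invocation is satisfied by construction. Modulo this orchestration --- and modulo Lemma~\ref{lem:main} itself, whose proof is the technical heart of the paper --- the derivation of the Main Theorem is essentially a parameterised rerun of the proof of Theorem~\ref{thm:AlgebraicB&F}, with \ch used solely to ensure $|\pwmff|=\aleph_1$ so that $\omega_1$ stages suffice to exhaust $\pwmff$ on both sides.
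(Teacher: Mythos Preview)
Your proposal is correct and follows essentially the same strategy as the paper's proof. The one organisational difference is that the paper maintains only a \emph{one-sided} elementarity invariant ($\AA_\alpha \prec \<\pwmff,\sigma\>$, with no condition on $\BB_\alpha$) and consequently needs just two lifting steps per successor stage rather than your inner $\omega$-zigzag: first Skolemise the $\BB$-side to $\BB^0_{\alpha+1} \ni a_\alpha$ and lift $\phi_\alpha^{-1}$ via Lemma~\ref{lem:main} (legal since its image $\AA_\alpha$ is elementary), then Skolemise the resulting $\AA$-side to $\AA_{\alpha+1} \ni a_\alpha$ and lift via Lemma~\ref{lem:niam} (legal since the intermediate $\BB^0_{\alpha+1}$ was just made elementary), restoring the invariant. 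Your two-sided invariant is stronger than needed but does no harm.
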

\begin{proof}
We repeat the proof of Theorem~\ref{thm:AlgebraicB&F} with a few important differences. 
Given $X \sub \pwmff$ with $X$ countable, let $\<\hspace{-1mm}\< X \>\hspace{-1mm}\>$ 
denote not merely the subalgebra generated by $X$, but some countable subalgebra of $\pwmff$ with $X \sub \<\hspace{-1mm}\< X \>\hspace{-1mm}\>$ such that 
$\big\langle \hspace{-.2mm} \<\hspace{-1mm}\< X \>\hspace{-1mm}\>,\s \!\rest\! \<\hspace{-1mm}\< X \>\hspace{-1mm}\> \hspace{-.2mm} \big\rangle$ 
is an elementary substructure of $\< \pwmff,\s \>$. 
(Clearly this contains the subalgebra generated by $X$, but it may be larger.) 
More precisely, let us fix for the remainder of the proof a map $X \mapsto \<\hspace{-1mm}\< X \>\hspace{-1mm}\>$ that sends a countable $X \sub \pwmf$ to a set $\<\hspace{-1mm}\< X \>\hspace{-1mm}\>$ as described above. 
This map is well-defined by the downward L\"{o}wenheim-Skolem theorem, which asserts that some such set exists, and the Axiom of Choice. 
(It is possible to avoid simply ``choosing''  $\<\hspace{-1mm}\< X \>\hspace{-1mm}\>$ by describing a canonical way of obtaining $\<\hspace{-1mm}\< X \>\hspace{-1mm}\>$ from $X$, analogous to the closure of $X$ under algebraic operations to obtain the subalgebra generated by $X$. This process is called \emph{Skolemization}, but defining it precisely is not necessary to our proof.) 
In what follows, we will write 
%%%%%%%%%%
$\big\langle \hspace{-.2mm} \<\hspace{-1mm}\< X \>\hspace{-1mm}\>,\s \hspace{-.2mm} \big\rangle$ 
%%%%%%%%%%
and $\big\langle \hspace{-.2mm} \<\hspace{-1mm}\< X \>\hspace{-1mm}\>,\s^{-1} \hspace{-.2mm} \big\rangle$ 
instead of $\big\langle \hspace{-.2mm} \<\hspace{-1mm}\< X \>\hspace{-1mm}\>,\s \!\rest\! \<\hspace{-1mm}\< X \>\hspace{-1mm}\> \hspace{-.2mm} \big\rangle$ and $\big\langle \hspace{-.2mm} \<\hspace{-1mm}\< X \>\hspace{-1mm}\>,\s^{-1} \!\rest\! \<\hspace{-1mm}\< X \>\hspace{-1mm}\> \hspace{-.2mm} \big\rangle$.

Generally $\card{\pwmff} = \continuum$, so $\card{\pwmff} = \aleph_1$ because we are assuming \ch. 
Let $\seq{a_\xi}{\xi < \w_1}$ be an enumeration of $\pwmff$ in order type $\w_1$. 
 
Using transfinite recursion, we now build a sequence $\seq{\phi_\a}{\a < \w_1}$ of maps such that each $\phi_\a$ is a conjugacy from its domain (which we denote $\AA_\a$) to its codomain (which we denote $\BB_\a$), or more precisely from $\< \AA_\a,\s \>$ to $\< \BB_\a,\s^{-1} \>$. 
Furthermore, the following will hold at every stage $\a$ of the recursion:
\begin{itemize}
\item[$\circ$] $\AA_\xi \sub \AA_\a$ and $\BB_\xi \sub \BB_\a$ for all $\xi \leq \a$, 
\item[$\circ$] $\AA_\a = \bigcup_{\xi < \a}\AA_\xi$ and $\BB_\a = \bigcup_{\xi < \a}\BB_\xi$ for limit $\a$, 
\item[$\circ$] the $\phi_\xi$'s are coherent, i.e., $\phi_\xi = \phi_\z \rest \AA_\xi$ whenever $\xi \leq \z \leq \a$, 
\item[$\circ$] $\< \AA_\a,\s \>$ is an elementary substructure of $\< \pwmff,\s \>$.
\end{itemize}

For the base case of the recursion, let $\AA_0 = \<\hspace{-1mm}\< \0 \>\hspace{-1mm}\>$. (In other words, $\AA_0$ is some countable subalgebra of $\pwmff$ such that $\< \AA_0,\s \>$ is an elementary substructure of $\< \pwmff,\s \>$.) 
Because incompressibility is expressible by a first-order sentence and $\< \pwmff,\s \>$ is incompressible, elementarity implies $\< \AA_0,\s \>$ is also incompressible. 
Recall that every countable incompressible dynamical system embeds in $\< \pwmff,\s^{-1} \>$. (This was mentioned in Section~\ref{sec:Incompressibility}. It is proved, in dual form, in \cite{Brian1}; it is also proved below as Theorem~\ref{thm:CountableIncompressibleEmbeds}.)  Let $\phi_0$ be an embedding from $\< \AA_0,\s \>$ into $\< \pwmff,\s^{-1} \>$. Let $\BB_0$ denote the image of $\AA_0$ under $\phi_0$. Thus 
$\phi_0$ is a conjugacy from $\< \AA_0,\s \>$ to $\< \BB_0,\s^{-1} \>$, and 
$\phi_0^{-1}$ is a conjugacy from $\< \BB_0,\s^{-1} \>$ to $\< \AA_0,\s \>$.

At limit stages of the recursion there is nothing to do: take $\AA_\a = \bigcup_{\xi < \a}\AA_\xi$, $\BB_\a = \bigcup_{\xi < \a}\BB_\xi$, and $\phi_\a = \bigcup_{\xi < \a}\phi_\xi$, which is well-defined because the $\phi_\xi$ are coherent. Recall that an increasing union of elementary substructures is itself an elementary substructure, so taking unions like this preserves the hypotheses of the recursion through limit stages. 

At successor steps we obtain $\phi_{\a+1}$ from $\phi_\a$ using Lemmas \ref{lem:main} and \ref{lem:niam}. 
First, let $\BB_{\a+1}^0 = \<\hspace{-1mm}\< \BB_\a \cup \{a_\a\} \>\hspace{-1mm}\>$ and observe that 
$$(\<\BB_\a,\s^{-1}\>,\<\BB^0_{\a+1},\s^{-1}\>,\iota,\phi_\a^{-1})$$ 
is an instance of the lifting problem for $\< \pwmff,\s \>$, where $\iota$ denotes the inclusion $\BB_\a \xhookrightarrow{} \BB_{\a+1}^0$. Furthermore, the image of $\phi^{-1}_\a$, namely $\< \AA_\a,\s \>$, is an elementary substructure of $\< \pwmff,\s \>$. 
By Lemma~\ref{lem:main}, there is an embedding $\phi^0_{\a+1}$ from $\< \BB_{\a+1}^0,\s^{-1} \>$ into $\< \pwmff,\s \>$. 
Let $\AA^0_{\a+1}$ denote the image of this embedding. %, so that 
%$\phi^0_{\a+1}$ is a conjugacy from $\< \BB^0_{\a+1},\s^{-1} \>$ to $\< \AA^0_{\a+1},\s \>$.  
(Note that there is no obvious reason why $\< \AA^0_{\a+1},\s \>$ should be an elementary substructure of $\< \pwmff,\s\>$. Instead, we now have elementarity on the other side of our mappings, with $\BB_{\a+1}^0 = \<\hspace{-1mm}\< \BB_\a \cup \{a_\a\} \>\hspace{-1mm}\>$.) 
Because $\phi^0_{\a+1}$ is a conjugacy from $\< \BB^0_{\a+1},\s^{-1} \>$ to $\< \AA^0_{\a+1},\s \>$, 
$(\phi^0_{\a+1})^{-1}$ is a conjugacy from $\< \AA^0_{\a+1},\s \>$ to $\< \BB^0_{\a+1},\s^{-1} \>$.

\begin{center}
\begin{tikzpicture}[scale=.8]

\node at (-.01,0) {\small $\AA_\a$};
\node at (3.99,0) {\small $\BB_\a$};
\node at (-.21,2) {\small $\AA_{\a+1}^0$};
\node at (5.7,2) {\small $\BB_{\a+1}^0 = \<\hspace{-1mm}\< \BB_\a \cup \{a_\a\} \>\hspace{-1mm}\>$};
\node at (-.21,3.5) {\small $\AA_{\a+1}^0$};
\node at (4.2,3.5) {\small $\BB_{\a+1}^0$};
\node at (-.21,5.5) {\small $\AA_{\a+1}$};
\node at (4.21,5.5) {\small $\BB_{\a+1}$};
\node at (6.28,5.5) {\small $= \mathrm{Image}(\phi_{\a+1})$};
\node at (-2.35,2) {\small $\mathrm{Image}(\phi_{\a+1}^0) =$};
\node at (-2.53,5.5) {\small $\<\hspace{-1mm}\< \AA_{\a+1}^0 \cup \{a_\a\} \>\hspace{-1mm}\> =$};

\draw[<-] (.6,0) -- (3.4,0);
\draw[<-] (.6,2) -- (3.4,2);
\draw[->] (.6,3.5) -- (3.4,3.5);
\draw[->] (.6,5.5) -- (3.4,5.5);

\node at (3.95,1) {\footnotesize \rotatebox{90}{$\sub$}};
\node at (-.3,2.75) {\footnotesize \rotatebox{90}{$=$}};
\node at (4,2.75) {\footnotesize \rotatebox{90}{$=$}};
\node at (-.3,4.5) {\footnotesize \rotatebox{90}{$\sub$}};

\node at (2,.3) {\footnotesize $\phi_\a^{-1}$};
\node at (2,2.3) {\footnotesize $\phi_{\a+1}^0$};
\node at (2,3.8) {\footnotesize $(\phi_{\a+1}^0)^{-1}$};
\node at (2,5.8) {\footnotesize $\phi_{\a+1}$};

\end{tikzpicture}
\end{center}

Second, let $\AA_{\a+1} = \<\hspace{-1mm}\< \AA_\a^0 \cup \{a_\a\} \>\hspace{-1mm}\>$ and observe that 
$$(\<\AA^0_{\a+1},\s\>,\<\AA_{\a+1},\s\>,\iota,(\phi_\a^0)^{-1})$$ 
is an instance of the lifting problem for $\< \pwmff,\s^{-1} \>$, where $\iota$ denotes the inclusion $\AA^0_{\a+1} \xhookrightarrow{} \AA_{\a+1}$. Furthermore, the image of $(\phi^0_{\a+1})^{-1}$, namely $\< \BB_{\a+1}^0,\s^{-1} \>$, is an elementary substructure of $\< \pwmff,\s^{-1} \>$. 
By Lemma~\ref{lem:main}, there is an embedding $\phi_{\a+1}$ from $\< \AA_{\a+1},\s \>$ into $\< \pwmff,\s^{-1} \>$. 
Let $\BB_{\a+1}$ denote the image of this embedding, so that 
$\phi_{\a+1}$ is a conjugacy from $\< \AA_{\a+1},\s \>$ to $\< \BB_{\a+1},\s^{-1} \>$. 

This completes the recursion. Because the $\phi_\a$ are coherent conjugacies from substructures of $\< \pwmff,\s \>$ to substructures of $\< \pwmff,\s^{-1} \>$, $\phi = \bigcup_{\a < \w_1}\phi_\a$ is a conjugacy from $\big\< \bigcup_{\a < \w_1} \AA_\a,\s \big\>$ to $\big\< \bigcup_{\a < \w_1} \BB_\a,\s^{-1} \big\>$. 
But $\bigcup_{\a < \w_1} \AA_\a = \pwmff$ and $\bigcup_{\a < \w_1} \BB_\a = \pwmff$, because we ensured at stage $\a+1$ of the recursion that $a_\a \in \AA_{\a+1}$ and $a_\a \in \BB_{\a+1}$. 
\end{proof}

%%%%%%%%%%%%
\section{The Lifting Lemma, part 2: directed graphs}\label{sec:Digraphs}
%%%%%%%%%%%%

We now set about proving the Lifting Lemma. 
This entails a close look at the ``finite substructures'' of $\< \pwmff,\s \>$ and $\< \pwmff,\s^{-1} \>$, and how they all fit together. 
Following \cite{Darji,GM,Brian2}, we use digraphs to encode the finitary fragments of an infinite dynamical system. 

\begin{definition}
A \emph{directed graph}, or \emph{digraph}, is a pair $\< \V,\to \>$, where $\V$ is a finite set (the vertices) and $\to$ is a relation on $\V$ (possibly asymmetric, and possibly with ``loops'', i.e., vertices $v \in \V$ with $v \to v$).
\hfill{\Coffeecup}
\end{definition}

\begin{center}
\begin{tikzpicture}[yscale=1.2]

\draw (0,0) ellipse (5mm and 6mm);
\draw (3,0) ellipse (5mm and 6mm);
\draw (6,0) ellipse (5mm and 6mm);

\node at (0,0) {\small $b$};
\node at (3,0) {\small $a$};
\node at (6,0) {\small $c$};

\draw[->] (3.2,.7) arc (-45:225:2.8mm);
\draw [->] (.7,.2) to [out=15,in=165] (2.3,.2);
\draw [<-] (.7,-.2) to [out=-15,in=-165] (2.3,-.2);
\draw [->] (3.7,.2) to [out=15,in=165] (5.3,.2);
\draw [<-] (3.7,-.2) to [out=-15,in=-165] (5.3,-.2);

\end{tikzpicture}
\end{center}

\vspace{3mm}

\noindent When convenient, to keep the clutter in our pictures to a minimum, we will sometimes use a two-headed arrow between $x$ and $y$ when $x \to y$ and $y \to x$ (unlike in the picture above).

\begin{definition}\label{def:digraphs}
Suppose $\< \U,\toU \>$ and $\< \V,\toV \>$ are directed graphs. An \emph{epimorphism} from $\< \V,\toV \>$ onto $\< \U,\toU \>$ is a surjection $\phi: \V \to \U$ such that
for any $u,u' \in \U$, we have $u \toU u'$ if and only if there are some $v \in \phi^{-1}(u)$ and $v' \in \phi^{-1}(u')$ such that $v \toV v'$. 

In other words, arrows in $\V$ map to arrows in $\U$, and every arrow in $\U$ is the image of some arrow in $\V$.

In this case, we say that $\< \V,\toV \>$ is a \emph{refinement} of $\< \U,\toU \>$ (via $\phi$).
\hfill \Coffeecup
\end{definition}

For example, the following directed graph is a refinement of the one drawn above, and the epimorphism $\phi$ witnessing this fact is indicated by where the vertices of the new graph are drawn.

\vspace{2mm}

\begin{center}
\begin{tikzpicture}[yscale=1.2]

\draw[densely dotted] (0,0) ellipse (5mm and 6mm);
\draw[densely dotted] (3,0) ellipse (5mm and 6mm);
\draw[densely dotted] (6,0) ellipse (5mm and 6mm);

\node at (0,.25) {\small $\bullet$};
\node at (0,-.25) {\small $\bullet$};
\node at (3,.25) {\small $\bullet$};
\node at (3.18,-.22) {\small $\bullet$};
\node at (2.82,-.22) {\small $\bullet$};
\node at (6,0) {\small $\bullet$};

\draw[<->] (.16,.25)--(2.85,.29);
\draw[->] (.15,-.2)--(2.86,.19);
\draw[<-] (.15,-.28)--(2.68,-.22);
\draw[->] (3.04,.14)--(3.14,-.11);
\draw[<-] (2.96,.14)--(2.86,-.11);
\draw[<-] (2.92,-.22)--(3.08,-.22);
\draw[<-] (3.31,-.21)--(5.85,-.04);
\draw[->] (3.15,.24)--(5.85,.06);

\end{tikzpicture}
\end{center}

\vspace{1mm}

\begin{definition}
Given a dynamical system $\< \AA,\a \>$, we associate to every partition $\A$ of $\AA$ a digraph $\< \A,\toa \>$, where $\A$ itself is the vertex set, and we define $\toa$ as the ``hitting relation'' for $\a$: that is, 
$$x \toa y \quad \text{ if and only if } \quad \a(x) \wedge y \neq \mathbf{0}$$
for all $x,y \in \A$.
\hfill{\Coffeecup}
\end{definition}

\begin{lemma}\label{lem:Epimorphism}
Let $\< \AA,\a \>$ be a dynamical system. Suppose $\A$ and $\B$ are partitions of $\AA$, and $\B$ refines $\A$. Then the natural map $\pi: \B \to \A$ is an epimorphism from $\< \B,\toa \>$ to $\< \A,\toa \>$.
\end{lemma}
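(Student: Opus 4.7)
The plan is to verify the three conditions in Definition~\ref{def:digraphs} — surjectivity, arrows-map-to-arrows, and arrows-lift-to-arrows — in turn, with the last being the main point. Throughout, I will use the fact that since $\B$ is a partition of unity refining $\A$, every $a \in \A$ is the join of its preimage: $a = \bigvee \pi^{-1}(a)$. Since each $a \in \A$ is nonzero by definition of a partition, the set $\pi^{-1}(a)$ is nonempty, so $\pi$ is surjective.

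For the forward direction, suppose $b \toa b'$ in $\B$, i.e., $\a(b) \wedge b' \neq \mathbf{0}$. Since $b \leq \pi(b)$, monotonicity of $\a$ gives $\a(b) \leq \a(\pi(b))$, and $b' \leq \pi(b')$ by definition of $\pi$. Therefore
\[
\mathbf{0} \;\neq\; \a(b) \wedge b' \;\leq\; \a(\pi(b)) \wedge \pi(b'),
\]
so $\pi(b) \toa \pi(b')$ in $\A$.

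For the reverse direction — the only mildly nontrivial step — suppose $a \toa a'$ in $\A$, i.e., $\a(a) \wedge a' \neq \mathbf{0}$. I would use that $\a$ is a Boolean-algebra automorphism, hence preserves finite joins, together with distributivity, to compute
\[
\a(a) \wedge a' \;=\; \a\Bigl(\bigvee \pi^{-1}(a)\Bigr) \wedge \bigvee \pi^{-1}(a') \;=\; \bigvee_{b \in \pi^{-1}(a)} \bigvee_{b' \in \pi^{-1}(a')} \bigl(\a(b) \wedge b'\bigr).
\]
Since this join is nonzero, at least one summand $\a(b) \wedge b'$ must be nonzero, witnessing $b \toa b'$ with $\pi(b) = a$ and $\pi(b') = a'$. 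This completes the verification that $\pi$ is an epimorphism of digraphs. There is no real obstacle; the argument is essentially a bookkeeping exercise in how $\a$ interacts with the lattice operations on $\AA$, and the entire proof should fit in a short paragraph.
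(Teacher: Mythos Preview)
Your proposal is correct and follows essentially the same approach as the paper: both verify surjectivity, the forward edge condition, and the reverse edge condition directly. The only minor stylistic difference is in the reverse step, where you expand $\a(a)\wedge a'$ as a single distributed join and pick a nonzero summand, while the paper instead uses $\a^{-1}$ to first find $b\in\B$ meeting $a\wedge\a^{-1}(a')$ and then $b'\in\B$ meeting $\a(b)\wedge a'$; these are equivalent bookkeeping choices.
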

\begin{proof}
Because $\B$ is a partition refining $\A$, for every $a \in \A$ there is at least one $b \in \B$ with $b \leq a$, i.e., with $\pi(b) = a$. Thus $\pi$ is a surjection $\B \to \A$. 

Suppose $b,b' \in \B$ and $b \toa b'$. This means $\a(b) \wedge b' \neq \mathbf{0}$, which implies $\a(\pi(b)) \wedge \pi(b') \neq \mathbf{0}$ (because $b \leq \pi(b)$ and $b' \leq \pi(b')$), i.e. $\pi(b) \toa \pi(b')$. 

On the other hand, suppose that $a,a' \in \A$ and $a \toa a'$. This means $\a(a) \wedge a' \neq \mathbf{0}$, or equivalently, $a \wedge \a^{-1}(a') \neq \mathbf{0}$. This implies that there is some $b \in \B$ with $b \wedge a \wedge \a^{-1}(a') \neq \mathbf{0}$. 
Hence $b \wedge a \neq \mathbf{0}$ and $\a(b) \wedge a' \neq \mathbf{0}$. 
The fact that $b \wedge a \neq \mathbf{0}$ implies $\pi(b) = a$. 
The fact that $\a(b) \wedge a' \neq \mathbf{0}$ implies there is some $b' \in \B$ with $b' \wedge \a(b) \wedge a' \neq \mathbf{0}$. 
But $b' \wedge a' \neq \mathbf{0}$ implies $\pi(b') = a'$, and $b' \wedge \a(b) \neq \mathbf{0}$ means $b \toa b'$. 
Thus there are $b,b' \in \B$ such that $\pi(b) = a$, $\pi(b') = a'$, and $b \toa b'$.
\end{proof}

This lemma explains our choice of terminology in the last part of Definition~\ref{def:digraphs}: $\< \B,\toa \>$ is a (digraph) refinement of $\< \A,\toa \>$, via the natural map, whenever $\B$ is a (partition) refinement of $\A$.

\begin{definition}
A \emph{walk} in/through a digraph $\< \V,\to \>$ is a finite sequence $\< a_0,a_1,a_2,\dots,a_n \>$ of members of $\V$ such that $a_i \to a_{i+1}$ for all $i < n$. We say that this walk has length $n$ (counting the arrows, not the vertices), and that it is a walk from $a_0$ to $a_n$.

A digraph $\< \V,\to \>$ is \emph{strongly connected} if for any $a,b \in \V$, there is a walk in $\< \V,\to \>$ from $a$ to $b$.
\hfill{\Coffeecup}
\end{definition}

The next two theorems show roughly that the strongly connected digraphs capture the property of incompressibility in a dynamical system.

\begin{theorem}\label{thm:Incompressible=StronglyConnected}
A dynamical system $\< \AA,\a \>$ is incompressible if and only if $\< \A,\toa \>$ is strongly connected for every partition $\A$ of $\AA$.
\end{theorem}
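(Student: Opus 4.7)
The plan is to prove both directions by a standard trick: compressibility of $\< \AA,\a \>$ corresponds directly to the existence of a ``trapping'' set of vertices in some partition digraph, i.e., a non-empty proper subset of $\A$ with no outgoing arrows.

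For the easy direction (not incompressible $\Rightarrow$ some digraph not strongly connected), I would suppose $x \in \AA \setminus \{\mathbf{0},\mathbf{1}\}$ satisfies $\a(x) \leq x$, and look at the two-element partition $\A = \{x, \mathbf{1} - x\}$. The hypothesis $\a(x) \leq x$ means precisely $\a(x) \wedge (\mathbf{1} - x) = \mathbf{0}$, i.e., $x \not\toa (\mathbf{1} - x)$. Since the only possible outgoing arrow from $x$ in $\< \A,\toa \>$ would go to $\mathbf{1} - x$ or back to $x$, no walk starting at $x$ ever leaves $\{x\}$, so there is no walk from $x$ to $\mathbf{1} - x$, and $\< \A,\toa \>$ is not strongly connected.

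For the other direction, suppose some partition digraph $\< \A,\toa \>$ fails to be strongly connected, so there exist $a,b \in \A$ with no walk from $a$ to $b$. I would let
$$R \,=\, \{a\} \cup \set{c \in \A}{\text{there is a walk from } a \text{ to } c},$$
so that $a \in R$ and $b \notin R$. The key observation is that $R$ is forward-closed in $\< \A,\toa \>$: if $c \in R$ and $c \toa c'$, we extend the walk from $a$ to $c$ by the arrow $c \to c'$, so $c' \in R$. Contrapositively, for every $c \in R$ and $c' \in \A \setminus R$ we have $c \not\toa c'$, i.e., $\a(c) \wedge c' = \mathbf{0}$.

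Now set $x = \bigvee R$. Then $\mathbf{1} - x = \bigvee (\A \setminus R)$, and by distributivity together with the previous sentence,
$$\a(x) \wedge (\mathbf{1} - x) \,=\, \bigvee \set{\a(c) \wedge c'}{c \in R, \ c' \in \A \setminus R} \,=\, \mathbf{0},$$
so $\a(x) \leq x$. Since $a \in R$ is a non-zero member of the partition, $x \geq a > \mathbf{0}$; since $b \in \A \setminus R$ is non-zero, $\mathbf{1} - x \geq b > \mathbf{0}$, so $x \neq \mathbf{1}$. Hence $x$ witnesses compressibility.

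The only potentially subtle point is handling $R$ correctly when $a$ has no outgoing walk at all (including to itself), which is why I explicitly include $a$ in the definition; but this is not a real obstacle, just a definitional convenience. The whole argument is essentially combinatorial bookkeeping on the partition digraph, and the equivalence ``trapping set of vertices'' $\Leftrightarrow$ ``element compressed by $\a$'' does all the work.
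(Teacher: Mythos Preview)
Your proof is correct and follows essentially the same approach as the paper: both use the two-element partition $\{x,\mathbf 1-x\}$ for the compressible-implies-not-strongly-connected direction, and both use the forward-closed set of vertices reachable from a fixed $a$ (your $R$, the paper's $C_a$) to produce a compressed element $\bigvee R$ for the converse. The only cosmetic difference is that the paper phrases the latter direction directly rather than as a contrapositive, and your explicit inclusion of $a$ in $R$ is unnecessary (every partition element has an outgoing arrow since $\a$ is an automorphism) but harmless.
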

\begin{proof}
For the forward implication, suppose $\< \AA,\a \>$ is an incompressible dynamical system, and let $\A$ be a partition of $\AA$. Fix $a \in \A$, and let 
$$C_a = \set{b \in \A}{\text{there is a walk in $\< \A,\toa \>$ from $a$ to $b$}}.$$ 
If $b \in C_a$ and $b \toa c$, then any walk from $a$ to $b$ can be extended to a walk from $a$ to $c$, and it follows that $c \in C_a$. Thus $\a(b) \leq \bigvee C_a$ for every $b \in C_a$. 
It follows that $\a(\bigvee C_a) = \bigvee \set{\a(b)}{b \in C_a} \leq \bigvee C_a$. 
Because $\< \AA,\a \>$ is incompressible (and $C_a \neq \0$), this implies $\bigvee C_a = \mathbf{1}$. Thus $C_a = \A$, which means there is a walk in $\< \A,\toa \>$ from $a$ to any other member of $\A$. But $a$ was arbitrary, so this means $\< \A,\toa \>$ is strongly connected.

For the reverse implication, suppose $\< \AA,\a \>$ is not incompressible, which means there is some $x \in \AA \setminus \{\mathbf{0},\mathbf{1}\}$ such that $\a(x) \leq x$. Let $\A = \{ x,\mathbf{1}-x \}$. 

\begin{center}
\begin{tikzpicture}[scale=1]

\draw (0,0) ellipse (6mm and 6mm);
\draw (3,0) ellipse (6mm and 6mm);

\node at (0,0) {\footnotesize $x$};
\node at (3,0) {\footnotesize $\mathbf{1}-x$};

\draw[->] (.2,.7) arc (-45:225:2.8mm);
\draw[->] (3.2,.7) arc (-45:225:2.8mm);
\draw [<-] (.75,0) -- (2.25,0);

\begin{scope}[shift={(8,0)}]
\draw (0,0) ellipse (6mm and 6mm);
\draw (2.5,0) ellipse (6mm and 6mm);

\node at (0,0) {\footnotesize $x$};
\node at (2.5,0) {\footnotesize $\mathbf{1}-x$};

\draw[->] (.2,.7) arc (-45:225:2.8mm);
\draw[->] (2.7,.7) arc (-45:225:2.8mm);
\end{scope}

\node at (5.5,0) {\footnotesize or};

\end{tikzpicture}
\end{center}

\noindent The digraph $\< \A,\toa \>$ is not strongly connected: because $x \leq \a(x)$, we do not have $x \toa \mathbf{1}-x$, and there is no walk from $x$ to $\mathbf{1}-x$.
\end{proof}

\begin{lemma}\label{lem:PathExistence}
Let $\< \V,\to \>$ be a strongly connected digraph, and let $v,w \in \V$. 
There is a walk $\< v_0,v_1,\dots,v_n \>$ through $\< \V,\to \>$ such that $v_0 = v$ and $v_n = w$, and such that for any $x,y \in \V$ with $x \to y$, there is some $i < n$ such that $v_i = x$ and $v_{i+1} = y$. 
(In other words, every arrow in the digraph is represented in the walk.)
\end{lemma}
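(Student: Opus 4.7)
The plan is to build the desired walk by threading together short pieces provided by strong connectivity, forcing every arrow to appear somewhere along the way. Enumerate the arrows of $\<\V,\to\>$ in any order as $(x_1,y_1), (x_2,y_2), \ldots, (x_k,y_k)$. I want a walk that starts at $v$, ends at $w$, and, for each $i$, contains the arrow $(x_i,y_i)$ as a consecutive pair of vertices.

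Using strong connectivity, for each $i = 0, 1, \ldots, k$ I can choose a walk $W_i$ as follows. Let $W_0$ be a walk from $v$ to $x_1$; for $1 \leq i < k$, let $W_i$ be a walk from $y_i$ to $x_{i+1}$; and let $W_k$ be a walk from $y_k$ to $w$. Each $W_i$ exists by the definition of strong connectivity (where a trivial one-vertex walk is allowed when the two endpoints coincide). Now form the concatenation
\[
W_0 \cat \<y_1\> \cat W_1 \cat \<y_2\> \cat W_2 \cat \cdots \cat \<y_k\> \cat W_k,
\]
where each intermediate $\<y_i\>$ is appended after the last vertex $x_i$ of the preceding piece, thereby inserting exactly the arrow $(x_i,y_i)$. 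Since the last vertex of $W_{i-1}$ (namely $x_i$) together with $y_i$ gives the arrow $(x_i,y_i)$, and $y_i$ is also the first vertex of $W_i$, the concatenation is indeed a walk in $\<\V,\to\>$.

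By construction this walk begins at $v$ (the first vertex of $W_0$) and ends at $w$ (the last vertex of $W_k$), and for every $i \leq k$ it contains the consecutive pair $x_i, y_i$, so every arrow of $\<\V,\to\>$ is represented. This completes the argument; there is no real obstacle, since strong connectivity is precisely the tool that licenses all the connecting walks $W_i$, and the only care needed is in the bookkeeping of the concatenation to ensure each prescribed arrow really does appear as a pair of adjacent vertices in the final walk.
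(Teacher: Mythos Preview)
Your proof is correct and follows essentially the same approach as the paper: enumerate the edges and thread them together using the connecting walks provided by strong connectivity. The only cosmetic difference is that the paper arranges the enumeration so that the first edge starts at $v$ and the last ends at $w$, thereby avoiding your extra pieces $W_0$ and $W_k$; your version compensates with those two additional connecting walks, which is equally valid.
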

\begin{proof}
Fix an enumeration $\set{(a_i,b_i)}{i \leq k}$ of (the ordered pairs in) the edge relation $\to$, such that $a_0 = v$ and $b_k = w$.  
To get the desired walk, first let $v_0 = a_0$ and $v_1 = b_0$. Because $\< \V,\to \>$ is strongly connected, there is a walk from $b_0$ to $a_1$; so we may define $v_2,v_3, \dots, v_{\ell_1}$ (for some $\ell_1$) so that $\< b_0,v_2,v_3,\dots,v_{\ell_1},a_1 \>$ is a walk from $b_0$ to $a_1$. Take $v_{\ell_1+1} = a_1$ and $v_{\ell_1+2} = b_1$, and then define $v_{\ell_1+3},v_{\ell_1+4}, \dots, v_{\ell_2}$ (for some $\ell_2 \geq \ell_1+3$) so that $\< b_1,v_{\ell_1+3},v_{\ell_1+4},\dots,v_{\ell_2},a_2 \>$ is a walk from $b_1$ to $a_2$. Take $v_{\ell_2+1} = a_2$ and $v_{\ell_2+2} = b_2$, and then choose the next few $v_i$ to form a walk to $a_3$. Continuing in this way, we find a walk from $a_0 = v$ to $b_k = w$ such that every arrow in the digraph is represented in the walk.
\end{proof}

\begin{theorem}\label{thm:PartitionsOfPWMFF}
Given a digraph $\< \V,\to \>$, the following are equivalent:
\begin{enumerate}
\item $\< \V,\to \>$ is strongly connected.
\item There is an incompressible dynamical system $\< \AA,\a \>$ and a partition $\A$ of $\AA$ such that $\< \V,\to \> \iso \< \A,\toa \>$. 
\item There is a partition $\A$ of $\pwmff$ such that $\< \V,\to \> \iso \< \A,\tosi \>$.
\end{enumerate}
\end{theorem}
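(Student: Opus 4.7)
The plan is to prove $(3) \Rightarrow (2) \Rightarrow (1) \Rightarrow (3)$. The first implication is immediate: $\langle \pwmff, \s \rangle$ is incompressible by Theorem~\ref{thm:It'sIncompressible}, so any partition of $\pwmff$ serves as witness for $(2)$. The implication $(2) \Rightarrow (1)$ is just Theorem~\ref{thm:Incompressible=StronglyConnected}: if $\langle \A, \toa \rangle$ is the hitting digraph of a partition of an incompressible system, it is strongly connected, and this property is invariant under digraph isomorphism. So all the content is in $(1) \Rightarrow (3)$.

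For $(1) \Rightarrow (3)$, suppose $\langle \V, \to \rangle$ is a strongly connected digraph. The idea is to encode $\V$ as a partition of $\w$ using an infinite walk that realizes every arrow infinitely often. First I would use Lemma~\ref{lem:PathExistence} iteratively to build an infinite walk $\langle v_0, v_1, v_2, \ldots \rangle$ through $\langle \V, \to \rangle$ with the property that for every arrow $x \to y$ of $\V$, there are infinitely many indices $i$ with $v_i = x$ and $v_{i+1} = y$. Concretely, fix any $v \in \V$; then repeatedly apply Lemma~\ref{lem:PathExistence} to produce finite walks $v \to \cdots \to v$ each of which uses every arrow of $\V$ at least once, and concatenate them. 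Then for each $v \in \V$, define $A_v = \set{n \in \w}{v_n = v}$, and set $\A = \set{[A_v]}{v \in \V}$.

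Next I would verify that $\A$ is a partition of $\pwmff$ and that $\langle \A, \tosi \rangle \iso \langle \V, \to \rangle$ via the map $v \mapsto [A_v]$. Each $A_v$ is infinite (every vertex appears in the walk infinitely often, because it is the target of at least one arrow by strong connectivity), so $[A_v] \neq \mathbf{0}$. The sets $\set{A_v}{v \in \V}$ are pairwise disjoint and cover $\w$, so their equivalence classes form a partition of $\pwmff$. The map $v \mapsto [A_v]$ is injective because the $A_v$ are pairwise disjoint infinite sets. For the edge relation, compute
\[
[A_v] \tosi [A_w] \iff [A_v + 1] \wedge [A_w] \neq \mathbf{0} \iff (A_v + 1) \cap A_w \text{ is infinite},
\]
and $(A_v + 1) \cap A_w$ is infinite iff there are infinitely many $n$ with $v_n = v$ and $v_{n+1} = w$. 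By our construction of the walk, this occurs precisely when $v \to w$ in $\langle \V, \to \rangle$ (infinitely often if it holds, never if it does not, since every consecutive pair in the walk is an arrow). So $v \mapsto [A_v]$ is a digraph isomorphism.

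There is no real obstacle here; the only modest care required is arranging the walk so that every arrow is realized infinitely often (rather than merely once), but this is a routine diagonal concatenation using Lemma~\ref{lem:PathExistence}. Once the walk is in hand, the rest is bookkeeping about the shift map on $\pwmff$.
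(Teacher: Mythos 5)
Your proposal is correct and follows essentially the same route as the paper: both reduce everything to $(1)\Rightarrow(3)$, invoke Lemma~\ref{lem:PathExistence} to get a closed walk using every arrow, and take the fibers $A_v$ of an infinite walk as the partition. The only cosmetic difference is that the paper repeats one fixed closed walk periodically (defining $A_v$ arithmetically as $\{nk+i : v_i = v\}$), whereas you concatenate copies of such walks to traverse every arrow infinitely often; the verification of the edge relation is then identical.
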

\begin{proof}
$(3) \Rightarrow (2)$ because $\< \pwmff,\s \>$ is incompressible (by Theorem~\ref{thm:It'sIncompressible}). 
$(2) \Rightarrow (1)$ by Theorem~\ref{thm:Incompressible=StronglyConnected}. 

To prove $(1) \Rightarrow (3)$, 
fix some $w \in \V$ and then, applying the previous lemma, fix a walk $\seq{v_i}{i \leq n}$ in $\< \V,\to \>$ such that $v_0 = v_n = w$ and such that every arrow in the digraph is represented in the walk. 
For each $v \in \V$, define 
$$A_v = \set{nk+i}{k \geq 0 \text{ and } v_i = v}$$
and let $\A = \set{[A_v]}{v \in \V}$. We claim that $\A = \set{[A_v]}{v \in \V}$ is a partition of $\pwmff$ and $\< \A,\tosi \> \iso \< \V,\to \>$.

Because $\< \V,\to \>$ is strongly connected, every $v \in \V$ has in-degree and out-degree $>\!0$, and this implies (as every arrow is represented in our path) that $v = v_i$ for at least one $i \leq n$. Thus $A_v$ is infinite. 
Furthermore, it is clear that $A_u \cap A_v = \0$ whenever $u \neq v$, and that $\bigcup_{v \in \V}A_v = \w$. 
Hence $\A = \set{[A_v]}{v \in \V}$ is a partition of $\pwmff$. 

To finish the proof, we show that the map $v \mapsto [A_v]$ is an isomorphism from $\< \V,\to \>$ to $\< \A,\tosi \>$. 
This map is clearly bijective, so we must show that it preserves the edge relation. If $u,v \in \V$ and $u \to v$, then there is some $i < n$ such that $v_i = u$ and $v_{i+1} = v$. So $(A_u+1) \cap A_v \supseteq \set{nk+i+1}{k \geq 0},$ 
and it follows that $\s([A_u]) \wedge [A_v] = [(A_u+1) \cap A_v] \geq [\set{nk+i+1}{k \geq 0}] \neq \mathbf{0}$, which means $[A_u] \tosi [A_v]$. 
Conversely, if $u,v \in \V$ and $[A_u] \tosi [A_v]$, then we have $j+1 \in A_v$ for infinitely many $j \in A_u$. By our definition of $A_u$ and $A_v$, this means that $u = v_i$ and $v = v_{i+1}$ for some $i < n$. Because $\seq{v_j}{j \leq n}$ is a walk, this implies $u \to v$.
\end{proof}

We end this section with the following important observation, which was used in the proof of Theorem~\ref{thm:main} above. This theorem, in its dual topological form, already appeared in \cite{Brian1}, but we include a proof here for the sake of completeness.

\begin{theorem}\label{thm:CountableIncompressibleEmbeds}
A countable dynamical system $\<\AA,\a\>$ embeds in $\< \pwmff,\s \>$ if and only if it is incompressible.
\end{theorem}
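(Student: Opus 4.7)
The plan is as follows. One direction is immediate from what is already in the paper: if $\<\AA,\a\>$ embeds in $\<\pwmff,\s\>$, then it is incompressible by Theorem~\ref{thm:It'sIncompressible} and Theorem~\ref{thm:Incompressibility} (or Corollary~\ref{cor:Substructures}). So the content is in the other direction: I would show that every countable incompressible dynamical system $\<\AA,\a\>$ admits an embedding into $\<\pwmff,\s\>$.

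For this I would use the characterization in Theorem~\ref{thm:CharacterizingGoodSequences}$(3)$: it suffices to produce an eventually small, eventually dense sequence $\seq{a_n}{n \in \w}$ in $\AA \setminus \{\zero\}$ such that $\a(a_n) \wedge a_{n+1} \neq \zero$ for every $n$. To build it, first fix a sequence $\U_0,\U_1,\U_2,\dots$ of partitions of $\AA$ as in Lemma~\ref{lem:NicePartitionSequence}: $\U_{k+1}$ refines $\U_k$, and every partition of $\AA$ is refined by some $\U_k$. Because $\<\AA,\a\>$ is incompressible, Theorem~\ref{thm:Incompressible=StronglyConnected} tells us each digraph $\<\U_k,\toa\>$ is strongly connected.

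Now I would build the sequence by concatenating walks. Pick any starting vertex $s_0 \in \U_0$, and, using Lemma~\ref{lem:PathExistence}, let $\vec{w}_0$ be a walk through $\<\U_0,\toa\>$ beginning at $s_0$ that traverses every edge (and hence visits every vertex). Having chosen $\vec{w}_0,\dots,\vec{w}_{k-1}$, let $e_{k-1}$ denote the final vertex of $\vec{w}_{k-1}$. Since $\a(e_{k-1}) \neq \zero$ and $\mathbf{1} = \bigvee \U_k$, there is some $s_k \in \U_k$ with $\a(e_{k-1}) \wedge s_k \neq \zero$; that is, $e_{k-1} \toa s_k$ in the ``mixed'' sense that one needs at the boundary. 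Apply Lemma~\ref{lem:PathExistence} again to pick a walk $\vec{w}_k$ through $\<\U_k,\toa\>$ starting at $s_k$ and traversing every edge. The concatenation $\vec{w}_0 \cat \vec{w}_1 \cat \vec{w}_2 \cat \cdots$ gives the desired sequence $\seq{a_n}{n \in \w}$.

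Finally I would verify the three properties. Adjacency: within each $\vec{w}_k$, consecutive terms $u \toa u'$ give $\a(u) \wedge u' \neq \zero$ directly, and at each boundary the transition was arranged to satisfy this. Eventual smallness: given any partition $\V$ of $\AA$, choose $K$ with $\U_K$ refining $\V$; once $n$ is large enough that $a_n$ lies in some $\U_k$ with $k \geq K$, then $a_n$ is below $\U_K$ and hence below $\V$. Eventual density: given $x \in \AA \setminus \{\zero\}$, choose $K$ so that $\U_K$ refines $\{x,\mathbf{1}-x\}$; then for every $k \geq K$ there is some $v \in \U_k$ with $v \leq x$, and since $\vec{w}_k$ visits every vertex of $\U_k$, we have $a_n = v \leq x$ for some $n$ inside the $k$th block. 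Theorem~\ref{thm:CharacterizingGoodSequences} then delivers the embedding, and this completes the proof. There is no serious obstacle here; the only mild subtlety is the splicing step between walks, which is handled by picking $s_k$ as above before invoking Lemma~\ref{lem:PathExistence}.
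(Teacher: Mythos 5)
Your proposal is correct and follows essentially the same route as the paper: both directions are handled identically, and the embedding is built by concatenating edge-covering walks through the digraphs $\< \U_k,\toa \>$ supplied by Lemma~\ref{lem:NicePartitionSequence}, Theorem~\ref{thm:Incompressible=StronglyConnected}, and Lemma~\ref{lem:PathExistence}, then invoking Theorem~\ref{thm:CharacterizingGoodSequences}. The only (cosmetic) difference is the splicing step: you choose the start $s_k$ of each new walk so that $\a(e_{k-1}) \wedge s_k \neq \zero$ and verify condition $(3)$ directly, whereas the paper uses an ultrafilter to fix a nested sequence of basepoints, takes each walk to be closed at its basepoint, and verifies the $\approx$ condition $(2)$; both handle the boundary correctly.
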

\begin{proof}
The forward direction was proved already: every dynamical system that embeds in $\< \pwmff,\s \>$ is incompressible by Theorem~\ref{thm:Incompressibility}.

For the reverse direction, fix a sequence $\seq{\A_n}{n \in \w}$ of partitions of $\AA$ satisfying the conclusion of Lemma~\ref{lem:NicePartitionSequence}: that is, $\A_n$ refines $\A_m$ whenever $m \leq n$, and for every partition $\V$ of $\AA$, $\A_n$ refines $\V$ for all sufficiently large $n$. 
Because $\< \AA,\a \>$ is incompressible, Theorem~\ref{thm:Incompressible=StronglyConnected} tells us that $\< \A_n,\toa \>$ is strongly connected for every $n$.

Let $\U$ be an ultrafilter on $\AA$, and for each $n \in \w$ let $a^n_0$ be the (unique) member of $\A_n$ in $\U$. (This implies $a^0_0 \geq a^1_0 \geq a^2_0 \geq \dots$.) 
Applying Lemma~\ref{lem:PathExistence}, for each $n$ let $\seq{a^n_i}{i \leq \ell_n}$ be a walk in $\< \A_n,\toa \>$ such that, for every $n$, $a^n_0 = a^n_{\ell_n} = a^n_0$ and every arrow in $\< \A_n,\toa \>$ is represented in the walk. 

Let $\seq{b_i}{i \in \w}$ be the infinite sequence of members of $\AA$ obtained by concatenating the walks $\seq{a^0_i}{i < \ell_0}$, $\seq{a^1_i}{i < \ell_1}$, $\seq{a^2_i}{i < \ell_2}$, etc., in that order. In other words, $b_i = a^n_j$ if and only if $i = \ell_0+\ell_1+\dots+\ell_{n-1}+j$ for some $j < \ell_n$. 

If $\V$ is a partition of $\AA$, then $\A_n$ refines $\V$ for sufficiently large $n$, which means $b_i$ is below $\V$ for sufficiently large $i$. Hence $\seq{b_i}{i \in \w}$ is eventually small. 
If $a \in \AA \setminus \{\mathbf 0\}$ then for all sufficiently large $n$ there is some $a' \in \A_n$ with $a' \leq a$, which implies there is some $j < \ell_n$ with $a^n_j = a' \leq a$. Hence there are infinitely many $i$ with $b_i \leq a$, and as $a$ was arbitrary, this shows $\seq{b_i}{i \in \w}$ is eventually dense. 

Finally, fix a partition $\V$ of $\AA$, and suppose $n$ is large enough that $\A_n$ refines $\V$. 
If $b_i = a^n_j$ and $b_{i+1} = a^n_{j+1}$, then $b_i \toa b_{i+1}$ because $\< a^n_j :\, j \leq \ell_n \>$ is a walk. 
Because $\A_n$ refines $\V$, this implies $\a(b_i) \approx_\V b_{i+1}$. 
If instead $b_i = a^n_{\ell_n-1}$ and $b_{i+1} = a^{n+1}_0$, then $\a(b_i) \approx_{\A_n} b_{i+1}$ because 
$$b_i \,=\, a^n_{\ell_n-1} \,\toa\, a^n_{\ell_n} \,=\, a^n_0 \,\geq\, a^{n+1}_0 \,=\, b_{i+1}.$$  
Because $\A_n$ refines $\V$, we have $\a(b_i) \approx_\V b_{i+1}$ in this case as well. 
In either case, $b_i \approx_\V b_{i+1}$ for all sufficiently large $n$. 
As $\V$ was an arbitrary partition of $\AA$, this means $\seq{\a(b_i)}{i \in \w} \approx \seq{b_{i+1}}{i \in \w}$. 
By Theorem~\ref{thm:CharacterizingGoodSequences}, $\seq{b_i}{i \in \w}$ induces an embedding of $\< \AA,\a \>$ into $\< \pwmff,\s \>$.
\end{proof}

%%%%%%%%%%%%
\section{The Lifting Lemma, part 3: a diagonal argument}\label{sec:Diagonalization}
%%%%%%%%%%%%

In Theorem~\ref{thm:Dagger} below, we give a necessary and sufficient condition for the existence of a solution to a given instance of the lifting problem for $\< \pwmff,\s \>$. 
Roughly, the condition is that we can solve an instance of the lifting problem if and only if we can solve arbitrarily large ``finitary fragments'' of the problem (where these fragments are encoded via digraphs as described in the previous section). 
The proof of this theorem is essentially just a diagonalization argument, albeit a somewhat intricate one.  
First we need several definitions and lemmas. 

\begin{definition}\label{def:Walks}
An \emph{infinite walk} through a digraph $\< \V,\to \>$ is an infinite sequence $\seq{v_n}{n \in \w}$ in $\V$ such that
$v_n \to v_{n+1}$ for all $n \in \w$. 

A \emph{diligent walk} in/through $\<\V,\to\>$ is an infinite walk with the property that every arrow is traversed infinitely often, or more precisely: 
\begin{itemize}
\item[$\circ$] for each $v,w \in \V$ with $v \to w$, there are infinitely many $n \in \w$ such that $v_n = v$ and $v_{n+1} = w$. 
\end{itemize}

Given an infinite walk $\seq{v_n}{n \in \w}$ (diligent or not) in a digraph $\< \V,\to \>$, for each $v \in \V$ define $A_v = [\set{n \in \w}{v_n = v}]$. 
It is not difficult to see that $\set{A_v}{v \in \V} \setminus \{[\0]\}$ is a partition of $\pwmff$; we call this the \emph{partition of $\pwmff$ associated to the walk $\seq{v_n}{n \in \w}$}.
\hfill{\Coffeecup}
\end{definition}

\begin{lemma}\label{lem:Walks0}
A digraph $\< \V,\to \>$ is strongly connected if and only if it has no isolated points and there is a diligent walk through $\< \V,\to \>$.
\end{lemma}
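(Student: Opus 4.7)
The plan is to prove both implications of the lemma, with the backward direction being essentially immediate from the definition of a diligent walk, and the forward direction requiring a standard splicing construction that uses strong connectivity to stitch individual arrows together into a single infinite walk.

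For the backward direction, suppose $\<\V,\to\>$ has no isolated points and admits a diligent walk $\seq{v_n}{n \in \w}$. Given any $a, b \in \V$, the hypothesis that $a$ is not isolated means some arrow is incident to $a$; by diligence, that arrow is traversed infinitely often, and so $a$ appears infinitely often in the sequence $\seq{v_n}{n \in \w}$. The same argument shows that $b$ appears infinitely often in the walk. Pick $m < n$ with $v_m = a$ and $v_n = b$. Then $\< v_m, v_{m+1}, \dots, v_n \>$ is a walk from $a$ to $b$, so $\<\V,\to\>$ is strongly connected.

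For the forward direction, suppose $\<\V,\to\>$ is strongly connected; setting aside the trivial case of a single vertex with no loop, one first verifies that each vertex has an outgoing arrow (fixing any single arrow $a \to b$, for any $v \in \V$ either $v = a$ already has an outgoing arrow, or strong connectivity yields a positive-length walk from $v$ to $a$), so no vertex is isolated. To build a diligent walk I would enumerate the arrows of $\<\V,\to\>$ in an infinite list $(x_1,y_1), (x_2,y_2), (x_3,y_3), \dots$ arranged so that each arrow appears infinitely often, which is possible since the edge relation is finite. Starting from the vertex $x_1$, I then inductively splice together: traverse the arrow $(x_i, y_i)$ to arrive at $y_i$, apply strong connectivity to obtain a finite walk from $y_i$ to $x_{i+1}$, append it, and repeat. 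The concatenation is an infinite walk through $\<\V,\to\>$ in which the arrow $(x_i, y_i)$ is explicitly traversed at stage $i$; since each arrow of $\<\V,\to\>$ appears at infinitely many stages of the enumeration, each arrow is traversed infinitely often, and the resulting walk is diligent. The only real bookkeeping concern is the choice of enumeration, which is a standard diagonalization (for instance, list the arrows in the pattern $(x_1,y_1),(x_1,y_1),(x_2,y_2),(x_1,y_1),(x_2,y_2),(x_3,y_3),\dots$), so there are no substantive obstacles in the argument.
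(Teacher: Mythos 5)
Your proof is correct and takes essentially the same approach as the paper: the backward direction is identical (diligence plus no isolated points forces every vertex to appear infinitely often, and a segment of the walk connects any two vertices), and your forward direction's infinite splicing construction is just the unrolled version of what the paper does by invoking Lemma~\ref{lem:PathExistence} to get a finite closed walk traversing every arrow and then repeating it periodically.
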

\begin{proof}
If $\< \V,\to \>$ is strongly connected, then clearly it has no isolated points. Furthermore, by Lemma~\ref{lem:PathExistence} there is a walk $\< v_0,v_1,\dots,v_k\>$ through $\< \V,\to \>$ such that $v_k = v_0$, and for each $v,w \in \V$ with $v \to w$, there is some $i < k$ such that $v_i = v$ and $v_{i+1} = w$. 
For each $n \in \w$, let $v_n = v_i$ if and only if $i \equiv n$ modulo $k$. Then $\seq{v_n}{n \in \w}$ is a diligent walk through $\< \V,\to \>$. 

Conversely, suppose $\< \V,\to \>$ has no isolated points, and $\seq{v_n}{n \in \w}$ is a diligent walk through $\< \V,\to \>$. 
Let $u,v \in \V$. 
Because $u$ and $v$ are not isolated in $\< \V,\to \>$, the definition of ``diligent'' implies $u$ and $v$ both appear infinitely often in $\seq{v_n}{n \in \w}$. 
Consequently, there is some $m \in \w$ with $v_m = u$ and some $n > m$ with $v_n = v$. But then $\<v_m,v_{m+1},\dots,v_n\>$ is a walk from $u$ to $v$ in $\< \V,\to \>$. As $u$ and $v$ were arbitrary, $\< \V,\to \>$ is strongly connected.
\end{proof}

The next two lemmas express the most important aspect of diligent walks: a diligent walk through a digraph $\< \V,\to \>$ is equivalent to an identification of $\< \V,\to \>$ with a digraph of the form $\< \A,\tosi \>$, where $\A$ is a partition of $\pwmff$. 

\begin{lemma}\label{lem:Walks1}
Let $\< \V,\to \>$ be a strongly connected digraph and let $\seq{v_n}{n \in \w}$ be a diligent walk through $\< \V,\to \>$. 
Let $\A = \set{A_v}{v \in \V}$ be the partition of $\pwmff$ associated to this walk. Then the map $v \mapsto A_v$ is an isomorphism from $\< \V,\to \>$ to $\< \A,\tosi \>$.
\end{lemma}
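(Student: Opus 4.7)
The plan is to verify that the map $\Phi\colon v \mapsto A_v$ is a bijection from $\V$ onto $\A$ and then that both $\Phi$ and $\Phi^{-1}$ preserve the edge relation, so that $\Phi$ is a digraph isomorphism from $\< \V,\to \>$ to $\< \A,\tosi \>$.

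First, I would verify bijectivity. Because $\<\V,\to\>$ is strongly connected, Lemma~\ref{lem:Walks0} applies; in particular every vertex has an outgoing edge, so diligence guarantees that each $v \in \V$ occurs as $v_n$ for infinitely many $n$, whence $A_v = [\set{n \in \w}{v_n = v}] \neq [\emptyset]$. Since $\seq{v_n}{n \in \w}$ is a function, the underlying sets $\set{n}{v_n = v}$ are pairwise disjoint over $v \in \V$, so distinct vertices give distinct nonzero classes. Surjectivity onto $\A$ is immediate from the definition $\A = \set{A_v}{v \in \V}$ (with the previous sentence ruling out the degenerate possibility that some $A_v$ equals $[\emptyset]$ and has been discarded).

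Second, I would handle both directions of edge preservation at once by unpacking
$$ \s(A_v) \wedge A_w \,=\, \bigl[\set{n \in \w}{v_{n-1} = v \text{ and } v_n = w}\bigr]. $$
For the forward direction, if $v \to w$ in $\<\V,\to\>$, then diligence provides infinitely many $n$ with $v_n = v$ and $v_{n+1} = w$; re-indexing by $m = n+1$, the set on the right above is infinite, so $\s(A_v) \wedge A_w \neq \mathbf{0}$, i.e.\ $A_v \tosi A_w$. Conversely, if $A_v \tosi A_w$, the displayed intersection is nonzero, so at least one $n$ witnesses $v_{n-1} = v$ and $v_n = w$; since $\seq{v_n}{n \in \w}$ is a walk through $\<\V,\to\>$, this forces $v \to w$.

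I do not anticipate any real obstacle here: the argument is a routine unpacking of the definitions of diligent walk, of the associated sets $A_v$, and of the shift action on $\pwmff$. The only subtlety worth flagging is the use of strong connectedness (via Lemma~\ref{lem:Walks0}) to ensure that no $A_v$ collapses to $[\emptyset]$, which is what makes $\Phi$ well-defined as a map into the partition $\A$ in the first place.
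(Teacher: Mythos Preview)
Your proof is correct and follows essentially the same approach as the paper's: both verify bijectivity and then establish edge preservation in both directions by computing the intersection $\s(A_v)\wedge A_w$ (the paper equivalently computes $A_v\wedge\s^{-1}(A_w)$) and invoking diligence. Your treatment of bijectivity is somewhat more careful than the paper's, which simply asserts that the map is ``clearly a bijection,'' but the arguments are otherwise the same.
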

\begin{proof}
Let $\< \V,\to \>$ be a strongly connected digraph, let $\seq{v_n}{n \in \w}$ a diligent walk through $\< \V,\to \>$, and let $\A = \set{A_v}{v \in \V}$ be the partition of $\pwmff$ associated to this walk. 
The map $v \mapsto A_v$ is clearly a bijection. 
If $v \to w$, then (by the definition of a diligent walk) there are infinitely many $n$ such that $v_n = v$ and $v_{n+1} = w$, and this implies $A_v \tosi A_w$. 
Conversely, if $A_v \tosi A_w$, then this means that 
\begin{align*}
[\set{n}{v_n = v \text{ and }v_{n+1}=w}] & \,=\,  [\set{n}{v_n=v}] \wedge [\set{n}{v_{n+1}=w}] \\
&\,=\,  [\set{n}{v_n=v}] \wedge [\set{n-1}{v_n=w}] \\
&\,=\, A_v \wedge \s^{-1}(A_w) \,\neq\, [\0].
\end{align*}
In other words, there are infinitely many $n$ such that $v_n = v$ and $v_{n+1} = w$, which implies $v \to w$. 
Hence $v \to w$ if and only if $A_v \tosi A_w$.
\end{proof}

\begin{definition}
Suppose that $\< \V,\to \>$ is a strongly connected digraph, and $\seq{v_n}{n \in \w}$ is a diligent walk through $\< \V,\to \>$. The map $v \mapsto A_v$ is called the \emph{natural isomorphism} from $\< \V,\to \>$ to $\< \A,\tosi \>$ or, more precisely, the natural isomorphism \emph{associated to the walk $\seq{v_n}{n \in \w}$}.
\hfill
\Coffeecup
\end{definition}

\begin{lemma}\label{lem:Walks2}
Suppose $\A$ is a partition of $\pwmff$, and suppose that there is a digraph $\< \V,\to \>$ and an isomorphism $\phi$ from $\< \V,\to \>$ to $\< \A,\tosi \>$. Then there is a diligent walk through $\< \V,\to \>$ such that $\phi$ is the natural isomorphism associated to the walk. 
\end{lemma}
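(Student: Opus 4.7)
The plan is to realize the given isomorphism $\phi$ concretely by picking set representatives and then reading the walk off the natural numbers. Concretely, index the partition as $\A = \set{A_v}{v \in \V}$ where $A_v = \phi(v)$, and choose actual subsets $X_v \sub \w$ with $[X_v] = A_v$. Since the $A_v$ form a partition of $\pwmff$, the $X_v$ are pairwise almost disjoint with cofinite union; modifying on a finite set yields a genuine partition $\w = \bigsqcup_{v \in \V} Y_v$ with $[Y_v] = A_v$ for each $v$. My candidate walk is simply the sequence that reads off which block contains each integer: $v_n = v$ iff $n \in Y_v$.

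The main obstacle is that this candidate is not automatically a walk through $\<\V,\to\>$: small values of $n$ could easily exhibit transitions $v_n \to v_{n+1}$ that do not correspond to edges of $\V$. However, because $\phi$ is an isomorphism, whenever $v \not\to w$ in $\<\V,\to\>$ we have $A_v \not\tosi A_w$, which means $(Y_v + 1) \cap Y_w$ is finite. Since there are only finitely many such ``bad'' pairs $(v,w)$, there is some $N$ such that for all $n \geq N$, the transition from $v_n$ to $v_{n+1}$ is a genuine edge of $\<\V,\to\>$. I would then surgically repair the initial segment: by Theorem~\ref{thm:PartitionsOfPWMFF}, $\<\V,\to\>$ is strongly connected, so every vertex has in-degree $\geq 1$ (in the one-vertex case this still holds because $\<\A,\tosi\>$ must have a loop at $\mathbf{1}$). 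Starting from $v_N$ and choosing an in-neighbor at each step backwards, pick $v_{N-1},v_{N-2},\dots,v_0$ with $v_{n} \to v_{n+1}$ for every $n < N$. Replace the initial block-assignments accordingly, defining $Y'_v = \set{n \in \w}{v_n = v}$.

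After this repair, the sequence $\seq{v_n}{n \in \w}$ is an infinite walk through $\<\V,\to\>$. Because $Y'_v$ differs from $Y_v$ only on the finite set $\{0,1,\dots,N-1\}$, we have $[Y'_v] = [Y_v] = A_v = \phi(v)$ for every $v$; thus the partition of $\pwmff$ associated with $\seq{v_n}{n \in \w}$ is exactly $\A$, and the natural isomorphism associated to the walk is exactly $\phi$, as required. The remaining point to verify is diligence. For any edge $v \to w$ in $\<\V,\to\>$ we have $A_v \tosi A_w$, so $(Y_v+1) \cap Y_w$, and hence $(Y'_v+1) \cap Y'_w$, is infinite; this gives infinitely many $n \geq N$ with $v_n = v$ and $v_{n+1} = w$, which is precisely the defining property of a diligent walk from Definition~\ref{def:Walks}.
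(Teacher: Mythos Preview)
Your proof is correct and follows essentially the same approach as the paper's: choose set representatives for the partition, read the walk off $\w$ according to which block contains each integer, repair the finite initial segment where the edge relation may fail, and then verify diligence and the natural-isomorphism property. The only cosmetic differences are that the paper works directly in $\<\A,\tosi\>$ and pulls back via $\phi^{-1}$ at the end, and it builds the initial segment as a forward walk of length $N$ rather than by choosing in-neighbours backward from $v_N$.
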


\begin{proof}
Let $\A$ be a partition of $\pwmff$, let $\< \V,\to \>$ be a digraph, and let $\phi$ be an isomorphism from $\< \V,\to \>$ to $\< \A,\tosi \>$. Note that this implies $\< \V,\to \>$ is strongly connected by Theorem~\ref{thm:PartitionsOfPWMFF}.

For each $a \in \A$ fix some $X_a \sub \w$ such that $a = [X_a]$. 
If $a \neq b$ then $a \wedge b = [\0]$, which means that $X_a \cap X_b$ is finite. 
Also, because $\bigvee \A = [\w]$, $\set{n \in \w}{n \notin \bigcup_{a \in \A}X_a}$ is finite.
Similarly, if it is not the case that $a \tosi b$ then $\set{n}{n \in X_a \text{ and } n+1 \in X_b}$ is finite, but if it is the case that $a \tosi b$ then $\set{n}{n \in X_a \text{ and } n+1 \in X_b}$ is infinite. 

Consequently, there is some sufficiently large value of $N \in \w$ such that
\begin{itemize}
\item[$\circ$] If $n \geq N$ then there is exactly one $a \in \A$ such that $n \in X_a$, and 
\item[$\circ$] The following are equivalent for any $a,b \in \A$:
\begin{itemize} 
\item[$\cdot$] $n \in X_a$ and $n+1 \in X_b$ for some $n \geq N$
\item[$\cdot$] $n \in X_a$ and $n+1 \in X_b$ for infinitely many $n \geq N$
\item[$\cdot$] $a \tosi b$. 
\end{itemize}
\end{itemize}
By modifying each of the $X_a$ below $N$ (which does not change the equality $[X_a]=a$), we may (and do) assume that these two properties hold with $N = 0$. 
(More precisely, to see how these modifications can be accomplished, fix a length-$N$ walk $\seq{a_i}{i \leq N}$ in $\< \A,\tosi \>$ with $a_N = [X_{a_N}]$. 
Some such walk exists because $\< \A,\tosi \>$ is strongly connected, by Theorem~\ref{thm:PartitionsOfPWMFF}. 
Then modify the $X_a$ by putting $i \in X_a$ if and only if $a = a_i$ for all $i \leq N$.)

For each $k \in \w$, let $a_k$ denote the unique $a \in \A$ such that $k \in X_a$. 
For each $n \in \w$, let 
$v_n = \phi^{-1}(a_n)$. 
We claim $\seq{v_n}{n \in \w}$ is a diligent walk through $\<\V,\to\>$, and $\phi$ is the natural isomorphism associated to this walk. 

To see that this is a diligent walk, first note that $a_n \tosi a_{n+1}$ for all $n$ (because $n \in X_a$ and $n+1 \in X_b$ is equivalent to $a \tosi b$). 
Because $\phi$ is an isomorphism, this implies $\phi^{-1}(a_n) \to \phi^{-1}(a_{n+1})$ for all $n$. 
Thus $\seq{v_n}{n \in \w}$ is an infinite walk. 
Furthermore, if $v,w \in \V$ with $v \to w$, then $\phi(v) \tosi \phi(w)$, which means there are infinitely many $n$ with $n \in X_{\phi(v)}$ and $n+1 \in X_{\phi(w)}$. Thus there are infinitely many $n$ such that $v_n = v$ and $v_{n+1} = w$. 
Thus $\seq{v_n}{n \in \w}$ is a diligent walk.  

Finally, we claim that $\phi$ is the natural isomorphism associated to this diligent walk. 
Our definition of the walk implies that for all $v \in \V$, 
$$A_v \,=\, [\set{n}{v_n = v}] \,=\, [\set{n}{a_{n} = \phi(v)}] \,=\, [X_{\phi(v)}] \,=\, \phi(v).$$
Thus $A_v = \phi(v)$ for all $v \in \V$.
\end{proof}

Recall that $[X]^2$ denotes the set of all unordered pairs of elements of $X$. 

\begin{lemma}\label{lem:Rainbow}
Suppose $\chi: [\w]^2 \to \w$, i.e., $\chi$ is an $\w$-coloring of the complete graph $K_\w$, and suppose that all monochromatic cliques for $\chi$ are finite. Then 
there is an increasing sequence $\seq{n_k}{k \in \w}$ of natural numbers such that $\chi(n_k,n_{k+1}) > k$ for all $k$.
\end{lemma}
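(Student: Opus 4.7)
The plan is to construct the sequence recursively, using a pigeonhole/Ramsey argument to ensure that at every stage there remain enough candidates for the next term.

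For each $k \in \w$, define
$$T_k \,=\, \set{n \in \w}{\set{m > n}{\chi(\{n,m\}) > k} \text{ is infinite}}.$$
The key claim is that each $T_k$ is cofinite in $\w$. First I would derive the lemma from this claim: pick $n_0 \in T_0$, and given $n_0 < n_1 < \ldots < n_k$ with $n_k \in T_k$, observe that the set $\set{m > n_k}{\chi(\{n_k,m\}) > k}$ is infinite by the definition of $T_k$, while $\w \setminus T_{k+1}$ is finite by the claim, so their intersection is nonempty. Pick any element of this intersection to serve as $n_{k+1}$.

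The main work, and the hard part of the proof, is verifying that $T_k$ is cofinite. I would argue by contradiction: suppose $\w \setminus T_k$ is infinite. Then I would build an infinite set $C = \{c_0 < c_1 < c_2 < \dots\} \sub \w \setminus T_k$ with the property that $\chi(\{c_i,c_j\}) \leq k$ for all $i < j$. This can be done recursively: having chosen $c_0 < \dots < c_j$, each $c_i \notin T_k$ means $F_i := \set{m > c_i}{\chi(\{c_i,m\}) > k}$ is finite; pick $c_{j+1} \in \w \setminus T_k$ with $c_{j+1} > c_j$ and $c_{j+1} \notin F_0 \cup \dots \cup F_j$, which is possible because $\w \setminus T_k$ is infinite and $F_0 \cup \dots \cup F_j$ is finite. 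The restriction $\chi \rest [C]^2$ now takes only the $k+1$ values $0,1,\ldots,k$, so by the classical (finitary-colors) Ramsey theorem there is an infinite monochromatic subset of $C$, contradicting the hypothesis that all monochromatic cliques for $\chi$ are finite. Hence $T_k$ is cofinite, and the recursive construction goes through.
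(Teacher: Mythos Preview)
Your proof is correct. Both your argument and the paper's hinge on the same Ramsey fact---an infinite set on which $\chi$ takes only finitely many values would contain an infinite monochromatic clique---but they organize the construction differently. The paper builds a descending chain $\w = X_0 \supseteq X_1 \supseteq \cdots$ of infinite sets with $\chi(a,b) \geq k$ for all $\{a,b\} \in [X_k]^2$ (applying Ramsey once per level to refine), and then diagonalizes by choosing $n_k \in X_k$. You instead prove directly that each $T_k$ is cofinite (first constructing the auxiliary set $C$ on which $\chi$ is bounded, then invoking Ramsey once) and run a straightforward recursion through the $T_k$. Your version is a bit more local and avoids the nested-tower machinery; the paper's chain yields the marginally stronger intermediate statement that \emph{all} pairs in $X_k$ have large color, not merely that each point has infinitely many high-color partners, but this extra strength is not needed for the lemma.
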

\begin{proof}
Fix $\chi: [\w]^2 \to \w$, an $\w$-coloring  of $K_\w$, and assume that all monochromatic cliques are finite. 
Now suppose $X \sub \w$ is infinite. 
If only finitely many colors are taken by $\chi$ in the graph $[X]^2$, then Ramsey's Theorem (the infinitary version) implies that there is an infinite monochromatic clique for $\chi$ in $[X]^2$, contradicting our assumption about $\chi$. 
Thus if $X \sub \w$ is infinite, then the image of $[X]^2$ under $\chi$ is infinite.

We now construct an $\w$-sequence of infinite subsets of $\w$ by recursion. 
Let $X_0 = \w$. 
Given $X_n$, define a $2$-coloring $\chi_n$ of $[X_n]^2$ by setting
$$\chi_n(a,b) \,=\, \begin{cases}
0 \ \text{ if } \chi(a,b) < n+1, \\ 
1 \ \text{ if } \chi(a,b) \geq n+1.
\end{cases}$$
By the previous paragraph, $[X_n]^2$ does not have any infinite monochromatic cliques with color $0$. But by Ramsey's Theorem, $[X_n]^2$ does have an infinite monochromatic clique. Let $X_{n+1}$ be any infinite subset of $X_n$ such that $\chi_n(a,b) \geq n+1$ for all $a,b \in X_{n+1}$. This completes the recursion.

For each $k \in \w$, let $n_k \in X_k \setminus \max \{n_0,n_1,\dots,n_{k-1}\}$. 
Then $n_k,n_{k+1} \in X_k$, which implies $\chi(n_k,n_{k+1}) > k$, for all $k$.
\end{proof}

\begin{lemma}\label{lem:ImageOfAPartition}
Let $\< \AA,\a \>$ and $\< \BB,\b \>$ be dynamical systems, and let $\A$ be a partition of $\AA$. 
If $\eta$ is an embedding from $\< \AA,\a \>$ into $\< \BB,\b \>$, then $\eta$ is an isomorphism from $\< \A,\toa \>$ to $\< \eta[\A],\tob \>$.
\end{lemma}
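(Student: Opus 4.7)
The plan is to unpack the definitions: the map $\eta \rest \A$ must be shown to be a bijection onto $\eta[\A]$ that preserves and reflects the hitting relation, and along the way one should verify that $\eta[\A]$ is genuinely a partition of $\BB$ (so that the codomain digraph $\< \eta[\A],\tob \>$ is of the type considered in this section).

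First I would check that $\eta[\A]$ is a partition. Since $\eta$ is a Boolean-algebraic embedding, it is injective and preserves $\mathbf 0$, $\mathbf 1$, $\wedge$, and $\vee$. Hence the elements $\eta(a)$ for $a \in \A$ are nonzero (as $a \neq \mathbf 0$), pairwise disjoint (since $\eta(a) \wedge \eta(a') = \eta(a \wedge a') = \eta(\mathbf 0) = \mathbf 0$ for distinct $a,a' \in \A$), and sum to $\bigvee \eta[\A] = \eta(\bigvee \A) = \eta(\mathbf 1_\AA) = \mathbf 1_\BB$. So $\eta[\A]$ is a partition of $\BB$, and $\eta \rest \A$ is a bijection between $\A$ and $\eta[\A]$ by injectivity of $\eta$.

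Next I would verify that $\eta \rest \A$ preserves and reflects the hitting relation. Using the commutation $\eta \circ \a = \b \circ \eta$ and the fact that $\eta$ preserves meets, we compute for $a, a' \in \A$:
\[
\b(\eta(a)) \wedge \eta(a') \,=\, \eta(\a(a)) \wedge \eta(a') \,=\, \eta(\a(a) \wedge a').
\]
Since $\eta$ is injective, $\eta(\a(a) \wedge a') = \mathbf 0$ if and only if $\a(a) \wedge a' = \mathbf 0$. Therefore $\a(a) \wedge a' \neq \mathbf 0$ if and only if $\b(\eta(a)) \wedge \eta(a') \neq \mathbf 0$, that is, $a \toa a'$ if and only if $\eta(a) \tob \eta(a')$. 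Combined with the bijectivity already established, this shows $\eta \rest \A$ is a digraph isomorphism from $\< \A,\toa \>$ to $\< \eta[\A],\tob \>$.

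There is no real obstacle here: the lemma is essentially a direct consequence of the definition of an embedding of dynamical systems, and everything follows by mechanically applying the facts that $\eta$ is a Boolean-algebraic embedding intertwining $\a$ and $\b$. The content of the lemma is just to record this observation so that, when we later compare the finite-digraph data of $\< \AA,\a \>$ and $\< \pwmff,\s \>$ under an embedding (as needed for the proof of Theorem~\ref{thm:Dagger}), we may transport partitions and their associated digraphs along $\eta$ without comment.
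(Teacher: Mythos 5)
Your proof is correct and follows the same route as the paper's: bijectivity of $\eta\rest\A$ onto $\eta[\A]$ plus the computation $\b(\eta(a)) \wedge \eta(a') = \eta(\a(a)\wedge a')$ together with injectivity of $\eta$ to see that the hitting relation is preserved and reflected. You simply spell out the details (including the check that $\eta[\A]$ is a partition) that the paper's one-line proof leaves implicit.
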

\begin{proof}
Clearly $\eta$ is a bijection $\A \to \eta[\A]$. 
Furthermore, because $\eta$ is an embedding, given any $a,b \in \A$, we have $a \toa b$ if and only if $\eta(a) \tob \eta(b)$. 
It follows that $\eta$ is a digraph isomorphism from $\< \A,\toa \>$ to $\< \eta[\A],\tob \>$.
\end{proof}

Before stating the next theorem, which is the main result of this section, let us establish a new (abuse of) notation. 
Suppose $\BB$ and $\CC$ are Boolean algebras, $\B$ and $\C$ are partitions of $\BB$ and $\CC$, respectively, and $\eta$ is a bijection $\B \to \C$. Suppose also that $a$ is in the (finite) subalgebra of $\BB$ generated by $\B$, i.e., $a = \bigvee \set{b \in \B}{b \leq a}$. In this case, we define $\eta(a) = \bigvee \set{\eta(b)}{b \in \B \text{ and } b \leq a}$. In other words, a bijection $\B \to \C$ extends naturally to an isomorphism from the finite subalgebra of $\BB$ generated by $\B$ to the finite subalgebra of $\CC$ generated by $\C$, and we abuse notation by denoting this extension also by $\eta$. This abuse occurs in the statement of $(\dagger)$ in the following theorem when speaking of the map $\tilde \eta \circ \iota$. In general we may not have $\iota(a) \in \B$ (although we do have $\iota(a) = \bigvee \set{b \in \B}{b \leq \iota(a)}$), so $\tilde \eta(\iota(a))$ is only well-defined via this notational abuse. 

\begin{theorem}\label{thm:Dagger}
Suppose $(\< \AA,\a \>,\< \BB,\b \>,\iota,\eta)$ is an instance of the lifting problem for $\< \pwmff,\s \>$. 
This instance of the lifting problem has a solution if and only if the following condition holds:
\begin{itemize}
\item[$(\dagger)$] For every partition $\A$ of $\AA$ and every partition $\B$ of $\BB$ that refines $\iota[\A]$, there is a partition $\C$ of $\pwmff$ refining $\eta[\A]$, and an isomorphism $\tilde \eta$ from $\< \B,\tob \>$ to $\< \C,\tosi \>$ such that $\tilde \eta \circ \iota = \eta$
\end{itemize}
\end{theorem}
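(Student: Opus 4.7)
If a solution $\bar\eta$ exists, then given any $\A$ and $\B$ as in $(\dagger)$, I would set $\C := \bar\eta[\B]$ and $\tilde\eta := \bar\eta \rest \B$. Lemma~\ref{lem:ImageOfAPartition} makes $\tilde\eta$ a digraph isomorphism from $\<\B,\tob\>$ to $\<\C,\tosi\>$; the identity $\tilde\eta\circ\iota=\eta$ is just the restriction of $\bar\eta\circ\iota=\eta$ to the finite subalgebra generated by $\A$; and $\C$ refines $\eta[\A]$ because $\bar\eta$ preserves order.

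\textbf{Reverse direction, setup.} Assume $(\dagger)$. By Theorem~\ref{thm:CharacterizingGoodSequences} it suffices to produce an eventually small, eventually dense sequence $\seq{b_k}{k \in \w}$ in $\BB$ with $\b(b_k) \wedge b_{k+1} \neq \mathbf{0}$ for every $k$, whose induced embedding $\bar\eta$ satisfies $\bar\eta \circ \iota = \eta$. I would start by invoking Lemma~\ref{lem:NicePartitionSequence} to fix increasing cofinal partition sequences $\seq{\A_n}{n \in \w}$ of $\AA$ and $\seq{\B_n}{n \in \w}$ of $\BB$, arranged so that each $\B_n$ refines $\iota[\A_n]$. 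Feeding each pair $(\A_n,\B_n)$ into $(\dagger)$ yields partitions $\C_n$ of $\pwmff$ and isomorphisms $\tilde\eta_n : \<\B_n,\tob\> \to \<\C_n,\tosi\>$; Lemma~\ref{lem:Walks2} then converts each $\tilde\eta_n$ into a diligent walk $\vec{v}^n = \seq{v^n_k}{k \in \w}$ through $\<\B_n,\tob\>$ whose natural isomorphism is $\tilde\eta_n$. Linearly extending $\tilde\eta_n$, the relation $\tilde\eta_n \circ \iota = \eta$ reads as $\eta(a) = [\set{k}{v^n_k \leq \iota(a)}]$ for every $a \in \A_n$, so for $m \geq n$ and $a \in \A_n$ the sets $\set{k}{v^n_k \leq \iota(a)}$ and $\set{k}{v^m_k \leq \iota(a)}$ agree modulo finite.

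\textbf{The splicing.} The sequence $\seq{b_k}{k \in \w}$ will then be assembled by stitching the walks $\vec{v}^n$ together with $n$ slowly increasing. The easy half: any choice of the form $b_k = v^{n(k)}_k$ with $n(k) \to \infty$ sufficiently slowly delivers $\bar\eta \circ \iota = \eta$, because for each fixed $n$ and $a \in \A_n$ the set $\set{k}{b_k \leq \iota(a)}$ eventually coincides with $\set{k}{v^n_k \leq \iota(a)}$ and hence represents $\eta(a)$. The hard half is preserving the hitting relation at the transitions $n(k) < n(k+1)$: although $\vec{v}^n$ and $\vec{v}^{n+1}$ have the same $\iota[\A_n]$-projection modulo finite, they can disagree on $\B_n$, and so the arrow in $\<\B_{n+1},\tob\>$ required at a splice point may simply fail to exist. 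My plan is to resolve this with Lemma~\ref{lem:Rainbow}: I would cook up a coloring $\chi : [\w]^2 \to \w$ so that $\chi(i,j)$ records the largest level $n$ at which the walks are ``splicable'' at indices $i$ and $j$, verify that monochromatic cliques for $\chi$ are finite by combining the diligence of the walks with the strong connectedness of $\<\B_n,\tob\>$ (Theorem~\ref{thm:Incompressible=StronglyConnected}), and then apply Lemma~\ref{lem:Rainbow} to produce indices $k_0 < k_1 < \cdots$ with $\chi(k_l,k_{l+1}) > l$. Defining $b_j$ on each interval $[k_l,k_{l+1})$ to follow $\vec{v}^{\chi(k_l,k_{l+1})}$ should then simultaneously deliver the hitting relation (within each interval and across each splice, by the splicability of the endpoints), eventual smallness (since the levels tend to infinity), eventual density (by diligence), and the correct $\iota[\AA]$-statistics.

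\textbf{Main obstacle.} The delicate point is designing $\chi$ so that $\chi > l$ conveys enough alignment between the walks to permit both the transition at $k_l$ and the statistical coherence at level $l$, while simultaneously forcing monochromatic cliques to be finite. Getting these two demands to cooperate is precisely the combinatorial work carried out in the sections that follow.
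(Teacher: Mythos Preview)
Your setup and forward direction match the paper exactly. The reverse direction also assembles the same ingredients: the partition sequences from Lemma~\ref{lem:NicePartitionSequence}, the diligent walks $\vec v^{\,n}$ from Lemma~\ref{lem:Walks2}, and Lemma~\ref{lem:Rainbow} to organize the splicing. The divergence is in what the coloring $\chi$ acts on.

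You propose to color pairs of \emph{positions} in $\w$ and read off both the jump times $k_l$ and the walk levels $\chi(k_l,k_{l+1})$ in one stroke. This does not work as stated: at the splice point $k_l$ you are transitioning from walk $\vec v^{\,\chi(k_{l-1},k_l)}$ to walk $\vec v^{\,\chi(k_l,k_{l+1})}$, and neither coloring value says anything about how \emph{those two particular walks} interact at position $k_l$. The paper instead colors pairs of \emph{levels}: for $k<\ell$ one sets
\[
\chi(k,\ell)\,=\,\sup\bigl\{\, j\le k : \b(v^k_n)\approx_{\B_j} v^\ell_{n+1}\ \text{for infinitely many }n\,\bigr\},
\]
which directly measures how smoothly one can jump from walk $k$ to walk $\ell$. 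Lemma~\ref{lem:Rainbow} then yields a subsequence of \emph{levels} $n_0<n_1<\cdots$ with $\chi(n_k,n_{k+1})>k$; the jump \emph{positions} $J_k$ are chosen afterward, using the ``infinitely many $n$'' clause to land at a position where the transition from $\vec v^{\,n_k}$ to $\vec v^{\,n_{k+1}}$ is $\B_k$-smooth, and late enough that every element of $\B_{n_k}$ has already been visited on $[J_k,J_{k+1})$. Separating the choice of walks from the choice of positions is exactly what makes the argument close.

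Two smaller corrections. First, the finite-monochromatic-clique verification is not via diligence or strong connectedness but via pigeonhole: choose $j'$ so that $\B_{j'}$ refines the partition generated by $\B_j\cup\{\b(b):b\in\B_j\}$; if a monochromatic clique had more than $|\B_{j'}|+1$ members, then at every position $n$ two of the walks in the clique land in the same cell of $\B_{j'}$, so some fixed pair does so infinitely often, forcing $\chi>j$ for that pair. Second, nothing from later sections is used here; the proof of this theorem is self-contained, and it is the later sections that invoke it, not the reverse.
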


\begin{center}
\begin{tikzpicture}[scale=1.2]

\node at (-.05,0) {$\<\A,\toa\>$};
\node at (-.05,2) {$\<\B,\tob\>$};
\node at (3.15,2) {$\<\C,\tosi\>$};

\draw[dashed,->] (.65,2) -- (2.4,2);
\draw[->] (.55,.35) -- (2.5,1.65);
\draw[<-] (-.05,1.65) -- (-.05,.35);

\node at (-.2,1) {\footnotesize $\iota$};
\node at (1.82,.85) {\footnotesize $\eta$};
\node at (1.5,2.24) {\footnotesize $\tilde \eta$};

\end{tikzpicture}
\end{center}

\noindent Roughly, this theorem asserts that $(\< \AA,\a \>,\< \BB,\b \>,\iota,\eta)$ has a solution if and only if every finitary fragment $(\< \A,\toa \>,\< \B,\tob \>,\iota,\eta)$ of it has a solution.

\begin{proof}%[Proof of Theorem~\ref{thm:Dagger}]
The ``only if'' direction of the theorem is relatively easy. Suppose that $\bar \eta$ is a solution to $(\< \AA,\a \>,\< \BB,\b \>,\iota,\eta)$. 
To show that $(\dagger)$ holds, fix a partition $\A$ of $\AA$ and a partition $\B$ of $\BB$ that refines $\iota[\A]$. 
Let $\C = \bar \eta[\B]$. Taking $\tilde \eta = \bar \eta$ gives an isomorphism from $\< \B,\tob \>$ to $\< \C,\tosi \>$ by Lemma~\ref{lem:ImageOfAPartition}, and because $\bar \eta$ is a lifting of $\eta$, we have $\tilde \eta \circ \iota = \bar \eta \circ \iota = \eta$. 

For the ``if'' direction, suppose $(\dagger)$ holds. We aim to find a solution $\bar \eta$ to $(\< \AA,\a \>,\< \BB,\b \>,\iota,\eta)$. The plan is to obtain $\bar \eta$ by finding an eventually small, eventually dense sequence in $\BB$ and then applying Theorem~\ref{thm:CharacterizingGoodSequences}.

Let $\seq{\A_n}{n \in \w}$ be a sequence of partitions of $\AA$ satisfying the conclusion of Lemma~\ref{lem:NicePartitionSequence}, and likewise let $\seq{\B_n}{n \in \w}$ be a sequence of partitions of $\BB$ satisfying the conclusion of Lemma~\ref{lem:NicePartitionSequence}. 
Furthermore, we may assume for convenience that $\B_0 = \{\mathbf{1}\}$ and, by thinning out the sequence $\seq{\B_n}{n \in \w}$ if necessary, we may (and do) assume that $\B_n$ refines $\iota[\A_n]$ for all $n$. 

For each $n$, property $(\dagger)$ gives us a partition $\C_n$ of $\pwmff$, with $\C_n \supseteq \eta[\A_n]$, and an isomorphism $\tilde \eta_n$ from $\< \B_n,\tob \>$ to $\< \C_n,\tosi \>$ such that $\tilde \eta_n \circ \iota = \eta$. 
Applying Lemma~\ref{lem:Walks2}, for each $n \in \w$ there is a diligent walk $\seq{b^n_i}{i \in \w}$ through $\< \B_n,\tob \>$ such that $\tilde \eta_n$ is the natural isomorphism associated with the walk $\seq{b^n_i}{i \in \w}$.

The plan of the proof is to diagonalize across the sequences $\seq{b^n_i}{i \in \w}$ in order to obtain a single sequence $\seq{d_n}{n \in \w}$ in $\BB$. This diagonal sequence will be eventually small and eventually dense, so that it induces an embedding $\BB \to \pwmff$, and the embedding it induces will be the desired solution $\bar \eta$ to the lifting problem. 

Not all the sequences $\seq{b^n_i}{i \in \w}$ are used in our diagonalization, but only a selection of them. 
In order to decide which sequences to use, we employ Lemma~\ref{lem:Rainbow}. 
Recall that if $\B$ is a partition of $\BB$ and $b,b' \in \BB$, then $b \approx_\B b'$ means $b \wedge u \neq \mathbf{0}$ if and only if $b' \wedge u \neq \mathbf{0}$ for all $u \in \B$. 
Define a function $\chi: [\w]^2 \to \w$ as follows. 
For every $k,\ell \in \w$ with $k < \ell$, define
\begin{align*}
\chi(k,\ell) \,=\, \max \set{j \leq k}{ \b(b^k_n) \approx_{\B_j} b^\ell_{n+1} \text{ for infinitely many }n}.
\end{align*}
Our assumption that $\B_0 = \{\mathbf{1}\}$ means the set in this definition is nonempty, as it contains $0$; hence $\chi$ is well-defined. 

Roughly, the idea motivating the definition of $\chi$ is as follows. 
In our diagonal sequence $\seq{d_i}{i \in \w}$ that we are planning to construct, we will have long stretches of indices $i$ where $d_i = b^k_i$, i.e., where the diagonal sequence just copies $\seq{b^k_i}{i \in \w}$, for some $k$. 
Then at some point we stop copying $\seq{b^k_i}{i \in \w}$ and ``jump'' to copying another sequence $\seq{b^\ell_i}{i \in \w}$ with $\ell > k$. 
In particular, for some $i \in \w$ (the jump time) our diagonal sequence will have $d_i = b^k_i$ and then $d_{i+1} = b^\ell_{i+1}$. 
In the end, Theorem~\ref{thm:CharacterizingGoodSequences} requires that we have $\seq{\b(d_i)}{i \in \w} \approx \seq{d_{i+1}}{i \in \w}$. 
Thus we need $\b(d_i) \approx_{\B_j} d_{i+1}$ for larger and larger $j$ as we go along. 
In some sense, the $j$ in this expression is measuring the degree of error in the jump: the bigger the $j$, the less the error between $\b(d_i)$ and $d_{i+1}$. 
Because we cannot control these jump times very well in the diagonalization, we would like to have infinitely many opportunities to jump from one sequence to another with small error (large $j$). 
The number $\chi(k,\ell)$ tells us what error we can expect to see when jumping from $\seq{b^k_i}{i \in \w}$ to $\seq{b^\ell_i}{i \in \w}$. 

In the parlance of Ramsey theory, $\chi$ is a coloring $[\w]^2 \to \w$. 
We claim that all the monochromatic cliques for this coloring are finite. 
To see this, fix $j \in \w$, and suppose $C \sub \w$ is a clique with color $j$, i.e., $\chi(k,\ell) = j$ for all $k,\ell \in C$. 
Let $m$ be large enough that $\B_{m}$ refines the partition generated by $\B_{j+1} \cup \set{\b^{-1}(b)}{b \in \B_{j+1}}$. 
We claim $|C| \leq |\B_m|+m+1$.
Aiming for a contradiction, suppose that $|C| > |\B_m|+m+1$.
For every $b \in \BB$ that is below $\B_m$, let $\pi_m(b)$ denote the (unique) member of $\B_m$ with $b \leq \pi_m(b)$. 
Fix $k_0,k_1,\dots,k_{|\B_m|} \in C$ such that $m \leq k_0 < k_1 < \dots < k_{|\B_m|}$. 
(Note that some such members of $C$ exist by our assumption $|C| > |\B_m|+m+1$.) 
For any given $k \geq m$ and $n \in \w$, $b^k_n$ is below $\B_m$, so that $\pi_m(b^k_n)$ is well-defined, and there are only $|\B_m|$ possible values for $\pi_m(b^k_n)$. 
In particular, by the pigeonhole principle, for every $n \in \w$ at least two of 
$$\pi_m(b^{k_0}_n),\, \pi_m(b^{k_1}_n),\, \pi_m(b^{k_2}_n),\, \dots,\, \pi_m(b^{k_{|\B_m|}}_n)$$
are the same. More precisely, for each $n \in \w$ there is a pair $i_n,i'_n$ with $i_n < i'_n \leq |\B_m|$ such that $\pi_m(b^{k_{i_n}}_n) = \pi_m(b^{k_{i'_n}}_n)$. 
Applying the pigeonhole principle again, there are some particular $i < i' \leq |\B_m|$ such that $i = i_n$ and $i' = i'_n$ for infinitely many $n$. 
But $\pi_m(b^{k_{i'}}_n) \tob \pi_m(b^{k_{i'}}_{n+1})$ for any $n$ (because $b^{k_{i'}}_n \tob b^{k_{i'}}_{n+1}$), 
and and therefore $\pi_m(b^{k_i}_n) \tob \pi_m(b^{k_{i'}}_{n+1})$ whenever $\pi_m(b^{k_i}_{n}) = \pi_m(b^{k_{i'}}_{n})$. 
Hence $\pi_m(b^{k_i}_n) \tob \pi_m(b^{k_{i'}}_{n+1})$ for infinitely many $n$. 
Now recall that $m \leq k_i < k_{i'}$, which means all members of the sequences $\seq{b^{k_i}_n}{n \in \w}$ and $\big \langle b^{k_{i'}}_n :\, n \in \w \big \rangle$ are below $\B_m$. 
By our choice of $m$, if $b,b'$ are below $\B_m$ then both $\b(b)$ and $b'$ are below $\B_{j+1}$; and if $b \tob b'$, then this means $\b(b)$ and $b'$ are below the same member of $\B_{j+1}$ (that is, $\b(b),b' \leq u$ for some $u \in \B_{j+1}$), which implies $\b(b) \approx_{\B_{j+1}} b'$. 
In particular, because $\pi_m(b^{k_i}_n) \tob \pi_m(b^{k_{i'}}_{n+1})$ for infinitely many $n$, this means $\b(b^{k_i}_n) \approx_{\B_{j+1}} b^{k_{i'}}_{n+1}$ for infinitely many $n \in \w$. Thus $\chi(k_i,k_{i'}) \geq j+1$. Contradiction!

As all the monochromatic cliques for $\chi$ are finite, Lemma~\ref{lem:Rainbow} gives us an increasing sequence $\seq{n_k}{k \in \w}$ such that $\chi(n_k,n_{k+1}) > k$ for all $k$.

Next we define a sequence $J_0,J_1,J_2,\dots$ of integers (our jump times) by recursion. 
For each $a \in \AA$, fix some $X_a \sub \w$ such that $\eta(a) = [X_a]$. 
Let $J_0 = 0$, and given some $J_k$, $k \in \w$, choose $J_{k+1}$ such that
\begin{itemize}
\item[$\circ$] for every $b \in \B_{n_k}$, there is some $i \in [J_k,J_{k+1})$ such that $b^{n_k}_i = b$,
\item[$\circ$] $\b \big( b^{n_k}_{J_k-1} \big) \approx_{\B_k} b^{n_{k+1}}_{J_k}$, and 
\item[$\circ$] if $a$ is a member of the finite subalgebra of $\AA$ generated by $\A_{n_{k+1}}$, then $X_a \setminus J_{k+1} = \set{i}{b^{n_{k+1}}_i \leq \iota(a)} \setminus J_{k+1}$.
\end{itemize}
To see that some such $J_{k+1}$ exists: the first condition is satisfied by all sufficiently large choices of $J_{k+1}$, because the sequence $\seq{b^{n_k}_i}{i \in \w}$ is a diligent walk and therefore contains each member of $\B_{n_k}$ infinitely many times; the second condition is satisfied infinitely often, because $\chi(n_k,n_{k+1}) > k$; the third condition is satisfied by all sufficiently large choices of $J_{k+1}$ because, by the fact that $\tilde \eta_{n_{k+1}} \circ \iota = \eta$ (and by our notational abuse that extends $\tilde \eta_{n_{k+1}}$ to the subalgebra of $\AA$ generated by $\A_{n_{k+1}}$),
$$[\{ i :\, b^{n_{k+1}}_i \leq \iota(a) \}] \,=\, \tilde \eta_{n_{k+1}}(\iota(a)) \,=\, (\tilde \eta_{n_{k+1}} \circ \iota)(a) \,=\, \eta(a) \,=\, [X_a]$$
for every $a$ in the finite subalgebra of $\AA$ generated by $\A_{n_{k+1}}$.

Define the diagonal sequence $\seq{d_n}{n \in \w}$ in $\BB$ by setting 
$$d_i \,=\, b^{n_k}_i \quad \text{ whenever $J_k \leq i < J_{k+1}$}.$$
The sequence $\seq{d_i}{i \in \w}$ is eventually small in $\BB$ by our choice of the $\B_n$'s, and because the $n_k$ are increasing and each $b^{n_k}_i$ is in $\B_{n_k}$.
The sequence is eventually dense by the condition that for every $b \in \B_{n_k}$ there is some $i \in [J_k,J_{k+1})$ such that $b^{n_k}_i = b$. By our choice of the $\B_n$'s, this implies that for any given $x \in \BB$, if $k$ is sufficiently large then some $b \in \B_{n_k}$ has $b \leq x$, and so there is some $d_i = b \leq x$ with $J_k \leq i < J_{k+1}$. 

Furthermore, we claim $\seq{\b(d_i)}{i \in \w} \approx \seq{d_{i+1}}{i \in \w}$. 
Let $\V$ be a partition of $\BB$, and fix $K$ large enough that $\B_K$ refines the partition generated by $\V \cup \set{\b^{-1}(b)}{b \in \V}$. 
If $i \neq J_\ell-1$ for any $\ell$, then there are some $k,j$ such that $d_i = b^k_j$ and $d_{i+1} = b^k_{j+1}$. 
But $\< b^k_j :\, j \in \w \>$ is is a diligent walk through $\< \B_k,\tob \>$, 
so $b^k_j \tob b^k_{j+1}$. 
If $k \geq K$, then $\b(b^k_j)$ and $b^k_{j+1}$ are both below $\V$, and because $b^k_j \tob b^k_{j+1}$, they are both below the same member of $\V$. 
Thus $\b(d_i) = \b(b^k_j) \approx_{\V} b^k_{j+1} = d_{i+1}$. 
Hence $\b(d_i) \approx_{\V} d_{i+1}$ for sufficiently large $i$, provided that $i \neq J_\ell-1$ for any $\ell$. 
On the other hand, suppose $i = J_\ell-1$ for some $\ell \geq K$. Then $\b(d_i) \approx_{\B_\ell} d_{i+1}$ by the second condition listed above in our definition of the $J_k$. 
Because $\ell \geq K$, $\B_\ell$ refines $\V$, so $\b(d_i) \approx_{\B_\ell} d_{i+1}$ implies $\b(d_i) \approx_{\V} d_{i+1}$. 
Thus, in either case, $\b(d_i) \approx_{\V} d_{i+1}$ for all sufficiently large $i$. 
Because $\V$ was an arbitrary partition of $\BB$, this shows $\seq{\b(d_i)}{i \in \w} \approx \seq{d_{i+1}}{i \in \w}$ as claimed. 

Applying Theorem~\ref{thm:CharacterizingGoodSequences}, $\seq{d_i}{i \in \w}$ induces an embedding of $\< \BB,\b \>$ into $\< \pwmff,\s \>$. Call this embedding $\bar \eta$. We claim that $\bar \eta$ is a lifting of $\eta$. 

To see this, fix $a \in \AA$. There is some $K$ large enough that $a$ is in the finite subalgebra of $\AA$ generated by $\A_k$ for all $k \geq K$. 
Given $k \geq K$, $\B_k$ refines $\iota[\A_k]$, and thus $\iota(a) = \bigvee \set{d \in \B_k}{d \leq \iota(a)}$. 
Combining this with the third bullet point in our choice of the $J_k$, if $J_k \leq i < J_{k+1}$ then 
$d_i = b^{n_k}_i \leq \iota(a)$ if and only if $i \in X_a$. Consequently, 
$$(\bar \eta \circ \iota)(a) \,=\, \bar \eta(\iota(a)) \,=\, [\set{i}{d_i \leq \iota(a)}] \,=\, [X_a] \,=\, \eta(a).$$
As $a$ was arbitrary, it follows that $\eta = \bar \eta \circ \iota$ as claimed.
\end{proof}

Of course one could prove an analogous result concerning the lifting problem for $\< \pwmff,\s^{-1} \>$. 
But there is no need. 
The purpose of Theorem~\ref{thm:Dagger} is to bring us one step closer to proving the Lifting Lemma. We already proved that a ``reverse'' Lifting Lemma (Lemma~\ref{lem:niam}) follows from the original version, so there is no need for a reversed version of Theorem~\ref{thm:Dagger}. 

The rest of this section is not part of the proof of Lemma~\ref{lem:main}, and the reader who wishes to may skip it without losing the thread of the argument. 

The main theorem of \cite{Brian1}, translated from the topological to the algebraic category, states that every incompressible dynamical system of size $\leq\!\aleph_1$ embeds in $\< \pwmff,\s \>$ and in $\< \pwmff,\s^{-1} \>$ (the conclusion of the vacuously true Theorem~\ref{thm:AU2}.) 
To end this section, we derive this theorem as a relatively straightforward consequence of Theorem~\ref{thm:Dagger}. 

The proof shows how elementarity can be used, in combination with Theorem~\ref{thm:Dagger}, to solve certain instances of the lifting problem, just as in Lemma~\ref{lem:main}. 
The difference is that the following lemma requires $\iota$ to be elementary, whereas Lemma~\ref{lem:main} places the burden on $\eta$ instead.

\begin{lemma}\label{lem:IotaElementary}
Suppose $(\< \AA,\a \>,\< \BB,\b \>,\iota,\eta)$ is an instance of the lifting problem for $\< \pwmff,\s \>$. 
If $\iota$ is an elementary embedding of $\< \AA,\a \>$ into $\< \BB,\b \>$, then condition $(\dagger)$ from Theorem~\ref{thm:Dagger} is satisfied, and this instance of the lifting problem has a solution.
\end{lemma}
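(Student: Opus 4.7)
My plan is to verify condition $(\dagger)$ from Theorem~\ref{thm:Dagger} and apply that theorem. Fix a partition $\A$ of $\AA$ and a partition $\B$ of $\BB$ refining $\iota[\A]$. The key point is that the existence of $\B$, with its particular cardinality, its particular refinement map into $\iota[\A]$, and its particular digraph structure $\< \B,\tob \>$, is expressible by a single first-order formula $\varphi(x_1,\dots,x_m,y_1,\dots,y_n)$ in the language of dynamical systems: the formula states that the $y_j$ are nonzero and pairwise disjoint, each $y_j$ lies below a prescribed $x_{i(j)}$, every $x_i$ is the join of the $y_j$'s below it, and $\b(y_i) \wedge y_j \neq \mathbf{0}$ holds exactly for the pairs $(i,j)$ in a prescribed edge set. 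The formula $\varphi$ is satisfied in $\< \BB,\b \>$ when the parameters $x_i$ are set equal to an enumeration of $\iota[\A]$, as witnessed by an enumeration of $\B$. By elementarity of $\iota$, the formula is also satisfied in $\< \AA,\a \>$ when the $x_i$ enumerate $\A$. This produces a partition $\A'$ of $\AA$ refining $\A$ together with a bijection $\psi: \A' \to \B$ that is simultaneously a digraph isomorphism $\< \A',\toa \> \to \< \B,\tob \>$ and a map commuting with the refinement maps into $\A$ and $\iota[\A]$, in the sense that for $a' \in \A'$ and $a \in \A$ one has $a' \leq a$ if and only if $\psi(a') \leq \iota(a)$.

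Once $\A'$ and $\psi$ are in hand, the rest will be almost formal. Let $\C = \eta[\A']$; since $\eta$ is a Boolean-algebraic embedding and $\A'$ refines $\A$, the image $\C$ is a partition of $\pwmff$ refining $\eta[\A]$. By Lemma~\ref{lem:ImageOfAPartition}, the restriction of $\eta$ to $\A'$ is a digraph isomorphism $\< \A',\toa \> \to \< \C,\tosi \>$, so the composition $\tilde \eta := \eta \circ \psi^{-1}$ is a digraph isomorphism $\< \B,\tob \> \to \< \C,\tosi \>$. To check that $\tilde \eta \circ \iota = \eta$, it suffices (by the notational convention preceding Theorem~\ref{thm:Dagger}) to verify the equality on elements of $\A$. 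For $a \in \A$, we have $\iota(a) = \bigvee \set{b \in \B}{b \leq \iota(a)}$ because $\B$ refines $\iota[\A]$, and the refinement-commuting property of $\psi$ identifies $\set{\psi^{-1}(b)}{b \in \B,\, b \leq \iota(a)}$ with $\set{a' \in \A'}{a' \leq a}$, so
\[
\tilde \eta(\iota(a)) \,=\, \eta\Big( {\textstyle \bigvee} \set{a' \in \A'}{a' \leq a} \Big) \,=\, \eta(a).
\]

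This verifies $(\dagger)$, and Theorem~\ref{thm:Dagger} then produces the desired solution. The only real substance in the plan is isolating the correct first-order formula whose transfer under $\iota$ produces $\A'$ and $\psi$; everything else is a routine diagram chase, and I do not anticipate a substantial obstacle.
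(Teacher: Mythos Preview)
Your proof is correct and essentially identical to the paper's own argument: both write down a first-order formula with parameters $\iota(a_1),\dots,\iota(a_m)$ asserting the existence of a partition with the prescribed digraph structure and refinement pattern of $\B$, pull it back to $\< \AA,\a \>$ via elementarity of $\iota$ to obtain a partition $\A'$ (the paper calls it $\tilde \B$), and then push forward through $\eta$ to produce $\C$ and $\tilde\eta$. The only cosmetic difference is that the paper also records the digraph structure of $\< \iota[\A],\tob \>$ in its formula, but this is redundant (it is determined by $\< \B,\tob \>$ and the refinement map via Lemma~\ref{lem:Epimorphism}), so your leaner formula suffices.
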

\begin{proof}
Suppose $\iota$ is an elementary embedding from $\< \AA,\a \>$ into $\< \BB,\b \>$. 
Let $\A$ be a partition of $\AA$ and let $\B$ be a partition of $\BB$ that refines $\iota[\A]$. 
We aim to show that condition $(\dagger)$ from Theorem~\ref{thm:Dagger} holds.
 
Let $\varphi$ be the first-order sentence about $\< \BB,\b \>$ obtained as the conjunction of all true assertions of the following form:
\begin{itemize}
\item[$\circ$] $\bigvee \B = \mathbf{1}$ and $b \wedge b' = \mathbf{0}$ whenever $b,b' \in \B$ and $b \neq b'$,
\item[$\circ$] $\bigvee \iota[\A] = \mathbf{1}$ and $b \wedge b' = \mathbf{0}$ whenever $b,b' \in \iota[\A]$ and $b \neq b'$,
\item[$\circ$] $b \tob b'$ for some $b,b' \in \B$ or some $b,b' \in \iota[\A]$,
\item[$\circ$] $\neg(b \tob b')$ for some $b,b' \in \B$ or some $b,b' \in \iota[\A]$,
\item[$\circ$] $b \leq b'$ for some $b \in \B$ and $b' \in \iota[\A]$.
\end{itemize}
In other words, $\varphi$ asserts that $\iota[\A]$ and $\B$ are partitions of $\BB$, it describes precisely the digraphs $\< \B,\tob \>$ and $\< \iota[\A],\tob \>$, and it records which members of $\B$ are below which members of $\iota[\A]$. 

Fix an enumeration $\A = \{a_1,a_2,\dots,a_m\}$ of $\A$ and an enumeration $\B = \{b_1,b_2,\dots,b_n\}$ of $\B$. 
Let $\psi(\iota(a_1),\dots,\iota(a_m),x_1,\dots,x_n,\b)$ be the formula about $\< \BB,\b \>$ obtained from $\varphi$ by replacing every instance of $b_i$ with the variable $x_i$, and let $\bar \psi$ be the formula
$$\exists x_1 \exists x_2 \dots \exists x_n \psi(\iota(a_1),\dots,\iota(a_m),x_1,\dots,x_n,\b).$$
This is a true assertion about $\< \BB,\b \>$, because $\psi(\iota(a_1),\dots,\iota(a_m),b_1,\dots,b_n,\b)$ is true.

Now consider the formula  
$$\exists x_1 \exists x_2 \dots \exists x_n \psi(a_1,\dots,a_n,x_1,\dots,x_n,\a)$$
about $\< \AA,\a \>$ obtained from $\bar \psi$ by replacing every instance of $\iota(a_i)$ with $a_i$ and every instance of $\b$ with $\a$. 
Because $\iota$ is an elementary emedding, this formula must be true in $\< \AA,\a \>$, as the corresponding formula (the original $\bar \psi$) is true in $\< \BB,\b \>$. 
Hence there exist some $\tilde b_1, \tilde b_2, \dots, \tilde b_n \in \AA$ such that 
$$\< \AA,\a \> \models \psi(a_1,\dots,a_m,\tilde b_1, \dots, \tilde b_n,\a).$$
Define $\tilde \eta: \B \to \pwmff$ by setting $\tilde \eta(b_i) = \eta(\tilde b_i)$ for all $i \leq n$.

Let $\tilde \B = \{ \tilde b_i :\, i \leq n \}$ and let $\C = \tilde \eta[\tilde \B]$.
Observe that $\tilde \B$ is a partition of $\AA$, because 
$\psi(a_1,\dots,a_m,\tilde b_1, \dots, \tilde b_n,\a)$ asserts that it is. 
Similarly, $\< \tilde \B, \toa \>$ is isomorphic to $\< \B,\tob \>$, and $\tilde \B$ refines $\A$, and the natural map $\tilde \B \to \A$ matches the natural map $\B \to \iota[\A]$ (in the sense that the former sends $\tilde b_i$ to $a_j$ if and only if the latter sends $b_i$ to $\iota(a_j)$). 
Because $\tilde \B$ is a partition of $\AA$, $\C = \eta[\tilde \B]$ is a partition of $\pwmff$, and $\eta$ is an isomorphism from $\< \tilde B,\toa \>$ to $\< \eta[\tilde \B],\tosi \>$ by Lemma~\ref{lem:ImageOfAPartition}. It follows that $\tilde \eta$ is an isomorphism from $\< \B,\tob \>$ to $\< \eta[\tilde \B],\tosi \>$. 
Furthermore, because the natural map $\tilde \B \to \A$ matches the natural map $\B \to \iota[\A]$, we have $\tilde \eta \circ \iota(a) = \eta(a)$ for all $a \in \A$. 

In other words, this map $\tilde \eta$ witnesses that $(\dagger)$ is satisfied for $\A$ and $\B$. 
As $\A$ and $\B$ were arbitrary partitions, $(\dagger)$ holds.
\end{proof}

\begin{theorem}\label{thm:BigEmbeddings}
Every incompressible dynamical system of size $\aleph_1$ embeds in $\< \pwmff,\s \>$ and in $\< \pwmff,\s^{-1} \>$. 
\end{theorem}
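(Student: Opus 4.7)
The plan is to mimic the proof of Theorem~\ref{thm:AlgebraicUniv}, but filter the system by a continuous $\w_1$-chain of elementary substructures, so that the relevant instances of the lifting problem have an elementary $\iota$ and Lemma~\ref{lem:IotaElementary} applies.

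Let $\< \AA,\a \>$ be an incompressible dynamical system of size $\aleph_1$. First I would use the downward L\"{o}wenheim--Skolem theorem to write $\AA$ as the union of a continuous, increasing $\w_1$-chain of countable subalgebras $\< \AA_\xi :\, \xi < \w_1 \>$ such that each $\< \AA_\xi, \a \!\rest\! \AA_\xi \>$ is an elementary substructure of $\< \AA,\a \>$. Because incompressibility is expressible by a first-order sentence in the language of dynamical systems (namely $\forall x \neq \mathbf 0,\mathbf 1 \ \a(x) \not\leq x$), each $\< \AA_\xi, \a \!\rest\! \AA_\xi \>$ is itself incompressible, and for all $\xi \leq \z < \w_1$ the inclusion $\iota_\xi^\z : \AA_\xi \hookrightarrow \AA_\z$ is an elementary embedding.

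Next I would build a coherent sequence $\< \eta_\xi :\, \xi < \w_1 \>$ of embeddings $\eta_\xi : \< \AA_\xi, \a \!\rest\! \AA_\xi \> \to \< \pwmff, \s \>$ by transfinite recursion, arranging that $\eta_\z \circ \iota_\xi^\z = \eta_\xi$ whenever $\xi \leq \z$. The base case follows from Theorem~\ref{thm:CountableIncompressibleEmbeds}, since $\< \AA_0, \a \!\rest\! \AA_0 \>$ is a countable incompressible dynamical system. At limit stages there is nothing to do: coherence forces $\eta_\l = \bigcup_{\xi < \l} \eta_\xi$, which is well-defined and an embedding because $\AA_\l = \bigcup_{\xi<\l}\AA_\xi$. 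The crucial point is the successor step: the tuple $(\< \AA_\xi, \a \rest \AA_\xi \>,\< \AA_{\xi+1}, \a \rest \AA_{\xi+1} \>,\iota_\xi^{\xi+1},\eta_\xi)$ is an instance of the lifting problem for $\< \pwmff,\s \>$, and $\iota_\xi^{\xi+1}$ is an elementary embedding. Hence Lemma~\ref{lem:IotaElementary} furnishes a lifting $\eta_{\xi+1}$. In the end $\eta = \bigcup_{\xi<\w_1}\eta_\xi$ is an embedding of $\< \AA,\a \>$ into $\< \pwmff,\s \>$.

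For the $\s^{-1}$ half, I would note that $\< \AA,\a^{-1} \>$ is incompressible whenever $\< \AA,\a \>$ is (observed in the proof of Theorem~\ref{thm:It'sIncompressible}), so the argument above produces an embedding $\eta : \< \AA,\a^{-1} \> \to \< \pwmff,\s \>$. But a map $\eta$ is an embedding of $\< \AA,\a^{-1} \>$ into $\< \pwmff,\s \>$ if and only if it is an embedding of $\< \AA,\a \>$ into $\< \pwmff,\s^{-1} \>$ (exactly the equivalence used at the end of the proof of Theorem~\ref{thm:ESED}), so $\eta$ also serves as the required embedding into $\< \pwmff,\s^{-1} \>$. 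Alternatively one could repeat the recursion verbatim using the ``inverse'' version of Lemma~\ref{lem:IotaElementary}, which is proved from the stated version exactly as Lemma~\ref{lem:niam} is deduced from Lemma~\ref{lem:main}.

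The only real content lies in the successor step, and it is entirely packaged in Lemma~\ref{lem:IotaElementary}; the remaining obstacle is purely bookkeeping, namely ensuring the continuity and coherence of the recursion, both of which are standard. Notably, this proof uses the elementarity of $\iota$ (whereas Lemma~\ref{lem:main} is tailored to elementarity of $\eta$), which is why the same scheme does not yield a proof of the full main theorem.
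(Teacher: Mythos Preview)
Your proposal is correct and matches the paper's proof essentially line for line: filter $\<\AA,\a\>$ by a continuous $\w_1$-chain of countable elementary substructures, invoke Theorem~\ref{thm:CountableIncompressibleEmbeds} at the base, take unions at limits, apply Lemma~\ref{lem:IotaElementary} at successors, and handle $\s^{-1}$ by passing to $\a^{-1}$. Your closing remark that the successor step relies on elementarity of $\iota$ rather than of $\eta$, and that this is why the same scheme does not prove the main theorem, is exactly the point the paper is making by placing this result here.
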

\begin{proof}
We prove the theorem first for $\< \pwmff,\s \>$. 
Every countable incompressible dynamical system embeds in $\< \pwmff,\s \>$ by Theorem~\ref{thm:CountableIncompressibleEmbeds}, so it suffices to consider systems of size $\aleph_1$. 

Let $\< \AA,\a \>$ be an incompressible dynamical system with $|\AA| = \aleph_1$. 
Applying the downward L\"{o}wenheim-Skolem theorem $\w_1$ times, there is an increasing sequence $\seq{\AA_\b}{\b < \w_1}$ of countable subalgebras of $\AA$ such that
\begin{itemize}
\item[$\circ$] $\< \AA_\b,\a \>$ is an elementary substructure of $\< \AA,\a \>$ for all $\b < \w_1$, 
\item[$\circ$] $\AA_\b = \bigcup_{\xi < \b} \AA_\xi$ whenever $\b$ is a limit ordinal, and 
\item[$\circ$] $\bigcup_{\b < \w_1} \AA_\b = \AA$.
\end{itemize} 

We now construct, via transfinite recursion, a sequence $\seq{\eta_\b}{\b < \w_1}$ of maps such that each $\eta_\b$ is an embedding from $\< \AA_\b,\a \>$ into $\< \pwmff,\s \>$, and the $\eta_\b$ are coherent in the sense that $\eta_\b = \eta_\g \rest \AA_\b$ whenever $\b < \g$.

To begin, fix an embedding $\eta_0$ from $\< \AA_0,\a \>$ into $\< \pwmff,\s \>$. (Some such embedding exists by Theorem~\ref{thm:CountableIncompressibleEmbeds}, and because elementarity implies $\< \AA_0,\a \>$ is incompressible.) At a limit stage $\b$ of the recursion, let $\eta_\b = \bigcup_{\xi < \b}\eta_\xi$. This is an embedding from $\< \AA_\b,\a \>$ into $\< \pwmff,\s \>$, because the $\eta_\xi$ are coherent embeddings and $\AA_\b = \bigcup_{\xi < \b}\AA_\xi$. 

For the successor step, suppose $\g = \b+1$ and suppose that we already have an embedding $\eta_\b$ from $\< \AA_\b,\a \>$ into $\< \pwmff,\s \>$. Let $\iota$ denote the inclusion map $\AA_\b \xhookrightarrow{} \AA_\g$. 
Then $(\<\AA_\b,\a\>,\<\AA_\g,\a\>,\iota,\eta_\b)$ is an instance of the lifting problem for $\< \pwmff,\s \>$, and furthermore $\iota$ is an elementary embedding. By Lemma~\ref{lem:IotaElementary}, there is an embedding $\eta_\g$ from $\< \AA_\g,\a \>$ into $\< \pwmff,\s \>$ such that $\eta_\g \circ \iota = \eta_\b$. Because $\iota$ is the inclusion map $\AA_\b \xhookrightarrow{} \AA_\g$, this last condition just means that $\eta_\b = \eta_\g \rest \AA_\b$, i.e., the coherence of the maps is preserved.

In the end, let $\eta = \bigcup_{\b < \w_1}\eta_\b$. Because the $\eta_\b$ are coherent embeddings and $\AA = \bigcup_{\b < \w_1}\AA_\b$, $\eta$ is an embedding from $\< \AA,\a \>$ to $\< \pwmff,\s \>$. 

It remains to show that we can also obtain an embedding from $\< \AA,\a \>$ to $\< \pwmff,\s^{-1} \>$. 
Of course one could repeat the above proof, \emph{mutatis mutandis}, but this is not necessary. 
Recall that $\< \AA,\a \>$ is incompressible if and only if $\< \AA,\a^{-1} \>$ is. Thus, by the preceding argument, there is an embedding $\eta'$ from $\< \AA,\a^{-1} \>$ into $\< \pwmff,\s \>$. 
But then the same map $\eta'$ is also an embedding from $\< \AA,\a \>$ into $\< \pwmff,\s^{-1} \>$.
\end{proof}

\begin{corollary}\label{cor:BigEmbeddings}
Assuming the Continuum Hypothesis, $\< \pwmff,\s \>$ embeds in $\< \pwmff,\s^{-1} \>$ and vice versa. 
\end{corollary}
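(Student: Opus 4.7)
The plan is to deduce this directly from Theorem~\ref{thm:BigEmbeddings}, which says that every incompressible dynamical system of size $\aleph_1$ embeds in both $\<\pwmff,\s\>$ and $\<\pwmff,\s^{-1}\>$. The point is that under \ch, the structures $\<\pwmff,\s\>$ and $\<\pwmff,\s^{-1}\>$ themselves fall within the scope of that theorem.

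First I would note that $|\pwmff|=\continuum$, so \ch gives $|\pwmff| = \aleph_1$. Next, Theorem~\ref{thm:It'sIncompressible} tells us that both $\<\pwmff,\s\>$ and $\<\pwmff,\s^{-1}\>$ are incompressible dynamical systems. Hence each of these systems is an incompressible dynamical system of size $\aleph_1$, and Theorem~\ref{thm:BigEmbeddings} applies to each.

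Applying Theorem~\ref{thm:BigEmbeddings} with $\<\AA,\a\> = \<\pwmff,\s\>$ produces an embedding of $\<\pwmff,\s\>$ into $\<\pwmff,\s^{-1}\>$. Applying it with $\<\AA,\a\> = \<\pwmff,\s^{-1}\>$ produces an embedding of $\<\pwmff,\s^{-1}\>$ into $\<\pwmff,\s\>$. This gives both directions of the corollary.

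There is no real obstacle here: all the work has been done in Theorem~\ref{thm:BigEmbeddings}. The corollary is essentially a remark observing that, under \ch, the target structure itself satisfies the hypotheses of the universality theorem for incompressible systems of size $\aleph_1$.
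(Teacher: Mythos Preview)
Your proposal is correct and matches the paper's own proof essentially verbatim: the paper simply says the corollary follows from Theorem~\ref{thm:BigEmbeddings} because \ch gives $\card{\pwmff}=\aleph_1$ and both $\<\pwmff,\s\>$ and $\<\pwmff,\s^{-1}\>$ are incompressible. Your write-up is slightly more explicit (citing Theorem~\ref{thm:It'sIncompressible} for incompressibility), but the argument is identical.
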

\begin{proof}
This follows from Theorem~\ref{thm:BigEmbeddings}, because \ch implies $\card{\pwmff} = \aleph_1$, and both $\< \pwmff,\s \>$ and $\< \pwmff,\s^{-1} \>$ are incompressible.
\end{proof}

As noted in the introduction, the conclusion of this corollary is independent of \zfc: assuming \ocama, there is no embedding from $\< \pwmff,\s \>$ into $\< \pwmff,\s^{-1} \>$ or from $\< \pwmff,\s^{-1} \>$ into $\< \pwmff,\s \>$. This was proved in \cite{Brian1} as a relatively straightforward corollary to the extensive work of Farah in \cite{FarahA} on how \ocama restricts the continuous self-maps of $\w^*$.

%%%%%%%%%%%%
\section{The Lifting Lemma, part 4: polarization}\label{sec:Elementarity}
%%%%%%%%%%%%

The Lifting Lemma guarantees that an instance $(\<\AA,\a\>,\<\BB,\b\>,\iota,\eta)$ of the lifting problem has a solution, assuming the image of $\< \AA,\a \>$ under $\eta$ is an elementary substructure of $\< \pwmff,\s \>$. Note that, unlike Lemma~\ref{lem:IotaElementary}, this puts no conditions whatsoever on $\iota$, only on $\eta$ and $\< \AA,\a \>$. 

In the previous section we formulated a condition $(\dagger)$ that is both necessary and sufficient for the existence of a solution to an instance of the lifting problem. 
In this section we use $(\dagger)$ to provide a sufficient condition for a solution. 
Unlike $(\dagger)$, this condition is a property of $\< \AA,\a \>$ only, called ``polarization'', and in particular it makes no mention of $\iota$. 

After this section, our plan for completing the proof of Lemma~\ref{lem:main} is as follows. In Section 10 we prove a combinatorial statement more or less equivalent to the fact that $\< \pwmff,\s \>$ is polarized. Then in Section 11, we show that polarization passes to elementary substructures, because it is expressible in a sufficiently first-order way. 
This means that the hypotheses of the Lifting Lemma imply $\< \AA,\a \>$ is polarized, which, by the results proved in this section, suffices to get a solution to the lifting problem.

\begin{definition}
Let $\< \AA,\a \>$ be an incompressible dynamical system and let $\A$ be a partition of $\AA$. A \emph{virtual refinement} of $\< \A,\toa \>$ is a pair $(\phi,\< \V,\to \>)$, where $\< \V,\to \>$ is a strongly connected digraph and $\phi$ is an epimorphism from $\< \V,\to \>$ to $\< \A,\toa \>$. 
A partition $\A'$ of $\AA$ \emph{realizes} $(\phi,\< \V,\to \>)$ if $\A'$ refines $\A$ and there is an isomorphism $\psi$ from $\< \V,\to \>$ to $\< \A',\toa \>$ such that $\pi \circ \psi = \phi$, where $\pi$ is the natural map from $\A'$ to $\A$.
\hfill{\Coffeecup}
\end{definition}

The following lemma says virtual refinements generalize actual refinements. 
This lemma has Lemma~\ref{lem:Epimorphism} as a special case (by taking $\iota = \mathrm{id}_\AA$).

\begin{lemma}\label{lem:Natural}
Suppose $\< \AA,\a \>$ and $\< \BB,\b \>$ are dynamical systems, and $\iota$ is an embedding of $\< \AA,\a \>$ into $\< \BB,\b \>$. Given a partition $\A$ of $\AA$ and a partition $\B$ of $\BB$ that refines $\iota[\A]$, define the map
$$\phi(b) \,=\, \text{the unique member of $\A$ with } b \leq \iota(a).$$
Then $(\phi,\< \B,\tob \>)$ is a virtual refinement of $\< \A,\toa \>$.
\end{lemma}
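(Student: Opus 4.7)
The plan is to verify the three things required by the definition of a virtual refinement: that $\phi$ is a well-defined surjection from $\B$ onto $\A$, that the digraph $\< \B,\tob \>$ is strongly connected, and that $\phi$ preserves arrows both ways in the sense of Definition~\ref{def:digraphs}. Well-definedness of $\phi$ is immediate: since $\B$ refines $\iota[\A]$, each $b \in \B$ satisfies $b \leq \iota(a)$ for exactly one $a \in \A$. Surjectivity is also quick: $\iota(a)$ is partitioned by $\set{b \in \B}{b \leq \iota(a)}$ (using that $\B$ refines $\iota[\A]$), and this set must be nonempty because $\iota(a) \neq \mathbf{0}$. Strong connectedness of $\< \B,\tob \>$ follows from Theorem~\ref{thm:Incompressible=StronglyConnected} together with the fact that in our intended use (the lifting problem), $\< \BB,\b \>$ is incompressible.

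The meat of the proof lies in verifying that $\phi$ is an epimorphism, and the key tool is that $\iota$, being an embedding of dynamical systems, satisfies $\b \circ \iota = \iota \circ \a$ and preserves all Boolean operations and nonzeroness. For the forward implication, suppose $b,b' \in \B$ with $b \tob b'$, and set $a = \phi(b)$ and $a' = \phi(b')$. Since $b \leq \iota(a)$ and $b' \leq \iota(a')$, we have
$$\mathbf{0} \,\neq\, \b(b) \wedge b' \,\leq\, \b(\iota(a)) \wedge \iota(a') \,=\, \iota(\a(a)) \wedge \iota(a') \,=\, \iota(\a(a) \wedge a'),$$
and since $\iota$ is injective with $\iota(\mathbf{0}) = \mathbf{0}$, we conclude $\a(a) \wedge a' \neq \mathbf{0}$, i.e.~$a \toa a'$.

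For the reverse implication, suppose $a,a' \in \A$ and $a \toa a'$, so that $\a(a) \wedge a' \neq \mathbf{0}$. Because $\iota$ is an embedding, $\iota(\a(a) \wedge a') = \b(\iota(a)) \wedge \iota(a') \neq \mathbf{0}$. Now use that $\B$ refines $\iota[\A]$ to write $\iota(a) = \bigvee \phi^{-1}(a)$ and $\iota(a') = \bigvee \phi^{-1}(a')$; distributing gives
$$\mathbf{0} \,\neq\, \b(\iota(a)) \wedge \iota(a') \,=\, \bigvee \set{\b(b) \wedge b'}{b \in \phi^{-1}(a),\, b' \in \phi^{-1}(a')},$$
so at least one disjunct is nonzero, yielding $b \in \phi^{-1}(a)$ and $b' \in \phi^{-1}(a')$ with $b \tob b'$, as required.

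There is no genuine obstacle here: each step reduces to one routine algebraic manipulation and the interplay between $\iota$ commuting with the dynamics and $\B$ refining $\iota[\A]$. The only subtle point worth flagging is the strong-connectedness clause, which quietly relies on $\< \BB,\b \>$ being incompressible (i.e., on the ambient context being the lifting problem); apart from this, the lemma really is a straightforward unpacking of the definitions.
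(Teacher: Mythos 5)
Your proof is correct and follows essentially the same route as the paper's: well-definedness and surjectivity come from the refinement hypothesis, and the two directions of the epimorphism condition come from the fact that $\iota$ commutes with the dynamics and preserves nonzero meets (your distributivity computation merely spells out the step the paper leaves implicit). Your remark about strong connectedness is apt: the paper's own proof silently omits that clause of the definition of a virtual refinement, and it does indeed rest on the incompressibility of $\< \BB,\b \>$, which holds in every context where the lemma is invoked.
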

\begin{proof}
Note that $\phi$ is well-defined, because $\B$ refines $\iota[\A]$, and is clearly a surjection $\B \to \A$. 
If $a,a' \in \A$ and $a \toa a'$, then $\iota(a) \tob \iota(a')$ (because $\iota$ is an embedding), and this implies that $b \tob b'$ for some $b \leq \iota(a)$ and $b' \leq \iota(a')$. In other words, there are $b,b' \in \B$ such that $\phi(b) = a$, $\phi(b') = a'$, and $b \tob b'$. 
On the other hand, it is clear that $b \tob b'$ implies $\phi(b) \toa \phi(b')$.
\end{proof}

\noindent Henceforth, if the conditions of this lemma hold then the function
$$\phi(b) \,=\, \text{the unique member of $\A$ with } b \leq \iota(a)$$
is called the \emph{natural epimorphism} from $\< \B,\tob \>$ onto $\< \A,\toa \>$.

\begin{definition}\label{def:Incompatible}
Let $\< \A,\toA \>$, $\< \U,\toU \>$, and 
$\< \V,\toV \>$ be strongly connected digraphs, and suppose that
$\phi$ and $\psi$ are epimorphisms from $\< \U,\toU \>$ onto $\< \A,\toA \>$ and from $\< \V,\toV \>$ onto $\< \A,\toA \>$, respectively. 
These two epimorphisms are \emph{compatible} if there is a strongly connected digraph
$\< \W,\toW \>$, and there are epimorphisms $\bar \phi$ and $\bar \psi$ from 
$\< \W,\toW \>$ onto 
$\< \U,\toU \>$ and $\< \V,\toV \>$, respectively, such that $\phi \circ \bar \phi = \psi \circ \bar \psi$.
\end{definition}

\begin{center}
\begin{tikzpicture}[scale=1]

\node at (0,0) {$\< \A,\toA \>$};
\node at (-2,2) {$\< \U,\toU \>$};
\node at (2,2) {$\< \V,\toV \>$};
\node at (0,4) {$\< \W,\toW \>$};

\draw[<-] (.5,.5)--(1.5,1.5);
\draw[<-] (-.5,.5)--(-1.5,1.5);
\draw[<-,dashed] (1.5,2.5)--(.5,3.5);
\draw[<-,dashed] (-1.5,2.5)--(-.5,3.5);

\node at (1.25,.85) {\footnotesize $\psi$};
\node at (-1.25,.85) {\footnotesize $\phi$};
\node at (1.25,3.15) {\footnotesize $\bar \psi$};
\node at (-1.25,3.15) {\footnotesize $\bar \phi$};

\end{tikzpicture}
\end{center}

\noindent Otherwise $\phi$ and $\psi$ are \emph{incompatible}. 
By extension, if $\< \AA,\a \>$ is a dynamical system and $\A$ is a partition of $\AA$, then two virtual refinements $(\phi,\<\U,\toU\>)$ and $(\psi,\<\V,\toV\>)$ of $\< \A,\toa \>$ are (in)compatible if the epimorphisms $\phi$ and $\psi$ are (in)compatible.
\hfill {\Coffeecup} 

\vspace{.5mm}

Note that this definition requires $\< \W,\toW \>$ to be strongly connected. 
In fact, without the requirement that $\< \W,\toW \>$ be strongly connected, any two digraph epimorphisms would be compatible. The reason is that the ``pullback'' of $\phi$ and $\psi$ always makes the above diagram commute, although the pullback of two strongly connected digraphs may not be strongly connected.
%\begin{definition}\label{def:pullback}
%Let $\< \A,\toA \>$, $\< \U,\toU \>$, and 
%$\< \V,\toV \>$ be digraphs, and suppose
%$\phi$ and $\psi$ are epimorphisms from $\< \U,\toU \>$ onto $\< \A,\toA \>$ and from $\< \V,\toV \>$ onto $\< \A,\toA \>$, respectively. 
(The \emph{pullback} of $\phi$ and $\psi$ is the digraph $\<\W,\toW\>$ where
$$\W \,=\, \set{(u,v) \in \U \times \V}{\phi(u) = \psi(v)}, \quad \text{and}$$
$$(u,v) \toW (u',v') \ \text{ if and only if } \ u \toU u' \text{ and }v \toV v'.$$
The \emph{standard projections} from $\<\W,\toW\>$ to $\<\U,\toU\>$ and $\<\V,\toV\>$, respectively, are the maps  $\bar \phi(u,v) = u$ and $\bar \psi(u,v) = v$. It is not difficult to see that these maps are epimorphisms, and that $\phi \circ \bar \phi = \psi \circ \bar \psi$. For further details about the pullback of digraphs, also called the fiber product, see \cite{BNW} or \cite{CN}.)
%\hfill {\Coffeecup} 
%\end{definition}
%
%It is not difficult to see that the standard projections from Definition~\ref{def:pullback} are epimorphisms, and that $\phi \circ \bar \phi = \psi \circ \bar \psi$. For further details about the pullback of digraphs (also called a fiber product), see, e.g., \cite{BNW} or \cite{CN}.

In light of this, one should expect that any proof of the incompatibility of two epimorphisms must rely in some essential way on the strong connectedness condition for $\< \W,\toW \>$.  
For an example of incompatible epimorphisms, consider the following maps reminiscent of the proof of Theorem~\ref{thm:NoGo}:

\vspace{2mm}

\begin{center}
\begin{tikzpicture}[xscale=.46,yscale=.5]

\draw[->,thick] (7,-2) -- (10,-5);
\draw[->,thick] (18.5,-2) -- (15.5,-5);

%\node at (5,2.5) {\small $\< \B,\tob \>$};

\draw[densely dotted] (0,0) ellipse (8mm and 11mm);
\draw[densely dotted] (5,0) ellipse (9mm and 13mm);
\draw[densely dotted] (10,0) ellipse (8mm and 11mm);

\node at (0,0) {\tiny $\bullet$};
\node at (5,-.5) {\tiny $\bullet$};
\node at (5,.5) {\tiny $\bullet$};
\node at (10,0) {\tiny $\bullet$};
\draw[<->] (9.85,.1) -- (5.15,.475);
\draw[<->] (.15,0) -- (4.85,-.475);
\draw[<->] (5.15,-.475) -- (9.85,-.1);

\draw[->] (-.12,.14) arc (35:325:2.2mm);
\draw[->] (4.88,.39) arc (-35:-325:2.2mm);
\draw[->] (4.87,-.62) arc (125:415:2.2mm);
\draw[->] (10.12,-.1) arc (-145:145:2.2mm);

\begin{scope}[shift={(7.75,-7)}]
%\node at (5,-3.5) {\small $\< \A,\toa \>$};

\draw (0,0) ellipse (8mm and 11mm);
\draw (5,0) ellipse (9mm and 13mm);
\draw (10,0) ellipse (8mm and 11mm);

\draw [->] (1,.2) to [out=10,in=170] (3.9,.2);
\draw [<-] (1,-.2) to [out=-10,in=-170] (3.9,-.2);
\draw [->] (6.1,.2) to [out=10,in=170] (9,.2);
\draw [<-] (6.1,-.2) to [out=-10,in=-170] (9,-.2);

\draw[->] (-1,.27) arc (50:310:4mm);
\draw[->] (11,-.27) arc (230:490:4mm);
\draw[->] (4.73,-1.45) arc (140:400:4mm);
\end{scope}

\begin{scope}[shift={(15.5,0)}]
%\node at (5,2.5) {\small $\< \C,\tosi \>$};

\draw[densely dotted] (0,0) ellipse (8mm and 11mm);
\draw[densely dotted] (5,0) ellipse (9mm and 13mm);
\draw[densely dotted] (10,0) ellipse (8mm and 11mm);

\node at (0,0) {\tiny $\bullet$};
\node at (5,-.5) {\tiny $\bullet$};
\node at (5,.5) {\tiny $\bullet$};
\node at (10,0) {\tiny $\bullet$};
\draw[->] (.18,.11) -- (4.85,.475);
\draw[<-] (.15,-.11) -- (4.82,-.475);
\draw[<-] (5.15,-.475) -- (9.82,-.11);
\draw[->] (5.18,.475) -- (9.85,.11);

\draw[->] (-.12,.14) arc (35:325:2.2mm);
\draw[->] (4.87,-.62) arc (125:415:2.2mm);
\draw[->] (5.13,.62) arc (-415:-125:2.2mm);
\draw[->] (10.12,-.1) arc (-145:145:2.2mm);
\end{scope}

\end{tikzpicture}
\end{center}

\noindent Note that the pullback of these epimorphisms is not strongly connected: 

\vspace{2mm}
\begin{center}
\begin{tikzpicture}[xscale=.62,yscale=.72]

\draw[densely dotted] (0,0) ellipse (8mm and 11mm);
\draw[densely dotted] (5,0) ellipse (12mm and 16mm);
\draw[densely dotted] (10,0) ellipse (8mm and 11mm);

\node at (0,0) {\tiny $\bullet$};
\node at (5.5,-.7) {\tiny $\bullet$};
\node at (5.5,.7) {\tiny $\bullet$};
\node at (4.5,-.4) {\tiny $\bullet$};
\node at (4.5,.4) {\tiny $\bullet$};
\node at (10,0) {\tiny $\bullet$};

\draw[<-] (9.8,.16) -- (5.65,.695);
\draw[->] (9.8,-.16) -- (5.65,-.695);
\draw[<-] (9.8,.06) -- (4.65,.4);
\draw[->] (9.8,-.06) -- (4.65,-.4);

\draw[<-] (.15,-.08) -- (4.35,-.4);
\draw[->] (.15,.08) -- (4.35,.4);

\draw[->] (-.12,.14) arc (35:325:2.2mm);
\draw[->] (4.37,.52) arc (-125:-415:2.2mm);
\draw[->] (5.37,.82) arc (-125:-415:2.2mm);
\draw[->] (4.63,-.52) arc (55:-235:2.2mm);
\draw[->] (5.63,-.82) arc (55:-235:2.2mm);
\draw[->] (10.12,-.13) arc (215:505:2.2mm);

\end{tikzpicture}
\end{center}
\vspace{2mm}

\noindent We leave it as an exercise to show that these two digraph epimorphisms are incompatible. Interestingly, one way of doing this is strikingly similar to the latter part of the proof of Theorem~\ref{thm:NoGo}.

Although we do not provide a proof of the fact here (as it is not needed), let us remark that two epimorphisms of strongly connected digraphs are compatible if and only if their pullback is strongly connected. 

\begin{lemma}\label{lem:Compatability}
Suppose $\< \AA,\a \>$ and $\< \BB,\b \>$ are incompressible dynamical systems, and $\iota$ is an embedding of $\< \AA,\a \>$ into $\< \BB,\b \>$. Let $\A$ be a partition of $\AA$, and let $\B$ be a partition of $\BB$ refining $\iota[\A]$. 
Then for any partition $\A'$ of $\AA$ refining $\A$, the natural map from $\< \A',\toa \>$ to $\< \A,\toa \>$
and the natural epimorphism from $\< \B,\tob \>$ to $\< \A,\toa \>$ 
are compatible.
\end{lemma}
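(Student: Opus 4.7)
The plan is to realize $\<\W,\toW\>$ as a common refinement of $\iota[\A']$ and $\B$ living inside the ambient algebra $\BB$. Concretely, set
$$\C \,=\, \set{\iota(a') \wedge b}{a' \in \A',\, b \in \B} \setminus \{\mathbf{0}\}.$$
Since $\iota$ is a Boolean embedding and $\A'$, $\B$ are partitions of $\AA$ and $\BB$ respectively, a direct check shows $\C$ is a partition of $\BB$ that simultaneously refines $\iota[\A']$ and $\B$. Equip it with the hitting relation $\tob$ inherited from $\<\BB,\b\>$. Because $\<\BB,\b\>$ is incompressible, Theorem~\ref{thm:Incompressible=StronglyConnected} immediately gives that $\<\C,\tob\>$ is strongly connected, so it is eligible to play the role of $\<\W,\toW\>$ in Definition~\ref{def:Incompatible}.

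Next I would define the two candidate epimorphisms downward. Let $\bar\phi\colon\C \to \A'$ send each $c \in \C$ to the unique $a' \in \A'$ with $c \leq \iota(a')$, and let $\bar\psi\colon\C \to \B$ send each $c \in \C$ to the unique $b \in \B$ with $c \leq b$. Applying Lemma~\ref{lem:Natural} to the embedding $\iota$, with $\A'$ in the role of $\A$ and $\C$ in the role of $\B$, shows that $\bar\phi$ is an epimorphism from $\<\C,\tob\>$ onto $\<\A',\toa\>$. Applying Lemma~\ref{lem:Epimorphism} inside $\<\BB,\b\>$, using that $\C$ refines $\B$, shows that $\bar\psi$ is an epimorphism from $\<\C,\tob\>$ onto $\<\B,\tob\>$.

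It remains to verify the commutativity $\phi \circ \bar\phi = \psi \circ \bar\psi$, where $\phi\colon \A' \to \A$ is the natural map from the partition refinement and $\psi\colon\B \to \A$ is the natural epimorphism associated with $\iota$. For any $c = \iota(a') \wedge b \in \C$, the element $\phi(\bar\phi(c))$ is the unique $a \in \A$ with $a' \leq a$, equivalently the unique $a \in \A$ with $c \leq \iota(a)$, since $c \leq \iota(a') \leq \iota(a)$. Likewise $\psi(\bar\psi(c))$ is the unique $a \in \A$ with $b \leq \iota(a)$, which is again the unique $a \in \A$ with $c \leq \iota(a)$, since $c \leq b$. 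The two compositions therefore agree on every $c \in \C$.

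There is really no serious obstacle here: the only idea is to form the common refinement in $\BB$. Once this choice of $\<\W,\toW\>$ is made, strong connectedness is handed to us for free by the incompressibility of $\<\BB,\b\>$, and the remaining verifications are immediate applications of Lemmas~\ref{lem:Epimorphism} and~\ref{lem:Natural}. The content of the lemma is that the reason the ``outer'' partition $\B$ cannot realize some pathological virtual refinement incompatible with a genuine refinement $\A'$ of $\A$ is that, downstairs in $\BB$, both data already sit inside a single honest partition.
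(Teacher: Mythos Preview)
Your proof is correct and takes essentially the same approach as the paper: both choose $\<\W,\toW\>$ to be a partition of $\BB$ refining both $\B$ and $\iota[\A']$ (you write down the explicit common refinement via meets, the paper just says ``any partition refining both''), and then observe that the two composites $\phi\circ\bar\phi$ and $\psi\circ\bar\psi$ both coincide with the natural epimorphism onto $\<\A,\toa\>$. Your version is slightly more explicit in citing Theorem~\ref{thm:Incompressible=StronglyConnected} for strong connectedness and Lemmas~\ref{lem:Epimorphism} and~\ref{lem:Natural} for the epimorphism claims, but the substance is identical.
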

\begin{proof}
Let $\B'$ be any partition of $\BB$ refining both $\B$ and $\iota[\A']$. 
We claim that the digraph $\< \B',\tob \>$, along with the appropriate natural epimorphisms, witnesses the compatibility of the two virtual refinements in question. 
In short, the proof works because the composition of two natural maps is again a natural map (with the appropriate domain and co-domain). 

More precisely, let $\phi$ denote the natural map from $\A'$ to $\A$, and 
let $\psi$ denote the natural epimorphism from $\< \B,\tob \>$ onto $\< \A,\toa \>$. 
Let $\bar \phi$ denote the natural epimorphism from $\< \B',\tob \>$ onto $\< \A',\toa \>$ and 
let $\bar \psi$ denote the natural map from $\B'$ to $\B$. 
The definitions of these various maps give us that both $\phi \circ \bar \phi$ and $\psi \circ \bar \psi$ are equal to the function $\B' \to \A$ given by
$$b \,\mapsto\, \text{the unique member of $\A$ with } b \leq \iota(a),$$
or in other words, the natural epimorphism from $\< \B',\tob \>$ onto $\< \A,\toa \>$. 
\end{proof}

In other words, this lemma states that a virtual refinement of $\< \A,\toa \>$ that comes from an embedding (in the way that $\< \B,\tob \>$ does in the statement of the lemma) is always compatible with actual refinements of $\A$. 

%Our definition of ``polarized'' is motivated by this observation. 
%It says roughly that every virtual refinement of $\< \A,\toa \>$ either is realized by a further refinement of $\AA$, or is incompatible with a further refinement of $\AA$.

\begin{definition}\label{def:Polarized}
Let $\< \AA,\a \>$ be an incompressible dynamical system, and let $\A$ be a partition of $\AA$. 
A virtual refinement $(\phi,\<\V,\to\>)$ of $\<\A,\toa\>$ is \emph{polarized} if either
\begin{enumerate}
\item there is a partition $\A'$ of $\AA$ that realizes $(\phi,\<\V,\to\>)$, or
\item there is a partition $\A'$ of $\AA$ refining $\A$ such that $(\phi,\< \V,\to \>)$ and $(\pi,\< \A',\toa \>)$ are incompatible, where $\pi$ denotes the natural projection from $\A'$ onto $\A$. 
\end{enumerate} 
By extension, the dynamical system $\< \AA,\a \>$ is \emph{polarized} if every virtual refinement of every partition of $\AA$ polarized. 
\hfill{\Coffeecup}
\end{definition}

For example, finite dynamical systems are not polarized. To see this, suppose $\AA$ is finite and $\< \AA,\a \>$ is an incompressible dynamical system. Then $\AA$ has a maximal partition $\A$ (the set of atoms of $\AA$). If $(\phi,\<\V,\to\>)$ is any virtual refinement of $\< \A,\toa \>$ with $|\V| > |\A|$, then both options from Definition~\ref{def:Polarized} are clearly impossible. 

%Observe that polarization is an ``existence of witnesses'' property: whenever a virtual refinement $(\phi,\<\V,\to\>)$ of some $\< \A,\toa \>$ cannot be realized, there is a refinement $\A'$ of $\A$ that witnesses this in a strong way, namely by being incompatible with that virtual refinement.

For a less trivial example of a non-polarized dynamical system, consider the system $\< \AA,\a \>$ from the proof of Theorem~\ref{thm:NoGo}. 
%This system is not polarized with respect to the partition $\A$ and the virtual refinement $(\phi,\<\B,\tob\>)$ discussed above. %Roughly, the reason is that no refinement of $\A$ can split the middle vertex of $\< \A,\toa \>$, but this would be required either to realize $(\phi,\<\B,\tob\>)$ or to be incompatible with it. 
%More precisely, t
The fact that $\< \AA,\a \>$ is not polarized follows from our next theorem. 

\begin{theorem}\label{thm:FinalStroke}
Suppose $(\< \AA,\a \>,\< \BB,\b \>,\iota,\eta)$ is an instance of the lifting problem for $\< \pwmff,\s \>$. 
If $\<\AA,\a\>$ is polarized, then this instance of the lifting problem has a solution.
\end{theorem}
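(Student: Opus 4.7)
The plan is to invoke Theorem~\ref{thm:Dagger} and verify that condition $(\dagger)$ holds. Fix an arbitrary partition $\A$ of $\AA$ and any partition $\B$ of $\BB$ refining $\iota[\A]$. By Lemma~\ref{lem:Natural}, the assignment $\phi(b) = $ the unique $a \in \A$ with $b \leq \iota(a)$ is a natural epimorphism, so that $(\phi,\<\B,\tob\>)$ is a virtual refinement of $\<\A,\toa\>$. Our job is to realize this virtual refinement inside $\pwmff$ in a way that is compatible with $\eta$.

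Here is where polarization enters. Since $\<\AA,\a\>$ is polarized, the virtual refinement $(\phi,\<\B,\tob\>)$ satisfies one of the two alternatives of Definition~\ref{def:Polarized}. I claim alternative~(2) is impossible: if $\A'$ is any partition of $\AA$ refining $\A$, then Lemma~\ref{lem:Compatability} (applied to the embedding $\iota$ with the partitions $\A$ and $\B$) shows the natural projection $\<\A',\toa\>\to\<\A,\toa\>$ is compatible with $(\phi,\<\B,\tob\>)$. This rules out every candidate incompatibility witness, so alternative~(1) must hold: there is a partition $\A'$ of $\AA$ refining $\A$ and an isomorphism $\psi: \<\B,\tob\>\to\<\A',\toa\>$ with $\pi\circ\psi = \phi$, where $\pi:\A'\to\A$ is the natural projection.

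It remains to transport this realization into $\pwmff$ via $\eta$. Set $\C = \eta[\A']$ and $\tilde\eta = \eta\circ\psi$. Since $\A'$ is a partition of $\AA$ refining $\A$ and $\eta$ is an embedding, $\C$ is a partition of $\pwmff$ refining $\eta[\A]$. By Lemma~\ref{lem:ImageOfAPartition}, $\eta$ restricts to a digraph isomorphism from $\<\A',\toa\>$ to $\<\C,\tosi\>$, so $\tilde\eta$ is an isomorphism from $\<\B,\tob\>$ to $\<\C,\tosi\>$. Finally, for any $a\in\A$ we have $\iota(a) = \bigvee\phi^{-1}(a)$ because $\B$ refines $\iota[\A]$; using $\pi\circ\psi=\phi$, we get $\psi[\phi^{-1}(a)] = \pi^{-1}(a)$, and hence
\[
\tilde\eta(\iota(a)) \,=\, \bigvee\eta\bigl[\psi[\phi^{-1}(a)]\bigr] \,=\, \bigvee\eta[\pi^{-1}(a)] \,=\, \eta\Bigl(\bigvee\pi^{-1}(a)\Bigr) \,=\, \eta(a),
\]
so $\tilde\eta\circ\iota = \eta$ on $\A$ (in the sense of the notational abuse preceding Theorem~\ref{thm:Dagger}). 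This verifies $(\dagger)$ for the arbitrary partitions $\A$ and $\B$, and Theorem~\ref{thm:Dagger} then provides the desired solution.

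The ``main obstacle'' in this argument is essentially definitional: once the polarization property and the notion of (in)compatibility have been set up so that Lemma~\ref{lem:Compatability} blocks the incompatibility alternative, the deduction is a clean rearrangement. The real work lies elsewhere, in subsequent sections, where one must verify that $\<\pwmff,\s\>$ itself enjoys enough polarization and that the property descends to the image of $\eta$ under the elementarity hypothesis of the Lifting Lemma.
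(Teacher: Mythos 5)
Your proposal is correct and follows essentially the same route as the paper's own proof: reduce to condition $(\dagger)$ of Theorem~\ref{thm:Dagger}, use Lemma~\ref{lem:Compatability} to rule out the incompatibility alternative of polarization so that the virtual refinement $(\phi,\<\B,\tob\>)$ must be realized by some $\A'$, and then push the realization forward via $\eta$ using Lemma~\ref{lem:ImageOfAPartition}. The final verification that $\tilde\eta\circ\iota=\eta$ matches the paper's computation as well.
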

\begin{proof}
In order to prove that $(\< \AA,\a \>,\< \BB,\b \>,\iota,\eta)$ has a solution, it suffices to prove that condition $(\dagger)$ from Theorem~\ref{thm:Dagger} holds.
Let $\A$ be a partition of $\AA$, and let $\B$ be a partition of $\B$ that refines $\iota[\A]$. 

Let $\phi$ denote the natural epimorphism from $\< \B,\tob \>$ onto $\< \A,\toa \>$. By Lemma~\ref{lem:Natural},  
$(\phi,\< \B,\tob \>)$ is a virtual refinement of $\< \A,\toa \>$. 

Let $\tilde \A$ be any partition of $\AA$ refining $\A$, and let $\tilde \pi$ denote the natural map $\tilde \A \to \A$. Then $(\tilde \pi,\< \tilde \A,\toa \>)$ is a virtual refinement of $\< \A,\toa \>$ by Lemma~\ref{lem:Epimorphism}, and this virtual refinement is compatible with $(\phi,\<\B,\tob\>)$, by Lemma~\ref{lem:Compatability}. 
Consequently, case $(2)$ of Definition~\ref{def:Polarized} is not true of $\A$ and $(\phi,\<\B,\tob\>)$. 
Because $\A$ is a polarized partition, this means that case $(1)$ must be true: there is a partition $\A'$ of $\AA$ that realizes $(\phi,\<\B,\tob\>)$. 
Letting $\pi$ denote the natural map from $\A'$ onto $\A$, this means that there is an isomorphism $\phi'$ from $\< \B,\tob \>$ onto $\< \A',\toa \>$ such that $\pi \circ \phi' = \phi$.

The remainder of the argument imitates the proof of Lemma~\ref{lem:IotaElementary}, but with the members of $\A'$ playing the part that the $\tilde b_i$ played in that proof. 

Define $\tilde \eta(b) = \eta \circ \phi'(b)$ for all $b \in \B$, and let $\C = \tilde \eta[\B]$. 
Because $\phi'$ is a digraph isomorphism from $\< \B,\tob \>$ to $\< \A',\toa \>$ and $\eta$ is a digraph isomorphism from $\< \A',\toa \>$ to $\< \C,\tosi \>$ (by Lemma~\ref{lem:ImageOfAPartition}), $\tilde \eta$ is an isomorphism from $\< \B,\tob \>$ to $\< \C,\tosi \>$. 
Also, $\C = \tilde \eta[\B] = \eta[\phi'[\B]] = \eta[\A']$; because $\A'$ refines $\A$, this implies that $\C$ refines $\eta[\A]$. 
It remains to check that $\eta = \tilde \eta \circ \iota$.

Let $a \in \A$. Recall that $\iota(a)$ may not be in $\B$ (the domain of $\tilde \eta$), but extending (or abusing) our notation in the natural way, we define 
$\tilde \eta \circ \iota(a) = \bigvee \set{\tilde \eta(b)}{b \in \B \text{ and }b \leq \iota(a)}$. 
Similarly, (with the same notational abuse) we define $\phi' \circ \iota(a) = \bigvee \set{\phi'(b)}{b \in \B \text{ and }b \leq \iota(a)}$. 
Because $\pi \circ \phi' = \phi$ and $\pi(\phi'(b)) = a$ if and only if $\phi'(b) \leq a$, 
\begin{align*}
\phi' \circ \iota(a) &\,=\, \textstyle \bigvee \set{\phi'(b)}{b \in \B \text{ and }b \leq \iota(a)} \\
& \textstyle \,=\, \bigvee \set{\phi'(b)}{b \in \B \text{ and } \phi'(b) \leq a} \\
& \textstyle \,=\, \bigvee \set{c}{c \in \A' \text{ and } c \leq a} \,=\, a.
\end{align*}
Consequently, $\eta(a) = \eta \circ \phi' \circ \iota(a) = \tilde \eta \circ \iota(a).$
As $a$ was an arbitrary member of $\A$, this shows that $\eta = \tilde \eta \circ \iota$ as required.
\end{proof}

%%%%%%%%%%%%
\section{The Lifting Lemma, part 5: the heart of the lemma}\label{sec:heart}
%%%%%%%%%%%%

In this section, we prove a graph-theoretic result that is the combinatorial heart and soul of the Lifting Lemma, and indeed, of the entire paper. %We need one more definition before stating this result.

\begin{definition}
Let $\< \A,\toA \>$ and $\< \B,\toB \>$ be strongly connected digraphs, and let $\phi$ be a function $\B \to \A$ (for instance, an epimorphism). 

A length-$k$ walk $\<b_0,b_1,\dots,b_k\>$ in $\< \B,\toB \>$ \emph{projects} onto a length-$k$ walk $\<a_0,a_1,\dots,a_k\>$ in $\< \A,\toA \>$ (via $\phi$) if $\phi(b_i) = a_i$ for all $i \leq k$. 
In the same way, an infinite walk $\seq{b_n}{n \in \w}$ in $\< \B,\toB \>$ \emph{projects} (via $\phi$) onto an infinite walk $\seq{a_n}{n \in \w}$ in $\< \A,\toA \>$ if $\phi(b_n) = a_n$ for all $n$. 

Likewise, a walk $\seq{b_n}{n \in \w}$ \emph{almost projects} (via $\phi$) onto $\seq{a_n}{n \in \w}$ if $\phi(b_n) = a_n$ for all sufficiently large values of $n$.
\hfill{\Coffeecup}
\end{definition}

The main theorem of this section states: 

\begin{theorem}\label{thm:Heart&Soul}
Let $\< \A,\toA \>$ and $\< \B,\toB \>$ be strongly connected digraphs, and let $\phi$ be an epimorphism from $\< \B,\toB \>$ onto $\< \A,\toA \>$. 
If $\seq{a_n}{n \in \w}$ is a diligent walk through $\< \A,\toA \>$, then either
\begin{enumerate}
\item there is a diligent walk through $\< \B,\toB \>$ that almost projects onto $\seq{a_n}{n \in \w}$, or
\item there is a strongly connected digraph $\< \C,\toC \>$, and an epimorphism $\psi$ from $\< \C,\toC \>$ to $\< \A,\toA \>$, and a diligent walk through $\< \C,\toC \>$ that almost projects onto $\seq{a_n}{n \in \w}$, such that $\psi$ and $\phi$ are incompatible.
\end{enumerate}
\end{theorem}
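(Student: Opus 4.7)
The plan is to directly attempt building a diligent walk $\seq{b_n}{n \in \w}$ through $\< \B,\toB \>$ that almost projects onto $\seq{a_n}{n \in \w}$, and, when that attempt fails, to turn the combinatorial obstruction into a witness $\< \C,\toC \>$ for case (2). The setup is the time-indexed ``lifting graph'' whose vertices are pairs $(n,b)$ with $b \in \phi^{-1}(a_n)$ and whose arrows are $(n,b) \to (n+1,b')$ whenever $b \toB b'$; an infinite forward path in this graph starting at some time $N$ gives a lift projecting onto $\seq{a_n}{n \geq N}$, and I want one that, in addition, uses every arrow of $\B$ infinitely often.

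For case (1), I would carry out a careful diagonal/greedy construction. Using the diligence of $\seq{a_n}{n \in \w}$ through $\A$, together with the epimorphism property of $\phi$, each arrow of $\B$ has infinitely many potential opportunities to be traversed. The plan is to schedule these opportunities: enumerate the arrows of $\B$, and whenever possible, route the lift so as to use the ``next'' arrow on the list. A K\"onig-type compactness argument on the finite graph $\B$ shows that, starting from some time $N$, infinite lifts exist whenever arbitrarily long partial lifts do; strong connectedness of $\B$ is then used to justify that the scheduling can actually reach each queued arrow without getting stuck. When this scheduling succeeds in covering every arrow of $\B$ infinitely often, case (1) holds.

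The hard part, and the main obstacle, is case (2), when the greedy construction cannot be made diligent or even cannot produce an infinite lift at all. The plan here is to analyse precisely which arrow patterns of $\B$ get ``avoided'' by every lift of a suitably late tail of $\seq{a_n}{n \in \w}$, and to use these to construct $\C$ -- either by enriching $\B$ with missing arrows, passing to a strongly connected subgraph that supports diligent lifts, or taking a suitable quotient, with the right choice dictated by the obstruction. Incompatibility of $\psi$ with $\phi$ would then be established via the fibered product $\B \times_{\A} \C$: any common strongly connected refinement $\< \W,\toW \>$ must map into a single strongly connected component of $\B \times_{\A} \C$, since homomorphic images of strongly connected digraphs are strongly connected. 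It therefore suffices to show that the obstruction forces the strongly connected component decomposition of $\B \times_{\A} \C$ to be nontrivial in such a way that no single component projects epimorphically onto both $\B$ and $\C$. Making this analysis uniform across all configurations of $\B$, $\A$, and $\seq{a_n}{n \in \w}$ -- especially identifying the right form of $\C$ in general -- is the main combinatorial difficulty of the proof.
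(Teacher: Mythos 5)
Your proposal correctly identifies the outer shape of the argument (a dichotomy, K\H{o}nig-type compactness for producing infinite lifts, and a reformulation of incompatibility via the fibered product $\B \times_{\A} \C$, which is indeed sound since the image of $\< \W,\toW \>$ under $w \mapsto (\bar\phi(w),\bar\psi(w))$ is a strongly connected subdigraph of the fibered product whose coordinate projections are epimorphisms). But the content of the theorem lives exactly in the two places you defer. First, in case (1), strong connectedness of $\B$ does \emph{not} justify that the greedy scheduler ``can reach each queued arrow without getting stuck'': any detour through a queued arrow must also project correctly onto the prescribed tail of $\seq{a_n}{n \in \w}$, and this constraint can make an arrow permanently unreachable even though infinite lifts exist. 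Isolating the exact condition under which the routing succeeds is the dividing line of the dichotomy; in the paper this is condition $(\ddagger)$, and even stating it requires a Ramsey-theoretic homogenization of the ``walkability'' relations $\mathcal R_{\ell,m}$ between times $\ell,m$ in a homogeneous set $D$. Your proposal has no analogue of this, so the failure mode ``infinite lifts exist but no diligent one does'' is not actually handled.

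Second, none of your three candidate constructions for $\C$ (enriching $\B$ with missing arrows, passing to a strongly connected subgraph, or taking a quotient) can work, for a structural reason: alternative (2) requires $\C$ to carry a diligent walk that almost projects onto $\seq{a_n}{n \in \w}$, and simultaneously to be incompatible with $\phi$. A subgraph or quotient of $\B$ supporting such a walk would tend to witness compatibility rather than refute it. The paper instead manufactures $\C$ out of the walk data itself: when no infinite lift exists, the vertices of $\C$ are elements of the state space $\St^\phi$ (sets of vertices in fibers of $\phi$, recording which partial lifts survive to each time), glued to a copy of $\A$ along the recurrent pattern by which the state collapses to $\0$; when infinite lifts exist but $(\ddagger)$ fails, the vertices are pairs of ``past/future'' accessibility relations indexed by times between consecutive elements of $D$, arranged around a hub vertex. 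In both cases the diligent walk through $\C$ exists by construction, and incompatibility is proved by an induction along walks in a hypothetical $\< \W,\toW \>$ showing $\bar\phi(w_i) \in \bar\psi(w_i)$ (or $\bar\phi(w_i) \in \F_v(\bar\psi(w_i))$), which forces either membership in the empty state or the truth of $(\ddagger)$ --- a contradiction either way. Since your proposal explicitly labels this ``the main combinatorial difficulty'' and leaves it open, it is a plan for a proof rather than a proof.
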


\noindent These two alternatives are mutually exclusive (though not obviously so), and they correspond roughly to the two alternatives in Definition~\ref{def:Polarized}. 

The proof of Theorem~\ref{thm:Heart&Soul} naturally divides itself into two major stages. In the first stage, we prove a weaker version of the theorem, where the ``diligent walk'' from alternative $(1)$ is replaced by the weaker ``infinite walk'' instead. 
This is stated as Theorem~\ref{thm:DigraphPathLifting} below. 
The second stage of the proof extends this weaker version of the theorem to the full version. 

\begin{lemma}\label{lem:EpisForFree}
Suppose that $\< \A,\toA \>$ is a strongly connected digraph, and let $\seq{a_n}{n \in \w}$ be a diligent walk through $\< \A,\toA \>$. 
Suppose that $\< \C,\toC \>$ is a digraph with no isolated points, and let $\psi$ be a function $\C \to \A$. 
If there is a diligent walk $\seq{c_n}{n \in \w}$ through $\< \C,\toC \>$ that almost projects via $\psi$ onto $\seq{a_n}{n \in \w}$, then $\< \C,\toC \>$ is strongly connected and $\psi$ is an epimorphism.
\end{lemma}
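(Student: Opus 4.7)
The proof will be essentially a direct bookkeeping exercise, splitting naturally into the two assertions of the conclusion. Throughout, fix some threshold $N \in \w$ past which the diligent walk $\seq{c_n}{n\in\w}$ through $\<\C,\toC\>$ projects exactly via $\psi$ onto $\seq{a_n}{n\in\w}$, i.e., $\psi(c_n) = a_n$ for all $n \geq N$.

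The strong connectedness of $\<\C,\toC\>$ is immediate from Lemma~\ref{lem:Walks0}: by hypothesis, $\<\C,\toC\>$ has no isolated points and admits a diligent walk, which is exactly what that lemma requires.

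To show $\psi$ is an epimorphism, I would verify the three conditions from Definition~\ref{def:digraphs} in turn. First, surjectivity of $\psi$: because $\<\A,\toA\>$ is strongly connected it has no isolated points, so diligence of $\seq{a_n}{n\in\w}$ forces every $a \in \A$ to appear infinitely often in this walk; pick any $n \geq N$ with $a_n = a$ and then $\psi(c_n) = a$. Second, the ``arrows go to arrows'' direction: if $c \toC c'$, then diligence of $\seq{c_n}{n\in\w}$ gives infinitely many $n$ with $c_n = c$ and $c_{n+1} = c'$; choosing such an $n \geq N$ yields $\psi(c) = a_n$ and $\psi(c') = a_{n+1}$, and since $\seq{a_n}{n\in\w}$ is a walk, $\psi(c) \toA \psi(c')$. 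Third, the ``arrows lift'' direction: if $a \toA a'$, then diligence of $\seq{a_n}{n\in\w}$ produces an $n \geq N$ with $a_n = a$ and $a_{n+1} = a'$; the pair $c := c_n$, $c' := c_{n+1}$ then satisfies $c \toC c'$ (because $\seq{c_n}{n\in\w}$ is a walk), $\psi(c) = a$, and $\psi(c') = a'$.

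There is no real obstacle here; the proof is short precisely because ``almost projects'' only discards a finite prefix while ``diligent'' guarantees that every edge (on either side) is witnessed infinitely often, so anything that can be lifted or projected can be lifted or projected past any prescribed threshold.
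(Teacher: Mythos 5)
Your proposal is correct and follows essentially the same route as the paper: invoke Lemma~\ref{lem:Walks0} for strong connectedness, fix a threshold $N$ past which the projection is exact, and then use diligence on the appropriate side to witness surjectivity and both halves of the epimorphism condition at indices $\geq N$. No gaps.
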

\begin{proof}
Suppose $\seq{c_n}{n \in \w}$ is a diligent walk through $\< \C,\toC \>$ that almost projects onto $\seq{a_n}{n \in \w}$. The existence of a diligent walk through $\< \C,\toC \>$ implies $\< \C,\toC \>$ is strongly connected, by Lemma~\ref{lem:Walks0}. To prove the lemma, we must show $\psi$ is an epimorphism. 
Fix some $N \in \w$ sufficiently large that if $n \geq N$ then $\psi(c_n) = a_n$. 

Let $a \in \A$. Because $\seq{a_n}{n \in \w}$ is diligent, there are infinitely many $n \in \w$ such that $a_n = a$; in particular, $a_n = a$ for some $n \geq N$. Hence $\psi(c_n) = a_n = a$, and as $a$ was arbitrary, it follows that $\psi$ is surjective. 

Fix $c,c' \in \C$ such that $c \toC c'$. Because $\seq{c_n}{n \in \w}$ is diligent, there is some $n \geq N$ such that $c = c_n$ and $c' = c_{n+1}$. Because $n \geq N$, this implies $\psi(c) = a_n \toA a_{n+1} = \psi(c')$. 

Lastly, fix $a,a' \in \A$ such that $a \toA a'$. Because $\seq{a_n}{n \in \w}$ is diligent, there is some $n \geq N$ such that $a = a_n$ and $a' = a_{n+1}$. Because $n \geq N$, this implies $\psi(c_n) = a$ and $\psi(c_{n+1}) = a'$. But $c_n \toC c_{n+1}$ (because $\seq{c_n}{n \in \w}$ is a walk), so this shows there are $c,c' \in \C$ such that $\psi(c) = a$, $\psi(c') = a'$, and $c \toC c'$.
\end{proof}

\begin{lemma}\label{lem:Konig}
Let $\< \A,\toA \>$ and $\< \B,\toB \>$ be strongly connected digraphs, let $\phi$ be an epimorphism from $\< \B,\toB \>$ to $\< \A,\toA \>$, and suppose $\seq{a_n}{n \in \w}$ is an infinite walk in $\< \A,\toA \>$. 
There is an infinite walk in $\< \B,\toB \>$ that projects onto $\seq{a_n}{n \in \w}$ if and only if for every $k \in \w$ there is a length-$k$ walk $\<b_0,b_1,\dots,b_k\>$ in $\< \B,\toB \>$ that projects onto $\<a_0,a_1,\dots,a_k\>$.
\end{lemma}
\begin{proof}
The ``only if'' direction is obvious: if $\seq{b_n}{n \in \w}$ is an infinite walk in $\< \B,\toB \>$ that projects onto $\seq{a_n}{n \in \w}$, then $\<b_0,b_1,\dots,b_k\>$ projects onto $\<a_0,a_1,\dots,a_k\>$ for every $k \in \w$. 

The ``if'' direction is a consequence of K\H{o}nig's Tree Lemma. To see this, consider the set $\mathcal T$ of all finite walks $\<b_0,b_1,\dots,b_k\>$ in $\< \B,\toB \>$ that project onto the initial segment $\<a_0,a_1,\dots,a_k\>$ of $\seq{a_n}{n \in \w}$ with the same length. 
When ordered in the natural way, by end-extension, this set $\mathcal T$ is a tree. Because $\B$ is finite, this tree is finitely branching (specifically, each member of $\mathcal T$ has at most $|\B|$ immediate successors). 

Now suppose that for every $k \in \w$ there is a length-$k$ walk $\<b_0,b_1,\dots,b_k\>$ in $\< \B,\toB \>$ that projects onto $\<a_0,a_1,\dots,a_k\>$. 
This implies $\mathcal T$ is infinite. 
Consequently, K\H{o}nig's Tree Lemma implies that $\mathcal T$ has an infinite branch 
$\big\< \0,\<b_0\>,\<b_0,b_1\>,\<b_0,b_1,b_2\>,\dots \big\>.$ 
The union of this branch is an infinite sequence $\<b_0,b_1,b_2,\dots\>$ in $\B$, and this sequence is an infinite walk in $\< \B,\toB \>$ that projects onto $\seq{a_n}{n \in \w}$.
\end{proof}

\begin{definition}\label{def:StateSpace}
Let $\< \A,\toA \>$ and $\< \B,\toB \>$ be strongly connected digraphs, and let $\phi$ be an epimorphism from $\< \B,\toB \>$ to $\< \A,\toA \>$. 
The \emph{state space} of $\phi$ is a digraph $\< \St^\phi,\toSp \>$, where 
$$\St^\phi \,=\, \set{\fS \sub \B}{\fS \sub \phi^{-1}(a) \text{ for some }a \in \A},$$
and given $\fS_0,\fS_1 \in \St^\phi$, we define $\fS_0 \toSp \fS_1$ if and only if there is some edge $(a_0,a_1)$ in $\toA$ such that $\fS_0 \sub \phi^{-1}(a_0)$ and $\fS_1 \sub \phi^{-1}(a_1)$, and 
$$\fS_1 \,=\, \set{y \in \phi^{-1}(a_1)}{\exists x \in \fS_0 \text{ such that } x \toB y}.$$ 
In this case we say that the edge $(a_0,a_1)$ in $\toA$ \emph{induces} the edge $(\fS_0,\fS_1)$ in $\toSp$. 
%If $\fS \in \St^\phi \setminus \{\0\}$, there is some $a \in \A$ such that $\phi[\fS] = \{a\}$. 
The \emph{natural map} $\Pi_\phi: \St^\phi \setminus \{\0\} \to \A$ sends each $\fS \in \St^\phi$ to the unique $a \in \A$ such that $\fS \sub \phi^{-1}(a)$.
\hfill{\Coffeecup}
\end{definition}
%
%We can (and do) think of $\< \St^\phi,\E^\phi \>$ as a labeled digraph (with arrows labeled, not vertices). 
%In this view, $\< \St^\phi,\toSp \>$ is simply the unlabeled digraph obtained by forgetting the labels on the arrows of $\< \St^\phi,\E^\phi \>$.

Drawn below are two examples of the state space $\< \St^\phi,\toSp \>$ associated to an epimorphism $\phi$ from some $\< \B,\toB \>$ onto some $\< \A,\toA \>$. 
To clarify the construction, we have labelled the members of $\toA$, and labelled each member of $\toSp$ with the member of $\toA$ that induces it (except for the arrow $\0 \toSp \0$, which is induced by every edge in $\toA$). 

\vspace{4mm}

\begin{center}
\begin{tikzpicture}[xscale=.62,yscale=.62]

\draw (0,0) circle (6mm);
\draw (4,0) circle (6mm);
\draw (2,2) circle (6mm);

\node at (3.3,1.22) {\scalebox{.7}{$d$}};
\node at (2.72,.8) {\scalebox{.7}{$c$}};
\node at (.7,1.22) {\scalebox{.7}{$a$}};
\node at (1.3,.8) {\scalebox{.7}{$b$}};

\draw[<-] (3.35,.45) -- (2.45,1.35);
\draw[->] (3.55,.65) -- (2.65,1.55);
\draw[->] (.65,.45) -- (1.55,1.35);
\draw[<-] (.45,.65) -- (1.35,1.55);

\node at (2,-1.4) {$\< \A,\toA \>$};
\node at (9.25,-1.4) {$\phi\,,\,\< \B,\toB \>$};
\node at (16.5,-1.4) {$\< \St^\phi,\toSp \>$};

%%%%%%
\begin{scope}[shift={(7.25,0)}]
%%%%%%
\draw[densely dotted] (0,0) circle (6mm);
\draw[densely dotted] (4,0) circle (6mm);
\draw[densely dotted] (2,2) circle (6mm);

\node at (0,0) {\tiny $\bullet$};
\node at (4,0) {\tiny $\bullet$};
\node at (1.75,2) {\tiny $\bullet$};
\node at (2.25,2) {\tiny $\bullet$};

\draw[<->] (.07,.16) -- (1.65,1.9);
\draw[<->] (.17,.04) -- (2.15,1.9);
\draw[<->] (3.9,.1) -- (2.35,1.9);
%%%%%%
\end{scope}
%%%%%%

%%%%%%
\begin{scope}[shift={(14.5,0)}]
%%%%%%

\draw (2,2) circle (2.7mm);
\node at (2.1,2) {\scalebox{.4}{$\bullet$}};
\node at (1.9,2) {\scalebox{.4}{$\bullet$}};
\draw (1.21,2) circle (2.7mm);
\node at (1.11,2) {\scalebox{.4}{$\bullet$}};
\draw (2.79,2) circle (2.7mm);
\node at (2.89,2) {\scalebox{.4}{$\bullet$}};
\draw (0,0) circle (2.7mm);
\node at (0,0) {\scalebox{.4}{$\bullet$}};
\draw (4,0) circle (2.7mm);
\node at (4,0) {\scalebox{.4}{$\bullet$}};

\draw (-1,2) circle (2.7mm);
\node at (-1,2) {\scalebox{.65}{$\emptyset$}};

\draw[->] (3.83,.43)--(3.07,1.68);
\draw[<-] (3.73,.33)--(2.97,1.58);
\draw[<-] (3.61,.24)--(2.25,1.65);
\draw[->] (.35,.23)--(1.8,1.6);
\draw[<-] (.25,.33)--(1.7,1.7);
\draw[<-] (.12,.4)--(1,1.65);
\draw[<-] (.4,.12)--(2.5,1.7);
\draw[<-] (-.59,2)--(.8,2);

\node at (.1,2.15) {\scalebox{.5}{$c$}};
\node at (.52,1.2) {\scalebox{.5}{$a$}};
\node at (.97,1.2) {\scalebox{.5}{$a$}};
\node at (1.3,.65) {\scalebox{.5}{$a$}};
\node at (1.67,1.3) {\scalebox{.5}{$b$}};
\node at (2.75,.95) {\scalebox{.5}{$c$}};
\node at (3.2,.95) {\scalebox{.5}{$c$}};
\node at (3.53,1.23) {\scalebox{.5}{$d$}};

\draw[->] (-1.12,1.6) arc (-235:55:2.2mm);

%%%%%%
\end{scope}
%%%%%%

\end{tikzpicture}
\end{center}

\vspace{6mm}

\begin{center}
\begin{tikzpicture}[xscale=.62,yscale=.62]

\draw (0,0) circle (7mm);
\draw (4,0) circle (7mm);
\draw (2,3.464) circle (7mm);

\node at (.78,1.74) {\scalebox{.7}{$a$}};
\node at (3.4,1.84) {\scalebox{.7}{$d$}};
\node at (2.6,1.64) {\scalebox{.7}{$c$}};
\node at (2,.22) {\scalebox{.7}{$b$}};

\draw[->] (1,0) -- (3,0);
\draw[<-] (.5,.866) -- (1.5,2.598);
\draw[<-] (3.65,.946) -- (2.65,2.678);
\draw[->] (3.35,.786) -- (2.35,2.518);

\node at (2,-1.8) {$\< \A,\toA \>$};
\node at (9.25,-1.8) {$\phi\,,\,\< \B,\toB \>$};
\node at (16.5,-1.8) {$\< \St^\phi,\toSp \>$};

%%%%%%
\begin{scope}[shift={(7.25,0)}]
%%%%%%
\draw[densely dotted] (0,0) circle (7mm);
\draw[densely dotted] (4,0) circle (7mm);
\draw[densely dotted] (2,3.464) circle (7mm);

\node at (.15,-.26) {\tiny $\bullet$};
\node at (-.15,.26) {\tiny $\bullet$};
\node at (3.85,-.26) {\tiny $\bullet$};
\node at (4.15,.26) {\tiny $\bullet$};
\node at (1.7,3.464) {\tiny $\bullet$};
\node at (2.3,3.464) {\tiny $\bullet$};

\draw[->] (.35,-.26) -- (3.65,-.26);
\draw[->] (.05,.26) -- (3.95,.26);
\draw[<-] (-.05,.433) -- (1.6,3.291);
\draw[<-] (.25,-.087) -- (2.2,3.291);
\draw[->] (3.7,-.087) -- (1.7,3.291);
\draw[->] (4.05,.433) -- (2.45,3.291);
\draw[<-] (3.82,-.08) -- (2.34,3.28);
%%%%%%
\end{scope}
%%%%%%

%%%%%%
\begin{scope}[shift={(14.5,0)}]
%%%%%%
\draw (0,0) circle (2.7mm);
\node at (.055,-.08) {\scalebox{.4}{$\bullet$}};
\node at (-.055,.08) {\scalebox{.4}{$\bullet$}};
\draw (-.45,.65) circle (2.7mm);
\node at (-.5,.74) {\scalebox{.4}{$\bullet$}};
\draw (.45,-.65) circle (2.7mm);
\node at (.5,-.74) {\scalebox{.4}{$\bullet$}};
\begin{scope}[xscale=-1,shift={(-4,0)}]
\draw (0,0) circle (2.7mm);
\node at (.055,-.08) {\scalebox{.4}{$\bullet$}};
\node at (-.055,.08) {\scalebox{.4}{$\bullet$}};
\draw (-.45,.65) circle (2.7mm);
\node at (-.5,.74) {\scalebox{.4}{$\bullet$}};
\draw (.45,-.65) circle (2.7mm);
\node at (.5,-.74) {\scalebox{.4}{$\bullet$}};
\end{scope}
\draw (2,3.464) circle (2.7mm);
\node at (2.1,3.464) {\scalebox{.4}{$\bullet$}};
\node at (1.9,3.464) {\scalebox{.4}{$\bullet$}};
\draw (1.21,3.464) circle (2.7mm);
\node at (1.11,3.464) {\scalebox{.4}{$\bullet$}};
\draw (2.79,3.464) circle (2.7mm);
\node at (2.89,3.464) {\scalebox{.4}{$\bullet$}};

\draw (-1.5,3.464) circle (2.7mm);
\node at (-1.5,3.464) {\scalebox{.65}{$\emptyset$}};

\draw[->] (.78,3.464)--(-1.07,3.464);

\draw[->] (.45,0)--(3.55,0);
\draw[->] (0,.65)--(4,.65);
\draw[->] (.9,-.65)--(3.1,-.65);

\draw[->] (1,3.06)--(-.22,1.07);
\draw[<-] (3,3.06)--(4.22,1.07);
\draw[->] (1.81,3.06)--(.22,.38);
\draw[<-] (2.19,3.06)--(3.78,.38);
\draw[->] (2.62,3.06)--(.66,-.3);
\draw[<-] (1.38,3.06)--(3.34,-.3);
\draw[->] (2.8,3.06)--(3.45,-.3);
\draw[->] (2.05,3.06)--(3.45,-.3);

\node at (-.14,3.64) {\scalebox{.5}{$d$}};
\node at (2,.2) {\scalebox{.5}{$b$}};
\node at (2,.85) {\scalebox{.5}{$b$}};
\node at (2,-.45) {\scalebox{.5}{$b$}};
\node at (1.72,1.21) {\scalebox{.5}{$a$}};
\node at (2.28,1.21) {\scalebox{.5}{$c$}};
\node at (1.13,1.62) {\scalebox{.5}{$a$}};
\node at (.54,2.03) {\scalebox{.5}{$a$}};
\node at (3.44,2.03) {\scalebox{.5}{$c$}};
\node at (3.5,1.17) {\scalebox{.5}{$c$}};
\node at (3.1,2.3) {\scalebox{.5}{$d$}};
\node at (2.87,1.48) {\scalebox{.5}{$d$}};

\draw[->] (-1.62,3.064) arc (-235:55:2.2mm);

%%%%%%
\end{scope}
%%%%%%

\end{tikzpicture}
\end{center}

\vspace{2mm}

The natural map $\Pi_\phi: \St^\phi \setminus \{\0\} \to \A$ is indicated by the positions of the vertices of $\St^\phi$ in the picture (although we have omitted the dotted circles as in the drawings of $\<\B,\toB\>$). 

Observe that $\Pi_\phi$ is an epimorphism from the digraph $\< \St^\phi \setminus \{\0\},\toSp \>$ to $\< \A,\toA \>$. 
Observe also that $\< \St^\phi,\toSp \>$ is not strongly connected, because there is no walk from the empty state $\0$ to any other vertex of $\St^\phi$. 
In fact, the sub-digraph $\< \St^\phi \setminus \{\0\},\toSp \>$ need not be strongly connected either: for instance, in the second example illustrated above, there is no path from a size-$1$ member of $\St^\phi$ to a size-$2$ member. 

\begin{definition}\label{def:AssociatedWalk}
Let $\< \A,\toA \>$ and $\< \B,\toB \>$ be strongly connected digraphs, and let $\phi$ be an epimorphism from $\< \B,\toB \>$ to $\< \A,\toA \>$. 
Given an infinite walk $\bar a = \seq{a_n}{n \in \w}$ in $\< \A,\toA \>$, we associate to $\bar a$ an infinite sequence $\seq{\fS_n^{\bar a}}{n \in \w}$ in $\St^\phi$, defined by taking
\begin{align*}
\fS^{\bar a}_k \,=\, \Big\{ b \in \B :\ \text{$b$ is the last member }&\text{of a length-$k$ walk in $\< \B,\toB \>$} \\
&\text{ that projects onto }\<a_1,a_2,\dots,a_k\> \Big\}
\end{align*}
for all $k \in \w$. 
Each $\fS^{\bar a}_k$ is well-defined (i.e., $\fS^{\bar a}_k \in \St^\phi$) because $\fS^{\bar a}_k \sub \phi^{-1}(a_k)$. 
This sequence is the \emph{state space sequence associated to $\bar a$}.
\hfill{\Coffeecup}
\end{definition}

To motivate Definition~\ref{def:AssociatedWalk}, suppose we have an epimorphism $\phi$ from some strongly connected digraph $\< \A,\toA \>$ onto some other strongly connected digraph $\< \B,\toB \>$. We wish to understand the following 

\vspace{1mm}

\noindent \emph{Question}: \hspace{1mm} Given an infinite walk $\seq{a_n}{n \in \w}$ in $\< \A,\toA \>$, is there an

\hspace{15.5mm} infinite walk in $\< \B,\toB \>$ that projects onto it?

\vspace{1mm}

\noindent By Lemma~\ref{lem:Konig}, this question is equivalent to asking whether there are arbitrarily long finite walks in $\< \B,\toB \>$ that project onto initial segments of $\bar a = \seq{a_n}{n \in \w}$. The state space sequence is simply a convenient way of keeping track of what such walks might look like: $\fS^{\bar a}_k$ tells us the possibilities for $b_k$ in such an infinite walk $\seq{b_n}{n \in \w}$ 
Or more precisely, it tells us the possibilities not yet ruled out by looking at $\<a_0,a_1,\dots,a_k\>$ (the ``past'' part of $\bar a$ at time $k$); some possibilities may also be ruled out by considering $\< a_k,a_{k+1},a_{k+2},\dots\>$ (the future part of $\bar a$), although $\fS^{\bar a}_k$ will not be able to detect it. 

According to Lemma~\ref{lem:Konig}, there is an infinite walk in $\< \B,\toB \>$ that projects onto a walk $\seq{a_n}{n \in \w}$ if and only if the state space sequence associated to $\seq{a_n}{n \in \w}$ never reaches the empty state $\0$. 

\begin{lemma}\label{lem:Recursion}
Let $\< \A,\toA \>$ and $\< \B,\toB \>$ be strongly connected digraphs, and let $\phi$ be an epimorphism from $\< \B,\toB \>$ to $\< \A,\toA \>$. 
If $\bar a = \seq{a_n}{n \in \w}$ is an infinite walk in $\< \A,\toA \>$, then the state space sequence associated to $\bar a$ satisfies the following recursion:
\begin{align*}
\fS^{\bar a}_0 &\,=\, \phi^{-1}(a_0), \\
\fS^{\bar a}_{k+1} &\,=\, \set{y \in \phi^{-1}(a_{k+1})}{\exists x \in \fS^{\bar a}_k \text{ such that } x \toB y}.
\end{align*}
\end{lemma}
\begin{proof}
It is clear that $\fS^{\bar a}_0 = \phi^{-1}(a_0)$, because $\<b\>$ is a length-$0$ walk projecting onto $\<a_0\>$ if and only if $\phi(b) = a_0$. 
For the general step of the recursion, observe that $\< b_0,b_1,\dots,b_k,b_{k+1} \>$ is a length-$(k+1)$ walk projecting onto $\< a_0,a_1,\dots,a_k,a_{k+1} \>$ if and only if $(1)$ $\< b_0,b_1,\dots,b_k \>$ is a length-$k$ walk projecting onto $\< a_0,a_1,\dots,a_k \>$, $(2)$ $b_k \toB b_{k+1}$, and $(3)$ $\phi(b_{k+1}) = a_{k+1}$. Thus $b \in \fS^{\bar a}_{k+1}$ if and only if $\phi(b) = a_{k+1}$ and there is some length-$k$ walk $\< b_0,b_1,\dots,b_k \>$ projecting onto $\< a_0,a_1,\dots,a_k \>$ such that $b_k \toB b$. 
\end{proof}

One immediate consequence of this lemma is that 
$\fS^{\bar a}_k \toSp \fS^{\bar a}_{k+1}$ for every $k$, and this edge in $\toSp$ is induced by the edge $(a_k,a_{k+1})$ in $\toA$. 
Consequently, $\seq{\fS^{\bar a}_n}{n \in \w}$ is an infinite walk through $\< \St^\phi,\toSp \>$. This walk may or may not include the empty state $\0$; if it does then it has nowhere else to go, and $\fS^{\bar a}_n = \0$ for all sufficiently large $n$. 

\vspace{2mm}

The following theorem is a weakening of Theorem~\ref{thm:Heart&Soul}, which has ``infinite walk''  instead of ``diligent walk" in alternative $(1)$. The proof of this theorem completes the first major stage of our proof of Theorem~\ref{thm:Heart&Soul}.

\begin{theorem}\label{thm:DigraphPathLifting}
Let $\< \A,\toA \>$ and $\< \B,\toB \>$ be strongly connected digraphs, and let $\phi$ be an epimorphism from $\< \B,\toB \>$ onto $\< \A,\toA \>$. 
If $\seq{a_n}{n \in \w}$ is a diligent walk through $\< \A,\toA \>$, then either
\begin{enumerate}
\item there is an infinite walk through $\< \B,\toB \>$ that almost projects onto $\seq{a_n}{n \in \w}$, or
\item there is a strongly connected digraph $\< \C,\toC \>$ and an epimorphism $\psi$ from $\< \C,\toC \>$ to $\< \A,\toA \>$ such that $\psi$ is incompatible with $\phi$, and there is a diligent walk through $\< \C,\toC \>$ that projects onto $\seq{a_n}{n \in \w}$.
\end{enumerate}
\end{theorem}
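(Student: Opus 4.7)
My plan is to analyze the \emph{state space} $\<\St^\phi,\toSp\>$ of Definition~\ref{def:StateSpace} and its sequence $\seq{\fS_n^{\bar a}}{n\in\w}$ (Definition~\ref{def:AssociatedWalk}) associated to $\bar a$, splitting on the dichotomy of whether some tail of this sequence avoids $\emptyset$ or not. If there is some $N\in\w$ such that the restart beginning with $\phi^{-1}(a_N)$ at time $N$, propagated forward by the recursion of Lemma~\ref{lem:Recursion}, never equals $\emptyset$, then the tree of length-$k$ walks in $\B$ projecting onto $\<a_N,a_{N+1},\dots,a_{N+k}\>$ is infinite and finitely branching, so Lemma~\ref{lem:Konig} produces an infinite walk $\seq{b_n}{n\geq N}$ in $\B$ with $\phi(b_n)=a_n$ for every $n\geq N$. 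Using strong connectedness of $\B$ I prepend any finite walk ending at $b_N$ to obtain an infinite walk in $\B$ almost projecting onto $\bar a$, giving option~(1).

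The remaining case, which I call \emph{total extinction}, is that every restart of the state space dies eventually. To establish option~(2), my plan is to construct $\<\C,\toC\>$ as a \emph{history unfolding} of $\A$ along $\bar a$: fix a sufficiently large integer $k$, let $\C$ have as vertices those length-$(k+1)$ walks $(v_0,\dots,v_k)$ in $\A$ which appear as consecutive windows of $\bar a$, declare $(v_0,\dots,v_k)\toC(v_1,\dots,v_{k+1})$ precisely when $(v_0,\dots,v_{k+1})$ also occurs as a window of $\bar a$, and define $\psi(v_0,\dots,v_k)=v_k$. Strong connectedness of $\C$, the epimorphism property of $\psi$ onto $\<\A,\toA\>$, and the existence of a diligent walk in $\C$ projecting onto $\bar a$ (namely the one whose $n$-th vertex is $(a_{n-k},\dots,a_n)$, with $n<k$ handled by prepending a short initial segment) all follow directly from the diligence of $\bar a$.

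The main obstacle is proving $\psi$ incompatible with $\phi$. Assume compatibility for contradiction via strongly connected $\<\W,\toW\>$ with epimorphisms $\bar\phi\colon\W\to\B$ and $\bar\psi\colon\W\to\C$ satisfying $\phi\circ\bar\phi=\psi\circ\bar\psi$. A diligent walk in $\W$ projects via $\bar\psi$ to a diligent walk in $\C$ and via $\bar\phi$ to a diligent walk in $\B$; both descend to the same diligent walk $\bar a'$ in $\A$. By construction of $\C$ every $(k+1)$-window of $\bar a'$ appears as a window of $\bar a$, while the $\B$-projection witnesses that the $\phi$-state-space sequence of $\bar a'$ never dies. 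The crux is to show---via a pigeonhole on the finitely many transition functions $F_{x,y}\colon\St^\phi\to\St^\phi$ induced by arrows $x\toA y$, with $k$ chosen large enough in terms of $|\St^\phi|$ and the death times of $\bar a$'s restarts---that any walk in $\A$ whose $(k+1)$-windows all agree with windows of $\bar a$ must inherit the total-extinction behavior of $\bar a$, yielding the desired contradiction. This quantitative translation from ``window alphabets agree'' to ``extinction transfers along any such walk'' is where the main combinatorial effort will lie, and I expect it to be the delicate step.
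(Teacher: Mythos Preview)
Your first case is correct and matches the paper exactly.

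Your second case has a genuine gap, and it appears earlier than the ``crux'' you flag. You assert that strong connectedness of $\C$ and diligence of the sliding-window walk ``follow directly from the diligence of $\bar a$'', but this is false. Diligence of $\bar a$ guarantees only that every \emph{edge} of $\A$ recurs infinitely often; it says nothing about longer windows. A $(k{+}1)$-window of $\bar a$ may occur only once---for instance the initial segment $(a_0,\dots,a_k)$---in which case its vertex in $\C$ has no incoming edge, $\C$ is not strongly connected, and by Lemma~\ref{lem:Walks0} no diligent walk through $\C$ exists at all. Concretely: take $\A=\{0,1,2\}$ with the six non-loop edges and $\bar a = 0,1,2,(0,1,0,2,1,2)^\infty$. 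This walk is diligent in $\A$, yet the $3$-window $012$ occurs exactly once and has no predecessor in your $\C$. Restricting $\C$ to \emph{recurrent} windows would restore strong connectedness, but then the sliding-window sequence leaves $\C$ at every non-recurrent window, so you lose the exact projection onto $\bar a$ that alternative~(2) demands. Your extinction-transfer ``crux'' is also stated too loosely---the death times $k_m$ can be unbounded, so ``$k$ large in terms of the death times'' needs care---though a correct idea is lurking: if a single killing word $(a_m,\dots,a_{m+k_m})$ sits inside a vertex that any diligent $\C$-walk must visit, then every $\bar a'$ arising from a putative $\W$ contains it and dies. But forcing that vertex to be visited is precisely the strong-connectedness problem again.

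The paper avoids all of this by building $\C$ out of the state space $\St^\phi$ rather than out of windows. It fixes, by pigeonhole, an infinite set $D$ of restart positions $m$ sharing a common death pattern $(\C_0,\E_0,\fS_{\mathrm{in}},a_{\mathrm{in}},a_{\mathrm{end}},a_{\mathrm{out}})$ with $\C_0\subseteq\St^\phi$, sets $\C=\A\cup\C_0$ with two extra edges $a_{\mathrm{in}}\to\fS_{\mathrm{in}}$ and $\emptyset\to a_{\mathrm{out}}$, and defines $\psi$ via $\Pi_\phi$ on $\C_0\setminus\{\emptyset\}$. The diligent walk in $\C$ alternates between copying $\bar a$ inside the $\A$-part and tracing the recorded death sequence through $\C_0$ at the times in $D$; this projects exactly onto $\bar a$ by design. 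Incompatibility is then a short induction: for any walk $\langle w_0,\dots,w_n\rangle$ in a putative $\W$ whose $\bar\psi$-image runs from $\fS_{\mathrm{in}}$ to $\emptyset$ inside $\C_0$, one shows $\bar\phi(w_i)\in\bar\psi(w_i)$ from Lemma~\ref{lem:Recursion}, and the terminal containment $\bar\phi(w_n)\in\emptyset$ is the contradiction. Because the $\C_0$-vertices \emph{are} states, no translation between windows and state evolution is needed.
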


\begin{proof}
Let $\< \A,\toA \>$ and $\< \B,\toB \>$ be strongly connected digraphs, and let $\phi$ be an epimorphism from $\< \B,\toB \>$ onto $\< \A,\toA \>$. 
Fix an infinite walk $\bar a = \seq{a_n}{n \in \w}$ through $\< \A,\toA \>$. 
For each $m \in \w$, let $\bar a_m = \seq{a_{n+m}}{n \in \w}$ (i.e., $\bar a_m$ is the sequence $\bar a$ with the first $m$ entries removed).

To begin, suppose there is some $m \in \w$ such that $\fS^{\bar a_m}_k \neq \0$ for all $k$. 
Applying Lemma~\ref{lem:Konig}, this implies there is an infinite walk $\seq{b_n}{n \in \w}$ through $\<\B,\toB\>$ that projects onto $\bar a_m$. 
Using the fact that $\< \B,\toB \>$ is strongly connected, fix some length-$m$ walk $\< b'_0,b'_1,\dots,b'_m\>$ in $\< \B,\toB \>$ such that $b'_m = b_0$. Define a new walk $\< \tilde b_0,\tilde b_1,\tilde b_2,\dots \>$ by concatenating these two walks: 
$$\tilde b_i \,=\, \begin{cases}
b_i' & \text{ if } i \leq m, \\
b_{i-m} & \text{ if } i \geq m.
\end{cases}$$
Then $\< \tilde b_0,\tilde b_1,\tilde b_2,\dots \>$ is an infinite walk through $\< \B,\toB \>$ that almost projects onto $\bar a$ via $\phi$, because $\phi(b_n) = a_n$ whenever $n \geq m$. 

Therefore, if there is some $m \in \w$ such that $\fS^{\bar a_m}_k \neq \0$ for all $k$, then alternative $(1)$ from the statement of the theorem holds. 
To prove the theorem, it suffices to show that 
if this is not the case then alternative $(2)$ holds. 

Let us suppose, then, that for every $m \in \w$ there is some $k \in \w$ such that $\fS^{\bar a_m}_{k} = \0$. 
For each $m$, let $k_m$ denote the least number with this property, so that $\fS^{\bar a_m}_i \neq \0$ for all $i < k_m$, but $\fS^{\bar a_m}_{k_m} = \0$. 
Note that $k_m > 0$ for all $m$, because $\fS^{\bar a_m}_0 = \phi^{-1}(a_m) \neq \0$. 

Because $\St^\phi$ and $\A$ are both finite, there are some particular $\C_0 \sub \St^\phi$, and $\E_0 \sub \ \toSp$, and  $\fS_\text{in} \in \St^\phi$, and $a_\text{in},a_\text{end},a_\text{out} \in \A$, such that 
\begin{itemize}
\item[$\circ$] $\set{\fS^{\bar a_m}_i}{i \leq k_m} \,=\, \C_0$,
\item[$\circ$] $\set{(\fS^{\bar a_m}_i,\fS^{\bar a_m}_{i+1})}{i < k_m} \,=\, \E_0$,
\item[$\circ$] $\fS^{\bar a_m}_0 \,=\, \fS_\text{in}$,  
\item[$\circ$] $a_{m-1} \,=\, a_\text{in}$, 
\item[$\circ$] $a_{k_m} \,=\, a_\text{end}$,  
\item[$\circ$] $a_{k_m+1} \,=\, a_\text{out}$
\end{itemize}
for infinitely many $m > 0$. (We exclude $m=0$ from consideration so that $a_{m-1}$ is well-defined.) 
Let $D$ denote the set of all $m > 0$ satisfying the six properties listed above. 
%Note that, by the previous paragraph, $\fS_\text{in} = \phi^{-1}(a_m) \neq \0$.  

We are now ready to describe the digraph $\< \C,\toC \>$ and the epimorphism $\psi$ that witness alternative $(2)$ from the statement of the theorem. 
First define $\C \,=\, \A \cup \C_0$. (We are assuming, without loss of generality, that $\A \cap \C_0 = \0$. This is true automatically unless the members of $\A$ are labeled in an unreasonable way.) 
Define the edge relation on $\C$ as follows:
\begin{align*}
x \toC y \quad \ \Longleftrightarrow \quad \ & x,y \in \A \text{ and } x \toA y \text{, or} \\
& x,y \in \C_0 \text{ and } (x,y) \in \E_0 \text{, or} \\
& x=a_\text{in} \text{ and } y = \fS_\text{in} \text{, or} \\
& x = \0 \text{ and } y = a_{\mathrm{out}}. 
\end{align*}

\vspace{1mm}

\begin{center}
\begin{tikzpicture}[xscale=1.1,yscale=.95]

\draw (0,2.75) ellipse (32mm and 8mm);
\node at (0,2.75) {$\< \A,\toA \>$};
\node at (-1.8,2.45) {\tiny $\bullet$};
\node at (-1.7,2.7) {\scalebox{.8}{$a_\text{in}$}};
\node at (1.8,2.45) {\tiny $\bullet$};
\node at (1.72,2.7) {\scalebox{.8}{$a_\text{out}$}};

\draw (0,0) ellipse (40mm and 10mm);
\node at (0,0) {$\< \C_0,\E_0 \>$};
\node at (-2.2,.45) {\tiny $\bullet$};
\node at (-2.1,.18) {\scalebox{.8}{$\fS_\text{in}$}};
\node at (2.2,.45) {\tiny $\bullet$};
\node at (2.3,.18) {\scalebox{.8}{$\0$}};

\draw[->] (-1.815,2.36)--(-2.185,.54);
\draw[<-] (1.815,2.36)--(2.185,.54);
\end{tikzpicture}
\end{center}

\vspace{1mm}

Define $\psi: \C \to \A$ as follows: 
$$\psi(x) = \begin{cases}
x & \text{ if } x \in \A, \\
\Pi_\phi(\fS) & \text{ if } \fS \in \C_0 \setminus \{\0\}, \\
a_\text{end} & \text{ if } x = \0.
\end{cases}$$

This completes the definition of $\< \C,\toC \>$ and $\psi$. To finish the proof, we need to show that $\< \C,\toC \>$ is strongly connected, that $\psi$ is an epimorphism from $\< \C,\toC \>$ to $\< \A,\toA \>$, that there is a diligent walk through $\< \C,\toC \>$ projecting onto $\seq{a_n}{n \in \w}$ via $\psi$, and that $\psi$ is incompatible with $\phi$. 

\vspace{2mm}

We can verify the first three of these four things all at once by applying Lemma~\ref{lem:EpisForFree}. To do this, we must define a diligent walk $\seq{c_i}{i \in \w}$ through $\< \C,\toC \>$ that projects onto $\seq{a_i}{i \in \w}$ via $\psi$. We define this walk via recursion, obtaining at each stage not a single $c_i$, but a long piece of the walk.

To begin, recall that for every edge $(a,a')$ in $\toA$, there are infinitely many $i \in \w$ such that $(a_i,a_{i+1}) = (a,a')$. Fix some $m_0 \in D$ large enough that for every $(a,a')$ in $\toA$, there is at least one $i+1 < m_0$ such that $(a_i,a_{i+1}) = (a,a')$. Then define 
\begin{align*}
c_i &\,=\, a_i \qquad \quad \ \ \, \text{for all } i < m_0 \text{, and} \\
c_i &\,=\, \fS^{\bar a_{m_0}}_{i-m_0} \qquad \text{whenever }  m_0 \leq i \leq m_0+k_{m_0}.
\end{align*}
This completes the base step of the recursion. 

Later stages of the recursion are similar. At stage $n$, suppose that there is some $m_{n-1} \in A$ such that $c_i$ is already defined for all $i \leq m_{n-1}+k_{m_{n-1}}$. 
Fix some $m_n \in D$ large enough that for every $(a,a') \in \ \toA$ there is some $i$ with $m_{n-1}+k_{m_{n-1}} \leq i < m_n-1$ such that $(a_i,a_{i+1}) = (a,a')$. Then define 
\begin{align*}
c_i &\,=\, a_i \qquad \quad \ \ \, \text{whenever } m_{n-1}+k_{m_{n-1}} \leq i < m_n \text{, and} \\
c_i &\,=\, \fS^{\bar a_{m_n}}_{i-m_n} \qquad \text{whenever }  m_n \leq i \leq m_n+k_{m_n}.
\end{align*}
This completes the recursion. 

Next, we check that $\seq{c_i}{i \in \w}$ is a diligent walk through $\< \C,\toC \>$ that projects onto $\seq{a_i}{i \in \w}$ via $\psi$. 

Every $i \in \w$ fits into one of the following four cases:
\begin{itemize}
\item[$\circ$] $c_i = a_i$ and $c_{i+1} = a_{i+1}$,
\item[$\circ$] $c_i = a_{m_n-1} = a_\text{in}$ and $c_{i+1} = \fS^{\bar a_{m_n}}_0 = \fS_\text{in}$ for some $m_n \in D$,
\item[$\circ$] $c_i = \fS^{\bar a_{m_n}}_j$ and $c_{i+1} = \fS^{\bar a_{m_n}}_{j+1}$ for some $m_n \in D$ and some $j < m_n$, or 
\item[$\circ$] $c_i = \0$ and $c_{i+1} = a_{m_n + k_{m_n}+1} = a_\text{out}$ for some $m_n \in D$. 
\end{itemize}
In any of these four cases, it follows immediately from the definition of $\toC$ that $c_i \toC c_{i+1}$. 
Hence $\seq{c_i}{i \in \w}$ is an infinite walk in $\< \C,\toC \>$. 

Furthermore, at stage $n$ of the recursion we chose the new $c_i$'s (that is, those $c_i$ for which $m_{n-1}+k_{m_{n-1}} \leq i \leq m_n+k_{m_n}$) so that: 
\begin{itemize}
\item[$\circ$] for every $(a,a')$ in $\toA$ there is some new $c_i$ with $(c_i,c_{i+1}) = (a,a')$, 
\item[$\circ$] for every $(\fS,\fS')$ in $\E_0$ there is some new $c_i$ with $(c_i,c_{i+1}) = (\fS,\fS')$,
\item[$\circ$] there is some new $c_i$ with $(c_i,c_{i+1}) = (a_\text{in},\fS_\text{in})$, and
\item[$\circ$] there is some new $c_i$ with $(c_i,c_{i+1}) = (\0,a_\text{out})$.
\end{itemize}
In other words, for every $(c,c')$ in $\toC$ there is some new $c_i$ with $(c_i,c_{i+1}) = (c,c')$. 
It follows that $\seq{c_i}{i \in \w}$ is a diligent walk through $\< \C,\toC \>$.

For each $i \in \w$, either $c_i \in \A$ or $c_i \in \C_0$. If $c_i \in \A$ then $\psi(c_i) = a_i$ by construction. 
If $c_i = \fS \in \C_0$ then either 
\begin{itemize}
\item[$\circ$] $c_i = \fS^{\bar a_{m_n}}_{i-m_n}$ for some $n$ with $m_n \leq i < m_n+k_{m_n}$, in which case we have $\fS^{\bar a_{m_n}}_{i-m_n} \neq \0$ and $\fS^{\bar a_{m_n}}_{i-m_n} \sub \phi^{-1}(a_{i-m_n})$, which implies that $\psi(c_i) = \Pi_\phi(\fS^{\bar a_{m_n}}_{i-m_n}) = a_i$; or 
\item[$\circ$] $c_i = \fS^{\bar a_{m_n}}_{k_{m_n}} = \0$ for some $m_n \in D$, in which case $\psi(c_i) = a_\text{end} = a_{m_n+k_{m_n}} = a_i$.
\end{itemize}
In either case, $\psi(c_i) = a_i$. This holds for all $i \in \w$. 

By Lemma~\ref{lem:EpisForFree}, $\< \C,\toC \>$ is strongly connected, $\psi$ is an epimorphism from $\< \C,\toC \>$ to $\< \A,\toA \>$, and $\seq{c_i}{i \in \w}$ is a diligent walk through $\< \C,\toC \>$ that projects via $\psi$ onto $\seq{a_i}{i \in \w}$.

\vspace{2mm}

It remains to show that $\psi$ is incompatible with $\phi$. Suppose, aiming for a contradiction, that $\psi$ is compatible with $\phi$. That is, suppose that there is a strongly connected digraph $\< \W,\toW \>$ and epimorphisms $\bar \phi$ and $\bar \psi$ from $\< \W,\toW \>$ to $\< \B,\toB \>$ and $\< \C,\toC \>$, respectively, such that $\phi \circ \bar \phi = \psi \circ \bar \psi$. 

\begin{center}
\begin{tikzpicture}[scale=1]

\node at (0,0) {$\< \A,\toA \>$};
\node at (-2,2) {$\< \B,\toB \>$};
\node at (2,2) {$\< \C,\toC \>$};
\node at (0,4) {$\< \W,\toW \>$};

\draw[<-] (.5,.5)--(1.5,1.5);
\draw[<-] (-.5,.5)--(-1.5,1.5);
\draw[<-,dashed] (1.5,2.5)--(.5,3.5);
\draw[<-,dashed] (-1.5,2.5)--(-.5,3.5);

\node at (1.25,.85) {\footnotesize $\psi$};
\node at (-1.25,.85) {\footnotesize $\phi$};
\node at (1.25,3.15) {\footnotesize $\bar \psi$};
\node at (-1.25,3.15) {\footnotesize $\bar \phi$};

\end{tikzpicture}
\end{center}

Because $\bar \psi$ is a surjection $\W \to \C$, the fibers $\bar \psi^{-1}(\fS_\text{in})$ and $\bar \psi^{-1}(\0)$ are both nonempty. Using the fact that $\< \W,\toW \>$ is strongly connected, fix a walk $\< w_0,w_1,w_2,\dots,w_n \>$ in $\< \W,\toW \>$ from some $w_0 \in \bar \psi^{-1}(\fS_\text{in})$ to some $w_n \in \bar \psi^{-1}(\0)$.

Because $\bar \psi$ is an epimorphism, $\< \bar \psi(w_0),\bar \psi(w_1),\bar \psi(w_2),\dots,\bar \psi(w_n) \>$ is a walk in $\< \C,\toC \>$ from $\bar \psi(w_0) = \fS_\text{in}$ to $\bar \psi(w_n) = \0$. Without loss of generality, we may (and do) assume that $\bar \psi(w_k) \neq \0$ for any $k < n$. (If we did have $\bar \psi(w_k) = \0$ for some $k < n$, then we could just consider the shorter walk $\< w_0,w_1,w_2,\dots,w_k \>$ instead of the original walk $\< w_0,w_1,w_2,\dots,w_n \>$.) But observe that every walk in $\< \C,\toC \>$ from $\fS_\text{in}$ to a vertex in $\A$ must include the vertex $\0$, because $(\0,a_\text{out})$ is the only member of $\toC$ going from a member of $\C_0$ to a member of $\A$. Because we are assuming $\bar \psi(w_i) \neq \0$ for all $i < n$, this implies $\bar \psi(w_i) \in \C_0$ for all $i \leq n$.  

Because $\bar \phi$ is an epimorphism, $\< \bar \phi(w_0),\bar \phi(w_1),\bar \phi(w_2),\dots,\bar \phi(w_n) \>$ is a walk in $\< \B,\toB \>$. We now prove, by induction on $i$, that $\bar \phi(w_i) \in \bar \psi(w_i)$ for all $i \leq n$. 
(By the previous paragraph, $\bar \psi(w_i) \in \C_0 \in \St^\phi$ whenever $i \leq n$. Therefore the assertion $\bar \phi(w_i) \in \bar \psi(w_i)$ at least makes sense: we have $\bar \phi(w_i) \in \B$ and $\bar \psi(w_i) \sub \B$ whenever $i \leq n$.)

For the base case $i=0$, observe that 
$$\phi \circ \bar \phi(w_0) \,=\, \psi \circ \bar \psi(w_0) \,=\, \psi(\fS_\text{in}) \,=\, \Pi_\phi(\fS_\text{in}).$$ 
Consequently, $\bar \phi(w_0) \in \phi^{-1}(\Pi_\phi(\fS_\text{in}))$. 
Now fix some $m \in D$. 
Recall that $\fS_\text{in} = \fS^{\bar a_m}_0 = \phi^{-1}(a_m) \neq \0$, which means $\Pi_\phi(\fS_\text{in}) = a_m$. Consequently, $\phi^{-1}(\Pi_\phi(\fS_\text{in})) = \phi^{-1}(a_m) = \fS_\text{in}$. Hence 
$$\bar \phi(w_0) \,\in\, \phi^{-1}(\Pi_\phi(\fS_\text{in})) \,=\, \fS_\text{in} \,=\, \bar \psi(w_0).$$ 

For the successor step, fix $i < n$ and suppose (the inductive hypothesis) that $\bar \phi(w_i) \in \bar \psi(w_i)$; we wish to show $\bar \phi(w_{i+1}) \in \bar \psi(w_{i+1})$. 
Because $\bar \psi$ is an epimorphism and $w_i \toW w_{i+1}$, $\bar \psi(w_i) \toC \bar \psi(w_{i+1})$. 
Because $\bar \psi(w_i),\bar \psi(w_{i+1}) \in \C_0$ (rather than $\A$), this means $(\bar \psi(w_i),\bar \psi(w_{i+1})) \in \E_0$, which is to say that for some (any) $m \in D$ there is some $k < k_m$ such that $\bar \psi(w_i) = \fS^{\bar a_m}_k$ and $\bar \psi(w_{i+1}) = \fS^{\bar a_m}_{k+1}$. 
But then
$$\phi \circ \bar \phi(w_i) \,=\, \psi \circ \bar \psi(w_i) \,=\, \psi(\fS^{\bar a_m}_k) \,=\, a_{m+k} \text{, and}$$
$$\phi \circ \bar \phi(w_{i+1}) \,=\, \psi \circ \bar \psi(w_{i+1}) \,=\, \psi(\fS^{\bar a_m}_{k+1}) \,=\, a_{m+k+1}.$$
In particular, $\bar \phi(w_{i+1}) \in \phi^{-1}(a_{m+k+1})$. Furthermore, $\bar \phi(w_i) \toB \bar \phi(w_{i+1})$ (because $w_i \toW w_{i+1}$ and $\bar \phi$ is an epimorphism), and $\bar \phi(w_i) \in \bar \psi(w_i) = \fS^{\bar a_m}_k$ (by the inductive hypothesis). 
But by Lemma~\ref{lem:Recursion}, 
$$\fS^{\bar a_m}_{k+1} \,=\, \set{b \in \phi^{-1}(a_{m+k+1})}{\exists x \in \fS^{\bar a_m}_k \text{ such that } x \toB y},$$
so this means that $\bar \phi(w_{i+1}) \in \fS^{\bar a_m}_{k+1} = \bar \psi(w_{i+1})$. 
This completes the induction. 

This shows that $\bar \phi(w_i) \in \bar \psi(w_i)$ for all $i \leq n$. In particular, $\bar \phi(w_n) \in \bar \psi(w_n)$. But $\bar \psi(w_n) = \0$. Contradiction! 
\end{proof}
%%%%%%%%%%

This completes the first stage of our proof of Theorem~\ref{thm:Heart&Soul}. 
The second stage is similar in its broad strokes: either some condition holds that (in a relatively easy way) implies alternative $(1)$ of the theorem, or else we must construct a digraph $\< \C,\toC \>$ witnessing alternative $(2)$. 

%%%%%%%%%%

\begin{proof}[Proof of Theorem~\ref{thm:Heart&Soul}] 
Let $\< \A,\toA \>$ and $\< \B,\toB \>$ be strongly connected digraphs, let $\phi$ be an epimorphism from $\< \B,\toB \>$ onto $\< \A,\toA \>$, and  
let $\bar a = \seq{a_n}{n \in \w}$ be a diligent walk through $\< \A,\toA \>$. 

By Theorem~\ref{thm:DigraphPathLifting}, if there is no infinite walk in $\< \B,\toB \>$ that almost projects onto $\bar a$, then 
alternative $(2)$ from the statement of Theorem~\ref{thm:DigraphPathLifting} holds. 
But alternative $(2)$ from the statement of Theorem~\ref{thm:DigraphPathLifting} implies alternative $(2)$ from the statement of Theorem~\ref{thm:Heart&Soul}. 
Therefore, if there is no infinite walk in $\< \B,\toB \>$ that almost projects onto $\bar a$, then alternative $(2)$ holds and we are done. 
Thus, for the remainder of the proof, we may (and do) assume that there is an infinite walk in $\< \B,\toB \>$ that almost projects onto $\seq{a_n}{n \in \w}$. 

\vspace{2mm}

For each $\ell,m \in \w$ with $\ell < m$, define a relation $\mathcal R_{\ell,m} \sub \B \times \B$ as follows:
\begin{align*}
(v,w) \in \mathcal R_{\ell,m} \qquad \Leftrightarrow \qquad &\text{there is a walk $\< b_\ell,b_{\ell+1},\dots,b_m \>$ in $\< \B,\toB \>$} \\
&\text{from $v$ to $w$ that projects onto $\< a_\ell,a_{\ell+1},\dots,a_m \>$.}
\end{align*}
This relation expresses the ``walkability'' between the members of $\B$, using correctly projecting walks beginning at time $\ell$ and ending at time $m$. 

Because $\B$ is finite, the mapping $\chi: [\w]^2 \to \P(\B \times \B)$ defined by
$$\chi(\ell,m) \,=\, \mathcal R_{\min\{\ell,m\},\max\{\ell,m\}}$$
is a finite coloring of $[\w]^2$. 
By the infinite version of Ramsey's Theorem, there is some particular $\mathcal R \sub \B \times \B$ and an infinite $D \sub \w$ such that if $\ell,m \in D$ and $\ell < m$, then $\mathcal R_{\ell,m} = \mathcal R$.
We may (and do) assume $D$ does not contain two consecutive integers.  
Henceforth, let $m_k$ denote the $k^\mathrm{th}$ member of $D$. Before moving on, let us make some observations concerning this set $D$ and this relation $\mathcal R$, both of which play an important part in the remainder of the proof.

\vspace{2mm}

\noindent \textbf{Observation:} $\mathcal R$ is not the empty relation.  

\vspace{2mm}

\noindent \emph{Proof of Observation:} By the assumption at the beginning of the proof, there is an infinite walk $\seq{b^0_i}{i \in \w}$ in $\< \B,\toB \>$ that almost projects onto $\seq{a_i}{i \in \w}$. From our definition of $\mathcal R_{\ell,m}$, it follows that $(b^0_\ell,b^0_m) \in \mathcal R_{\ell,m}$ for all sufficiently large $\ell,m \in \w$ with $\ell < m$. In particular, because $D$ is infinite this is true for some $\ell,m \in D$, and consequently $\mathcal R \neq \0$.
\hfill$\dashv$

\vspace{2mm}

In what follows, let $\fB$ denote the domain of this relation $\mathcal R$, i.e., 
$\fB = \set{v \in \V}{(v,v') \in \mathcal R \text{ for some }v'}$.

\vspace{2mm}

\noindent \textbf{Observation:} $\mathcal R$ is a transitive relation.

\vspace{2mm}

\noindent \emph{Proof of Observation:} If $k,\ell,m \in D$ with $k < \ell < m$, then 
$$\qquad\qquad\qquad\qquad\, \mathcal R \,=\, \mathcal R_{k,m} \,=\, \mathcal R_{\ell,m} \circ \mathcal R_{k,\ell} \,=\, \mathcal R \circ \mathcal R. \qquad\qquad\qquad\qquad \dashv$$

\vspace{2mm}

\noindent \textbf{Observation:} There is some $v \in \B$ such that $(v,v) \in \mathcal R$. 

\vspace{2mm}

\noindent \emph{Proof of Observation:}  
By assumption, there is an infinite walk $\seq{b^0_i}{i \in \w}$ in $\< \B,\toB \>$ that almost projects onto $\seq{a_i}{i \in \w}$. 
By the pigeonhole principle, there is some $v \in \B$ such that $b_m = v$ for infinitely many $m \in D$. 
As in the proof of our first observation above, $(b_\ell,b_m) \in \mathcal R$ for sufficiently large $\ell < m$ with $\ell,m \in D$. Therefore $(v,v) \in \mathcal R$.
\hfill$\dashv$

\vspace{2mm}

\noindent \textbf{Observation:} If $\ell,m \in D$, then $a_\ell = a_m$.  

\vspace{2mm}

\noindent \emph{Proof of Observation:} Fix some $v \in \B$ such that $(v,v) \in \mathcal R$, and let $\ell,m \in D$ with $\ell < m$. Because $(v,v) \in \mathcal R = \mathcal R_{\ell,m}$, the definition of $\mathcal R_{\ell,m}$ implies there is a walk $\< b_\ell,\dots,b_m \>$ in $\< \B,\toB \>$ from $v = b_\ell$ to $v = b_m$ that projects onto $\< a_\ell,\dots,a_m \>$. But (by the definition of ``projects") this means $a_\ell = \phi(b_\ell) = \phi(v)$ and $a_m = \phi(b_m) = \phi(v)$.
\hfill$\dashv$

\vspace{2mm}

Let us write $a_D$ to denote the unique member of $\set{a_m}{m \in D}$.

\vspace{2mm}

Next, we associate to each ``time" $i \in \w \setminus \{0,1,\dots,m_0\}$ two relations $\P^i$ and $\F^i$ (for \emph{Past} and \emph{Future}) that generalize the walkability relation $\mathcal R$. 

If $i \in D$, let $\P^i = \F^i = \mathcal R$. 
If $i \notin D$ and $i > m_0 = \min D$, then define:
\begin{align*}
(v,b) \in \P^i \quad \Leftrightarrow \quad &\text{$v \in \B$ and for some $m \in D$ with $m < i$ there is} \\
&\text{a walk $\< b_{m},b_{m+1},\dots,b_i \>$ in $\< \B,\toB \>$ from $v = b_m$} \\
&\text{to $b = b_i$ that projects onto $\< a_{m},a_{m+1},\dots,a_i \>$.} \\
\vphantom{f^{f^{f^{f^f}}}} (b,v) \in \F^i \quad \Leftrightarrow \quad &\text{$v \in \B$ and for some $m \in D$ with $m > i$ there is} \\
&\text{a walk $\< b_{i},b_{i+1},\dots,b_{m} \>$ in $\< \B,\toB \>$ from $b=b_i$} \\
&\text{to $v = b_m$ that projects onto $\< a_i,a_{i+1},\dots,a_{m} \>$.}
\end{align*}

\noindent This defines $\P^i$ and $\F^i$ for $i \geq m_0$. It is not necessary to define $\P^i$ and $\F^i$ for $i < m_0$, or if desired, one may define them there arbitrarily.

\vspace{2mm}

\noindent \textbf{Observation:} $\F^i \supseteq \F^i \circ \mathcal R$ for any $i \geq m_0$.

\vspace{2mm}%If you need them equal later, redefine \F^i with ``all sufficiently large" m \in D rather than as it is. It turns out that "sufficiently large" just means adding one or two, so you can do a similar thing for \P^i if need be.

\noindent \emph{Proof of Observation:} If $i \in D$ then this follows from our observation that $\mathcal R = \mathcal R \circ \mathcal R$, so fix $i \notin D$. Suppose $(b,v) \in \F^i$ and $(v,v') \in \mathcal R$. The former means that, for some $m \in D$ with $m > i$, there is a walk $\< b_i,\dots,b_m \>$ from $b = b_i$ to $v = b_m$ that projects onto $\< a_i,\dots,a_m \>$; the latter means that, for some $m' \in D$ with $m' > m$, there is a walk $\< b_m,\dots,b_{m'} \>$ from $v = b_m$ to $v' = b_{m'}$ that projects onto $\< a_m,\dots,a_{m'} \>$. Concatenating these two walks gives us a walk $\< b_i,\dots,b_{m'} \>$ from $b = b_i$ to $v' = b_{m'}$ that projects onto $\< a_i,\dots,a_{m'} \>$. But this means $(b,v') \in \F^i$. %Hence $\F^i \supseteq \F^i \circ \mathcal R$.
\hfill$\dashv$

\vspace{2mm}

\noindent \textbf{Observation:} If $i \geq m_2$, then $\P^i \supseteq \mathcal R \circ \P^i$.

\vspace{2mm}

\noindent \emph{Proof of Observation:} If $i \in D$ then this follows from our observation that $\mathcal R = \mathcal R \circ \mathcal R$, so fix $i \notin D$ with $i > m_2$. Suppose $(v,b) \in \P^i$ and $(v',v) \in \mathcal R$. 

Because $(v,b) \in \P^i$, for some $m \in D$ with $m < i$ there is a walk $\< b_m,\dots,b_i \>$ from $v = b_m$ to $b = b_i$ that projects onto $\< a_m,\dots,a_i \>$. We claim that, without loss of generality, we may take $m > m_0$. To see this, suppose instead that $m = m_0$. Recalling that $i > m_2$, the truncated walk $\< b_{m_0},\dots,b_{m_2} \>$ witnesses $(b_{m_0},b_{m_2}) \in \mathcal R_{m_0,m_2} = \mathcal R$. 
But $\mathcal R = \mathcal R_{m_1,m_2}$ as well, so there is a walk $\< b'_{m_1},\dots,b'_{m_2} \>$ from $b'_{m_1} = b_{m_0}$ to $b'_{m_2} = b_{m_2}$ that projects onto $\< a_{m_1},\dots,a_{m_2} \>$. 
Consequently, $\< b'_{m_1},\dots,b'_{m_2} = b_{m_2},b_{m_2+1},\dots,b_i \>$ is a walk from $b_{m_0} = b'_{m_1}$ to $v = b_i$ that projects onto $\< a_{m_1},\dots,a_i \>$. 

Thus for some $m \in D \setminus \{m_0\}$ with $m < i$ there is a walk $\< b_m,\dots,b_i \>$ from $v = b_m$ to $b = b_i$ that projects onto $\< a_m,\dots,a_i \>$. 
Furthermore, because $(v',v) \in \mathcal R$ and $m > m_0$, there is a walk $\< b_{m_0},\dots,b_{m} \>$ from $v' = b_{m_0}$ to $v = b_{m}$ that projects onto $\< a_{m_0},\dots,a_{m} \>$. 
Concatenating these two walks gives us a walk $\< b_{m_0},\dots,b_{i} \>$ from $v' = b_{m_0}$ to $b = b_{i}$ that projects onto $\< a_{m_0},\dots,a_{i} \>$. Therefore $(v',b) \in \P^i$. 
\hfill$\dashv$

\vspace{2mm}

%Note that we did not really need to use $m_0$ in the last paragraph of this proof: any $m_k$ with $i > m_{k+1}$ would have sufficed.

For notational convenience in what follows, if $v \in \B$ then define
\begin{align*}
\P^i_v &\,=\, \set{b \in \B}{ (v,b) \in \P^i } \qquad \text{and} \qquad
\F^i_v &\,=\, \set{b \in \B}{ (b,v) \in \F^i }.
\end{align*}
Thus we may write $b \in \P^i_v$ rather than $(v,b) \in \P^i$ (this can be read as ``$b$ has $v$ in its past at time $i$''); and we may write $b \in \F^i_v$ rather than $(b,v) \in \F^i$ (which can be read as ``$b$ has $v$ in its future at time $i$''). 
By the previous observation, if $b \in \P^i_v$ and $(v',v) \in \mathcal R$, then $b \in \P^i_{v'}$ (provided $i \geq m_2$) and if $b \in \F^i_v$ and $(v,v') \in \mathcal R$, then $b \in \F^i_{v'}$

%Unsurprisingly, some of our observations about $\mathcal R$ generalize to these $\P^i$ and $\F^i$. In particular, we shall need the following.

\vspace{2mm}

\noindent \textbf{Observation:} If $i \geq m_0$, then $\P^i$ and $\F^i$ are nonempty. 

\vspace{2mm}

\noindent \emph{Proof of Observation:} We have already showed $\mathcal R \neq \0$, and we defined $\F^i = \P^i = \mathcal R$ for $i \in D$. 
If $i > m_0$ and $i \notin D$, then fix some $m > i$ with $m \in D$. 
Because $\mathcal R_{m_0,m} = \mathcal R \neq \0$, there is a walk $\< b_{m_0},\dots,b_m \>$ in $\<\B,\toB\>$ that projects onto $\< a_{m_0},\dots,a_m \>$. 
Then $(b_{m_0},b_i) \in \P^i$ and $(b_i,b_m) \in \F^i$.
\hfill$\dashv$

\vspace{2mm}

\noindent \textbf{Observation:} If $i,i' \in \w$ and $\P^i = \P^{i'} \neq \0$, then $a_i = a_{i'}$. 
Similarly, if $i,i' \in \w$ and $\F^i = \F^{i'} \neq \0$, then $a_i = a_{i'}$.

\vspace{2mm}

\noindent \emph{Proof of Observation:} 
Suppose $\0 \neq \P^i = \P^{i'}$. 
If $(v,b) \in \P^i = \P^{i'}$, then the definition of $\P^i$ implies $\phi(b) = a_i$, and likewise the definition of $\F^{i'}$ implies $\phi(b) = a_{i'}$, so $a_i = a_{i'}$. 
The argument is essentially the same for $\F^i,\F^{i'}$.  
\hfill$\dashv$

\vspace{2mm}

If $\P \sub \B \times \B$ and $\P = \P^i \neq \0$ for some $i \in \w$, define $\Pi(\P) = a_i$. Similarly, if $\F \sub \B \times \B$ and $\F = \F^i \neq \0$ for some $i \in \w$, define $\Pi(\F) = a_i$. Note that this mapping $\Pi$ is well-defined by the previous observation.

\vspace{2mm}

With these preliminaries in place, we now move on to the main line of argument. Consider the following statement:

\vspace{1mm}

\begin{itemize}
\item[$(\ddagger)$] There is some $v \in \fB$ with $(v,v) \in \mathcal R$, such that for every edge $e$ in $\toB$, 
there are infinitely many $\ell \in D$ such that for some $m \in D$ with $m > \ell$ 
there is a walk $\< b_{\ell},b_{\ell+1},b_{\ell+2},\dots,b_{m} \>$ in $\< \B,\toB \>$ from $b_{\ell} = v$ to $b_{m} = v$ such that this walk projects onto $\< a_{\ell},a_{\ell+1},a_{\ell+2},\dots,a_{m} \>$, and this walk traverses $e$, in the sense that  
$(b_i,b_{i+1}) = e$ for some $i$ with $\ell \leq i < m$.
\end{itemize}

\vspace{1mm}

\noindent As we shall see in what follows, this statement $(\ddagger)$ is a sharp dividing line between alternatives $(1)$ and $(2)$ in the statement of the theorem: if $(\ddagger)$ is true then $(1)$ holds, and if not then $(2)$ holds.  

%Let us note in passing that, using some of the above observations about $\mathcal R$, it is possible to give several different formulations of $(\ddagger)$ that are all provably equivalent. 

\vspace{2mm}

To see that $(\ddagger) \Rightarrow (1)$, we construct by recursion a diligent walk through $\< \B,\toB \>$ that almost projects onto $\seq{a_n}{n \in \w}$. 
At step $n$ of the recursion, we add not just one point to our walk, but a long string of points with indices between some $m_{k_{n-1}}$ and $m_{k_n}$. (These indices $k_0,k_1,\dots$ are chosen along the way, as part of the construction.) 
Fix an enumeration of the edges of $\<\B,\toB\>$, say $\toB \ = \{e_0,e_1,\dots,e_{M-1}\}$, and fix some $v \in \fB$ witnessing $(\ddagger)$. 

For the base step of the recursion, let $k_0=0$ and let $\< b_0,b_1,b_2,\dots,b_{m_0} \>$ be a walk in $\< \B,\toB \>$ ending at $v = b_{m_0} = b_{m_{k_0}}$. Some such walk exists because $\< \B,\toB \>$ is strongly connected.

At step $n > 0$ of the recursion, we begin with a walk $\< b_0,b_1,\dots,b_{m_{k_{n-1}} }\>$ in $\< \B,\toB \>$ such that $b_{m_{k_{n-1}}} = v$. 
Let $j$ denote the unique member of $\{0,1,2,\dots,M-1\}$ such that $n \equiv j$ (mod $M$). 
Because $b_{m_{k_{n-1}}} = v$, and because $v$ witnesses $(\ddagger)$, 
there is some $k_n > k_{n-1}$ such that there is a walk 
$\< b_{m_{k_{n-1}}},b_{m_{k_{n-1}}+1},b_{m_{k_{n-1}}+2},\dots,b_{m_{k_n}} \>$ 
in $\< \B,\toB \>$ from $v = b_{m_{k_{n-1}}}$ to $v = b_{m_{k_n}}$ that projects onto 
$\< a_{m_{k_{n-1}}},a_{m_{k_{n-1}}+1},a_{m_{k_{n-1}}+2},\dots,a_{m_{k_n}} \>$, and such that 
$e_j = (b_i,b_{i+1})$ for some $i$ with $m_{k_{n-1}} \leq i < m_{k_n}$. 
We take this walk to define $b_i$ when $m_{k_{n-1}} < i \leq m_{k_n}$.%, completing the $n^\mathrm{th}$ step of the recursion.

We claim that the sequence $\seq{b_i}{i \in \w}$ produced by this recursion is a diligent walk through $\< \B,\toB \>$ that almost projects onto $\seq{a_i}{i \in \w}$. 

It is clear that $\seq{b_i}{i \in \w}$ is an infinite walk in $\< \B,\toB \>$. 
Furthermore, this walk is diligent because for any given $e_j$ in $\toB$, if $n \equiv j$ (mod $M$) then $(b_i,b_{i+1}) = e_j$ for some $i$ with $m_{k_{n-1}} \leq i < m_{k_n}$, so that every member of $\toB$ is equal to $(b_i,b_{i+1})$ for infinitely many $i \in \w$. 
Finally, our construction guarantees that $\phi(b_i) = a_i$ for all $i \geq m_0$, which means $\seq{b_i}{i \in \w}$ almost projects onto $\seq{a_i}{i \in \w}$. 
This completes the proof that $(\ddagger)$ implies alternative $(1)$ from the statement of the theorem. 

%%%%%%%
\vspace{2mm}
%%%%%%%

It remains to show that if $(\ddagger)$ fails then alternative $(2)$ from the statement of the theorem holds. As in the proof of Theorem~\ref{thm:DigraphPathLifting}, this involves the construction of a digraph $\< \C,\toC \>$. 
%For each $i \in \w$ with $i \geq m_0$, let $k(i)$ denote the (unique) member of $\w$ with the property that $m_{k(i)} \leq i < m_{k(i)+1}$. 
For each $k > 0$, define
\begin{align*}
\LL^k &\,=\, \set{ ( \P^i,\F^i )}{m_k < i < m_{k+1}}, \\
\E^k &\,=\, \set{\big( (\P^i,\F^i),(\P^{i+1},\F^{i+1}) \big)}{m_k < i < m_{k+1}-1}, \\
c^k_\text{in}  &\,=\, \big( \P^{m_k+1},\F^{m_k+1} \big), \\
c^k_\text{out}  &\,=\, \big( \P^{m_{k+1}-1},\F^{m_{k+1}-1} \big). 
\end{align*}

%In other words, $\< \LL^k,\E^k \>$ is a digraph recording the ``past'' and ``future" relations between time $m_k$ and time $m_{k+1}$. 

A $4$-tuple $(\LL,\E,c_\text{in},c_\text{out})$ is \emph{used infinitely often} if there are infinitely many $k > 0$ with $(\LL,\E,c_\text{in},c_\text{out}) = (\LL^k,\E^k,c^k_\text{in},c^k_\text{out}).$ 
Because there are only finitely many possibilities for the $4$-tuple $(\LL^k,\E^k,c^k_\text{in},c^k_\text{out})$, there is some $K > 0$ such that $(\LL^k,\E^k,c^k_\text{in},c^k_\text{out})$ is used infinitely often for every $k \geq K$. 

Let $N$ denote the number of $4$-tuples that are used infinitely often, and fix indices 
$k_0,k_1,\dots,k_N$ such that 
$$(\LL^{k_0},\E^{k_0},c^{k_0}_\text{in}c^{k_0}_\text{out}),\dots,(\LL^{k_N},\E^{k_N},c^{k_N}_\text{in},c^{k_N}_\text{out})$$
is a complete list of all the $4$-tuples used infinitely often. 
For each $j \leq N$ let
\begin{align*}
\C_j &\,=\, \{j\} \times \LL^{k_j}, \\
\E_j &\,=\, \set{\big( (j,t),(j,t') \big)}{(t,t') \in \E^{k_j}}, \\
c^j_\text{in} &\,=\, \big( j,c^{k_j}_\text{in} \big), \\
c^j_\text{out} &\,=\, \big( j,c^{k_j}_\text{out} \big).
\end{align*}
In other words, these are the $4$-tuples described above, only we have modified the vertex sets of the $\LL^{k_j}$ so that all the different $\C_j$'s are disjoint. 
Furthermore let us assume, without loss of generality, that $\mathrm{dom}(\mathcal R) = \fB \notin \bigcup_{j \leq N}\C_j$. (This is true automatically unless the vertices of $\B$ are labelled in an unreasonable way.) 

We are now ready to define the digraph $\< \C,\toC \>$ that will witness alternative $(2)$ from the statement of the theorem.  First, let 
$$\C \,=\, \{\fB\} \cup \C_0 \cup \C_1 \cup \dots \cup \C_N.$$
Then define the edge relation on $\C$ as follows:
\begin{align*}
x \toC y \quad \ \Longleftrightarrow \quad \ & x,y \in \C_j \text{ and } (x,y) \in \E_j \text{ for some $j \leq N$, or} \\
& x=\fB \text{ and } y = c^j_\text{in} \text{ for some $j \leq N$, or} \\
& x = c^j_{\mathrm{out}} \text{ and } y = \fB \text{ for some $j \leq N$}. 
\end{align*}

\begin{center}
\begin{tikzpicture}[scale=1.7]

\node at (0,0) {\small $\bullet$};
\node at (-.05,-.2) {\scalebox{.85}{$\fB$}};

\draw (-2.1,0) circle (7mm);
\draw[->] (-.12,-.025)--(-1.9,-.4);
\node at (-2,-.42) {\tiny $\bullet$};
\draw[<-] (-.12,.025)--(-1.9,.4);
\node at (-2,.42) {\tiny $\bullet$};
%\node at (-2.2,-.2) {\scalebox{.6}{$(0,S^0_\text{in})$}};
%\node at (-2.2,.2) {\scalebox{.6}{$(0,S^0_\text{out})$}};
\node at (-2.1,0) {\scalebox{.75}{$\< \C_0,\E_0 \>$}};

\begin{scope}[rotate=-72]
\draw (-2.1,0) circle (7mm);
\draw[->] (-.12,-.025)--(-1.9,-.4);
\node at (-2,-.42) {\tiny $\bullet$};
\draw[<-] (-.12,.025)--(-1.9,.4);
\node at (-2,.42) {\tiny $\bullet$};
\node at (-2.15,0) {\scalebox{.75}{$\< \C_1,\E_1 \>$}};
\end{scope}

\begin{scope}[rotate=-144]
\draw (-2.1,0) circle (7mm);
\draw[->] (-.12,-.025)--(-1.9,-.4);
\node at (-2,-.42) {\tiny $\bullet$};
\draw[<-] (-.12,.025)--(-1.9,.4);
\node at (-2,.42) {\tiny $\bullet$};
\node at (-2.15,0) {\scalebox{.75}{$\< \C_2,\E_2 \>$}};
\end{scope}

\begin{scope}[rotate=-216]
\draw (-2.1,0) circle (7mm);
\draw[->] (-.12,-.025)--(-1.9,-.4);
\node at (-2,-.42) {\tiny $\bullet$};
\draw[<-] (-.12,.025)--(-1.9,.4);
\node at (-2,.42) {\tiny $\bullet$};
\node at (-2.15,0) {\scalebox{.75}{$\< \C_3,\E_3 \>$}};
\end{scope}

\node at (0,-1.8) {$\dots$};

\end{tikzpicture}
\end{center}
\vspace{2mm}

Define $\psi: \C \to \A$ by setting $\psi(\fB) = a_D$, and $\psi(j,(\P,\F)) = \Pi(\P) = \Pi(\F)$. (To see that this last equality is true, observe that if $(j,(\P,\F)) \in \C_j$, then $(\P,\F) = (\P^i,\F^i)$ for some $i$, and this implies $\Pi(\P_i) = \Pi(\F_i) = a_i$.)

%\vspace{2mm}

This completes the definition of $\< \C,\toC \>$ and $\psi$. 
%The careful reader may notice that the $\vec P^i$ and $\vec F^i$ vectors do not play much of a role in the definition of $\< \C,\toC \>$. Their role becomes 
%
To finish the proof, we need to show that $\< \C,\toC \>$ is strongly connected, that $\psi$ is an epimorphism from $\< \C,\toC \>$ to $\< \A,\toA \>$, that there is a diligent walk through $\< \C,\toC \>$ projecting onto $\seq{a_n}{n \in \w}$ via $\psi$, and that $\psi$ is incompatible with $\phi$. 

\vspace{2mm}

Like in the proof of Theorem~\ref{thm:DigraphPathLifting}, we can verify the first three of these four things all at once via Lemma~\ref{lem:EpisForFree}. To do this, we must define a diligent walk $\seq{c_i}{i \in \w}$ through $\< \C,\toC \>$ that almost projects onto $\seq{a_i}{i \in \w}$ via $\psi$. 
Recall that $K$ was chosen sufficiently large so that if $k \geq K$ then $(\LL^k,\E^k,c^k_\text{in},c^k_\text{out})$ is used infinitely often. %Fix $k^0 \geq K$. 

Given $i \geq m_0$, let $k(i)$ be the (unique) index with $m_{k(i)} \leq i < m_{k(i)+1}$. Furthermore, if $i \geq m_K$ then define $j(i)$ to be the (unique) index $\leq\! N$ with 
$$\big( \LL^{k(i)},\E^{k(i)},c^{k(i)}_\text{in},c^{k(i)}_\text{out} \big) \,=\, \big( \LL^{k_{j(i)}},\E^{k_{j(i)}},c^{k_{j(i)}}_\text{in},c^{k_{j(i)}}_\text{out} \big).$$
For all $i \geq m_K$, define 
$$c_i \,=\, \begin{cases}
\fB &\text{ if } i \in D, \\
\big( j(i),(\P^i,\F^i) \big) &\text{ otherwise.}
\end{cases}$$

Let us check that this sequence is a walk through $\< \C,\toC \>$. 
For the first case, suppose $i,i+1 \notin D$. 
This implies $k(i) = k(i+1)$ and $j(i) = j(i+1)$, which means $c_i,c_{i+1} \in \C_{j(i)}$ and furthermore, 
$\big( (\P^i,\F^i),(\P^{i+1},\F^{i+1}) \big) \in \E^{k(i)}$ (by the definition of $\E^{k(i)}$), 
which gives 
$(c_i,c_{i+1}) \in \E_{j(i)}$ (by the definition of $\E_{j(i)}$). 
Hence $c_i \toC c_{i+1}$ in this case. 
For the second case, suppose $i \in D$ but $i+1 \notin D$. 
Then $i = m_{k(i)} = m_{k(i+1)}$ and $i+1 = m_{k(i)}+1$, which means 
$c_{i+1} \,=\, \big( j(i+1), (\P^{m_{k(i)}+1},\F^{m_{k(i)}+1} ) \big) \,=\, c^{j(i+1)}_\text{in},$
while $c_i = \fB$. 
Thus $c_i \toC c_{i+1}$ in this case also. 
For the third case, suppose $i \notin D$ but $i+1 \in D$. 
Then $i+1 = m_{k(i+1)}$ and $k(i) = k(i+1)-1$, which implies (like in the previous case) that   
$c_i = c^{j(i)}_\text{out},$
while $c_{i+1} = \fB$. 
Thus $c_i \toC c_{i+1}$ in this case also. 
These three cases are exhaustive, by our assumption near the beginning of the proof that $D$ does not contain two consecutive integers. 
Hence $\seq{c_i}{i \geq m_K}$ is a walk. 

To see that this walk is diligent, fix some edge $(c,c')$ in $\toC$. There is some $j \leq N$ such that either $(1)$ $(c,c') \in \E_j$, or $(2)$ $c = \fB$ and $c' = c^j_\text{in}$, or $(3)$ $c = c^j_\text{out}$ and $c' = \fB$. 
Because 
$\big( \LL^{k_j},\E^{k_j},c^{k_j}_\text{in}c^{k_j}_\text{out} \big)$ 
is used infinitely often, there are arbitrarily large values of $k \in \w$ such that $j(m_k) = j$. 
But then there is some $i$ with $k(i) = k$ and $m_{k(i)} \leq i \leq m_{k(i)+1}$ such that $(c_i,c_{i+1}) = (c,c')$. Thus $(c,c') = (c_i,c_{i+1})$ for arbitrarily large values of $i$, and this shows that $\seq{c_i}{i \geq m_K}$ is a diligent walk. 

Furthermore, if $i \in D$, then $a_i = a_D = \psi(c_i) = a_D$; and if $i \notin D$, then 
$\Pi_\phi(c_i) = \psi \big( j(i),(\P^i,\F^i) \big) = \Pi(\P^i) = \Pi(\F^i) = a_i.$ 
Therefore this walk projects onto $\seq{a_i}{i \geq m_K}$. 
Because $\seq{a_i}{i \in \w}$ is a diligent walk through $\< \A,\toA \>$, the tail sequence $\seq{a_i}{i \geq m_K}$ is also a diligent walk through $\< \A,\toA \>$. 
We have thus showed that $\seq{c_i}{i \geq m_K}$ is a diligent walk through $\< \C,\toC \>$ that projects onto a diligent walk $\seq{a_i}{i \geq m_K}$ through $\< \A,\toA \>$. 
By Lemma~\ref{lem:EpisForFree}, $\< \C,\toC \>$ is strongly connected, and $\psi$ is an epimorphism from $\< \C,\toC \>$ onto $\< \A,\toA \>$. 

Now that we know $\< \C,\toC \>$ is strongly connected, we can use this fact to find a length-$m_K$ walk $\< c^0,c^1,\dots,c^{m_K}\>$ in $\< \C,\toC \>$ such that $c^{m_K} = c_{m_K} = \fB$. 
And then 
$\< c^0,c^1,\dots,c^{m_K}=c_{m_K},c_{m_K+1},c_{m_K+2},\dots \>$ 
is a diligent walk through $\< \C,\toC \>$ that almost projects onto $\seq{a_i}{i \in \w}$.

%%%%%%%
\vspace{2mm}
%%%%%%%

To finish the proof, it remains only to show that if $(\ddagger)$ fails then $\psi$ is incompatible with $\phi$. 
We do this by proving the contrapositive: if $\psi$ and $\phi$ are compatible, then $(\ddagger)$ holds.

To this end, let us suppose $\psi$ is compatible with $\phi$. That is, suppose there is a strongly connected digraph $\< \W,\toW \>$ and epimorphisms $\bar \phi$ and $\bar \psi$ from $\< \W,\toW \>$ to $\< \B,\toB \>$ and $\< \C,\toC \>$, respectively, such that $\phi \circ \bar \phi = \psi \circ \bar \psi$. 

%\vspace{1mm}
\begin{center}
\begin{tikzpicture}[scale=1]

\node at (0,0) {$\< \A,\toA \>$};
\node at (-2,2) {$\< \B,\toB \>$};
\node at (2,2) {$\< \C,\toC \>$};
\node at (0,4) {$\< \W,\toW \>$};

\draw[<-] (.5,.5)--(1.5,1.5);
\draw[<-] (-.5,.5)--(-1.5,1.5);
\draw[<-,dashed] (1.5,2.5)--(.5,3.5);
\draw[<-,dashed] (-1.5,2.5)--(-.5,3.5);

\node at (1.25,.85) {\footnotesize $\psi$};
\node at (-1.25,.85) {\footnotesize $\phi$};
\node at (1.25,3.15) {\footnotesize $\bar \psi$};
\node at (-1.25,3.15) {\footnotesize $\bar \phi$};

\end{tikzpicture}
\end{center}

For each $c \in \C$ and $v \in \B$, define 
$$\P_v(c) \,=\, \begin{cases}
\set{b \in \B}{ (v,b) \in \mathcal R} &\text{ if }c = \fB, \\
\,\P^i_v &\text{ if $c = (j(i),(\P^i,\F^i))$ for some $i \notin D$.}
\end{cases}$$ 
$$\F_v(c) \,=\, \begin{cases}
\set{b \in \B}{ (b,v) \in \mathcal R} &\text{ if }c = \fB, \\
\,\F^i_v &\text{ if $c = (j(i),(\P^i,\F^i))$ for some $i \notin D$.}
\end{cases}$$ 

\vspace{2mm}

\noindent \textbf{Claim:} Suppose $\<w_0,w_1,\dots,w_\ell\>$ is a walk in $\< \W,\toW \>$ with $\bar \psi(w_\ell) = \fB$, and let $v = \bar \phi(w_\ell)$. Then $\bar \phi(w_i) \in \F_{v}(\bar \psi(w_i))$ for all $i < \ell$.

\vspace{2mm}

\noindent \emph{Proof of Claim:} 
This claim is proved by a backwards inductive argument: we take $i = \ell-1$ as the base case, and prove for the inductive step that if $0 < i < \ell$ and $\bar \phi(w_i) \in \F_v(\bar \psi(w_i))$, then $\bar \phi(w_{i-1}) \in \F_v(\bar \psi(w_{i-1}))$.% \sub \F_v'(\bar \psi(w_{i-1}))$. 

For the base case $i = \ell-1$, recall that we are assuming $\bar \psi(w_\ell) = \fB$. Thus 
$$\phi \circ \bar \phi(w_\ell) \,=\, \psi \circ \bar \psi(w_\ell) \,=\, \psi(\fB) \,=\, a_D,$$
and therefore $\bar \phi(w_\ell) \in \phi^{-1}(a_D)$. % = \F'_{v}(\bar \psi(w_\ell))$.
Because $w_{\ell-1} \toW w_\ell$ and $\bar \psi$ is an epimorphism, $\bar \psi(w_{\ell-1}) \toC \bar \psi(w_\ell)$. 
Because $\bar \psi(w_\ell) = \fB$, this means $\bar \psi(w_{\ell-1}) = (j,c^j_{\text{out}})$ for some $j \leq N$, which means $\bar \psi(w_{\ell-1}) = \big( j,(\P^{m_k-1},\F^{m_k-1}) \big)$ for some $k \geq K$. 
Furthermore, $\bar \phi(w_{\ell-1}) \toB \bar \phi(w_\ell)$, because $w_{\ell-1} \toW w_\ell$ and $\bar \phi$ is an epimorphism. 
Thus $\< \bar \phi(w_{\ell-1}),\bar \phi(w_\ell) \>$ is a walk in $\< \B,\toB \>$ from $\bar \phi(w_{\ell-1})$ to $\bar \phi(w_\ell) = v$, and this walk projects onto $\< a_{m_k-1},a_{m_k}\>$ because 
$$\phi \circ \bar \phi(w_\ell) \,=\, \psi \circ \bar \psi(w_\ell) \,=\, \psi(\fB) \,=\, a_D \,=\, a_{m_k} \text{ and} \,$$
$$\phi \circ \bar \phi(w_{\ell-1}) \,=\, \psi \circ \bar \psi(w_{\ell-1}) \,=\, \Pi(\F^{m_k-1}) \,=\, a_{m_k-1}.$$
Consequently, by the definition of $\F$, $\bar \phi(w_{\ell-1}) \in \F_v^{m_k-1} = \F_v(\bar \psi(w_{\ell-1}))$.

For the inductive step, fix $i$ with $0 < i < \ell$ and suppose, as an inductive hypothesis, that $\bar \phi(w_i) \in \F_{v}(\bar \psi(w_i))$. 
We consider two cases. 
For the first case, suppose $\bar \psi(w_i) = \fB$. In this case, the inductive hypothesis states that $\bar \phi(w_i) \in \F_v(\fB)$. This means that $(\bar \phi(w_i),v) \in \mathcal R$, which implies in particular that $\bar \phi(w_i) \in \fB \sub \phi^{-1}(a_D)$. 
Once this is known, we may reason exactly as in the previous paragraph (replacing each instance of $\ell$ with an $i$) and deduce that $\bar \phi(w_{i-1}) \in \F_{\bar \phi(w_i)}(\bar \psi(w_{i-1}))$. 
Because $(\bar \phi(w_i),v) \in \mathcal R$, this implies (via the two observations following the definition of $\P$ and $\F$ and the comments following those observations) that $\bar \phi(w_{i-1}) \in \F_v(\bar \psi(w_{i-1}))$.

For the second case, suppose $\bar \psi(w_i) \neq \fB$. 
Because $w_{i-1} \toW w_i$ and $\bar \psi$ is an epimorphism, 
$\bar \psi(w_{i-1}) \toC \bar \psi(w_i)$. 
By our definition of $\<\C,\toC\>$, this means there is some $j \leq N$ and some $p$ with $m_{k_j} < p < m_{k_j+1}$ such that 
$\F(\bar \psi(w_i)) = \F^p$ and $\F(\bar \psi(w_{i-1})) = \F^{p-1}$. 
(The fact that $\bar \psi(w_i) \neq \fB$ rules out the possibility that $p \in D$.) 
By definition, $\bar \phi(w_i) \in \F_v(\bar \psi(w_i)) = \F_v^p$ means there is a walk $\< b_p,b_{p+1},\dots,\dots,b_{m'} \>$ in $\< \B,\toB \>$, for some $m' \in D$ with $m' \geq m_{k_{j+1}}$, that goes from $b_p = \bar \phi(w_i)$ to $v = b_{m'}$ and projects onto $\< a_p,a_{p+1},\dots,a_{m'} \>$. 
Furthermore, 
$$\phi \circ \bar \phi(w_i) \,=\, \psi \circ \bar \psi(w_i) \,=\, \Pi(\F^p) \,=\, a_p \text{, and} \,$$
$$\phi \circ \bar \phi(w_{i-1}) \,=\, \psi \circ \bar \psi(w_{i-1}) \,=\, \Pi(\F^{p-1}) \,=\, a_{p-1}.$$
In particular, $\phi \big( \bar \phi(w_{i-1}) \big) = a_{p-1}$, and of course $\bar \phi(w_{i-1}) \toB \bar \phi(w_i)$ (because $w_{i-1} \toW w_i$ and $\bar \phi$ is an epimorphism). But then this shows that 
$$\< \bar \phi(w_{i-1}),\bar \phi(w_{i})=b_p,b_{p+1},\dots,b_{m'} \>$$
is a walk in $\< \B,\toB \>$ from $\bar \phi(w_{i-1})$ to $b_{m'} = v$, and this walk projects onto $\< a_{p-1},a_p,a_{p+1},\dots,a_{m'} \>$. 
Hence $\bar \phi(w_{i-1}) \in \F_v^{p-1} =  \F_v(\bar \psi(w_{i-1}))$. 
\hfill $\dashv$

\vspace{2mm}

\noindent \textbf{Claim:} If $w \in \W$ and $\bar \psi(w) = \fB$, then $\bar \phi(w) \in \fB$ and $(\bar \phi(w),\bar \phi(w)) \in \mathcal R$.

\vspace{2mm}

\noindent \emph{Proof of Claim:}
Fix $w \in \W$ with $\bar \psi(w) = \fB$. Because $\< \W,\toW \>$ is strongly connected, there is a walk $\< w_0,w_1,\dots,w_\ell \>$ with length $\ell > 0$ from $w = w_0$ to $w = w_\ell$. 
Letting $v = \bar \phi(w_\ell)$, the previous claim gives $\bar \phi(w_0) \in \F_v(\bar \psi(w_0))$. 
By the definition of $\F_v(\bar \psi(w_0))$, this means $(\bar \phi(w_0),v) \in \mathcal R$. 
But $\bar \phi(w_0) = \bar \phi(w)$ and $v = \bar \phi(w_\ell) = \bar \phi(w)$, so $(\bar \phi(w),\bar \phi(w)) \in \mathcal R$.
\hfill $\dashv$

\vspace{2mm}

\noindent \textbf{Claim:} Suppose $\<w_0,w_1,\dots,w_\ell\>$ is a walk in $\< \W,\toW \>$ with $\bar \psi(w_0) = \fB$, and let $v = \bar \phi(w_0)$. Then $\bar \phi(w_i) \in \P_{v}(\bar \psi(w_i))$ for all $i > 0$.

\vspace{2mm}

\noindent \emph{Proof of Claim:} 
This claim is the mirror image of the one above concerning $\F$. It is proved in almost the same way, but by a regular induction (starting at $1$, not $0$) rather than a backward induction. 

For the base case $i = 1$, recall that we are assuming $\bar \psi(w_0) = \fB$. Thus 
$$\phi \circ \bar \phi(w_0) \,=\, \psi \circ \bar \psi(w_0) \,=\, \psi(\fB) \,=\, a_D,$$
and therefore $\bar \phi(w_0) \in \phi^{-1}(a_D)$. 
Because $w_0 \toW w_1$ and $\bar \psi$ is an epimorphism, $\bar \psi(w_0) \toC \bar \psi(w_1)$. 
Because $\bar \psi(w_0) = \fB$, this means $\bar \psi(w_1) = (j,c^j_{\text{in}})$ for some $j \leq N$, which means $\bar \psi(w_1) = (j,(\P^{m_k+1},\F^{m_k+1})$ for some $k \geq K$. 
Furthermore, $\bar \phi(w_0) \toB \bar \phi(w_1)$, because $w_0 \toW w_1$ and $\bar \phi$ is an epimorphism. 
Thus $\< \bar \phi(w_0),\bar \phi(w_1) \>$ is a walk in $\< \B,\toB \>$ from $v=\bar \phi(w_0)$ to $\bar \phi(w_1)$, and this walk projects onto $\< a_{m_k},a_{m_k+1}\>$ because 
$$\phi \circ \bar \phi(w_0) \,=\, \psi \circ \bar \psi(w_0) \,=\, \psi(\fB) \,=\, a_D \,=\, a_{m_k} \text{ and} \,$$
$$\phi \circ \bar \phi(w_{1}) \,=\, \psi \circ \bar \psi(w_{1}) \,=\, \Pi(\P^{m_k+1}) \,=\, a_{m_k+1}.$$
Consequently, by the definition of $\P$, $\bar \phi(w_{1}) \in \P_v^{m_k+1} = \P_v(\bar \psi(w_{1}))$.

For the inductive step, fix $i$ with $0 < i < \ell$ and suppose, as an inductive hypothesis, that $\bar \phi(w_i) \in \P_{v}(\bar \psi(w_i))$. 
We consider two cases. 
For the first case, suppose $\bar \psi(w_i) = \fB$. In this case, the inductive hypothesis states that $\bar \phi(w_i) \in \P_v(\fB)$. This means $(v,\bar \phi(w_i)) \in \mathcal R$, which implies $\bar \phi(w_i) \in \fB \sub \phi^{-1}(a_D)$. 
Once this is known, we may reason as in the previous paragraph (replacing $0$ with $i$ and $1$ with $i+1$) and deduce $\bar \phi(w_{i+1}) \in \P_{\bar \phi(w_i)}(\bar \psi(w_{i+1}))$. 
Because $(v,\bar \phi(w_i)) \in \mathcal R$, this implies $\bar \phi(w_{i-1}) \in \P_v(\bar \psi(w_{i-1}))$.

For the second case, suppose $\bar \psi(w_i) \neq \fB$. 
Because $w_i \toW w_{i+1}$ and $\bar \psi$ is an epimorphism, 
$\bar \psi(w_i) \toC \bar \psi(w_{i+1})$. 
By our definition of $\<\C,\toC\>$, this means there is some $j \leq N$ and some $p$ with $m_{k_j} < p < m_{k_j+1}$ such that 
$\P(\bar \psi(w_i)) = \P^p$ and $\P(\bar \psi(w_{i+1})) = \P^{p+1}$. 
(The fact that $\bar \psi(w_i) \neq \fB$ rules out the possibility that $p \in D$.) 
By definition, $\bar \phi(w_i) \in \P_v(\bar \psi(w_i)) = \P_v^p$ means there is a walk $\< b_{m'},b_{m'+1},\dots,b_{p} \>$ in $\< \B,\toB \>$, for some $m' \in D$ with $m' \leq m_{k_j}$, that goes from $v = b_{m'}$ to  $\bar \phi(w_i) = b_p$ and projects onto $\< a_{m'},a_{m'+1},\dots,a_{p} \>$. 
Furthermore, 
$$\phi \circ \bar \phi(w_i) \,=\, \psi \circ \bar \psi(w_i) \,=\, \Pi(\P^p) \,=\, a_p \text{, and} \,$$
$$\phi \circ \bar \phi(w_{i+1}) \,=\, \psi \circ \bar \psi(w_{i+1}) \,=\, \Pi(\P^{p+1}) \,=\, a_{p+1}.$$
In particular, $\phi \big( \bar \phi(w_{i+1}) \big) = a_{p+1}$, and of course $\bar \phi(w_i) \toB \bar \phi(w_{i+1})$ (because $w_{i} \toW w_{i+1}$ and $\bar \phi$ is an epimorphism). But then this shows that 
$$\< b_{m'},b_{m'+1},\dots,b_p = \bar \phi(w_i),\bar \phi(w_{i+1}) \>$$
is a walk in $\< \B,\toB \>$ from $b_{m'} = v$ to $\bar \phi(w_{i+1})$, and this walk projects onto $\< a_{m'},\dots,a_{p},a_{p+1} \>$. 
Hence $\bar \phi(w_{i+1}) \in \P_v^{p+1} =  \P_v(\bar \psi(w_{i+1}))$. 
\hfill $\dashv$

\vspace{2mm}

With these claims established, we are ready to finish the proof. 
Because $\bar \psi$ is an epimorphism, there is some $w \in \W$ such that $\bar \psi(w) = \fB$. 
We claim that $v = \bar \phi(w)$ witnesses $(\ddagger)$. 

From the second of the three claims above, $v \in \fB$ and $(v,v) \in \mathcal R$. 

Fix an edge $e = (b,b')$ in $\toB$. 
Because $\bar \phi$ is an epimorphism, there exist some $w,w' \in \W$ such that $w \toW w'$ and $(b,b') = (\bar \phi(w),\bar \phi(w'))$. 
Using the fact that $\< \W,\toW \>$ is strongly connected, there is a walk $\< w_0,w_1,\dots,w_\ell \>$ in $\< \W,\toW \>$ with $w_0 = w_\ell = v$ such that $(w,w') = (w_i,w_{i+1})$ for some $i < \ell$. 

Using the first and third of our three claims above, $\bar \phi(w_i) \in \P_v(\bar \psi(w_i))$ and $\bar \phi(w_{i+1}) \in \F_v(\bar \psi(w_{i+1}))$.

There is some $j \leq N$ such that either $\bar \psi(w_i) \in \C_j$ or $\bar \psi(w_{i+1}) \in \C_j$ (or both). 
Fix any $k \geq K$ with $j(m_k) = j$. 
Because $\bar \psi(w_i) \toC \bar \psi(w_{i+1})$, our definition of $\E_j$ implies there is some $p$ with $m_k \leq p < m_{k+1}$ such that 
$\bar \psi(w_i) = \big( j,(\P^p,\F^p) \big)$ and 
$\bar \psi(w_{i+1}) = \big( j,(\P^{p+1},\F^{p+1}) \big)$. 
Hence
$$b \,=\, \bar \phi(w_i) \,\in\, \P^p_v \qquad \text{ and } \qquad b' \,=\, \bar \phi(w_{i+1}) \,\in\, \F^{p+1}_v.$$
By definition, this means 
\begin{itemize}
\item[$\circ$] for some $\ell \in D$ with $\ell \leq m_k$, there is a walk $\< b_\ell,b_{\ell+1},\dots,b_p \>$ in $\< \B,\toB \>$ from $b_\ell = v$ to $b_p = \bar \phi(w_i) = b$ that projects onto $\< a_{m_k},a_{m_k+1},\dots,a_p \>$; and 
\item[$\circ$] for some $m \in D$ with $m \geq m_{k+1}$, there is a walk $\< b_{p+1},b_{p+2},\dots,b_m \>$ in $\< \B,\toB \>$ from $b_{p+1} = \bar \phi(w_{i+1}) = b'$ to $b_m = v$ that projects onto $\< a_{p+1},a_{p+2},\dots,a_m \>$.  
\end{itemize}
But then, concatenating these two walks, 
$$\< b_\ell,b_{\ell+1},\dots,b_p,b_{p+1},b_{p+2},\dots,b_m \>$$ 
is a walk in $\< \B,\toB \>$ from $v$ to $v$ that projects onto $\< a_\ell,\dots,a_m \>$ and has $(b_p,b_{p+1}) = (b,b') = e$ for some $p$ with $\ell \leq p < m$. 

This nearly proves that $v$ witnesses $(\ddagger)$ for $e$, except that we must show it is possible for the parameter $\ell$ to be arbitrarily large in such walks. 

In the above argument, we took $k \geq K$ subject only to the condition that $j(m_k) = j$. There are infinitely many such $k$, because $(\LL^{k_j},\E^{k_j},c^{k_j}_\text{in}c^{k_j}_\text{out})$ is used infinitely often. Consequently, the value of $k$ in the preceding argument may be taken arbitrarily large. To finish the proof, we show that the value of $\ell$ in the preceding argument may be taken to be either $m_k$ or $m_{k-1}$. 

If $\ell = m_k$ we are done, so suppose $\ell < m_k$. Then 
$\< b_\ell,b_{\ell+1},\dots,b_{m_k} \>$ (the truncation of the walk constructed above) is a walk in $\< \B,\toB \>$ that projects onto 
$\< a_\ell,a_{\ell+1},\dots,a_{m_k} \>$. 
Consequently, $(b_\ell,b_{m_k}) = (v,b_{m_k}) \in \mathcal R$. 
But by our choice of $\mathcal R$, and our work near the beginning of the proof, it follows that there is a walk 
$\< b^{m_{k-1}},b^{m_{k-1}+1},\dots,b^{m_k} \>$ in $\< \B,\toB \>$ from $v = b^{m_{k-1}}$ to $b_{m_k} = b^{m_k}$ that projects onto 
$\< a_{m_{k-1}},\dots,a_{m_k} \>$. Thus
$$\< b^{m_{k-1}},b^{m_{k-1}+1},\dots,b^{m_k} = b_{m_k},\dots,b_p,b_{p+1},b_{p+2},\dots,b_m \>$$ 
is a walk in $\< \B,\toB \>$ from $v$ to $v$ that projects onto $\< a_{m_{k-1}},\dots,a_m \>$ and has $(b_p,b_{p+1}) = (b,b') = e$ for some $p$ with $m_{k-1} \leq p < m$. 
Therefore $v$ witnesses $(\ddagger)$ for $e$. Because $e$ was arbitrary, $v$ witnesses $(\ddagger)$, as claimed.
\end{proof}

%%%%%%%%%%%%
\section{The Lifting Lemma, part 6: completing the proof}\label{sec:Exhale}
%%%%%%%%%%%%

In this section we piece together the results from the preceding few sections to obtain a proof of the Lifting Lemma.

\begin{lemma}
For every partition $\A$ of $\pwmff$, there exists a partition $\set{X_a}{a \in \A}$ of $\w$ into infinite sets such that $[X_a] = a$ for all $a \in \A$, and for every $n \in \w$, if $n \in X_a$ and $n+1 \in X_{a'}$ then $a \tosi a'$.
\end{lemma}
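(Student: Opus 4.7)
The key observation is that $\<\A, \tosi\>$ is strongly connected: since $\A$ is a partition of $\pwmff$ and $\<\pwmff, \s\>$ is incompressible by Theorem~\ref{thm:It'sIncompressible}, strong connectedness follows from Theorem~\ref{thm:Incompressible=StronglyConnected}. The plan is then to invoke Lemma~\ref{lem:Walks2} with the input digraph being $\<\A, \tosi\>$ itself and the input isomorphism being the identity map $\mathrm{id}_\A$. The lemma supplies a diligent walk $\seq{v_n}{n \in \w}$ through $\<\A, \tosi\>$ whose associated natural isomorphism is $\mathrm{id}_\A$; unpacking the definitions, this says precisely that $\big[\set{n \in \w}{v_n = a}\big] = a$ for every $a \in \A$.

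Given this walk, I would simply define $X_a = \set{n \in \w}{v_n = a}$ for each $a \in \A$. Because $v_n$ lies in exactly one member of $\A$ for each $n$, the family $\set{X_a}{a \in \A}$ is a genuine partition of $\w$, and the identity $[X_a] = a$ was just noted. Each $X_a$ is automatically infinite, since $[X_a] = a \neq \mathbf{0}$. The required adjacency condition is then built into the fact that $\seq{v_n}{n \in \w}$ is an infinite walk through $\<\A, \tosi\>$: if $n \in X_a$ and $n+1 \in X_{a'}$, then $v_n = a$ and $v_{n+1} = a'$, so $v_n \to v_{n+1}$ in $\<\A, \tosi\>$ gives exactly $a \tosi a'$.

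I do not foresee any substantive obstacle here: the lemma is essentially the specialization of Lemma~\ref{lem:Walks2} to the case in which the ambient digraph is literally $\<\A, \tosi\>$, so the only real work is to notice that the diligent walk produced there can simultaneously play the role of the partition representatives. That identification is immediate from the definitions of $X_a$ and of the natural isomorphism, so the proof should be quite short.
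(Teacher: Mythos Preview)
Your proposal is correct and matches the paper's own proof almost exactly: the paper also specializes Lemma~\ref{lem:Walks2} to $\<\V,\to\> = \<\A,\tosi\>$ with $\phi = \mathrm{id}_\A$, obtains the associated walk, and sets $X_a = \set{n}{a_n = a}$. Your write-up is slightly more detailed (you make explicit the strong connectedness and the verification of the adjacency condition), but the approach is the same.
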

\begin{proof}
Consider the special case of Lemma~\ref{lem:Walks2} with $\< \V,\to \> = \< \A,\tosi \>$ and with the isomorphism $\phi$ taken to be the identity map. The lemma asserts there is an infinite walk $\seq{a_n}{n \in \w}$ through $\< \A,\tosi \>$ such that the natural map associated to this walk is the identity: that is, $a = [\set{n \in \w}{a_n = a}]$ for all $a \in \A$. 
Thus taking $X_a = \set{n \in \w}{a_n = a}$ works.
\end{proof}

\begin{theorem}\label{thm:Polarized}
$\< \pwmff,\s \>$ is polarized.
\end{theorem}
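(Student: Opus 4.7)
The plan is to reduce the polarization of $\langle \pwmff,\s\rangle$ directly to Theorem~\ref{thm:Heart&Soul}, using the preceding lemma to convert partitions of $\pwmff$ into diligent walks. Fix a partition $\A$ of $\pwmff$ and a virtual refinement $(\phi,\langle \V,\to\rangle)$ of $\langle \A,\tosi\rangle$. Apply the lemma immediately preceding the theorem to obtain a partition $\set{X_a}{a\in\A}$ of $\w$ into infinite sets with $[X_a]=a$ and with the property that whenever $n\in X_a$ and $n+1\in X_{a'}$, we have $a\tosi a'$. Defining $a_n$ to be the unique $a\in\A$ with $n\in X_a$ produces an infinite walk $\seq{a_n}{n\in\w}$ through $\langle\A,\tosi\rangle$ that I claim is diligent: every vertex $a$ is visited infinitely often because $X_a$ is infinite, and every edge $a\tosi a'$ appears infinitely often because $\s([X_a])\wedge [X_{a'}]\neq\mathbf{0}$ translates to $n\in X_a$ and $n+1\in X_{a'}$ for infinitely many~$n$.

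Now feed $\phi$ and this diligent walk into Theorem~\ref{thm:Heart&Soul}. Consider first its alternative $(1)$: a diligent walk $\seq{v_n}{n\in\w}$ through $\langle \V,\to\rangle$ almost projects onto $\seq{a_n}{n\in\w}$, i.e., $\phi(v_n)=a_n$ for all sufficiently large~$n$. Let $\A'=\set{A'_v}{v\in\V}$ be the partition of $\pwmff$ associated to this walk as in Definition~\ref{def:Walks}. By Lemma~\ref{lem:Walks1}, the map $\psi\colon v\mapsto A'_v$ is an isomorphism from $\langle\V,\to\rangle$ onto $\langle\A',\tosi\rangle$. Because $\phi(v_n)=a_n$ eventually, $\set{n}{v_n=v}\sub^* X_{\phi(v)}$, so $A'_v\le \phi(v)$ in $\pwmff$; this shows $\A'$ refines $\A$, and if $\pi\colon\A'\to\A$ denotes the natural projection, then $\pi(\psi(v))=\pi(A'_v)=\phi(v)$, so $\pi\circ\psi=\phi$. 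Thus $\A'$ realizes $(\phi,\langle\V,\to\rangle)$, and clause~(1) of Definition~\ref{def:Polarized} holds.

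In alternative $(2)$ of Theorem~\ref{thm:Heart&Soul} one gets a strongly connected digraph $\langle\C,\toC\rangle$, an epimorphism $\psi\colon\langle\C,\toC\rangle\to\langle\A,\tosi\rangle$ incompatible with $\phi$, and a diligent walk $\seq{c_n}{n\in\w}$ through $\langle\C,\toC\rangle$ that almost projects onto $\seq{a_n}{n\in\w}$ via~$\psi$. Form the partition $\A'$ of $\pwmff$ associated to this walk, and let $\psi'\colon\langle\C,\toC\rangle\to\langle\A',\tosi\rangle$ be the natural isomorphism. As in the previous paragraph, $\A'$ refines $\A$ and $\pi\circ\psi'=\psi$, where $\pi\colon\A'\to\A$ is the natural projection. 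I will check that $(\pi,\langle\A',\tosi\rangle)$ and $(\phi,\langle\V,\to\rangle)$ are incompatible: if some strongly connected $\langle\W,\toW\rangle$ with epimorphisms $\bar\pi\colon\W\to\A'$ and $\bar\phi\colon\W\to\V$ satisfied $\pi\circ\bar\pi=\phi\circ\bar\phi$, then $\bar\psi:=(\psi')^{-1}\circ\bar\pi$ would be an epimorphism $\W\to\C$ (composition of an epimorphism with an iso), and $\psi\circ\bar\psi=(\psi\circ(\psi')^{-1})\circ\bar\pi=\pi\circ\bar\pi=\phi\circ\bar\phi$, contradicting the incompatibility of $\psi$ and $\phi$. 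So clause~(2) of Definition~\ref{def:Polarized} holds.

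The main obstacle is of course packaged inside Theorem~\ref{thm:Heart&Soul}, which has already been proved; everything else is a routine dictionary between diligent walks and partitions (Lemmas~\ref{lem:Walks1} and~\ref{lem:Walks2}) together with the transfer of incompatibility through the natural isomorphism. The only mildly subtle point is to verify that ``almost projects'' rather than ``projects'' still yields $\A'\le\A$; this is immediate because the defining $=^*$-equivalence classes absorb the finite discrepancy between $\phi\circ v_\cdot$ and $a_\cdot$.
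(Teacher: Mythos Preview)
Your proof is correct and follows essentially the same route as the paper's: convert the partition $\A$ into a diligent walk via the preceding lemma, invoke Theorem~\ref{thm:Heart&Soul}, and in each alternative form the associated partition of $\pwmff$ via Lemma~\ref{lem:Walks1}. You are in fact slightly more careful than the paper in two places: you explicitly verify that the walk $\seq{a_n}{n\in\w}$ is diligent (the paper just calls it an ``infinite walk'' before feeding it to Theorem~\ref{thm:Heart&Soul}, which requires diligence), and in case~(2) you spell out the incompatibility transfer through the isomorphism $\psi'$, whereas the paper leaves that implicit.
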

\begin{proof}
Let $\A$ be a partition of $\pwmff$, and let $(\phi,\<\V,\to\>)$ be a virtual refinement of $\< \A,\tosi \>$. 

Applying the previous lemma, there is a partition $\set{X_a}{a \in \A}$ of $\w$ into infinite sets such that $[X_a] = a$ for all $a \in \A$, and for every $n \in \w$, if $n \in X_a$ and $n+1 \in X_{a'}$ then $a \tosi a'$. As in the proof of the previous lemma, there is an infinite walk through $\< \A,\tosi \>$ associated to this partition: for every $n \in \w$, let $X(n)$ denote the unique member of $\set{X_a}{a \in \A}$ containing $n$, and then let $a_n = [X(n)]$.

Applying Theorem~\ref{thm:Heart&Soul} with $\< \B,\toB \> = \< \V,\to \>$, either
\begin{enumerate}
\item there is a diligent walk through $\< \V,\to \>$ that almost projects (via $\phi$) onto $\seq{a_n}{n \in \w}$, or
\item there is a strongly connected digraph $\< \C,\toC \>$ and an epimorphism $\psi$ from $\< \C,\toC \>$ onto $\< \A,\tosi \>$ such that $\psi$ is incompatible with $\phi$, and there is a diligent walk $\seq{c_n}{n \in \w}$ through $\< \C,\toC \>$ that almost projects (via $\psi$) onto $\seq{a_n}{n \in \w}$.
\end{enumerate}

First, suppose case $(1)$ holds. Fix a diligent walk $\seq{v_n}{n \in \w}$ through $\< \V,\to \>$ such that $\phi(v_n) = a_n$ for all but finitely many $n \in \w$. As in Definition~\ref{def:Walks}, let $A_v = [\set{n \in \w}{v_n = v}]$ for each $v \in \V$. Let
$\A' = \set{A_v}{v \in \V}.$ 
We claim that this partition of $\pwmff$ realizes $(\phi,\< \V,\to \>)$. 
First, note that because $\phi(v_n) = a_n$ for all but finitely many $n \in \w$, 
\begin{align*}
\textstyle \bigvee \set{A_v}{\phi(v) = a} &\textstyle \,=\, [\bigcup_{v \in \phi^{-1}(a)} \set{n \in \w}{v_n = v}] \\
&\,=\, [\set{n \in \w}{\phi(v_n) = a}] \,=\, [\set{n \in \w}{a_n = a}] \,=\, a
\end{align*}
for every $a \in \A$. 
Thus $\A'$ refines $\A$. For each $v \in \V$, let $\phi'(v) = A_v$. By Lemma~\ref{lem:Walks1}, $\phi'$ is an isomorphism from $\< \V,\to \>$ to $\< \A',\tosi \>$ (the natural isomorphism). If $v \in \V$ and $\phi(v) = a$, then 
\begin{align*}
\phi'(v) \,=\, [\set{n \in \w}{v_n = v}] &\,\leq\, [\set{n \in \w}{\phi(v_n) = a}] \\
&\,=\, [\set{n \in \w}{a_n = a}] \,=\, a.
\end{align*}
Recall that the natural map $\pi: \A' \to \A$ sends $\phi(v)$ to $a$ whenever $\phi(v) \leq a$. The foregoing computation shows $\pi \circ \phi'(v) = \phi(v)$. As $v \in \V$ was arbitrary, $\pi \circ \phi' = \phi$. Thus $\< \A',\tosi \>$ realizes the virtual extension $(\phi,\< \V,\to \>)$, and so alternative $(1)$ of Definition~\ref{def:Polarized} holds. 

Next, suppose case $(2)$ holds in the Theorem~\ref{thm:Heart&Soul} dichotomy. 
Fix a strongly connected digraph $\< \C,\toC \>$ and an epimorphism $\psi$ from $\< \C,\toC \>$ onto $\< \A,\tosi \>$ such that $\psi$ is incompatible with $\phi$, and there is a diligent walk $\seq{c_n}{n \in \w}$ through $\< \C,\toC \>$ that almost projects (via $\psi$) onto $\seq{a_n}{n \in \w}$. 
Like in the previous paragraph, let $A_c = [\set{n \in \w}{c_n = c}]$ for each $c \in \C$, and let
$\A' = \set{A_c}{c \in \C}.$ 
Because $\psi(c_n) = a_n$ for all sufficiently large $n \in \w$, 
\begin{align*}
\textstyle \bigvee \set{A_c}{\psi(c) = a} &\textstyle \,=\, [\bigcup_{c \in \psi^{-1}(a)} \set{n \in \w}{c_n = c}] \\
&\,=\, [\set{n \in \w}{\psi(c_n) = a}] \,=\, [\set{n \in \w}{a_n = a}] \,=\, a
\end{align*}
for every $a \in \A$. 
Thus $\A'$ refines $\A$. For each $c \in \C$, let $\psi'(c) = A_c$, and note that $\psi'$ is an isomorphism from $\< \C,\toC \>$ to $\< \A',\tosi \>$ by Lemma~\ref{lem:Walks1}. If $c \in \C$ and $\phi(c) = a$, then (as above) 
\begin{align*}
\psi'(c) \,=\, [\set{n \in \w}{c_n = c}] &\,\leq\, [\set{n \in \w}{\phi(c_n) = a}] \\
&\,=\, [\set{n \in \w}{a_n = a}] \,=\, a.
\end{align*}
Thus the natural map $\pi: \A' \to \A$ sends $\psi'(c)$ to $a$ for every $c \in \C$. 
Hence $\< \A',\tosi \>$ realizes the virtual extension $(\psi,\< \C,\toC \>)$. 
Because $\psi$ is incompatible with $\phi$, this implies that alternative $(2)$ of Definition~\ref{def:Polarized} holds.

Thus, for the virtual refinement $(\phi,\< \V,\to \>)$, the two alternatives in Theorem~\ref{thm:Heart&Soul} correspond exactly to the two alternatives in Definition~\ref{def:Polarized}, and consequently, $(\phi,\< \V,\to \>)$ is polarized. 
Because $(\phi,\< \V,\to \>)$ was an arbitrary virtual partition of an arbitrary partition of $\< \pwmff,\s \>$, this shows that $\< \pwmff,\s \>$ is polarized. 
\end{proof}

\begin{theorem}\label{thm:elements}
If $\< \AA,\s \>$ is an elementary substructure of $\< \pwmff,\s \>$, then $\< \AA,\s \>$ is polarized.
\end{theorem}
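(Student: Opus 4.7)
The plan is to prove this by transferring polarization from $\< \pwmff,\s \>$ (which is polarized by Theorem~\ref{thm:Polarized}) to $\< \AA,\s \>$ via elementarity. The essential point will be that although the definition of polarization involves some second-order-looking combinatorics, for each fixed virtual refinement the whole question reduces to a finite combinatorial problem together with a first-order statement about the existence of an appropriate partition.

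First I would fix an arbitrary partition $\A = \{a_1,\dots,a_n\}$ of $\AA$ and an arbitrary virtual refinement $(\phi,\<\V,\to\>)$ of $\<\A,\toa\>$. The assertion ``$\A$ is a partition of unity'' is a first-order sentence in the parameters $a_1,\dots,a_n$, so by elementarity $\A$ is also a partition of $\pwmff$. Likewise, the assertion $a_i \toa a_j$ (i.e., $\s(a_i) \wedge a_j \neq \zero$) is first-order in these parameters, so the digraph $\< \A,\toa \>$ computed inside $\<\AA,\s\>$ agrees with the one computed inside $\<\pwmff,\s\>$. Hence $(\phi,\<\V,\to\>)$ is also a virtual refinement of $\<\A,\toa\>$ when we view $\A$ as a partition of $\pwmff$. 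Applying Theorem~\ref{thm:Polarized}, exactly one of the two alternatives from Definition~\ref{def:Polarized} is witnessed by some partition $\A''$ of $\pwmff$.

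Now I would argue that in either alternative, the existence of a suitable witness is expressible by a first-order sentence in the parameters $a_1,\dots,a_n$. Enumerate $\V = \{v_1,\dots,v_m\}$. The statement ``there exists a partition $\{x_1,\dots,x_m\}$ refining $\A$ that realizes $(\phi,\<\V,\to\>)$'' unfolds as
\[
\exists x_1 \dots \exists x_m \bigl[\textstyle\bigvee_i x_i = \1,\ x_i \wedge x_j = \zero \text{ for } i\neq j,\ x_i \neq \zero,\ x_i \leq a_{\phi(v_i)}, \text{ and } \s(x_i) \wedge x_j \neq \zero \Leftrightarrow v_i \to v_j \bigr],
\]
which is first-order in $a_1,\dots,a_n$. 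Similarly, for a fixed finite digraph $\<\U,\toU\>$ with a fixed epimorphism $\tau : \U \to \A$, the statement ``there exists a partition $\A''$ refining $\A$, isomorphic to $\<\U,\toU\>$ as a digraph over $\<\A,\toa\>$ via the epimorphism $\tau$'' is first-order in $a_1,\dots,a_n$ by the same template.

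In the first alternative, the above formula holds in $\<\pwmff,\s\>$, so by elementarity it holds in $\<\AA,\s\>$, producing the desired realizing partition inside $\AA$. In the second alternative, let $\<\U,\toU\>$ denote the digraph $\<\A'',\toa\>$ and $\tau : \U \to \A$ its natural projection; the witnessing existence formula for $\<\U,\toU,\tau\>$ then holds in $\<\pwmff,\s\>$, and by elementarity we obtain a partition $\A' \subseteq \AA$ refining $\A$ such that $\<\A',\toa\>$ together with its natural map $\pi : \A' \to \A$ is digraph-isomorphic (as an object over $\<\A,\toa\>$) to $\<\U,\toU,\tau\>$. The key observation — the point I expect to be worth emphasizing, though not a real obstacle — is that Definition~\ref{def:Incompatible} refers only to the finite digraphs $\<\V,\to\>$, $\<\A,\toa\>$, and the candidate digraph, plus the existence of a strongly connected bridge $\<\W,\toW\>$ with epimorphisms to both; all of this is purely combinatorial in the finite data and entirely independent of whether the digraph was realized inside $\AA$ or $\pwmff$. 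Thus the incompatibility of $\phi$ with $\tau$, which held for $\A''$ in $\pwmff$, passes unchanged to $\A'$ in $\AA$, and alternative (2) of Definition~\ref{def:Polarized} holds in $\<\AA,\s\>$. Since $\A$ and $(\phi,\<\V,\to\>)$ were arbitrary, $\<\AA,\s\>$ is polarized.
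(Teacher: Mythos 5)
Your proposal is correct and follows essentially the same route as the paper's proof: apply Theorem~\ref{thm:Polarized} in $\< \pwmff,\s \>$, encode the existence of the witnessing partition (together with its digraph structure and its position below $\A$) as a first-order existential formula with parameters from $\A$, and pull the witness down into $\AA$ by elementarity, observing that realization and incompatibility depend only on the finite combinatorial data and hence transfer. The only cosmetic difference is that you write the witnessing formula out explicitly while the paper describes it in words; the content is identical.
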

\begin{proof}
The proof strategy can be summarized in a sentence: This follows from the previous theorem, because polarization descends to elementary substructures. 
However, it is not necessarily obvious that this property descends to elementary substructures, because polarization is not obviously expressible in a first-order sentence. 
So let us check the details.

Let $\< \AA,\s \>$ be an elementary substructure of $\< \pwmff,\s \>$, let $\A$ be a partition of $\AA$, and let $(\phi,\< \V,\to \>)$ be a virtual refinement of $\< \A,\tosi \>$. 
By the previous theorem, $(\phi,\< \V,\to \>)$ is polarized in the larger dynamical system $\< \pwmff,\s \>$. Thus one of two things happens:
\begin{enumerate}
\item there is a partition $\A'$ of $\AA$ that realizes $(\phi,\<\V,\to\>)$, or
\item there is a partition $\A'$ of $\AA$ refining $\A$ such that $(\pi,\< \A',\tosi \>)$ is incompatible with $(\phi,\< \V,\to \>)$, where $\pi$ denotes the natural map from $\A'$ onto $\A$. 
\end{enumerate}

First suppose case $(1)$ holds. 
Fix an enumeration $\A = \{a_1,a_2,\dots,a_m\}$ of $\A$ and an enumeration $\A' = \{a_1',a_2',\dots,a_n'\}$ of $\A'$. 
Like in the proof of Lemma~\ref{lem:IotaElementary}, there is a first-order sentence $\varphi(a_1,\dots,a_m,a_1',\dots,a_n',\s)$ about $\< \pwmff,\s \>$ that records precisely the structure of the digraphs $\< \A,\tosi \>$ and $\< \A',\tosi \>$, and also records which members of $\A'$ are below which members of $\A$ (i.e., it records the action of the natural map $\pi: \A' \to \A$). 
Let $\bar \varphi$ be the formula 
$$\exists x_1 \exists x_2 \dots \exists x_n \varphi(a_1,\dots,a_m,x_1,\dots,x_n,\s).$$
This is a true assertion about $\< \pwmff,\s \>$, witnessed by setting $x_i = a'_i$ for each $i \leq n$. 
By elementarity, $\bar \varphi$ must also be a true assertion about $\< \AA,\s \>$. 
Therefore $\< \AA,\s \> \models \varphi(a_1,\dots,a_m,\bar a_1,\dots,\bar a_n,\s)$ for some $\bar a_1, \bar a_2,\dots, \bar a_n \in \AA$. 
Because of our choice of $\varphi$, this means that $\bar \A = \{\bar a_1, \bar a_2,\dots, \bar a_n\}$ is a partition of $\AA$ refining $\A$ and $(\pi,\< \bar \A,\tosi \>)$ is identical to $(\pi,\< \A',\tosi \>)$, where in each case $\pi$ denotes the natural map. 
Consequently, $(\pi,\< \bar \A,\tosi \>)$ realizes $(\phi,\< \V,\to \>)$, because $(\pi,\< \A',\tosi \>)$ does. 
Thus the first case of Definition~\ref{def:Polarized} holds for $\< \V,\to \>$.

Next suppose case $(2)$ holds. 
As in case $(1)$, fix an enumeration $\A = \{a_1,a_2,\dots,a_m\}$ of $\A$ and an enumeration $\A' = \{a_1',a_2',\dots,a_n'\}$ of $\A'$, and fix a first-order sentence $\varphi(a_1,\dots,a_m,a_1',\dots,a_n',\s)$ about $\< \pwmff,\s \>$ that records precisely the structure of $\< \A,\tosi \>$ and $\< \A',\tosi \>$, and the action of the natural map $\pi: \A' \to \A$. 
Let $\bar \varphi$ be the formula
$$\exists x_1 \exists x_2 \dots \exists x_n \varphi(a_1,\dots,a_m,x_1,\dots,x_n,\s).$$
This is a true assertion about $\< \pwmff,\s \>$, so by elementarity, $\< \AA,\s \> \models \varphi(a_1,\dots,a_m,\bar a_1,\dots,\bar a_n,\s)$ for some $\bar a_1, \bar a_2,\dots, \bar a_n \in \AA$. 
By our choice of $\varphi$, this means that $\bar \A = \{\bar a_1, \bar a_2,\dots, \bar a_n\}$ is a partition of $\AA$ refining $\A$ and $(\pi,\< \bar \A,\tosi \>)$ is identical to $(\pi,\< \A',\tosi \>)$, where in each case $\pi$ denotes the natural map. 
Consequently, $(\pi,\< \bar \A,\tosi \>)$ is incompatible with $(\phi,\< \V,\to \>)$, and the second case of Definition~\ref{def:Polarized} holds for $(\phi,\< \V,\to \>)$.

Thus the two cases listed at the beginning of the proof correspond exactly to the two cases in Definition~\ref{def:Polarized}, and $(\phi,\< \V,\to \>)$ is polarized in $\< \AA,\s \>$. 
Because $(\phi,\< \V,\to \>)$ was an arbitrary virtual partition of an arbitrary partition of $\< \AA,\s \>$, this shows that $\< \AA,\s \>$ is polarized. 
\end{proof}

%At this point the Lifting Lemma is essentially proved: we need only to piece together some of the things we have done. 

\begin{proof}[Proof of Lemma~\ref{lem:main}]
Let $(\<\AA,\a\>,\<\BB,\b\>,\iota,\eta)$ be an instance of the lifting problem for $\< \pwmff,\s \>$, and suppose that the image of $\eta$ is an elementary substructure of $\< \pwmff,\s \>$. 
In other words, $\eta$ is a conjugacy from $\< \AA,\a \>$ onto $\< \eta[A],\s \> \preceq \< \pwmff,\s \>$. 
The previous theorem implies that $\< \AA,\a \>$ is polarized, because this property is preserved by conjugacies. 
Consequently, $(\<\AA,\a\>,\<\BB,\b\>,\iota,\eta)$ has a solution by Theorem~\ref{thm:FinalStroke}.
\end{proof}

\noindent We showed already, in Section 6, that the main theorem of the paper (Theorem~\ref{thm:main}) follows from the Lifting Lemma (Lemma~\ref{lem:main}). Thus this proof of the Lifting Lemma concludes our proof of the main theorem. 

%The proof of Theorem~\ref{thm:elements} raises the question of whether, given a partition $\A$ of $\pwmff$ and a virtual refinement $(\phi,\<\V,\to\>)$ of $\< \A,\tosi \>$, the assertion ``$(\phi,\<\V,\to\>)$ polarized'' is expressible in a first-order sentence in the language of dynamical systems. 

%%%%%%%%%%%%
\section{A few corollaries}\label{sec:Corollaries}
%%%%%%%%%%%%

In this final section we deduce a few further results about $\< \pwmff,\s \>$. 
Some of these results are direct consequences of the main theorem, while some others are obtained from the Lifting Lemma instead. 

Two dynamical systems $\< \AA,\a \>$ and $\< \BB,\b \>$ are \emph{elementarily equivalent} if they satisfy the same sentences of first-order logic in the language of dynamical systems: i.e., if $\varphi(\t)$ is a parameter-free sentence in the language of dynamical systems, then
$\< \AA,\a \> \models \varphi(\a)$ if and only if $\< \BB,\b \> \models \varphi(\b)$.

\begin{theorem}\label{thm:ElementaryEquivalence}
The structures $\<\pwmff,\s\>$ and $\<\pwmff,\s^{-1}\>$ are elementarily equivalent.
\end{theorem}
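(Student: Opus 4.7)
The plan is to deduce elementary equivalence from the Main Theorem via a standard forcing-absoluteness argument, and thereby eliminate the appeal to \ch. Start in an arbitrary model $V$ of $\zfc$, and consider the poset $\PP = \mathrm{Coll}(\w_1,\continuum)$ of countable partial functions from $\w_1$ into $\continuum$. This poset is countably closed and collapses $\continuum$ to $\aleph_1$, so if $G$ is $V$-generic for $\PP$, then $V[G] \models \ch$. The crucial feature of countably closed forcing is that it adds no new countable sequences of ground model sets, and in particular no new subsets of $\w$. Consequently $\P(\w)^V = \P(\w)^{V[G]}$, the ideal of finite sets is the same in both models, and so $\pwmff$ together with its shift automorphism $\s$ (and its inverse) is \emph{exactly the same structure} in $V$ and in $V[G]$ — even though the cardinality of that structure has changed.

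Next, inside $V[G]$, where \ch holds, the Main Theorem (Theorem~\ref{thm:main}) gives a conjugacy between $\<\pwmff,\s\>$ and $\<\pwmff,\s^{-1}\>$; in particular these two structures are elementarily equivalent in $V[G]$. I would then invoke the following absoluteness fact: whether a first-order sentence $\varphi$ holds in a fixed structure $\mathcal M$ is determined by $\mathcal M$ alone (via the Tarski satisfaction predicate) and is absolute between any two transitive models of $\zfc$ that both contain $\mathcal M$. Applying this twice, once with $\mathcal M = \<\pwmff,\s\>$ and once with $\mathcal M = \<\pwmff,\s^{-1}\>$, yields that the truth values of the statements ``$\<\pwmff,\s\>\models\varphi$'' and ``$\<\pwmff,\s^{-1}\>\models\varphi$'' are the same in $V$ as in $V[G]$, for every first-order $\varphi$. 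Hence the elementary equivalence established in $V[G]$ descends to $V$, proving the theorem.

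The point I would want to emphasise — and the only step that is not entirely mechanical — is the asymmetry between conjugacy and elementary equivalence under forcing absoluteness. Conjugacy is \emph{not} absolute downward from $V[G]$ to $V$: by the theorem of van Douwen and Shelah, there are models $V$ in which $\<\pwmff,\s\>$ and $\<\pwmff,\s^{-1}\>$ are not conjugate, yet become conjugate in the countably closed extension, because the conjugating bijection is added by the forcing. Elementary equivalence, by contrast, is a \emph{schema} of first-order assertions about a fixed structure, and each such assertion is absolute between $V$ and $V[G]$ because the Tarski satisfaction predicate is absolute between transitive models. There is no genuine obstacle in the argument; the whole proof is really this absoluteness observation, packaged so that the Main Theorem can be applied inside $V[G]$.
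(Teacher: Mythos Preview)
Your argument is correct and is essentially identical to the paper's first proof: force \ch with a countably closed poset, observe that $\pwmff$ and $\s$ are unchanged, apply the Main Theorem in the extension, and use absoluteness of satisfaction to pull elementary equivalence back to $V$. The paper also supplies a second, forcing-free proof: run the back-and-forth construction from Theorem~\ref{thm:main} for $\w$ steps (which requires no \ch) to obtain a conjugacy between countable elementary substructures of $\<\pwmff,\s\>$ and $\<\pwmff,\s^{-1}\>$, and conclude elementary equivalence from that.
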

\begin{proof}
We provide two proofs of this theorem. The first proof uses the technique of forcing, and is in some sense ``slicker'' because it gives the present theorem as a nearly immediate immediate consequence of Theorem~\ref{thm:main}. The second proof is forcing-free, and more direct. It gives the theorem not as a consequence of Theorem~\ref{thm:main}, but rather as a consequence of the technique used in its proof. 

For the first proof, let $\PP$ denote the poset of countable partial functions $\w_1 \times \w \to \w$, ordered by extension, i.e., the usual poset for forcing \ch with countable conditions. Let $G$ be a $V$-generic filter on $\PP$. 
Because $\PP$ is a countably closed notion of forcing, no new subsets of $\w$ are added by $\PP$, which means that $V \cap \pwmff = V[G] \cap \pwmff$. Furthermore, because $\s$ is definable by the simple formula $\s([A]) = [A+1]$, the action of $\s$ is the same in $V$ and in $V[G]$. Thus the structures $\< \pwmff,\s \>^V$ and $\< \pwmff,\s \>^{V[G]}$ are one in the same. 
Thus (by the ``absoluteness of the satisfaction relation'')  the theory of $\< \pwmff,\s \>$ is the same in $V$ and in $V[G]$. 
The same argument applies to $\< \pwmff,\s^{-1} \>$, so the theory of $\< \pwmff,\s^{-1} \>$ is also the same in $V$ and in $V[G]$. 
Because $V[G] \models \ch$, Theorem~\ref{thm:main} implies $\< \pwmff,\s \>$ and $\< \pwmff,\s^{-1} \>$ are conjugate in $V[G]$. 
In particular, $\< \pwmff,\s \>$ and $\< \pwmff,\s^{-1} \>$ are elementarily equivalent in $V[G]$.
To summarize: the theory of $\< \pwmff,\s \>$ is the same in $V$ as in $V[G]$, the theory of $\< \pwmff,\s^{-1} \>$ is the same in $V$ as in $V[G]$, and in $V[G]$ these two structures have the same theory. Consequently, they have the same theory in $V$. 

For the second proof, let us take a closer look at the proof of Theorem~\ref{thm:main} earlier in the paper. 
In that proof, we construct a coherent sequence 
$\seq{\phi_\a}{\a < \w_1}$ of maps, where each $\phi_\a$ is a conjugacy from 
$\< \AA_\a,\s \>$ to $\< \BB_\a,\s^{-1} \>$, where 
$\AA_\a$ and $\BB_\a$ are subalgebras of $\pwmff$. 
In fact, each $\AA_\a$ had the property that 
$\< \AA_\a,\s \>$ is a countable elementary substructure of $\< \pwmff,\s \>$, and  
each $\BB_\a$ had the property that there is some subalgebra $\BB^0_{\a+1}$ with $\BB_\a \sub \BB^0_{\a+1} \sub \BB_{\a+1}$ such that 
$\< \BB_{\a+1}^0,\s^{-1} \>$ is a countable elementary substructure of $\< \pwmff,\s^{-1} \>$.

Note that the existence of these $\phi_\a$'s and $\AA_\a$'s, $\BB_\a$'s and $\BB^0_{\a+1}$'s does not depend on \ch: the construction of the sequence can be carried out in \zfc. The assumption of \ch was used only to arrange the construction so that $\bigcup_{\a < \w_1} \AA_\a = \bigcup_{\b < \w_1} \BB_\a = \pwmff$. 

Without \ch, we still get a map $\phi_\w: \AA_\w \to \BB_\w$. 
This map is a conjugacy from $\< \AA_\w,\s \>$ to $\< \BB_\w,\s^{-1} \>$. 
Now recall that an increasing union of elementary substructures is also an elementary substructure (by the Tarski-Vaught criterion). In particular, 
$\< \AA_\w,\s \>$ and $\< \BB_\w,\s^{-1} \>$ 
are elementary substructures of 
$\< \pwmff,\s \>$ and $\< \pwmff,\s^{-1} \>$, respectively, because $\AA_\w = \bigcup_{n < \w}\AA_n$ and $\BB_\w = \bigcup_{n < \w}\BB_{n+1}^0$.  
Because $\< \pwmff,\s \>$ and $\< \pwmff,\s^{-1} \>$ have elementary substructures that are conjugate to one another, they are elementarily equivalent.
\end{proof}

In fact, one may strengthen the conclusion of this theorem, without much altering the proof, by considering infinitary logic.

\begin{theorem}
The structures $\<\pwmff,\s\>$ and $\<\pwmff,\s^{-1}\>$ share the same $\mathcal L_{\w_1,\w_1}$-theory in the language of dynamical systems. 
\end{theorem}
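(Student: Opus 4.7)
The plan is to adapt the forcing argument from the first proof of Theorem~\ref{thm:ElementaryEquivalence}, upgrading the absoluteness step from first-order logic to $\mathcal L_{\w_1,\w_1}$. I would let $\PP$ be the countably closed poset of countable partial functions $\w_1 \times \w \to \w$ ordered by extension, and let $G$ be $V$-generic over $\PP$. Being countably closed, $\PP$ adds no new $\w$-sequences from ground model sets, and in particular no new subsets of $\w$; thus $\pwmff^{V[G]} = \pwmff^V$ and the shift map is unchanged, so $\<\pwmff,\s\>^{V[G]}$ is literally the same dynamical system as $\<\pwmff,\s\>^V$, and likewise for $\<\pwmff,\s^{-1}\>$. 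Since $\PP$ forces \ch, Theorem~\ref{thm:main} tells us that in $V[G]$ the two structures are conjugate, so they share the same $\mathcal L_{\w_1,\w_1}$-theory as computed inside $V[G]$.

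The key new ingredient is that the $\mathcal L_{\w_1,\w_1}$-theory of this fixed structure is absolute between $V$ and $V[G]$. This rests on two observations. First, the parameter-free sentences of $\mathcal L_{\w_1,\w_1}$ in the language of dynamical systems are the same in the two models, because each such sentence is coded by an element of $H_{\w_1}$ and $H_{\w_1}^V = H_{\w_1}^{V[G]}$ for countably closed forcing. Second, I would prove by induction on the construction of formulas that $\<\pwmff,\s\> \models \varphi$ is absolute between $V$ and $V[G]$ for every $\varphi \in \mathcal L_{\w_1,\w_1}$: the atomic, Boolean, and finite-quantifier clauses are handled exactly as in the first-order case, while the two infinitary clauses reduce respectively to quantifying over a fixed countable index set of subformulas (absolute because such index sets lie in $H_{\w_1}$) and to quantifying over $\w$-sequences of elements of $\pwmff$ (absolute because $\PP$ adds no new such sequences from the ground model). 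Combining this absoluteness statement, applied to both $\<\pwmff,\s\>$ and $\<\pwmff,\s^{-1}\>$, with the equality of theories already proved in $V[G]$, yields the desired equality of $\mathcal L_{\w_1,\w_1}$-theories in $V$.

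The main point to verify carefully is the inductive absoluteness argument: one must set up satisfaction for $\mathcal L_{\w_1,\w_1}$ as a well-defined recursion along the well-founded tree of subformulas (which is hereditarily countable, hence absolute), and then check that at each infinitary step the relevant witnessing objects --- the countable tuples in $\pwmff^\w$ --- lie in sets unaffected by countably closed forcing. This is routine but genuinely infinitary, and deserves a clean statement. Once it is in place, the rest of the argument is no harder than the first proof of Theorem~\ref{thm:ElementaryEquivalence}, and there is no further obstacle.
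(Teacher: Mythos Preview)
Your proposal is correct and follows exactly the first of the two approaches the paper sketches: force \ch with countably closed conditions and use that the $\mathcal L_{\w_1,\w_1}$-theory of the structure is unchanged by such forcing, which you justify in more detail than the paper does. The paper also mentions a second, forcing-free route---running the back-and-forth recursion for $\w_1$ steps to produce a system of partial conjugacies witnessing $\mathcal L_{\w_1,\w_1}$-equivalence directly---but your approach matches the paper's first method.
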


\noindent As with Theorem~\ref{thm:ElementaryEquivalence}, this can be proved in two ways. The first, using forcing, works exactly as above, noting that the $\mathcal L_{\w_1,\w_1}$-theory of $\<\pwmff,\s\>$ and $\<\pwmff,\s^{-1}\>$ is unchanged by a countably closed forcing. The second is like the second method of proof for Theorem~\ref{thm:ElementaryEquivalence} above, but the recursion should be run for $\w_1$ steps instead of just $\w$. Note that this recursion can run for $\w_1$ steps without assuming \ch; the only difference is that without \ch, we cannot have $\AA_{\w_1} = \BB_{\w_1} = \pwmff$ after only $\w_1$ steps.

\vspace{2mm}

The following theorem stems from the observation that in our proof of Theorem~\ref{thm:main}, the embedding at the base step of the recursion can be arbitrary. This gives us some freedom to construct not just one conjugacy from $\<\pwmff,\s\>$ to $\<\pwmff,\s^{-1}\>$ or from $\<\pwmff,\s\>$ to $\<\pwmff,\s\>$, but many.

\begin{theorem}\label{thm:ManyConjugations}
Assume \ch. Let $\< \AA,\s \>$ be a countable elementary substructure of $\< \pwmff,\s \>$, and let $\eta$ be an embedding of $\< \AA,\s \>$ into $\< \pwmff,\s \>$. Then there is a conjugacy $\phi$ from $\< \pwmff,\s \>$ to itself such that $\phi \rest \AA = \eta$.
\end{theorem}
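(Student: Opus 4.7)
The plan is to run the back-and-forth argument from Theorem~\ref{thm:main} again, but (a) with both sides of the recursion being copies of $\<\pwmff,\s\>$ rather than having $\<\pwmff,\s^{-1}\>$ on one side, and (b) initializing the recursion at $\phi_0 = \eta$ rather than at an arbitrary seed embedding. Assuming \ch, we enumerate $\pwmff = \{a_\xi :\, \xi < \w_1\}$ and then construct a coherent sequence $\seq{\phi_\a}{\a < \w_1}$ of partial conjugacies, where each $\phi_\a$ is a conjugacy from $\<\AA_\a,\s\>$ onto $\<\BB_\a,\s\>$ for some countable subalgebras $\AA_\a, \BB_\a$ of $\pwmff$, while maintaining the recursive invariant that $\<\AA_\a,\s\>$ is an elementary substructure of $\<\pwmff,\s\>$.

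At the base step, we set $\AA_0 = \AA$, $\BB_0 = \eta[\AA]$, and $\phi_0 = \eta$; the recursive invariant holds by hypothesis. At a limit stage we take unions, and the invariant is preserved because an increasing union of elementary substructures is elementary. At a successor stage $\a+1$, we invoke the Lifting Lemma (Lemma~\ref{lem:main}) twice, exactly as in the proof of Theorem~\ref{thm:main} -- except that now both halves use the forward version, since both target structures are $\<\pwmff,\s\>$. First, we choose $\BB^0_{\a+1} = \<\hspace{-1mm}\<\BB_\a \cup \{a_\a\}\>\hspace{-1mm}\>$ as a countable elementary substructure of $\<\pwmff,\s\>$, and apply the Lifting Lemma to the instance $(\<\BB_\a,\s\>, \<\BB^0_{\a+1},\s\>, \iota, \phi_\a^{-1})$; the image of $\phi_\a^{-1}$ is $\AA_\a$, which is elementary by the recursive invariant, so the lemma applies and produces a lifting $\phi^0_{\a+1}$ whose image $\AA^0_{\a+1}$ contains $\AA_\a$. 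Second, we choose $\AA_{\a+1} = \<\hspace{-1mm}\<\AA^0_{\a+1} \cup \{a_\a\}\>\hspace{-1mm}\>$ as a countable elementary substructure of $\<\pwmff,\s\>$, and apply the Lifting Lemma to $(\<\AA^0_{\a+1},\s\>, \<\AA_{\a+1},\s\>, \iota, (\phi^0_{\a+1})^{-1})$; the image of $(\phi^0_{\a+1})^{-1}$ is $\BB^0_{\a+1}$, which was chosen to be elementary. This produces $\phi_{\a+1}$ with image $\BB_{\a+1}$, extending $\phi_\a$, and containing $a_\a$ in both domain and image.

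Because $a_\a \in \AA_{\a+1} \cap \BB_{\a+1}$ at every successor stage, the $\w_1$-enumeration guarantees $\bigcup_{\a<\w_1}\AA_\a = \bigcup_{\a<\w_1}\BB_\a = \pwmff$, and $\phi = \bigcup_{\a<\w_1}\phi_\a$ is a conjugacy from $\<\pwmff,\s\>$ onto itself extending $\phi_0 = \eta$, as required. The only subtlety is the very first invocation of the Lifting Lemma at stage $1$: its ``source subalgebra'' is $\BB_0 = \eta[\AA]$, which is not known a priori to be an elementary substructure of $\<\pwmff,\s\>$. But this is fine -- the Lifting Lemma only requires elementarity of the \emph{image} of the map being lifted, namely $\phi_0^{-1}[\BB_0] = \AA$, and this is elementary by hypothesis. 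No further obstacle arises, and the argument proceeds in exact parallel with the proof of Theorem~\ref{thm:main}.
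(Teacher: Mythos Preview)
Your proposal is correct and follows essentially the same approach as the paper's proof: rerun the back-and-forth of Theorem~\ref{thm:main} with $\<\pwmff,\s\>$ on both sides, initialize at $\phi_0=\eta$, and apply Lemma~\ref{lem:main} (rather than Lemma~\ref{lem:niam}) in both halves of each successor step. Your closing remark that the Lifting Lemma only demands elementarity of the \emph{image} of the map being lifted (so $\BB_0=\eta[\AA]$ need not be elementary) is exactly the point, and the paper handles it the same way.
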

\begin{proof}
We repeat the proof of Theorem~\ref{thm:main} with a few minor differences. 
As in that proof, if $X \sub \pwmff$ and $|X| \leq \aleph_0$, then $\<\hspace{-1mm}\< X \>\hspace{-1mm}\>$ 
denotes some countable subalgebra of $\pwmff$ with $X \sub \<\hspace{-1mm}\< X \>\hspace{-1mm}\>$ such that 
$\big\langle \hspace{-.2mm} \<\hspace{-1mm}\< X \>\hspace{-1mm}\>,\s \!\rest\! \<\hspace{-1mm}\< X \>\hspace{-1mm}\> \hspace{-.2mm} \big\rangle$ 
is an elementary substructure of $\< \pwmff,\s \>$. 
Also as in that proof, we write 
$\< \AA,\s \>$ to abbreviate $\< \AA,\s \rest \AA \>$.

Let $\seq{a_\xi}{\xi < \w_1}$ be an enumeration of $\pwmff$ in order type $\w_1$.  
Using transfinite recursion, we now build a coherent sequence $\seq{\phi_\a}{\a < \w_1}$ of maps such that each $\phi_\a$ is a conjugacy from its domain $\< \AA_\a,\s \>$ to its co-domain $\< \BB_\a,\s \>$; furthermore, we shall have $\AA_\a \sub \AA_\b$ and $\BB_\a \sub \BB_\b$ whenever $\a \leq \b$, and $\AA_\a = \bigcup_{\xi < \a}\AA_\xi$ and $\BB_\a = \bigcup_{\xi < \a}\BB_\xi$ for limit $\a$. 

Let $\< \AA,\s \>$ be a countable elementary substructure of $\< \pwmff,\s \>$, and let $\eta$ be an embedding of $\< \AA,\s \>$ into $\< \pwmff,\s \>$. 
For the base case of the recursion, let $\AA_0 = \AA$, let $\phi_0 = \eta$, and let $\BB_0 = \phi_0[\AA_0]$. 

The remaining stages of the recursion proceed as in the proof of Theorem~\ref{thm:main}. 
At limit stages there is nothing to do: simply take $\AA_\a = \bigcup_{\xi < \a}\AA_\xi$, $\BB_\a = \bigcup_{\xi < \a}\BB_\xi$, and $\phi_\a = \bigcup_{\xi < \a}\phi_\xi$. 
At successor steps, to obtain $\AA_{\a+1}$ and $\phi_{\a+1}$, first let $\BB_{\a+1}^0 = \<\hspace{-1mm}\< \BB_\a \cup \{a_\a\} \>\hspace{-1mm}\>$ and observe that 
$$(\<\BB_\a,\s\>,\<\BB^0_{\a+1},\s\>,\iota,\phi_\a^{-1})$$ 
is an instance of the lifting problem for $\< \pwmff,\s \>$, where $\iota$ denotes the inclusion $\BB_\a \xhookrightarrow{} \BB_{\a+1}^0$. Furthermore, the image of $\phi^{-1}_\a$, namely $\< \AA_\a,\s \>$, is an elementary substructure of $\< \pwmff,\s \>$. 
By Lemma~\ref{lem:main}, there is an embedding $\phi^0_{\a+1}$ from $\< \BB_{\a+1}^0,\s \>$ into $\< \pwmff,\s \>$. 
Let $\AA^0_{\a+1}$ denote the image of this embedding, so that 
$\phi^0_{\a+1}$ is a conjugacy from $\< \BB^0_{\a+1},\s \>$ to $\< \AA^0_{\a+1},\s \>$, and 
$(\phi^0_{\a+1})^{-1}$ is a conjugacy from $\< \AA^0_{\a+1},\s \>$ to $\< \BB^0_{\a+1},\s \>$. 

Second, let $\AA_{\a+1} = \<\hspace{-1mm}\< \AA_\a^0 \cup \{a_\a\} \>\hspace{-1mm}\>$ and observe that 
$$(\<\AA^0_{\a+1},\s\>,\<\AA_{\a+1},\s\>,\iota,(\phi_\a^0)^{-1})$$ 
is another instance of the lifting problem for $\< \pwmff,\s \>$, where $\iota$ denotes the inclusion $\AA^0_{\a+1} \xhookrightarrow{} \AA_{\a+1}$. Furthermore, the image of $(\phi^0_{\a+1})^{-1}$, namely $\< \BB_{\a+1}^0,\s \>$, is an elementary substructure of $\< \pwmff,\s \>$. 
By Lemma~\ref{lem:main}, there is an embedding $\phi_{\a+1}$ from $\< \AA_{\a+1},\s \>$ into $\< \pwmff,\s \>$. 
Let $\BB_{\a+1}$ denote the image of this embedding, so that 
$\phi_{\a+1}$ is a conjugacy from $\< \AA_{\a+1},\s \>$ to $\< \BB_{\a+1},\s \>$, and 
$(\phi_{\a+1})^{-1}$ is a conjugacy from $\< \BB_{\a+1},\s \>$ to $\< \AA_{\a+1},\s \>$. 

This completes the recursion. In the end, $\phi = \bigcup_{\a < \w_1}\phi_\a$ is a conjugacy from $\< \pwmff,\s \>$ to $\< \pwmff,\s \>$, and clearly $\phi \rest \AA = \phi \rest \AA_0 = \phi_0 = \eta$. 
\end{proof} 

We now deduce several consequences of Theorem~\ref{thm:ManyConjugations}. We begin with two model-theoretic consequences. These two results were obtained in collaboration with Ilijas Farah, and are reproduced here with his permission. 

\begin{theorem}\label{thm:AutoElementary}
Suppose $\< \AA,\s \>$ is a countable elementary substructure of $\< \pwmff,\s \>$. Then every embedding of $\< \AA,\s \>$ into $\< \pwmff,\s \>$ is an elementary embedding. 
\end{theorem}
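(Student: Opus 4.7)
The plan is to reduce to the Continuum Hypothesis case via a standard countable-conditions forcing, and then to apply Theorem~\ref{thm:ManyConjugations} to extend the embedding $\eta$ to a full automorphism of $\<\pwmff,\s\>$; automorphisms respect truth, so the elementarity of $\eta$ falls out immediately.

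More precisely, let $\<\AA,\s\>$ be a countable elementary substructure of $\<\pwmff,\s\>$ and let $\eta:\AA\to\pwmff$ be an embedding of dynamical systems. Let $\PP$ be the poset of countable partial functions $\w_1\times\w\to\w$ ordered by extension (the usual countably closed forcing for \ch), and let $G$ be a $V$-generic filter on $\PP$. As in the first proof of Theorem~\ref{thm:ElementaryEquivalence}, $\PP$ adds no new reals, so $\pwmff^V = \pwmff^{V[G]}$ and the action of $\s$ is unchanged; in particular the structures $\<\pwmff,\s\>$, $\<\AA,\s\>$, and the map $\eta$ all exist unaltered in $V[G]$, and $\<\AA,\s\>$ is still a countable elementary substructure of $\<\pwmff,\s\>$ (the satisfaction relation $\models$ for fixed first-order formulas and parameters from $V$ is absolute). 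Since $V[G]\models\ch$, Theorem~\ref{thm:ManyConjugations} applies in $V[G]$ and produces an automorphism $\phi:\pwmff\to\pwmff$ with $\phi\circ\s = \s\circ\phi$ and $\phi\restriction\AA = \eta$.

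The elementarity of $\eta$ now follows by a three-step chain. For any first-order formula $\varphi(x_1,\dots,x_n)$ in the language of dynamical systems and any $a_1,\dots,a_n\in\AA$,
\[
\<\AA,\s\>\models\varphi(a_1,\dots,a_n) \ \Leftrightarrow\ \<\pwmff,\s\>\models\varphi(a_1,\dots,a_n)
\]
by elementarity of $\AA$ in $\pwmff$;
\[
\<\pwmff,\s\>\models\varphi(a_1,\dots,a_n) \ \Leftrightarrow\ \<\pwmff,\s\>\models\varphi(\phi(a_1),\dots,\phi(a_n))
\]
because $\phi$ is an automorphism (automorphisms preserve truth of first-order sentences with parameters); and $\phi(a_i) = \eta(a_i)$ for each $i$. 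Combining the three equivalences shows
\[
\<\AA,\s\>\models\varphi(a_1,\dots,a_n) \ \Leftrightarrow\ \<\pwmff,\s\>\models\varphi(\eta(a_1),\dots,\eta(a_n)),
\]
which is precisely the definition of $\eta$ being an elementary embedding. This statement is between the $V$-sets $\AA$, $\pwmff$, $\s$, $\eta$ and is absolute between $V$ and $V[G]$, so $\eta$ is elementary in $V$ as well.

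The one point that requires (minor) care, and which is the closest thing to an obstacle, is confirming that the ingredients survive the passage to $V[G]$: that $\pwmff$ and $\s$ are unchanged (by countable closure of $\PP$), that the elementary substructure relation $\<\AA,\s\>\preceq\<\pwmff,\s\>$ still holds in $V[G]$, and that the conclusion ``$\eta$ is an elementary embedding'' is absolute back down to $V$. All three follow from the fact that $\PP$ adds no new reals together with the absoluteness of $\models$ for formulas and parameters already in $V$, exactly as in the forcing proof of Theorem~\ref{thm:ElementaryEquivalence}. For readers who prefer to avoid forcing, one can instead observe that the argument above goes through directly in any model of \ch, and then note that the conclusion of Theorem~\ref{thm:AutoElementary} is itself a first-order schema about $\<\pwmff,\s\>$ (with the countable parameter $\<\AA,\s,\eta\>$ coded as a real) and hence does not depend on \ch.
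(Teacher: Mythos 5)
Your proof is correct and follows essentially the same route as the paper: apply Theorem~\ref{thm:ManyConjugations} under \ch to extend $\eta$ to an automorphism commuting with $\s$ (whence elementarity via the three-step chain), and then eliminate \ch by the countably closed forcing together with absoluteness of satisfaction for the unchanged structure $\<\pwmff,\s\>$. The only difference is cosmetic — the paper proves the \ch case first and then forces, while you force first and argue inside $V[G]$.
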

\begin{proof}
First let us show that this is true assuming \ch. (This will enable us afterward to deduce the general result without \ch.) 

Let $\< \AA,\s \>$ be a countable elementary substructure of $\< \pwmff,\s \>$, and let $\eta$ be an embedding of $\< \AA,\s \>$ into $\< \pwmff,\s \>$. 
By the previous theorem, \ch implies there is a conjugacy map $\phi$ from $\< \pwmff,\s \>$ to itself such that $\phi \rest \AA = \eta$. 
But conjugacy maps preserve the truth of all sentences in the language of dynamical systems (because they are the isomorphisms of this category). From this and the fact that $\< \AA,\s \>$ is an elementary substructure of $\< \pwmff,\s \>$, it follows that $\phi \rest \AA = \eta$ is an elementary embedding: for any first-order sentence $\psi$ and any $a_1,\dots,a_n \in \AA$, 
\begin{align*}
\< \AA,\s \> \models \psi(a_1,\dots,a_n,\s) & \ \Leftrightarrow \ \< \pwmff,\s \> \models \psi(a_1,\dots,a_n,\s) \\
& \ \Leftrightarrow \ \< \pwmff,\s \> \models \psi(\phi(a_1),\dots,\phi(a_n),\s) \\
& \ \Leftrightarrow \ \< \pwmff,\s \> \models \psi(\eta(a_1),\dots,\eta(a_n),\s). 
\end{align*}

Thus the corollary is true assuming \ch. To get rid of this assumption, we use the same forcing trick as in the proof of Theorem~\ref{thm:ElementaryEquivalence}. 
Let $\< \AA,\s \>$ be a countable elementary substructure of $\< \pwmff,\s \>$, and let $\eta$ be an embedding of $\< \AA,\s \>$ into $\< \pwmff,\s \>$. 
As in the proof of Theorem~\ref{thm:ElementaryEquivalence}, there is a forcing extension $V[G]$ such that $V[G] \models \ch$, and such that the theory of $\< \AA,\s \>$ or $\< \eta[\AA],\s \>$ or $\< \pwmff,\s \>$ is the same in $V$ and in $V[G]$. 
By the previous paragraph, $\eta$ is an elementary embedding in $V[G]$. But because the theory of $\< \AA,\s \>$ and $\< \eta[\AA],\s \>$ and $\< \pwmff,\s \>$ is the same in $V$ and in $V[G]$, this implies $\eta$ is an elementary embedding in $V$ as well.
\end{proof}

\begin{theorem}
Every embedding between two elementary substructures of $\< \pwmff,\s \>$ is an elementary embedding.
\end{theorem}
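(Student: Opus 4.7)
The plan is to reduce the general case to the countable case already handled by Theorem~\ref{thm:AutoElementary} via a standard downward L\"owenheim--Skolem argument. Let $\< \AA,\s \>$ and $\< \BB,\s \>$ be (possibly uncountable) elementary substructures of $\< \pwmff,\s \>$ and let $\eta: \< \AA,\s \> \to \< \BB,\s \>$ be an embedding of dynamical systems. To show $\eta$ is elementary, I fix an arbitrary first-order formula $\varphi(x_1,\dots,x_n)$ in the language of dynamical systems and an arbitrary tuple $a_1,\dots,a_n \in \AA$, and I aim to verify
\[
\< \AA,\s \> \models \varphi(a_1,\dots,a_n,\s) \ \Longleftrightarrow\ \< \BB,\s \> \models \varphi(\eta(a_1),\dots,\eta(a_n),\s).
\]

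First I would apply the downward L\"owenheim--Skolem theorem inside $\< \AA,\s \>$ to obtain a countable elementary substructure $\< \AA_0,\s \>$ of $\< \AA,\s \>$ that contains $a_1,\dots,a_n$. Since elementarity is transitive and $\< \AA,\s \> \preceq \< \pwmff,\s \>$, this yields $\< \AA_0,\s \> \preceq \< \pwmff,\s \>$ as well. Next, I compose $\eta \rest \AA_0$ with the inclusion $\BB \hookrightarrow \pwmff$ (which, as $\< \BB,\s \> \preceq \< \pwmff,\s \>$, is itself an elementary, hence dynamical, embedding). The result is an embedding of the countable dynamical system $\< \AA_0,\s \>$ into $\< \pwmff,\s \>$, and $\< \AA_0,\s \>$ is a countable elementary substructure of $\< \pwmff,\s \>$. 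This is precisely the situation covered by Theorem~\ref{thm:AutoElementary}, so this composed map is an elementary embedding.

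To conclude, I chain three equivalences. Because $\< \AA_0,\s \> \preceq \< \AA,\s \>$ and $a_1,\dots,a_n \in \AA_0$,
\[
\< \AA,\s \> \models \varphi(a_1,\dots,a_n,\s) \ \Longleftrightarrow\ \< \AA_0,\s \> \models \varphi(a_1,\dots,a_n,\s).
\]
By the elementarity of $\eta \rest \AA_0$ composed with inclusion into $\pwmff$ (granted by Theorem~\ref{thm:AutoElementary}),
\[
\< \AA_0,\s \> \models \varphi(a_1,\dots,a_n,\s) \ \Longleftrightarrow\ \< \pwmff,\s \> \models \varphi(\eta(a_1),\dots,\eta(a_n),\s).
\]
Finally, because $\< \BB,\s \> \preceq \< \pwmff,\s \>$ and $\eta(a_1),\dots,\eta(a_n) \in \BB$,
\[
\< \pwmff,\s \> \models \varphi(\eta(a_1),\dots,\eta(a_n),\s) \ \Longleftrightarrow\ \< \BB,\s \> \models \varphi(\eta(a_1),\dots,\eta(a_n),\s).
\]
Concatenating these equivalences gives exactly the desired biconditional. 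Since $\varphi$ and the tuple were arbitrary, $\eta$ is elementary.

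There is no real obstacle here once Theorem~\ref{thm:AutoElementary} is in hand; the only thing to double-check is the trivial but easy-to-overlook fact that $\eta \rest \AA_0$, viewed as a map into $\pwmff$, remains a dynamical embedding, which is automatic because $\< \BB,\s \>$ carries the restriction of $\s$ from $\< \pwmff,\s \>$, so the inclusion $\BB \hookrightarrow \pwmff$ commutes with $\s$ and therefore so does the composition with $\eta \rest \AA_0$.
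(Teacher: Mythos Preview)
Your proof is correct and follows essentially the same approach as the paper: pass to a countable elementary substructure $\AA_0 \preceq \AA$ containing the parameters, invoke Theorem~\ref{thm:AutoElementary} on $\eta \rest \AA_0$ viewed as a map into $\pwmff$, and then use $\BB \preceq \pwmff$ to pull the conclusion back down. Your version is slightly more explicit about the transitivity of elementarity and the role of the inclusion $\BB \hookrightarrow \pwmff$, but the argument is the same.
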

\begin{proof}
Let $\< \AA,\s \>$ and $\< \BB,\s \>$ be elementary substructures of $\< \pwmff,\s \>$, and suppose $\eta$ is an embedding of $\< \AA,\s \>$ into $\< \BB,\s \>$. 
Let $\psi$ be any first-order sentence in the language of dynamical systems, let $a_1,\dots,a_n \in \AA$, and suppose $\< \AA,\s \> \models \psi(a_1,\dots,a_n,\s)$. 
Let $\< \AA_0,\s \>$ be a countable elementary substructure of $\< \AA,\s \>$ with $a_1,\dots,a_n \in \AA_0$. 
Then $\< \AA_0,\s \> \models \psi(a_1,\dots,a_n,\s)$, by elementarity. 
By the previous theorem, $\eta \rest \AA_0$ is an elementary embedding of $\< \AA_0,\s \>$ into $\< \pwmff,\s \>$. 
Thus $\< \pwmff,\s \> \models \psi(\eta(a_1),\dots,\eta(a_n),\s)$. 
Because $\< \BB,\s \>$ is an elementary substructure of $\< \pwmff,\s \>$, this implies 
$\< \BB,\s \> \models \psi(\eta(a_1),\dots,\eta(a_n),\s)$ as well. 
Hence $\< \AA,\s \> \models \psi(a_1,\dots,a_n,\s)$ implies $\< \BB,\s \> \models \psi(\eta(a_1),\dots,\eta(a_n),\s)$; i.e., $\eta$ is an elementary embedding.
\end{proof}

Observe that either of the two instances of $\s$ in the theorem (for the domain or for the co-domain) can be replaced with an instance of $\s^{-1}$. Thus, with only superficial modifications to the proofs above, we obtain the following.

\begin{theorem}\label{thm:Allementary}
Every embedding between two elementary substructures of $\< \pwmff,\s \>$ is an elementary embedding; 
every embedding of an elementary substructure of $\< \pwmff,\s \>$ into $\< \pwmff,\s^{-1} \>$ is an elementary embedding; 
and every embedding of an elementary substructure of $\< \pwmff,\s^{-1} \>$ into $\< \pwmff,\s \>$ is an elementary embedding.
\end{theorem}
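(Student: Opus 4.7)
The plan is to follow the two-step template established by Theorems~\ref{thm:ManyConjugations}, \ref{thm:AutoElementary}, and the preceding unlabeled theorem, extending each of those results to accommodate $\s^{-1}$ on one or both sides.

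First I would prove an ``$\s^{-1}$ analogue'' of Theorem~\ref{thm:ManyConjugations}: assuming \ch, any embedding of a countable elementary substructure of $\< \pwmff,\s \>$ into $\< \pwmff,\s^{-1} \>$ extends to a full conjugacy $\phi:\< \pwmff,\s \>\to\< \pwmff,\s^{-1} \>$. The proof is literally the recursion from Theorem~\ref{thm:main}, but with base step $\phi_0=\eta$ and $\AA_0=\AA$; at each successor stage one of Lemma~\ref{lem:main} or Lemma~\ref{lem:niam} applies because at every stage we have arranged for the image of the current conjugacy to be an elementary substructure of either $\< \pwmff,\s \>$ or $\< \pwmff,\s^{-1} \>$. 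The symmetric statements (elementary substructure of $\< \pwmff,\s^{-1} \>$ into $\< \pwmff,\s \>$, or into $\< \pwmff,\s^{-1} \>$) follow the same way. This is the step where the most care is required: one must check that the coherence and elementarity invariants from the proof of Theorem~\ref{thm:main} survive when \emph{which} of the two Lifting Lemmas gets applied can alternate from step to step. This is where I expect the main (though mild) bookkeeping obstacle to lie.

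Next, assuming \ch, each of the three statements in Theorem~\ref{thm:Allementary} follows by exactly the argument of Theorem~\ref{thm:AutoElementary} and the preceding unlabeled theorem: if $\eta$ is an embedding of a countable elementary substructure $\<\AA_0,\s^{\pm 1}\>$ of $\<\pwmff,\s^{\pm 1}\>$ into $\<\pwmff,\s^{\mp 1}\>$ (or the same side), the extension lemma above gives a conjugacy $\phi$ from the whole $\<\pwmff,\s^{\pm 1}\>$ onto $\<\pwmff,\s^{\mp 1}\>$ extending $\eta$. Conjugacies preserve all first-order sentences, and elementary substructures preserve first-order sentences into the ambient algebra, so the chain
$$\<\AA_0,\s^{\pm 1}\>\models\psi(\bar a)\;\Leftrightarrow\;\<\pwmff,\s^{\pm 1}\>\models\psi(\bar a)\;\Leftrightarrow\;\<\pwmff,\s^{\mp 1}\>\models\psi(\phi(\bar a))$$
witnesses that $\eta$ is elementary. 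For the case of non-countable elementary substructures $\<\AA,\s^{\pm 1}\>$ and $\<\BB,\s^{\mp 1}\>$, one argues exactly as in the proof of the preceding unlabeled theorem: given a formula $\psi(\bar a)$ with $\bar a\in\AA$, pick a countable elementary substructure $\<\AA_0,\s^{\pm 1}\>$ of $\<\AA,\s^{\pm 1}\>$ containing $\bar a$, apply the countable case to $\eta\rest\AA_0$, and use elementarity of $\<\BB,\s^{\mp 1}\>$ in $\<\pwmff,\s^{\mp 1}\>$ to pull the truth of $\psi(\eta(\bar a))$ back down.

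Finally I would strip the \ch assumption with the forcing-absoluteness trick from the first proof of Theorem~\ref{thm:ElementaryEquivalence}: force \ch by countably closed forcing, which adds no new subsets of $\omega$ and hence preserves $\pwmff$, $\s$, and $\s^{-1}$ together with the given $\AA$, $\BB$, $\eta$, and the satisfaction relation for their first-order sentences. In the extension the \ch case just proved shows $\eta$ is elementary, and by absoluteness of satisfaction it is elementary in the ground model as well. Since no step of this plan invokes any unpublished machinery and the back-and-forth lemma differs from the proof of Theorem~\ref{thm:ManyConjugations} only by using both Lifting Lemmas interchangeably, I expect no genuinely new difficulty beyond bookkeeping.
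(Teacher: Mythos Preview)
Your proposal is correct and is exactly the approach the paper takes: it simply remarks that ``either of our uses of $\s$ in the theorem (for the domain or the co-domain) can be replaced with $\s^{-1}$,'' so that ``with only superficial modifications to the proofs above'' (namely Theorems~\ref{thm:ManyConjugations}, \ref{thm:AutoElementary}, and the preceding unlabeled theorem) one obtains Theorem~\ref{thm:Allementary}. You have merely spelled out those superficial modifications---in particular the alternation between Lemmas~\ref{lem:main} and~\ref{lem:niam} in the back-and-forth, and the forcing-absoluteness trick to remove \ch---more explicitly than the paper does.
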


The last two statements in this theorem are especially intriguing. 
In the proof of Theorem~\ref{thm:main}, and the analogous proof of Theorem~\ref{thm:ManyConjugations}, we kept careful track of which substructures were elementary, using elementarity on one side of our back-and-forth construction to obtain mappings back to the other side. 
Now, from Theorem~\ref{thm:Allementary}, we can see that all the substructures used in the proof were elementary substructures, on both sides of the back-and-forth argument. 
But we see this only in hindsight, after Theorem~\ref{thm:ManyConjugations} is established. 
There is irony in this, something like a security system, that, when installed, acts as a deterrent and is therefore never activated. 

This situation suggests the possibility that our main theorem could have a simpler proof, perhaps a proof based on the model-theoretic notion of $\aleph_1$-saturation. An earlier draft of this paper claimed erroneously, as a corollary to Theorem~\ref{thm:NoGo}, that $\< \pwmff,\s \>$ is not $\aleph_1$-saturated. Ilijas Farah found a mistake in the proof of this corollary, and we now can show that in fact the opposite is true. (A proof appears in the forthcoming paper \cite{BF}.) However, our proof of saturation still relies on the combinatorial arguments in Sections 8 and 10. So while this proof of $\aleph_1$-saturation offers further insight into the structure of the shift map, it does not seem to provide an essentially easier path to proving the main theorem of this paper. 

\vspace{2mm}

We now move on to a question that has been part of the folklore surrounding the shift map for years: \emph{Is there an automorphism of $\pwmff$ that commutes with $\s$ other than the powers of $\s$?} 
We show the answer is yes, and in fact this is a relatively straightforward consequence of Theorem~\ref{thm:ManyConjugations}. 

For the statement and proof of the following lemma, let $E$ and $F$ denote the following subsets of $\w$:
$$E \,=\, \textstyle \bigcup \set{[2^k,2^{k+1})}{k \text{ is even}} \quad \text{and} \quad F \,=\, \bigcup \set{[2^k,2^{k+1})}{k \text{ is odd}}.$$

\begin{lemma}\label{lem:NonTrivialConjugation}
Suppose $\AA$ is a countable subalgebra of $\pwmff$ that is closed under $\s$ and $\s^{-1}$, and with $[\set{kn}{n \in \w}] \in \AA$ for all $k \in \w$, and with $[E],[F] \in \AA$. 
Then there is an embedding $\eta$ of $\< \AA,\s \>$ into $\< \pwmff,\s \>$ such that $\eta \neq \s^n \rest \AA$ for any $n \in \Z$.  
\end{lemma}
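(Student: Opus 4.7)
The strategy is to build the embedding $\eta$ from an inducing sequence $\seq{a_k}{k \in \w}$ in $\AA$, using the sequence-based characterization of embeddings in Theorem~\ref{thm:CharacterizingGoodSequences}. Since $\s^n \rest \AA$ sends $[E] \mapsto [E+n]$, and each $E+n$ has its large maximal intervals of lengths $\{2^{2k}\}_{k \geq k_0}$ (powers of $4$), it will suffice to engineer the sequence so that $\eta([E]) = [\set{k}{a_k \leq [E]}]$ is represented by a subset of $\w$ whose eventual maximal-interval-length multiset differs from every translate, in particular contains values that are not powers of $4$.

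The construction adapts the blockwise argument of Theorem~\ref{thm:CountableIncompressibleEmbeds}: fix an increasing sequence $\A_0 \sub \A_1 \sub \cdots$ of finite partitions of $\AA$ refining $\{[E],[F]\}$ and generating $\AA$ (as in Lemma~\ref{lem:NicePartitionSequence}), and define $\seq{a_k}{k \in \w}$ by concatenating walks through the digraphs $\< \A_n, \toSi \>$. For each $n$, arrange that $\A_n$ contains the elements $[E] \wedge \s^{-1}([E])$ and $[F] \wedge \s^{-1}([F])$. A direct computation shows that each is a self-looping vertex of $\< \A_n,\toSi \>$: for instance $[E] \wedge \s^{-1}([E]) = [\set{i}{i, i+1 \in E}]$, and $\s$-shifting and intersecting yields $[\set{i}{i-1, i, i+1 \in E}]$, which is infinite because within each large interval $[2^{2k},2^{2k+1})$ of $E$ the middle portion satisfies the condition. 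These self-loops live in $\phi_n^{-1}([E])$ and $\phi_n^{-1}([F])$ respectively, where $\phi_n \colon \A_n \to \{[E],[F]\}$ is the natural map, so the walk can dwell inside either region for arbitrarily many consecutive steps without disturbing the $[E]/[F]$ labeling. Transitions between the two regions are available at any time since $[E] \toSi [F]$ and $[F] \toSi [E]$ hold in the quotient and therefore lift to actual edges in $\<\A_n,\toSi\>$.

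Within block $n$, the walk is designed in two pieces: first a short ``visit'' piece that traverses every vertex of $\A_n$ (to guarantee eventual denseness), and then a much longer ``padding'' piece that sits at the loop-vertex $[E] \wedge \s^{-1}([E])$ for exactly $p_n$ steps followed by the loop-vertex $[F] \wedge \s^{-1}([F])$ for $p_n$ steps, where $p_n$ is chosen to grow much faster than $|\A_n|$ and taken, say, $p_n = 3^n$. Because the padding contributes a maximal interval of length $p_n = 3^n$ to $\set{k}{a_k \leq [E]}$ (up to an $O(|\A_n|)$ deviation absorbed by the short visit portions of the adjacent blocks), the resulting set has arbitrarily large maximal $[E]$-intervals of length essentially $3^n$. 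Since $3^n$ is never a power of $4$ once $n \geq 1$, the multiset of large interval lengths cannot agree mod finite with that of any $E + c$, so $[\set{k}{a_k \leq [E]}] \neq [E+c]$ in $\pwmff$ for every $c \in \Z$. By Theorem~\ref{thm:CharacterizingGoodSequences}, $\seq{a_k}{k \in \w}$ induces an embedding $\eta$ of $\<\AA,\s\>$ into $\<\pwmff,\s\>$, and $\eta([E]) \neq \s^n([E])$ for any $n$, so $\eta \neq \s^n \rest \AA$.

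The main obstacle is the coordination in the second step: we must simultaneously enforce the $\toSi$-adjacency between consecutive $a_k$, visit every vertex of $\A_n$ in each block, and produce padding runs of prescribed length with prescribed labels. The inclusion of the two loop-bearing elements in each $\A_n$ resolves the stretch-length requirement for free, while choosing $p_n$ much larger than the number of vertices in $\A_n$ ensures that the visit portion is negligible relative to the padding and does not contaminate the large-interval structure. The transitions between visit, pad-$[E]$, and pad-$[F]$ pieces use actual $\toSi$-edges that exist by the strong connectivity of $\<\A_n,\toSi\>$ (Theorem~\ref{thm:Incompressible=StronglyConnected}), so the adjacency requirement is satisfied throughout.
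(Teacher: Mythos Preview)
Your approach via an inducing sequence is quite different from the paper's and could in principle be made to work, but there is a concrete gap. You ask that each partition $\A_n$ \emph{contain} the element $[E]\wedge\s^{-1}([E])$ as a vertex, and then pad at this self-loop. But if this fixed element of $\pwmff$ is a member of every $\A_n$, it is never refined, so $\A_0,\A_1,\ldots$ cannot be cofinal among the partitions of $\AA$ (for instance $\AA$ contains $[2\w]$, which splits $[E]\wedge\s^{-1}([E])$). Your sequence $\seq{a_k}{k}$ then fails to be eventually small---it equals $[E]\wedge\s^{-1}([E])$ infinitely often---and Theorem~\ref{thm:CharacterizingGoodSequences} does not apply. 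The repair is to use a \emph{cycle} among the $\A_n$-vertices below $[E]$ rather than a single self-loop; such a cycle exists for every $n$ because $E$ contains arbitrarily long intervals, but its length varies with $n$, and you must redo the interval-length bookkeeping so that the resulting $E$-run lengths (multiples of the cycle length, plus the visit-portion slop and transition steps) are provably not powers of $4$ infinitely often. This is doable but is real work you have not supplied. The block-boundary transitions and the ``negligible visit portion'' claim likewise need tighter arguments.

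For comparison, the paper's proof avoids inducing sequences entirely. Since $\AA$ is countable, its representatives do not form a splitting family, so there is an infinite $D\sub\w$ almost contained in or almost disjoint from every set in $\AA$. From $D$ the paper builds an interval-swapping bijection $h\colon\w\to\w$ and shows that the trivial automorphism $h^*$, restricted to $\AA$, commutes with $\s$ precisely because $D$ splits nothing in $\AA$. The arithmetic-progression hypothesis is then used to rule out $\eta=\s^n$ for $n\neq 0$, and $[E],[F]$ to rule out $\eta=\mathrm{id}$. Your route, once completed, would not use the arithmetic progressions at all.
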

\begin{proof}
Suppose $\AA$ is a countably infinite subalgebra of $\pwmff$ that is closed under $\s$ and $\s^{-1}$. 
Let $\mathcal S = \set{A \sub \w}{[A] \in \AA \setminus [\0]}$. 
Then $\mathcal S$ is a countable set of infinite subsets of $\w$. 
By a standard diagonalization argument, $\mathcal S$ is not a \emph{splitting family}; i.e., there is some infinite $D \sub \w$ such that for all $A \in \mathcal S$ either $D \cap A$ is finite or $D \setminus A$ is finite.

We now define a permutation $h: \w \to \w$ by permuting some intervals associated to $D$. Specifically, let $d_0 = 0$ and let $\set{d_i}{i > 0}$ be an increasing enumeration of the set $D+1$. Define $h$ by setting
$$h(i) = \begin{cases}
i + d_{2n+2} - d_{2n+1} & \text{ if }i \in [d_{2n},d_{2n+1}) \text{ for some $n$}, \\
i - d_{2n+1} + d_{2n} & \text{ if }i \in [d_{2n+1},d_{2n+2}) \text{ for some $n$}.
\end{cases}$$
In other words, if we set $I_n = [d_n,d_{n+1})$ for all $n$, then $h$ is the map that simply swaps the intervals $I_{2n}$ and $I_{2n+1}$ for all $n$. 

\begin{center}
\begin{tikzpicture}[scale=.455]

\node at (0,0) {\scalebox{.5}{$\bullet$}};
\node at (1,0) {\scalebox{.5}{$\bullet$}};
\node at (2,0) {\scalebox{.5}{$\bullet$}};
\node at (3,0) {\scalebox{.5}{$\bullet$}};
\node at (4,0) {\scalebox{.5}{$\bullet$}};
\node at (5,0) {\scalebox{.5}{$\bullet$}};
\node at (6,0) {\scalebox{.5}{$\bullet$}};
\node at (7,0) {\scalebox{.5}{$\bullet$}};
\node at (8,0) {\scalebox{.5}{$\bullet$}};
\node at (9,0) {\scalebox{.5}{$\bullet$}};
\node at (10,0) {\scalebox{.5}{$\bullet$}};
\node at (11,0) {\scalebox{.5}{$\bullet$}};
\node at (12,0) {\scalebox{.5}{$\bullet$}};
\node at (13,0) {\scalebox{.5}{$\bullet$}};
\node at (14,0) {\scalebox{.5}{$\bullet$}};
\node at (15,0) {\scalebox{.5}{$\bullet$}};
\node at (16,0) {\scalebox{.5}{$\bullet$}};
\node at (17,0) {\scalebox{.5}{$\bullet$}};
\node at (18,0) {\scalebox{.5}{$\bullet$}};
\node at (19,0) {\scalebox{.5}{$\bullet$}};
\node at (20,0) {\scalebox{.5}{$\bullet$}};
\node at (21,0) {\scalebox{.5}{$\bullet$}};
\node at (22,0) {\scalebox{.5}{$\bullet$}};
\node at (23,0) {\scalebox{.5}{$\bullet$}};
\node at (24,0) {\scalebox{.5}{$\bullet$}};
\node at (25,0) {\scalebox{.5}{$\bullet$}};
\node at (26.2,0) {\scalebox{.8}{$\dots$}};

\node at (0,-.4) {\scalebox{.65}{$d_0$}};
\node at (3,.4) {\scalebox{.65}{$d_1$}};
\node at (8,-.4) {\scalebox{.65}{$d_2$}};
\node at (14,.4) {\scalebox{.65}{$d_3$}};
\node at (18,-.4) {\scalebox{.65}{$d_4$}};
\node at (24,.4) {\scalebox{.65}{$d_5$}};

\draw [->] (1.1,.3) to [out=50,in=130] (5.9,.4);
\draw (-.15,.15) -- (-.15,.3) -- (2.15,.3) -- (2.15,.15);
\draw [<-] (2.1,-.4) to [out=-50,in=-130] (4.9,-.3);
\draw (2.85,-.15) -- (2.85,-.3) -- (7.15,-.3) -- (7.15,-.15);

\draw [->] (10.6,.3) to [out=50,in=130] (14.9,.4);
\draw (7.85,.15) -- (7.85,.3) -- (13.15,.3) -- (13.15,.15);
\draw [<-] (9.6,-.4) to [out=-50,in=-130] (15.4,-.3);
\draw (13.85,-.15) -- (13.85,-.3) -- (17.15,-.3) -- (17.15,-.15);

\draw (20.6,.3) to [out=50,in=176] (25.2,2);
\draw (17.85,.15) -- (17.85,.3) -- (23.15,.3) -- (23.15,.15);
\draw (23.85,-.15) -- (23.85,-.3) -- (25.2,-.3);

\end{tikzpicture}
\end{center}

\noindent Define $s: \w \to \w$ by setting $s(i) = h^{-1}(h(i)+1)$ for all $i \in \w$. Checking a few cases, we see that  
$$s(i) = \begin{cases}
i + 1 & \text{ if } i \in [d_{2n},d_{2n+1}-1) \text{ for some $n$}, \\
d_{2n} & \text{ if } i = d_{2n+2}-1 \text{ for some $n$}, \\
d_{2n+3} & \text{ if } i = d_{2n+1}-1 \text{ for some $n$}.
\end{cases}$$
In particular, $s(i) = i+1$ for all $i \notin D$, and $s[D] = \{0\} \cup (D+1) \setminus \{d_1\}$. 

\begin{center}
\begin{tikzpicture}[scale=.455]

\node at (0,0) {\scalebox{.5}{$\bullet$}};
\node at (1,0) {\scalebox{.5}{$\bullet$}};
\node at (2,0) {\scalebox{.5}{$\bullet$}};
\node at (3,0) {\scalebox{.5}{$\bullet$}};
\node at (4,0) {\scalebox{.5}{$\bullet$}};
\node at (5,0) {\scalebox{.5}{$\bullet$}};
\node at (6,0) {\scalebox{.5}{$\bullet$}};
\node at (7,0) {\scalebox{.5}{$\bullet$}};
\node at (8,0) {\scalebox{.5}{$\bullet$}};
\node at (9,0) {\scalebox{.5}{$\bullet$}};
\node at (10,0) {\scalebox{.5}{$\bullet$}};
\node at (11,0) {\scalebox{.5}{$\bullet$}};
\node at (12,0) {\scalebox{.5}{$\bullet$}};
\node at (13,0) {\scalebox{.5}{$\bullet$}};
\node at (14,0) {\scalebox{.5}{$\bullet$}};
\node at (15,0) {\scalebox{.5}{$\bullet$}};
\node at (16,0) {\scalebox{.5}{$\bullet$}};
\node at (17,0) {\scalebox{.5}{$\bullet$}};
\node at (18,0) {\scalebox{.5}{$\bullet$}};
\node at (19,0) {\scalebox{.5}{$\bullet$}};
\node at (20,0) {\scalebox{.5}{$\bullet$}};
\node at (21,0) {\scalebox{.5}{$\bullet$}};
\node at (22,0) {\scalebox{.5}{$\bullet$}};
\node at (23,0) {\scalebox{.5}{$\bullet$}};
\node at (24,0) {\scalebox{.5}{$\bullet$}};
\node at (25,0) {\scalebox{.5}{$\bullet$}};
\node at (26.2,0) {\scalebox{.8}{$\dots$}};

\draw[->] (.1,0)--(.9,0);
\draw[->] (1.1,0)--(1.9,0);
\draw [->] (2.1,.1) to [out=30,in=150] (13.9,.1);
\draw[->] (3.1,0)--(3.9,0);
\draw[->] (4.1,0)--(4.9,0);
\draw[->] (5.1,0)--(5.9,0);
\draw[->] (6.1,0)--(6.9,0);
\draw [<-] (.1,.1) to [out=30,in=150] (6.9,.1);
\draw[->] (8.1,0)--(8.9,0);
\draw[->] (9.1,0)--(9.9,0);
\draw[->] (10.1,0)--(10.9,0);
\draw[->] (11.1,0)--(11.9,0);
\draw[->] (12.1,0)--(12.9,0);
\draw [->] (13.1,.1) to [out=30,in=150] (23.9,.1);
\draw [<-] (8.1,.1) to [out=30,in=150] (16.9,.1);
\draw[->] (14.1,0)--(14.9,0);
\draw[->] (15.1,0)--(15.9,0);
\draw[->] (16.1,0)--(16.9,0);
\draw[->] (18.1,0)--(18.9,0);
\draw[->] (19.1,0)--(19.9,0);
\draw[->] (20.1,0)--(20.9,0);
\draw[->] (21.1,0)--(21.9,0);
\draw[->] (22.1,0)--(22.9,0);
\draw[->] (24.1,0)--(24.9,0);
\draw (25.1,0)--(25.3,0);
\draw [<-] (18.1,.1) to [out=30,in=178] (25.3,2);
\draw (23.1,.1) to [out=30,in=197] (25.3,1);

\end{tikzpicture}
\end{center}

As was mentioned in Section~\ref{sec:Incompressibility}, a function $f: \w \to \w$ naturally induces a function $\pwmff \to \pwmff$, namely $f^*([A]) = [\set{f(n)}{n \in A}]$. Let $h^*$ denote the function $\pwmff \to \pwmff$ induced by $h$, i.e., $h^*([A]) = [\set{h(n)}{n \in \w}]$ for all $A \sub \w$. Note that $h^*$ is an automorphism of $\pwmff$, because $h$ is a bijection $\w \to \w$. 

We shall finish the proof of the lemma by showing that $\eta = h^* \rest \AA$ is the desired embedding. 
Because $\eta$ is the restriction to $\AA$ of an automorphism $\pwmff \to \pwmff$, it is automatically an embedding of Boolean algebras $\AA \to \pwmff$. We must check that $\eta \circ \s(a) = \s \circ \eta(a)$ for all $a \in \AA$, and that $\eta \neq \s^p \rest \AA$ for any $p \in \Z$. 

Let $a \in \pwmff$, and fix some $A \sub \w$ with $a = [A]$. Then, using the fact that $s(n) = h^{-1}(h(n)+1)$ for all $n$, 
\begin{align*}
\s \circ \eta(a) &\,=\, \s([\set{h(n)}{n \in A}]) \,=\, [\set{h(n)+1}{n \in A}]  \\
& \,=\, [\set{h(s(n))}{n \in A}] \,=\, \eta([\set{s(n)}{n \in A}]).
\end{align*}
If $a \in \AA$, then by our choice of $D$ either $D \setminus A$ is finite or $D \cap A$ is finite. 
If $D \cap A$ is finite, then, using the fact that $s(i) = i+1$ when $i \notin D$,
$$[\set{s(n)}{n \in A}] = [\set{s(n)}{n \in A \setminus D}] = [\set{n+1}{n \in A \setminus D}] = [A+1].$$  
On the other hand, if $D \setminus A$ is finite then, using the fact that $s(i) = i+1$ when $i \notin D$ and the fact that $s[D] = \{0\} \cup (D+1) \setminus \{d_1\}$, 
\begin{align*}
[\set{s(n)}{n \in A}] &\,=\, [\set{s(n)}{n \in A \setminus D} \cup \set{s(n)}{n \in D}] \\
&\,=\, [\set{n+1}{n \in A \setminus D} \cup (D+1)] \,=\, [A+1].
\end{align*} 
In either case, if $a \in \AA$ then 
$\s \circ \eta (a) = \eta([\set{s(n)}{n \in A}]) = [A+1] = \eta \circ \s(a).$

Finally, we must show that $\eta \neq \s^p \rest \AA$ for any $p \in \Z$. 

Let $N \in \w$ with $N > 0$, and for each $j \in \{0,1,\dots,N-1\}$ let $N\w+j = \set{Nk+j}{k \in \w}$. 
By hypothesis, $\AA$ contains $[N\w]$. Furthermore, because $\AA$ is closed under $\s$, $\AA$ contains $[N\w+j] = \s^j([N\w])$ for all $j \in \{0,1,\dots,N-1\}$. 
For any such $j$, either $D \setminus (N\w+j)$ is finite or $D \cap (N\w+j)$ is finite. 
Because the sets $N\w+0, N\w+1,\dots,N\w+N-1$ are disjoint, the first alternative must hold for precisely one of them: i.e., there is some particular $j < N$ such that $D \setminus (N\w+j)$ is finite. 
If $n$ is large enough that all members of $D \setminus (N\w+j)$ are below $d_{2n}$, then for $i \in [d_{2n},d_{2n+1})$ we have $h(i) = i + d_{2n+2} - d_{2n+1} \equiv_N i$ (because $d_{2n+2} \equiv_N d_{2n+1}$); and similarly, for $i \in [d_{2n+1},d_{2n})$ we have $h(i) = i - d_{2n+1}+d_{2n} \equiv_N i$ 
(because $d_{2n+1} \equiv_N d_{2n}$). 
Hence $h(i) \equiv_N i$ for all sufficiently large $i$. 
Consequently, $\eta([N\omega]) \leq [N\omega]$. 

Now fix $p \in \Z$. If $p \neq 0$, then for any $N > p$ we have $(N\w) \cap (N\w+p) = \0$ and consequently, 
$$\s^p([N\w]) \wedge [N\w] \,=\, [N\w+p] \wedge [N\w] \,=\, [(N\w) \cap (N\w+p)] \,=\, [\0].$$ 
In particular, $\s^p([N\w]) \not\leq [N\w]$. 
Combined with the previous paragraph, this shows $\eta([N\w]) \neq \s^p([N\w])$. 
Because $[N\w] \in \AA$, this means $\eta \neq \s^p \rest \AA$.

For the remaining case $p = 0$, we must find some $a \in \AA$ with $\eta(a) \neq a$. 
This is where we use the hypothesis that $[E],[F] \in \AA$. 
Because $\AA$ is closed under $\s^{-1}$, we have $[E-1],[F-1] \in \AA$. 
Because $E$ and $F$ partition $\w$, and because of our choice of $D$, exactly one of $D \cap (E-1)$ and $D \cap (F-1)$ is finite; equivalently, 
exactly one of $(D+1) \cap E$ and $(D+1) \cap F$ is finite. 

Suppose $(D+1) \cap F$ is finite. 
Fix $m$ large enough that $d_n \in E$ for all $n \geq m$. 
There are infinitely many $n \geq m$ such that 
$[d_n,d_{n+1}] \not\sub E$: this is the case whenever some $d_n$ is the last member of $D+1$ in one of the intervals comprising $E$, and the next member of $D+1$ is in the next interval of $E$. 
When this happens, one of the intervals comprising $F$ is sandwiched between two consecutive members of $D+1$. 
Thus, for infinitely many $n$, there is an odd $k$ such that $[2^k,2^{k+1}) \sub (d_n,d_{n+1})$. 

If $n$ is odd, then $h$ shifts the interval $[2^k,2^{k+1})$ down by $\ell = d_n-d_{n-1} > 0$, so that $h[F]$ contains, and $h[E]$ is disjoint from, the interval $[2^k-\ell,2^{k+1}-\ell)$. 
But $E$ is not disjoint from any intervals of length $2^k$ that begin below $2^k$, so this means there is some $i \geq 2^k-\ell > d_{n-1}$ with $i \in E \setminus h[E]$. 

If $n$ is even, then $h$ shifts $[2^k,2^{k+1})$ up by $\ell = d_{n+2}-d_{n+1} > 0$, so that $h[F]$ contains, and $h[E]$ is disjoint from, the interval $[2^k+\ell,2^{k+1}+\ell)$. But also $2^k-1,2^{k+1} \in E$, so $2^k-1+\ell, 2^{k+1}+\ell \in h[E]$. Thus $h[E]$ contains a gap of length precisely $2^k$ beginning at $2^k+\ell > 2^k$. Because every gap in $E$ beginning above $2^k$ has length $>\!2^k$, this means there is some $i \geq 2^k > d_n$ with $i \in h[E] \setminus E$. 

Put together, the previous two paragraphs show that there are arbitrarily large values of $i$ that are either in $E \setminus h[E]$ or in $h[E] \setminus E$. It follows that $[E] \neq [\set{h(i)}{i \in E}] = \eta([E])$. 
Thus, in the case that $(D+1) \cap F$ is finite, $\eta$ is not the identity map on $\AA$. 
A similar argument, interchanging the roles of $E$ and $F$, shows that if $[F] \neq \eta([F])$, hence $\eta$ is not the identity map on $\AA$, in the case that $(D+1) \cap E$ is finite. 
\end{proof}

\begin{theorem}\label{thm:NonTrivialConjugation}
Assuming \ch, there is an automorphism $\phi$ of $\pwmff$ such that $\phi \circ \s = \s \circ \phi$, but $\phi \neq \s^n$ for any $n \in \Z$.
\end{theorem}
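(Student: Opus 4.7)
The plan is to combine Theorem~\ref{thm:ManyConjugations} with Lemma~\ref{lem:NonTrivialConjugation}. The first observation is that an automorphism $\phi$ of $\pwmff$ satisfies $\phi \circ \s = \s \circ \phi$ if and only if $\phi$ is a conjugacy from $\< \pwmff,\s \>$ to itself in the dynamical sense. Thus the task reduces to producing such a self-conjugacy that differs from every power of $\s$.

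First I would apply the downward L\"owenheim--Skolem theorem to fix a countable elementary substructure $\< \AA,\s \>$ of $\< \pwmff,\s \>$ that contains the elements $[\set{kn}{n \in \w}]$ for every $k \in \w$, together with $[E]$ and $[F]$ (where $E$ and $F$ are as in Lemma~\ref{lem:NonTrivialConjugation}). Since only countably many elements are demanded, such an $\AA$ exists, and being an elementary substructure, $\AA$ is automatically closed under $\s$ and $\s^{-1}$. Thus $\AA$ satisfies the hypotheses of Lemma~\ref{lem:NonTrivialConjugation}, giving an embedding
\[
\eta : \< \AA,\s \> \longrightarrow \< \pwmff,\s \>
\]
of dynamical systems such that $\eta \neq \s^n \!\rest\! \AA$ for every $n \in \Z$.

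Next I would invoke Theorem~\ref{thm:ManyConjugations}, whose hypotheses are exactly met here: $\< \AA,\s \>$ is a countable elementary substructure of $\< \pwmff,\s \>$ and $\eta$ is an embedding of $\< \AA,\s \>$ into $\< \pwmff,\s \>$. The theorem (which is where \ch is used) produces a conjugacy $\phi$ from $\< \pwmff,\s \>$ to itself such that $\phi \!\rest\! \AA = \eta$. Being a conjugacy, $\phi$ is an automorphism of $\pwmff$ satisfying $\phi \circ \s = \s \circ \phi$. Moreover, for each $n \in \Z$, the element witnessing $\eta \neq \s^n \!\rest\! \AA$ in Lemma~\ref{lem:NonTrivialConjugation} lies in $\AA$, so $\phi \!\rest\! \AA = \eta \neq \s^n \!\rest\! \AA$, and hence $\phi \neq \s^n$. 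This yields the desired nontrivial automorphism commuting with $\s$.

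Essentially no step here is the hard part; all the genuine work has already been carried out. The base-step freedom built into the proof of Theorem~\ref{thm:ManyConjugations} and the explicit construction of a non-power embedding in Lemma~\ref{lem:NonTrivialConjugation} do all the heavy lifting, so the proof amounts to verifying that one may arrange an elementary substructure $\AA$ containing the finitely-generated-style data required by the lemma and then quoting Theorem~\ref{thm:ManyConjugations}. The only point that deserves care is confirming that the Löwenheim--Skolem closure $\AA$ indeed falls within the scope of Lemma~\ref{lem:NonTrivialConjugation}, but since the lemma's hypotheses on $\AA$ are a countable list of conditions, this is immediate.
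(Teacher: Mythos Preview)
Your proposal is correct and follows exactly the same approach as the paper: choose a countable elementary substructure $\<\AA,\s\>$ containing the data required by Lemma~\ref{lem:NonTrivialConjugation}, apply that lemma to get $\eta$, and then invoke Theorem~\ref{thm:ManyConjugations} to extend $\eta$ to a self-conjugacy $\phi$ of $\<\pwmff,\s\>$. If anything, you are slightly more careful than the paper in explicitly listing the $[\set{kn}{n\in\w}]$ among the elements to be included in $\AA$ and in noting that elementarity gives closure under $\s$ and $\s^{-1}$.
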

\begin{proof}
Let $\< \AA,\s \>$ be a countable elementary substructure of $\<\pwmff,\s\>$ such that $[E],[F] \in \AA$. 
Applying the previous lemma, let $\eta$ be an embedding of $\< \AA,\s \>$ into $\< \pwmff,\s \>$ such that $\eta \neq \s^n \rest \AA$ for any $n \in \Z$. 
By Theorem~\ref{thm:ManyConjugations}, there is a conjugacy map $\phi$ from $\<\pwmff,\s\>$ to itself such that $\phi \rest \AA = \eta$. 
Thus $\phi \circ \s = \s \circ \phi$, but $\phi \neq \s^n$ for any $n \in \Z$, because $\phi \rest \AA = \eta \neq \s^n \rest \AA$ for any $n \in \Z$. 
\end{proof}

Let us note that there are several possible ways of proving Lemma~\ref{lem:NonTrivialConjugation}. We have presented the shortest and simplest we could find. But another worth mentioning is that, by constructing appropriate sequences to induce the embeddings as in Section 4, one can show that for any countable elementary substructure $\< \AA,\s \>$ of $\< \pwmff,\s \>$, there are $2^{\aleph_0}$ distinct embeddings of $\< \AA,\s \>$ in $\< \pwmff,\s \>$. A simple counting argument then implies that most of these embeddings are not equal to $\s^n \rest \AA$ for any $n$. 

\vspace{2mm}

Next we address another folklore question: \emph{Can a trivial automorphism be conjugate to a nontrivial one?} 
We show the answer is again yes, this time as a relatively straightforward consequence of Theorem~\ref{thm:ManyConjugations}. The following lemma was suggested by Ilijas Farah. 

\begin{lemma}
Let $\s$ denote the automorphism $\pzmff \to \pzmff$ induced by the shift map $n \mapsto n+1$ on $\Z$. There is no trivial automorphism $\t$ of $\pzmff$ such that $\t \circ \t = \s$.
\end{lemma}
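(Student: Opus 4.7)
\medskip

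The plan is to assume such a trivial $\tau$ exists and extract enough structure from the equation $\tau \circ \tau = \sigma$ to force a parity contradiction. Since $\tau$ is trivial, there is a finite $F_1, F_2 \subseteq \Z$ and a bijection $h \colon \Z \setminus F_1 \to \Z \setminus F_2$ inducing $\tau$ in the sense that $\tau([X]) = [h[X \setminus F_1]]$ for all $X \subseteq \Z$. The equation $\tau \circ \tau = \sigma$ then translates to: $h \circ h$ is defined on a cofinite subset of $\Z$ and agrees there with the successor function $n \mapsto n+1$. Fix a finite set $F$ containing $F_1 \cup F_2$ and all exceptional points of $h \circ h$, so that $h(h(n)) = n+1$ for every $n \notin F$ with $h(n) \notin F$.

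The crucial step is the following local rigidity: I will show that $h(n+1) = h(n) + 1$ for all but finitely many $n \in \Z$. Indeed, if $n$ is such that $n, n+1 \notin F$ and $h(n) \notin F$, then setting $m = h(n)$ we have $h(m) = h(h(n)) = n+1$, hence
\[
h(n+1) \,=\, h(h(m)) \,=\, m+1 \,=\, h(n)+1,
\]
using $h \circ h = \mathrm{id}+1$ at $m$. Only finitely many $n$ fail these conditions, so the equation $h(n+1) = h(n) + 1$ holds outside a finite set.

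From this, $h$ is eventually translation-like in both directions: there exist integers $N$, $c_+$, and $c_-$ such that $h(n) = n + c_+$ for every $n \geq N$ and $h(n) = n + c_-$ for every $n \leq -N$. Applying $h$ a second time on the positive tail gives $h(h(n)) = n + 2c_+$ for all sufficiently large $n$, and this must equal $n+1$, yielding $2c_+ = 1$, which is impossible in $\Z$. (The negative tail gives the same contradiction via $2c_- = 1$.)

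The only subtle point is making sure the step $h(n+1) = h(n)+1$ really holds cofinitely, which requires knowing that $h(n) \notin F$ for cofinitely many $n$; this is automatic from the fact that $h$ is a bijection between cofinite subsets of $\Z$, so $h^{-1}(F)$ is finite. Everything else is routine bookkeeping, and no non-trivial input from earlier in the paper is needed --- the statement is a clean parity obstruction to $\sigma$ having a trivial square root.
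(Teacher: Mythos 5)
Your proof is correct. The setup is sound (a trivial automorphism of $\pzmff$ is induced by a bijection between cofinite subsets of $\Z$, and $\t\circ\t=\s$ forces $h\circ h$ to agree with the successor off a finite set), the local rigidity step $h(n+1)=h(n)+1$ is verified under exactly the right hypotheses, and the passage from that to ``$h$ is a translation on each tail'' and then to $2c_+=1$ is airtight. The one point that needed care --- that $h(n)\notin F$ for cofinitely many $n$ --- you address correctly via injectivity of $h$.

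Your route differs from the paper's in an interesting way. The paper fixes a single sufficiently large $n$ and tracks its forward orbit under $t$, proving $t^{2k}(n)=n+k$ and $t^{2k+1}(n)=t(n)+k$; the contradiction is then that the two interleaved arithmetic progressions $\{n+k\}$ and $\{t(n)+k\}$ must collide (one catches up to the other depending on the sign of $t(n)-n$), violating injectivity of $t$ on a cofinite set --- the parity obstruction appears as ``an odd power of $t$ equals an even power at $n$.'' You instead prove a structural fact: any trivial square root of $\s$ must eventually commute with the successor, hence be a translation $n\mapsto n+c$ on each tail, and then $2c=1$ is the contradiction. Your version isolates a reusable statement (trivial automorphisms commuting with the shift on $\Z$ are eventually translations, which resonates with the paper's later discussion of automorphisms commuting with $\s$), at the modest cost of the extra induction needed to globalize $h(n+1)=h(n)+1$ into $h(n)=n+c_\pm$; the paper's orbit-collision argument is more bare-hands and never needs to identify $t$ globally. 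Both are valid, and neither uses any machinery from earlier in the paper.
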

\begin{proof}
Aiming for a contradiction, suppose there is a map $t: \Z \to \Z$ that induces an automorphism $\t$ of $\pzmff$ with $\t \circ \t = \s$, in the sense that $\t([A]) = [t(A)]$ for all $A \sub \Z$. 

There is some finite $F \sub \Z$ such that $t \rest (\Z \setminus F)$ is injective, since otherwise $t$ would not induce an automorphism in the manner described in the previous paragraph. 
Similarly, there is some finite $G \sub \Z$ such that $t \circ t(n) = n+1$ for all $n \notin G$. (Otherwise there one can find an infinite $A \sub \Z$ such that $t \circ t(A) \cap (A+1) = \0$, which means $\t \circ \t([A]) \wedge [A] = [\0]$ and in particular, because $[A] \neq [\0]$, $\t \circ \t([A]) \neq [A]$.)

Let $n > \max (F \cup G \cup t(F) \cup t^{-1}(F) \cup t^{-1}(G))$. Observe that $t(n) > \max G$, because $n > \max ( t^{-1}(G))$. 

We claim that $t^{2k}(n) = n+k$ for all $k \geq 0$. This is proved by a simple induction on $k$: the base case is trivial, and assuming it is true for $k$, then $t^{2k}(n) = n+k > n > \max G$, and this implies that $t^{2(k+1)}(n) = t \circ t (t^{2k}(n)) = t^{2k}(n)+1 = n+k+1$. 
By an essentially identical inductive argument, we also have $t^{2k+1}(n) = t^{2k}(t(n)) = t(n)+k$ for all $k \geq 0$. 

If $n < t(n)$, then there is some $k>0$ such that $t(n) = n+k = t^{2k}(n)$, and in particular $t(n) = t(t^{2k-1}(n))$. 
If $n > t(n)$, then there is some $k>0$ such that $n = t(n)+k = t^{2k+1}(n)$, and in particular, $t(t^{-1}(n)) = t(t^{2k}(n))$. 
However, $n,t^{-1}(n) > \max F$, and $t^{2k}(n) = n+k, t^{2k-1}(n) = t(n)+k-1 > \max F$, so this contradicts our choice of $F$.
\end{proof}

\begin{theorem}\label{thm:Ilijas}
Suppose that $\< \pwmff,\s \>$ and $\< \pwmff,\s^{-1} \>$ are conjugate. Then there is a nontrivial autormorphism $\z$ of $\pwmff$ such that $\< \pwmff,\s \>$ and $\< \pwmff,\z \>$ are conjugate. In particular, \ch implies $\s$ is conjugate to a nontrivial automorphism of $\pwmff$.
\end{theorem}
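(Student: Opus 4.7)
The plan is to leverage the given conjugacy to build a nontrivial ``square root'' of the shift on $\pzmff$, invoke the preceding lemma to certify its nontriviality, and transfer back to $\pwmff$ to obtain a nontrivial automorphism conjugate to $\s$.

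First, using the decomposition $\Z = \Z_{\geq 0} \sqcup \Z_{<0}$, I would identify $\pzmff$ with $\pwmff \times \pwmff$ as Boolean algebras via the obvious trivial isomorphism. Under this identification, the shift $\s_\Z$ on $\pzmff$ corresponds to $\s \times \s^{-1}$: reindexing $\Z_{<0}$ as $\w$ by $n \mapsto -n-1$ reverses the direction of the shift on the negative half. Now let $\phi$ be the given conjugacy, so $\phi \s = \s^{-1} \phi$; by van Douwen's theorem cited in the introduction, $\phi$ is necessarily nontrivial. Define an automorphism $\tilde\s$ of $\pwmff \times \pwmff$ by
\[ \tilde\s(x,y) \,=\, \bigl(\phi(y),\,\phi^{-1}(\s(x))\bigr). \]
A direct computation using $\phi^{-1}\s\phi = \s^{-1}$ yields $\tilde\s^2(x,y) = (\s x, \s^{-1} y)$, so $\tilde\s^2 = \s \times \s^{-1}$, which under the identification is $\s_\Z$. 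The preceding lemma therefore forces $\tilde\s$ to be nontrivial as an automorphism of $\pzmff$.

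Next, I would fix a trivial Boolean isomorphism $\beta\colon \pzmff \to \pwmff$ induced by any bijection $\Z \to \w$, and set $\z := \beta \tilde\s \beta^{-1}$. Because $\beta$ is trivial, conjugation by $\beta$ preserves both triviality and nontriviality, so $\z$ is a nontrivial automorphism of $\pwmff$.

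The main obstacle is to show that $\< \pwmff,\z \>$ and $\< \pwmff,\s \>$ are conjugate as dynamical systems, equivalently that $\< \pzmff,\tilde\s \>$ is conjugate to $\< \pwmff,\s \>$. Both are incompressible (the former because its square $\s_\Z$ is), and the hypothesis already supplies a conjugacy of the squared systems through $\< \pzmff,\s_\Z \> \cong \< \pwmff,\s \> \times \< \pwmff,\s^{-1} \> \cong \< \pwmff,\s^2 \>$. To descend to the original systems I would adapt the back-and-forth construction of Section~\ref{sec:back&forth}, invoking the Lifting Lemma to extend coherent partial conjugacies between countable elementary substructures of $\< \pzmff,\tilde\s \>$ and $\< \pwmff,\s \>$; the digraph analysis of Section~\ref{sec:heart} ensures that the finite-level structure of $\tilde\s$ matches that of $\s$ so that every lifting problem encountered along the way admits a solution. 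The ``in particular'' clause then follows at once: \ch supplies the hypothesis via the Main Theorem, and the construction above produces the required nontrivial $\z$ conjugate to $\s$.
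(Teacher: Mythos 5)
Your construction of $\tilde\s$ is correct as far as it goes: the identification of $\langle \pzmff,\s\rangle$ with $\langle \pwmff \times \pwmff, \s\times\s^{-1}\rangle$ is fine (the single crossing $-1\mapsto 0$ is absorbed mod finite), the computation $\tilde\s^2 = \s\times\s^{-1}$ checks out, and the preceding lemma does force $\tilde\s$ to be nontrivial. The gap is exactly where you locate it, and your proposed repair does not work. First, the theorem assumes only that $\langle\pwmff,\s\rangle$ and $\langle\pwmff,\s^{-1}\rangle$ are conjugate, not \ch; the back-and-forth argument of Section~\ref{sec:back&forth} needs $\continuum=\aleph_1$ to exhaust $\pwmff$ in $\w_1$ steps, so it is not available here (\ch enters only in the ``in particular'' clause). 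Second, even granting \ch, the Lifting Lemma is proved only for lifting problems into $\langle\pwmff,\s\rangle$ and $\langle\pwmff,\s^{-1}\rangle$; running a back-and-forth between $\langle\pzmff,\tilde\s\rangle$ and $\langle\pwmff,\s\rangle$ would require an analogous lemma for $\tilde\s$, i.e.\ a proof that $\langle\pzmff,\tilde\s\rangle$ is polarized (together with elementary equivalence of the two systems), none of which is supplied by the remark that the ``finite-level structure matches.'' Nothing in your construction rules out that $\tilde\s$ is a square root of the $\Z$-shift that is simply not conjugate to $\s$.

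The paper sidesteps this by reversing the order of difficulty: it arranges for the automorphism to be conjugate to $\s$ \emph{by construction} and puts all the work into nontriviality. Concretely, let $\t$ be the trivial automorphism induced by a bijection of $\w$ with a single $\Z$-like orbit, so that $\langle\pwmff,\t\rangle\cong\langle\pzmff,\s\rangle\cong\langle\pwmff,\s\rangle\times\langle\pwmff,\s^{-1}\rangle$, while $\langle\pwmff,\s^2\rangle\cong\langle\pwmff,\s\rangle\times\langle\pwmff,\s\rangle$; the hypothesis makes these conjugate, say $\phi\circ\t=\s^2\circ\phi$. Setting $\z=\phi^{-1}\circ\s\circ\phi$ gives $\z^2=\phi^{-1}\circ\s^2\circ\phi=\t$, so $\z$ is nontrivial by the lemma (a trivial $\z$ would transfer along the trivial isomorphism $\pwmff\cong\pzmff$ to a trivial square root of the shift on $\pzmff$), and $\z$ is conjugate to $\s$ for free. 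To salvage your argument you would have to prove that your particular square root $\tilde\s$ is conjugate to $\s$; the paper's choice of square root makes that step vacuous.
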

\begin{proof}
Let $t$ be a bijection on $\w$ with one $\Z$-like orbit: for concreteness, take $t(0)=1$, otherwise $t(2n) = 2n-2$, and $t(2n+1) = 2n+3$. 
Let $\t$ be the automorphism of $\pwmff$ induced by $t$. Then $\< \pwmff,\t \>$ is conjugate to $\< \pzmff,\s \>$. 
Observe that $\s^2$ is the automorphism of $\pwmff$ induced by the function $n \mapsto n+2$ on $\w$. This function has two $\N$-like orbits. 
Supposing $\< \pwmff,\s \>$ and $\< \pwmff,\s^{-1} \>$ are conjugate (e.g., under \ch), it is clear that $\< \pwmff,\t \>$ and $\< \pwmff,\s^2 \>$ are also conjugate. 
Let $\phi$ be an automorphism of $\pwmff$ such that $\phi \circ \t = \s^2 \circ \phi$. 
Define $\z = \phi^{-1} \circ \s \circ \phi$. Then 
\begin{align*}
\z \circ \z &\,=\, \phi^{-1} \circ \s \circ \phi \circ \phi^{-1} \circ \s \circ \phi \,=\, \phi^{-1} \circ \s^2 \circ \phi \,=\, \t. 
\end{align*}
Because $\< \pwmff,\t \>$ is conjugate to $\< \pzmff,\s \>$, this implies via the previous lemma that $\z$ is a nontrivial automorphism of $\pwmff$. And $\< \pwmff,\s \>$ is conjugate to $\< \pwmff,\z \>$, because $\phi \circ \z = \s \circ \phi$.
\end{proof}

These last two theorems can be summarized by saying that under \ch, there is both a nontrivial automorphism of $\pwmff$ that commutes with $\s$, as well as a nontrivial automorphism that is conjugate to $\s$. We end this paper by showing that, intriguingly, at least one of these two things must be true as soon as there are any nontrivial automorphisms at all. 

\begin{theorem}\label{thm:trivia}
If there is a nontrivial automorphism of $\pwmff$, then either there is a nontrivial automorphism that commutes with $\s$, or else there is a nontrivial automorphism that is conjugate to $\s$. 
\end{theorem}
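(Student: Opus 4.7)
The plan is to set $\psi = \phi^{-1} \circ \s \circ \phi$ for the given nontrivial automorphism $\phi$, making $\psi$ automatically conjugate to $\s$ via $\phi$, and then to analyze whether $\psi$ is itself trivial. If $\psi$ is nontrivial, then $\psi$ is a nontrivial automorphism conjugate to $\s$ and the second alternative is established. If $\psi = \s$, then $\phi$ itself commutes with $\s$ and the first alternative holds. The remaining case, in which $\psi$ is trivial but $\psi \neq \s$, is where the real work lies.

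In that case $\psi$ is induced by some bijection $p \colon \w \to \w$ modulo finite, and I would first use incompressibility of $\<\pwmff,\psi\>$, inherited from $\<\pwmff,\s\>$ through the conjugacy, to constrain the orbit structure of $p$. Any $p$-invariant partition of $\w$ into two infinite pieces, or any collection of finite cycles of $p$ whose union is infinite and coinfinite, would produce a nontrivial $a \in \pwmff$ with $\psi(a) = a$, contradicting incompressibility since $\psi(a) \leq a$. So modulo finite, $p$ has exactly one infinite orbit, and that orbit is either a one-sided forward ray or a two-sided $\Z$-chain.

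In the forward-ray sub-case, enumerating the orbit as $q(0),q(1),q(2),\ldots$ produces a bijection $q \colon \w \to \w$ (modulo finite) satisfying $p \circ q = q \circ s$, where $s(n) = n+1$ is the successor. The induced trivial automorphism $h$ then satisfies $h \circ \s \circ h^{-1} = \psi$, and setting $\eta = \phi \circ h$ gives $\eta \circ \s \circ \eta^{-1} = \phi \circ (h \circ \s \circ h^{-1}) \circ \phi^{-1} = \phi \circ \psi \circ \phi^{-1} = \s$, so $\eta$ commutes with $\s$; it is nontrivial because $\phi$ is nontrivial and $h$ is trivial. This yields the first alternative of the theorem.

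In the $\Z$-chain sub-case, identifying $\w$ with $\Z$ along the orbit of $p$ exhibits $\<\pwmff,\psi\>$ as conjugate to $\<\pzmff,\s\>$, where by abuse of notation $\s$ also denotes the shift on $\pzmff$. Combined with the standing conjugacy $\<\pwmff,\psi\> \cong \<\pwmff,\s\>$, this gives $\<\pwmff,\s\> \cong \<\pzmff,\s\>$. The reflection $n \mapsto -n$ on $\Z$ is a trivial self-bijection intertwining the shift on $\Z$ with its inverse, so $\<\pzmff,\s\> \cong \<\pzmff,\s^{-1}\>$, and hence $\<\pwmff,\s\> \cong \<\pwmff,\s^{-1}\>$. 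Theorem~\ref{thm:Ilijas} now supplies a nontrivial automorphism conjugate to $\s$, delivering the second alternative. The main obstacle in executing this plan is the orbit classification itself in the third case: verifying carefully, with attention to the modulo-finite bookkeeping for small cycles, that incompressibility alone forces the single-ray-or-chain structure; once that is in place, both sub-cases close cleanly, and the $\Z$-chain sub-case reduces to the previously established Theorem~\ref{thm:Ilijas}.
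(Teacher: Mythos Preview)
Your approach is essentially the paper's: conjugate $\s$ by $\phi$ to get $\psi$, split on whether $\psi$ is trivial, and in the trivial case classify $\psi$ up to trivial conjugacy and finish via Theorem~\ref{thm:Ilijas}. But your orbit classification has a gap.

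The $\Z$-chain sub-case cannot occur. If $p$ has (modulo finite) a single two-sided orbit $\ldots, q(-1), q(0), q(1), \ldots$ with $p(q(n)) = q(n+1)$, set $A = \{q(n) : n \geq 0\}$; then $A$ is infinite and coinfinite and $p(A) = A \setminus \{q(0)\}$, so $\psi([A]) = [A]$ with $[A] \notin \{\mathbf 0,\mathbf 1\}$, contradicting incompressibility. Your own invariant-set test already rules this out, so this sub-case is vacuous.

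What you are actually missing is the backward-ray case. A trivial automorphism need not be induced by an injection: $\s^{-1}$ is induced by a map eventually equal to $n \mapsto n-1$, which is surjective but not injective, and a short pigeonhole argument shows no injection $\w \to \w$ can induce $\s^{-1}$. When $p$ has a single backward ray, $\psi$ is trivially conjugate to $\s^{-1}$ rather than to $\s$; then $\s$ is conjugate to $\s^{-1}$ via $\phi$ composed with a trivial map, and Theorem~\ref{thm:Ilijas} yields the second alternative, exactly as in your (vacuous) $\Z$-chain argument. The paper shortcuts all of this by citing \cite{Brian0}, Lemma~5.1: up to trivial conjugacy, the only incompressible trivial automorphisms of $\pwmff$ are $\s$ and $\s^{-1}$. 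Replace your $\Z$-chain sub-case with the backward-ray sub-case (or invoke that lemma) and the proof goes through.
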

\begin{proof}
Suppose there is a nontrivial automorphism $\phi$ of $\pwmff$, and let $\psi = \phi \circ \s \circ \phi^{-1}$. Observe that $\psi$ is an automorphism of $\pwmff$, and that $\< \pwmff,\psi \>$ is conjugate to $\< \pwmff,\s \>$, with $\phi$ as the conjugacy map, because
$$\phi \circ \s \,=\, \phi \circ \s \circ \phi^{-1} \circ \phi \,=\, \psi \circ \phi.$$ 
If $\psi$ is nontrivial then the second alternative of the theorem holds, and we are done. 
So suppose $\psi$ is trivial. Because $\< \pwmff,\s \>$ is incompressible and $\< \pwmff,\psi \>$ is conjugate to $\< \pwmff,\s \>$ by the previous paragraph, $\< \pwmff,\psi \>$ is incompressible. But, up to a bijection on $\w$, there are only two incompressible trivial automorphisms of $\pwmff$, namely $\s$ and $\s^{-1}$ (see \cite[Lemm 5.1]{Brian0}). More precisely, there is a trivial automorphism $\t$ of $\pwmff$ such that either $\psi = \t^{-1} \circ \s \circ \t$, or else $\psi = \t^{-1} \circ \s^{-1} \circ \t$. But then 
$$\phi \circ \s \circ \phi^{-1} \,=\, \psi \,=\, \t^{-1} \circ \s^{\pm 1} \circ \t,$$
which implies that
$$\s \,=\, \phi^{-1} \circ \t^{-1} \circ \s^{\pm 1} \circ \t \circ \phi \,=\, (\t \circ \phi)^{-1} \circ \s^{\pm 1} \circ (\t \circ \phi),$$
i.e., $\s$ is conjugate to either $\s$ or $\s^{-1}$ via $\t \circ \phi$. 
If $\s$ is conjugate to $\s^{-1}$, then Theorem~\ref{thm:Ilijas} implies the second alternative of the theorem. 
If not, then observe that the composition of a trivial and nontrivial automorphism is nontrivial, so $\t \circ \phi$ is a nontrivial automorphism commuting with $\s$.
\end{proof}

We do not know which of these two alternatives, if either, actually follows from the existence of a nontrivial automorphism of $\pwmff$.

%%%%%%%%%%%%

%%%%%%%%%%%%

\end{document}